\def\Frob{\mathop{\rm Frob}\nolimits}
\def\Hom{\mathop{\rm Hom}\nolimits}
\def\Spec{\mathop{\rm Spec}\nolimits}
\def\deg{\mathop{\rm deg}\nolimits}
\def\im{\mathop{\rm im}\nolimits}
\def\supp{\mathop{\rm supp}\nolimits} 
\def\Mat{\mathop{\rm Mat}\nolimits}
\def\GL{\mathop{\rm GL}\nolimits}
\def\SL{\mathop{\rm SL}\nolimits}
\def\Bun{\mathop{\rm Bun}\nolimits}
\def\Isom{\mathop{\rm Isom}\nolimits}
\def\Maps{\mathop{\rm Maps}\nolimits}
\def\VinBun{\mathop{\rm VinBun}\nolimits}
\def\barVinBun{\mathop{\overline{\rm VinBun}}\nolimits}
\def\Qellbar{\mathop{\overline{\BQ}_\ell}\nolimits}
\def\IC{\mathop{\rm IC}\nolimits}
\def\can{\mathop{\rm can}\nolimits}
\def\var{\mathop{\rm var}\nolimits}
\def\sign{\mathop{\rm sign}\nolimits}
\def\tr{\mathop{\rm tr}\nolimits}
\def\Gr{\mathop{\rm Gr}\nolimits}
\def\CT{\mathop{\rm CT}\nolimits}
\def\add{{\rm add}}
\def\id{{\rm id}}
\def\Sym{\mathop{\rm Sym}\nolimits}
\newbox\starbox 
\def\hatE{{\mathchoice
  {\hbox{\rlap{\smash{\kern1pt\lower1pt\hbox{$\widehat{\phantom{\hbox{$E$}}}$}}}$E$}}
  {\hbox{\rlap{\smash{\kern1pt\lower1pt\hbox{$\widehat{\phantom{\hbox{$E$}}}$}}}$E$}}
  {\widehat E}
  {\widehat E}}}
\def\hatW{\hbox{\rlap{\smash{\lower1pt\hbox{$\widehat{\phantom{\hbox{$W$}}}$}}}$W$}}
\def\tildeW{\hbox{\rlap{\smash{\lower1pt\hbox{$\widetilde{\phantom{\hbox{$W$}}}$}}}$W$}}
\newbox\checkWbox
\checkWbox\hbox{\rlap{\smash{\kern.8pt\lower4pt\hbox{\huge \v{}}}}$W$}
\def\barBun{{\overline{\Bun}}}
\def\gr{{\rm gr}}
\def\D{{\rm D}}
\def\sl{{\mathfrak{sl}}}
\def\circV{{\mathchoice{\circVbig}{\circVbig}{\circVscript}{\circVscriptscript}}}
\def\circVbig{\hbox{\text{\it\r{V}}}}
\def\circVscript{\hbox{\scriptsize\text{\it\r{V}}}}
\def\circVscriptscript{\mbox{\tiny\text{\it\r{V}}}}
\def\circVprime{{\mathchoice{\circV\kern1.8pt{}^\prime}{\circV\kern1.8pt{}^\prime}
                            {\circVscript\kern1.3pt{}^\prime}{\circVscriptscript\kern1pt{}^\prime}}}
\def\circVpprime{{\mathchoice{\circV\kern1.8pt{}^{\prime \prime}}{\circV\kern1.8pt{}^{\prime \prime}}
                            {\circVscript\kern1.3pt{}^{\prime \prime}}{\circVscriptscript\kern1pt{}^{\prime \prime}}}}
\let\epsilon\varepsilon
\let\setminus\smallsetminus
\let\leq\leqslant
\let\geq\geqslant
\newtheorem{theorem}[subsubsection]{Theorem}
\newtheorem{corollary}[subsubsection]{Corollary}
\newtheorem{proposition-definition}[subsubsection]{Proposition-Definition}
\newtheorem{theorem-definition}[subsubsection]{Theorem-Definition}
\newtheorem{lemma}[subsubsection]{Lemma}
\newtheorem{proposition}[subsubsection]{Proposition}
\newtheorem{question}[subsubsection]{Question}
\newtheorem{remark}[subsubsection]{Remark}
\newcommand{\BA}{{\mathbb{A}}}
\newcommand{\BD}{{\mathbb{D}}}
\newcommand{\BF}{{\mathbb{F}}}
\newcommand{\BG}{{\mathbb{G}}}
\newcommand{\BP}{{\mathbb{P}}}
\newcommand{\BQ}{{\mathbb{Q}}}
\newcommand{\BY}{{\mathbb{Y}}}
\newcommand{\BZ}{{\mathbb{Z}}}
\newcommand{\FB}{{\mathfrak{B}}}
\newcommand{\CB}{{\mathcal B}}
\newcommand{\CO}{{\mathcal O}}
\newcommand{\CP}{{\mathcal P}}
\newcommand{\CU}{{\mathcal U}}
\newcommand{\CY}{{\mathcal Y}}
\newcommand{\CZ}{{\mathcal Z}}
\newcommand{\ssec}{\subsection}
\newcommand{\sssec}{\subsubsection}
\def\longto{\longrightarrow}
\def\into{\hookrightarrow}
\let\onto\twoheadrightarrow
\def\longinto{\lhook\joinrel\longrightarrow}
\def\longintointo{\lhook\joinrel\lhook\joinrel\lhook\joinrel\longrightarrow}
\def\longotni{\longleftarrow\joinrel\rhook}
\def\longonto{\ontoover{\ }}
\newbox\mybox
\def\arrover#1{\mathrel{
       \setbox\mybox=\hbox spread 1.4em
              {\hfil$\scriptstyle#1\vphantom{g}$\hfil}
       \vbox{\offinterlineskip\copy\mybox
             \hbox to\wd\mybox{\rightarrowfill}}}}
\def\larrover#1{\mathrel{
       \setbox\mybox=\hbox spread 1.4em{\hfil$\scriptstyle#1$\hfil}
       \vbox{\offinterlineskip\copy\mybox
             \hbox to\wd\mybox{\leftarrowfill}}}}
\def\ontoover#1{\mathrel{
       \setbox\mybox=\hbox spread 1.4em{\hfil$\scriptstyle#1$\hfil}
       \vbox{\offinterlineskip\copy\mybox
             \hbox to\wd\mybox{\rightarrowfill\hskip-2.8mm
                               $\rightarrow$}}}}
\def\leftontoover#1{\mathrel{
       \setbox\mybox=\hbox spread 1.4em{\hfil$\scriptstyle#1$\hfil}
       \vbox{\offinterlineskip\copy\mybox
             \hbox to\wd\mybox{$\leftarrow$\hskip-2.8mm
                               \leftarrowfill}}}}
\newbox\invlimsymbol
\newbox\dirlimsymbol
\begin{document}

\title[Picard-Lefschetz oscillators]{Picard-Lefschetz oscillators for the Drinfeld-Lafforgue-Vinberg degeneration \\ for $SL_2$}

\author{Simon Schieder}
\thanks{Department of Mathematics, MIT, Cambridge, MA 02139, USA}

\address{Department of Mathematics, MIT, Cambridge, MA 02139, USA}
\email{schieder@mit.edu}

\begin{abstract}
Let $G$ be a reductive group and let $\Bun_G$ denote the moduli stack of $G$-bundles on a smooth projective curve.
We begin the study of the singularities of a canonical compactification of $\Bun_G$ due to V. Drinfeld (unpublished), which we refer to as the \textit{Drinfeld-Lafforgue-Vinberg compactification} $\barBun_G$.
For $G=\GL_2$ and $G=\GL_n$ certain smooth open substacks of this compactification have already appeared in the work of Drinfeld and of L.~Lafforgue on the Langlands correspondence for function fields. The stack $\barBun_G$ is however already singular for $G=\SL_2$; questions about its singularities arise naturally in the geometric Langlands program, and form the topic of the present article.

Drinfeld's definition of $\barBun_G$ for a general reductive group~$G$ relies on the Vinberg semigroup of~$G$, and will be given and studied in \cite{Sch1}. In the present paper we focus on the case $G = \SL_2$. In this case the compactification can alternatively be viewed as a canonical one-parameter \textit{degeneration} of the moduli space of $\SL_2$-bundles. We study the singularities of this one-parameter degeneration via the weight-monodromy theory of its nearby cycles: We give an explicit description of the nearby cycles sheaf together with its monodromy action in terms of certain novel perverse sheaves which we call ``Picard-Lefschetz oscillators'' and which govern the singularities of $\barBun_G$. We then use this description to determine the intersection cohomology sheaf of $\barBun_G$ and other invariants of its singularities. Our proofs rely on the construction of certain local models which themselves form one-parameter families of spaces which are \textit{factorizable} in the sense of Beilinson and Drinfeld.

We also briefly discuss the relationship of our results for $G=\SL_2$ with the \textit{miraculous duality} of Drinfeld and Gaitsgory in the geometric Langlands program, as well as two applications of our results to the classical theory on the level of functions: To Drinfeld's and Wang's \textit{strange invariant bilinear form} on the space of automorphic forms; and to the categorification of the \textit{Bernstein asymptotics map} studied by Bezrukavnikov and Kazhdan as well as by Sakellaridis and Venkatesh.
\end{abstract}

\maketitle

\bigskip
\bigskip
\bigskip
\bigskip
\bigskip

\newpage

\tableofcontents

\section{Introduction}

\ssec{Overview}

\sssec{\textbf{The relative compactification $\barBun_G$}}
For the purpose of this introduction, let $\mathsf{k}$ be an algebraically closed field of arbitrary characteristic; see Subsection \ref{Notation and conventions} below for the conventions we adopt in the main body of the article.
Let $X$ be a smooth projective curve over $\mathsf{k}$, let $G$ be a reductive group over $\mathsf{k}$, and let $\Bun_G$ denote the moduli stack of $G$-bundles on $X$. In this article we begin the study of the singularities of a canonical relative compactification $\barBun_G$ of $\Bun_G$ discovered by V. Drinfeld (unpublished). Recall first that the diagonal morphism
$$\Delta: \ \Bun_G \ \stackrel{\Delta}{\longto} \ \Bun_G \times \Bun_G$$
of $\Bun_G$ is not proper. Drinfeld has hence defined a larger stack $\barBun_G$ together with a factorization of the diagonal $\Delta$ as
$$\xymatrix@+10pt{
\Bun_G \ar[r] \ar@/_-2pc/[rr]^{\Delta} & \barBun_G \ar^{\bar\Delta \ \ \ \ \ }[r] & \Bun_G \times \Bun_G \, .
}$$
where the map $\bar\Delta$ is proper; thus $\barBun_G$ is a compactification of $\Bun_G$ relative to the diagonal morphism $\Delta$ of $\Bun_G$. (As a technical comment, we remark that in the definition of $\barBun_G$ used in the present article, the map $\Bun_G \to \barBun_G$ is not quite an open immersion; rather, the stack $\barBun_G$ contains as a dense open substack the product $\Bun_G \times \cdot/Z_G$, where $Z_G$ denotes the center of $G$).

\medskip

For $G = \GL_2$ and for $G = \GL_n$ certain smooth open substacks of $\barBun_G$ were used by Drinfeld and by L. Lafforgue in their seminal work on the Langlands correspondence for function fields (\cite{Varieties of modules of F -sheaves}, \cite{Cohomology of compactified manifolds of modules of F -sheaves of rank 2}, \cite{Lafforgue JAMS}). For a general reductive group $G$, Drinfeld's definition of $\barBun_G$ has never been published, and relies on the Vinberg semigroup of $G$;
it will appear in \cite{Sch1}.
While the open substacks used by Drinfeld and Lafforgue for $G=\GL_n$ are smooth, the stack $\barBun_G$ is already singular for $G = \SL_2$.

\medskip

The goal of the present article and the future article \cite{Sch1} is the study of the singularities of $\barBun_G$. As is explained in Subsections \ref{Relation to the geometric Langlands program} through \ref{An application to Bernstein asymptotics} of this introduction, this study is motivated by the geometric Langlands program as well as by applications to the classical theory on the level of functions.

\medskip

\sssec{\textbf{The degeneration $\VinBun_G$}}
In the present article we study the singularities of $\barBun_G$ in the special case $G = \SL_2$; the case of a general reductive group will be treated in \cite{Sch1}. For most of the present article it will be more convenient for us to work with a minor modification of $\barBun_G$ which we denote by $\VinBun_G$ and refer to as the \textit{Drinfeld-Lafforgue-Vinberg degeneration} of $\Bun_G$. The space $\VinBun_G$ can be viewed as the total space of a canonical $\BG_m$-bundle on $\barBun_G$ (see Subsection \ref{The definition for G=SL_2} below), so that from the viewpoint of singularities it is equivalent to study either $\barBun_G$ or $\VinBun_G$.

\medskip

As indicated by its name, the modification $\VinBun_G$ of $\barBun_G$ naturally forms a one-parameter degeneration of $\Bun_G$: It admits a natural map to the affine line
$$v: \ \VinBun_G \ \longto \ \BA^1$$
whose fibers away from $0 \in \BA^1$ are isomorphic to $\Bun_G$, and whose fiber over~$0$ is singular. This degeneration of $\Bun_G$ is in fact canonical: It is induced by the canonical Vinberg semigroup degeneration of the group $G$, found by Vinberg \cite{V}, which will however not appear explicitly in this article.

\medskip

\sssec{\textbf{The definition for $G=\SL_2$}}
\label{The definition for G=SL_2}

For $G = \SL_2$ it is in fact easy to state the definition of $\VinBun_G$ as a moduli space: It parametrizes triples $(E_1, E_2, \varphi)$ consisting of two $\SL_2$-bundles $E_1$, $E_2$ on the curve $X$ together with a morphism of the associated vector bundles $\varphi: E_1 \to E_2$ which is required to be not the zero map. Taking the determinant of the map $\varphi$ defines the desired map
$$v: \ \VinBun_G \ \longto \ \BA^1 \, .$$
In these terms, the moduli space $\barBun_G$ can be recovered as the quotient of $\VinBun_G$ by the $\BG_m$-action given by scalar multiplication of the map $\varphi$. In other words, the spaces $\VinBun_G$ and $\barBun_G$ naturally fit into the cartesian square
$$\xymatrix@+10pt{
\VinBun_G \ar[r] \ar[d]_{v  } &    \barBun_G \ar[d]^{\bar{v}}          \\
\BA^1 \ar[r]                            &    \BA^1/\BG_m             \\
}$$
where the action of $\BG_m$ on $\BA^1$ in the lower right corner is the quadratic action.

\bigskip

\sssec{\textbf{Motivation and the contents of this introduction}}
The main geometric results of this article deal with the description of the singularities of the family
$$v: \ \VinBun_G \ \longto \ \BA^1$$
as well as of the singularities of the total space $\VinBun_G$: We determine the nearby cycles perverse sheaf of the family $v$ together with its monodromy action; the intersection cohomology sheaf of the total space $\VinBun_G$; as well as the stalks of the $*$-extension of the constant sheaf from the open locus $\VinBun_G|_{\BA^1 \setminus \{0\}}$.
Our main theorems describing these invariants of the singularities are sketched below in Subsection \ref{Main results about the singularities of VinBun_G} of this introduction; we refer the reader to Section \ref{Statement of main theorems} for the precise statements of these theorems.

\medskip

In Subsection \ref{Relation to the geometric Langlands program} of this introduction we discuss how our results are related to the geometric Langlands program and Drinfeld's and Gaitsgory's \textit{miraculous duality}. In Subsection \ref{An application to Drinfeld's strange invariant bilinear form} we discuss an application of our results to Drinfeld's and Wang's \textit{strange invariant bilinear form} on the space of automorphic forms; this corresponds to Section \ref{An application: Computation of Drinfeld's function} of the main text. In Subsection \ref{An application to Bernstein asymptotics} we discuss another application of our results on the level of functions, to the \textit{Bernstein asymptotics map} studied by Bezrukavnikov and Kazhdan and by Sakellaridis and Venkatesh; in particular, we explain how a special case ($G=\SL_2$) of a conjecture of Sakellaridis can be deduced from our results. In Subsection \ref{Proofs via local models} of this introduction we briefly discuss one ingredient of our proofs, the construction of certain \textit{local models} for the degeneration $\VinBun_G$.

\bigskip

\ssec{Main results about the singularities of $\VinBun_G$}
\label{Main results about the singularities of VinBun_G}

\mbox{}
\medskip

In this subsection we briefly sketch the main geometric results of this article.
In order to do so, we first discuss a natural stratification of the special fiber $v^{-1}(0)$ of the family $\VinBun_G$.

\sssec{\textbf{The defect stratification}}
\label{introduction --- The defect stratification}
To each point $\varphi: E_1 \to E_2$ of the special fiber $v^{-1}(0)$ of the family $\VinBun_G$ one can naturally associate an effective divisor $D$ on the curve $X$ which measures the ``defect'' of the map $\varphi$, i.e., it yields a measure of ``how singular'' the point $\varphi: E_1 \to E_2$ is.
One may consider $D$ the divisor of zeroes of the map $\varphi$; see Subsection \ref{Definition of the defect} below.
The special fiber of $\VinBun_G$ can then be stratified into strata $\sideset{_k}{_G}\VinBun$ on which the \textit{defect} $k$, i.e., the degree of the associated \textit{defect divisor} $D$, is equal to $k$. Associating to each point its defect divisor we obtain natural maps
$$p_k: \ \sideset{_k}{_G}\VinBun \ \longto \ X^{(k)}$$
where $X^{(k)}$ denotes the $k$-th symmetric power of the curve $X$. Finally, let
$$j_k: \ \sideset{_k}{_G}\VinBun \ \longinto \ \overline{\sideset{_k}{_G}\VinBun}$$
denote the inclusion of a stratum into its closure, and let $(j_k)_{!*}$ denote the corresponding intermediate extension functor for perverse sheaves.

\medskip

\sssec{\textbf{Main theorem about nearby cycles}}
Broadly speaking, our main theorem about the nearby cycles $\Psi$ of $\VinBun_G$, Theorem \ref{main theorem for nearby cycles}, expresses the associated graded $\gr \Psi$ of the weight-monodromy filtration on $\Psi$ as the direct sum
$$\gr \, \Psi \ \ = \ \ \bigoplus_{k \in \BZ_{\geq 0}} \ \, (j_k)_{!*} \ p_k^* \ \CP_k$$
where the $\CP_k$ denote certain novel perverse sheaves on $X^{(k)}$ which we call \textit{Picard-Lefschetz oscillators}. Their definition is rather simple and originates in the classical Picard-Lefschetz theory (see Subsection \ref{Picard-Lefschetz oscillators}). Our theorem includes a description of the monodromy action on the nearby cycles $\Psi$: By construction the Picard-Lefschetz oscillators come equipped with actions of the Lefschetz-$\sl_2$, and our theorem in fact asserts that the above isomorphism identifies the induced action of the Lefschetz-$\sl_2$ on the right hand side with its monodromy action on the left hand side. Finally, in the actual formulation of our theorem, the intermediate extensions $(j_k)_{!*}$ are explicitly computed via certain finite resolutions of singularities of the strata closures, which we construct using the smooth relative compactifications of the map $\Bun_B \to \Bun_G$ found by Drinfeld (\cite{BG1}) and Laumon (\cite{Lau}); see Theorem \ref{main theorem for nearby cycles} for the precise statement.

\medskip

We furthermore remark that the nearby cycles of $\VinBun_G$ are well-behaved in the sense that they \textit{factorize}, in the sense of Beilinson and Drinfeld (\cite{BD1}, \cite{BD2}), with respect to the defect: Broadly speaking, this means that the stalks of the perverse sheaf $\Psi$ decompose into a tensor product of \textit{local factors}, each of which is associated with a point on the curve $X$ occurring in the defect divisor. Somewhat surprisingly, this factorization property does not hold for the IC-sheaf of $\VinBun_G$; see Subsection \ref{Intersection Cohomology} below.

\medskip

The singularities of the total space $\VinBun_G$, and hence of the compactification $\barBun_G$, appear to be most easily studied via the above nearby cycles theorem, i.e., via the Picard-Lefschetz oscillators. We now illustrate this phenomenon in the example of the computation of the IC-sheaf of $\VinBun_G$:

\medskip

\sssec{\textbf{Intersection Cohomology}}
\label{Intersection Cohomology}

Our second main theorem, Theorem \ref{main theorem for intersection cohomology}, describes the IC-sheaf of $\VinBun_G$. We first point out that, as remarked above, the IC-sheaf of $\VinBun_G$ exhibits the peculiarity that it \textit{does not factorize} in the sense of Beilinson and Drinfeld, as we now explain; this sets the spaces $\barBun_G$ and $\VinBun_G$ apart from similarly defined singular moduli spaces in the geometric Langlands program such as Drinfeld's relative compactifications $\barBun_B$ (\cite{BG1}, \cite{BFGM}).

\medskip

As was mentioned in Subsection \ref{introduction --- The defect stratification} above, one can naturally associate to any singular point of $\VinBun_G$ its defect divisor. The IC-stalk at the singular point will in fact essentially only depend on this divisor; let us hence temporarily denote the IC-stalk at a singular point with associated defect divisor $D$ by $\IC_D$. A natural expectation is then that the IC-sheaf \textit{factorizes}, i.e., if $D = D_1 + D_2$ for two effective divisors $D_1$ and $D_2$ on $X$ with disjoint supports, then (up to shifts and twists) we have
$$\IC_D \ = \ \IC_{D_1} \otimes \IC_{D_2} \, .$$
Less formally, one might expect that distinct points on the curve ``cannot see each other'' in the sense that they contribute to the IC-stalk independently; this is indeed the case for Drinfeld's $\barBun_B$ (see \cite{FFKM}, \cite{BFGM}, and Subsection \ref{Proofs via local models} below).

\medskip

It however turns out that the IC-sheaf of $\VinBun_G$ does not factorize in this sense; this appears to make it difficult to carry out the approach of \cite{BFGM} to compute the IC-stalks directly. For this reason, we instead access the IC-sheaf via our understanding of the monodromy action on the nearby cycles: From the nearby cycles theorem we deduce a formula for the weight filtration of the restriction of the IC-sheaf of $\VinBun_G$ to the special fiber (Theorem \ref{main theorem for intersection cohomology}). Extracting this description from the formula for the nearby cycles amounts to computing the perverse kernel of the monodromy operator on the Picard-Lefschetz oscillators, which can be done systematically using the classical Schur-Weyl duality. Furthermore, exploiting the geometry of the defect stratification it is also possible to compute the IC-stalks from this description of the weight filtration (see Remark \ref{IC stalks}).

\medskip

For an example where a direct computation of the IC-stalks is possible in our setting, we refer the reader to Subsection \ref{Picard-Lefschetz oscillators via intersection cohomology}, where the case of defect $\leq 2$ is treated and used for our proof of the nearby cycles theorem. A geometric explanation for the lack of factorization of the IC-sheaf can be found in the nature of the local models for $\VinBun_G$ which we discuss in Subsection \ref{Proofs via local models} of this introduction.

\medskip

\sssec{\textbf{Stalks of the $*$-extension of the constant sheaf}}
\label{Stalks of the *-extension of the constant sheaf}
Let
$$j: \ \VinBun_G \big|_{\BA^1 \setminus \{0\}} \ \longinto \ \VinBun_G$$
denote the open inclusion of the inverse image of $\BA^1 \setminus \{0\}$ under $v$. One ingredient in our proof of the nearby cycles theorem which might be of independent interest is the computation of the $*$-stalks of the $*$-extension $j_*$ of the constant sheaf on $\VinBun_G \big|_{\BA^1 \setminus \{0\}}$: Theorem \ref{ij} describes these stalks in terms of the cohomology
of the ``open'' Zastava spaces defined and studied in \cite{FM}, \cite{FFKM}, \cite{BFGM}, and \cite{BG2}. An application of this stalk computation to the classical theory of automorphic forms is discussed in Subsection \ref{An application to Drinfeld's strange invariant bilinear form} of this introduction, and carried out in Section \ref{An application: Computation of Drinfeld's function} of the main text.

\bigskip

\ssec{Relation to the geometric Langlands program}
\label{Relation to the geometric Langlands program}

\mbox{} \medskip

We now briefly recall Drinfeld's and Gaitsgory's \textit{miraculous duality functor} (\cite{DrG1}, \cite{DrG2}, \cite{G2}) in the geometric Langlands program, and indicate how our results maybe be used in its study.

\sssec{\textbf{Drinfeld's and Gaitsgory's miraculous duality functor}}

This subsection is of motivational nature. For the purpose of exposition we implicitly make various simplifying assumptions and suppress any technicalities.
We refer the reader to the original articles (\cite{DrG1}, \cite{DrG2}, \cite{G2}) for a rigorous treatment.
Let $D(\Bun_G)$ denote the derived category of constructible complexes on $\Bun_G$. In the articles \cite{DrG1} and \cite{G2} Drinfeld and Gaitsgory have introduced a natural self-equivalence of $D(\Bun_G)$, the ``miraculous duality functor" or ``pseudo-identity functor''
$$\text{Ps-Id}: \ D(\Bun_G) \ \stackrel{\cong}{\longto} \ D(\Bun_G).$$
(Without going into detail in this introduction, we remark that the miraculous duality really yields a self-equivalence only on the subcategory of compact objects; on the full DG-categories it defines an equivalence between $D(\Bun_G)$ and its dual category.)
This functor plays an important role in the geometric Langlands program: For example, it arises naturally when attempting to relate Verdier duality on the automorphic side of the categorical geometric Langlands correspondence to Serre duality on the spectral side via the conjectural geometric Langlands transform (\cite{G2}). The functor $\text{Ps-Id}$ furthermore plays a key role in Drinfeld's and Gaitsgory's \textit{strange functional equations} for the Eisenstein Series and Constant Term functors (\cite{DrG2}, \cite{G2}), and is closely related to Drinfeld's and Wang's \textit{strange invariant bilinear form} on the space of automorphic forms (\cite{DrW}).
Finally, we remark that the functor $\text{Ps-Id}$ acts as the identity functor on the subcategory of $D(\Bun_G)$ consisting of cuspidal objects; this might explain why the need to study the functor $\text{Ps-Id}$ has not arisen earlier historically.

\bigskip

\sssec{\textbf{Relation to $\barBun_G$}}
To explain the relationship of the miraculous duality functor $\text{Ps-Id}$ with the present work, let $\Delta$ denote the diagonal morphism of $\Bun_G$ and let ${\Qellbar}$ denote the constant sheaf on $\Bun_G$. Then the functor $\text{Ps-Id}$ is constructed as an integral transform defined by the kernel $\Delta_! ({\Qellbar}) \in D(\Bun_G \times \Bun_G)$. Note that since $\Bun_G$ is a non-separated algebraic stack, the diagonal morphism $\Delta$ is not a closed immersion; in fact, it is neither a monomorphism nor proper. Unlike in the separated case, the objects $\Delta_! ({\Qellbar})$ and $\Delta_* ({\Qellbar})$ thus do not agree, and form interesting complexes on $\Bun_G \times \Bun_G$. To study the functor $\text{Ps-Id}$, one is naturally led to studying these complexes; we restrict our attention to $\Delta_! ({\Qellbar})$ for the purpose of this introduction.

\medskip

Using the factorization of the diagonal $\Delta$ as
$$\xymatrix@+10pt{
\Bun_G \ar^{j}[r] \ar@/_-2pc/[rr]^{\Delta} & \barBun_G \ar^{\bar\Delta \ \ \ \ \ }[r] & \Bun_G \times \Bun_G \,
}$$
and exploiting the properness of the map $\bar\Delta$, one can reduce questions about the complex $\Delta_! ({\Qellbar})$ to questions about the complex $j_!({\Qellbar})$ on $\barBun_G$; see Subsection \ref{Reduction to a trace computation on barBun_G} for an example of such a reduction step. In the next subsection we explain why some of the main results of the present article can be viewed as giving a description of precisely this complex $j_!({\Qellbar})$ of interest.

\sssec{\textbf{The complex $j_!({\Qellbar})$}}

Let $i$ denote the inclusion of the boundary
$$\barBun_G \setminus \Bun_G \ \longinto \ \barBun_G \, .$$
Motivated by the above, one would like to understand the complex $i^!j_!{\Qellbar}$. On the one hand, one would like to compute its stalks. As was already discussed in Subsection \ref{Stalks of the *-extension of the constant sheaf} above, this computation is achieved by our Theorem \ref{ij} : We introduce a natural stratification of the boundary and compute the restriction of the complex $i^!j_!{\Qellbar}$ to the strata in terms of the cohomology of the open Zastava spaces.

\medskip

On the other hand, one might also want to obtain a description of the entire complex $i^!j_!{\Qellbar}$ ``at once'', rather than just of its restrictions to the various strata of the boundary as above. Such a description can indeed also be extracted from our main results: It follows from the cartesian square in Section \ref{The definition for G=SL_2} above that the complex $i^!j_!{\Qellbar}$ on $\barBun_G \setminus \Bun_G$ is \textit{Koszul dual} to the nearby cycles perverse sheaf $\Psi$ of the degeneration $\VinBun_G$, in a sense that will be explained in detail in the future work \cite{Sch1}.
But a description of the nearby cycles as a perverse sheaf (and not only of its stalks) is in turn provided by our Theorem \ref{main theorem for nearby cycles}.

\medskip

To state one concrete application to the miraculous duality functor, we remark that from our above-mentioned results about the complex $i^!j_!{\Qellbar}$ one can extract the geometric input needed for Gaitsgory's proof that the miraculous duality functor acts as the identity on the subcategory of $D(\Bun_G)$ consisting of all cuspidal objects (\cite{G2}). The geometric input required for this proof will be supplied in \cite{Sch1}, where we return to this topic in the context of an arbitrary reductive group.

\bigskip

\ssec{An application to Drinfeld's strange invariant bilinear form}
\label{An application to Drinfeld's strange invariant bilinear form}

\mbox{} \medskip

In the article \cite{DrW} Drinfeld and Wang have introduced, for $G=\SL_2$, a novel invariant symmetric bilinear form on the space of automorphic forms. This article forms part of Drinfeld's \textit{geometric functional analysis} and \textit{geometric theory of automorphic forms}: While the article is written for an arbitrary global field $F$, the motivation for its constructions stems from the geometric Langlands program. In this subsection we discuss an application of the space $\barBun_G$ and the results of the present article to the work \cite{DrW}, which is carried out in Section \ref{An application: Computation of Drinfeld's function} of the main text.

\medskip

Let $G=\SL_2$, let $F$ be a global field, let $\BA$ denote the adele ring of $F$, and let $K$ denote the standard maximal compact subgroup of $G(\BA)$. Drinfeld's and Wang's \textit{strange bilinear form} $\FB$ on the space of $K$-finite smooth compactly supported functions on $G(\BA)/G(F)$ is defined via the formula
$$\FB(f_1,f_2) \ \ := \ \ \FB_{naive}(f_1, f_2) - \langle M^{-1} \CT(f_1), \CT(f_2) \rangle \, .$$
Here $\FB_{naive}$ denotes the usual scalar product; $M^{-1}$ denotes the inverse of the long intertwining operator $M$; and $\CT$ denotes the constant term operator. This definition is inspired by the work of Drinfeld and Gaitsgory on the miraculous duality functor in the geometric Langlands program from Subsection \ref{Relation to the geometric Langlands program} above. We refer to \cite{DrW} for how this new bilinear form ties in with the existing theory of automorphic forms; see also \cite{W}.

\medskip

The main theorem of \cite{DrW} asserts that the bilinear form $\FB$ ``comes from geometry'', as we now discuss. Let $X$ be a smooth projective curve over a finite field $\BF_q$, and as before let
$$\Delta: \ \ \Bun_G \ \longto \ \Bun_G \times \Bun_G$$
denote the diagonal morphism of $\Bun_G$. Consider the constructible $\ell$-adic complex $\Delta_*(\Qellbar)$ on $\Bun_G \times \Bun_G$, and let $f_{\textrm{Drinfeld}}$ denote \textit{Drinfeld's function}, i.e., the function on the set of isomorphism classes of $\BF_q$-points of $\Bun_G \times \Bun_G$ associated to this complex; note that since $\Delta$ is not proper, the explicit computation of this function in terms of a combinatorial formula is not immediate.

\medskip

Let $F$ denote the function field of $X$, and denote the function induced by $f_{\textrm{Drinfeld}}$ on the product $G(\BA_F)/G(F) \times G(\BA_F)/G(F)$ by $f_{\textrm{Drinfeld}}$ as well. The main theorem of \cite{DrW} then asserts that the bilinear form $\FB$ is equal to the bilinear form defined by integrating the product of two given automorphic forms multiplied with the function $f_{\textrm{Drinfeld}}$. To prove this theorem, Drinfeld and Wang required a combinatorial formula for $f_{\textrm{Drinfeld}}$. This combinatorial formula is established in Section \ref{An application: Computation of Drinfeld's function} of the present article; its deduction is based on Drinfeld's idea to compactify the diagonal $\Delta$ using the space $\barBun_G$; more precisely, the formula is deduced from one of our main theorems, Theorem \ref{ij} below. We will return to the calculation of the function $f_{\textrm{Drinfeld}}$ for an arbitrary reductive group in future work.

\medskip
\bigskip

\ssec{An application to Bernstein asymptotics}
\label{An application to Bernstein asymptotics}

\mbox{} \medskip

Some of our results in the present work (for $G=\SL_2$) can also be viewed as a step towards a geometric construction, or rather a categorification, of the \textit{Bernstein asymptotics map} appearing in the representation theory of reductive groups over non-archimedean local fields and the theory of automorphic forms (\cite{BK}, \cite{SakV}, \cite{Sak1}, \cite{Sak2}, \cite{DrW}, \cite{CY}); our results for $G=\SL_2$ thus confirm a special case of a conjecture of Sakellaridis, briefly outlined below. These results hence fit into the general framework of lifting various constructions in the classical theory to the geometric and categorical level, for example with the goal of developing a theory of character sheaves for loop groups and \textit{p}-adic groups.

\medskip

Let $G$ now be a reductive group over a non-archimedean local field. Let $N$ denote the unipotent radical of the Borel $B$ of $G$. The \textit{Bernstein asymptotics map} is a map of $G \times G$-modules
$$C^{\infty}(G) \ \longto \ C^{\infty}((G/N \times G/N^-)/T)$$
from the space of functions on the group $G$ to the space of functions on the \textit{boundary degeneration} $(G/N \times G/N^-)/T$. It can be characterized either by a universal property related to the asymptotics of matrix coefficients,
or as a composition of the \textit{orispheric transform} with the inverse of the \textit{intertwining operator} (see \cite{BK}, \cite{Sak2}).

\medskip

In the finite-dimensional setting (i.e., over an algebraically closed field instead of over a local field), a categorical construction similar to the Bernstein map has already appeared in \cite{BFO} (see also \cite{ENV}, \cite{CY}), where the authors study the \textit{twisted Harish-Chandra functor} between certain categories of D-modules. The authors show that, in this setting, the twisted Harish-Chandra functor can be realized as the functor of Verdier specialization in the De Concini-Procesi wonderful compactification. Motivated by this result, Bezrukavnikov and Kazhdan remark in their work \cite{BK}, which uses the Bernstein map to prove Bernstein's second adjointness theorem for reductive groups over non-archimedean local fields, that the Bernstein map bears some resemblance to a ``nearby cycles procedure'' in the setting over a local field, and pose the question whether these heuristics can be made precise. This question has also been raised by \cite{CY}.

\medskip

A similar prediction has been made by Sakellaridis and Venkatesh. They have constructed a more general \textit{asymptotics map} in their work \cite{SakV} on harmonic analysis on spherical varieties over non-archimedean local fields, forming part of their \textit{relative Langlands program}. More precisely, associated to certain natural degenerations of spherical varieties, they have constructed asymptotics maps on the level of functions, which reduce to the above case of Bernstein asymptotics when the spherical variety is the group itself. Sakellaridis has given a precise conjecture (see below, and see \cite{Sak2} for details) relating the asymptotics map on the level of functions to the nearby cycles functor of a global model for the degeneration.

\medskip

The results in this article indicate that the degeneration $\VinBun_G$ may be used to categorify the Bernstein map; in particular, our results confirm the conjecture of Sakellaridis when the spherical variety is the group $G=\SL_2$.

\medskip

To sketch the formulation of Sakellaridis's conjecture, we first recall that, unlike the IC-sheaf of $\VinBun_G$, the nearby cycles $\Psi_{\VinBun_G}$ of $\VinBun_G$ do possess the \textit{factorization property}: The stalks of $\Psi_{\VinBun_G}$ decompose into a tensor product of \textit{local factors}, each of which is associated with a point on the curve $X$ occurring in the defect divisor. Next, recall that our main results provide an explicit description of $\Psi_{\VinBun_G}$ for $G=\SL_2$.
Working over a finite field and with $\ell$-adic sheaves, one easily computes (for example from the formulas in Theorem \ref{main theorem for nearby cycles} or Theorem \ref{ij}) the function corresponding to the perverse sheaf $\Psi_{\VinBun_G}$ under the sheaf-function dictionary. Due to the factorization of $\Psi_{\VinBun_G}$, this function splits into a product of local factors as well. Our formulas for $\Psi_{\VinBun_G}$ in the present article then imply the special case $G=\SL_2$ of Sakellaridis's conjecture:

\begin{proposition}
\label{Bernstein via Psi}
For $G=\SL_2$, the local factors of the function corresponding to the nearby cycles perverse sheaf $\Psi_{\VinBun_G}(\IC_{\Bun_G})$ agree with the corresponding Bernstein asymptotics of the canonical Schwartz function.
\end{proposition}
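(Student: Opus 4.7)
The plan is to compute both sides of the claimed equality directly and match them. Since by assumption we may appeal to Theorems \ref{main theorem for nearby cycles} and \ref{ij}, together with the factorization property of $\Psi_{\VinBun_G}$ asserted in the subsection on factorization, the computation of the function side reduces to a completely local question at a single point $x \in X(\BF_q)$.

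First, I would reduce the statement to a purely local assertion. By the factorization of $\Psi_{\VinBun_G}$ with respect to the defect divisor, the function $\tr(\Frob,\Psi_{\VinBun_G}(\IC_{\Bun_G}))$ evaluated at a point of the special fiber whose defect divisor is $D = \sum k_i x_i$ splits as a product over $i$ of local factors depending only on the integer $k_i$ and the residue field at $x_i$. Thus it suffices to fix a single closed point $x \in X$ and compute the local factor at $x$ for each defect $k \geq 0$. I would then write down this local factor explicitly from Theorem \ref{main theorem for nearby cycles}: up to shift/twist it equals $\tr(\Frob, ((j_k)_{!*}\, p_k^*\, \CP_k)_{\text{local}})$, and since the Picard-Lefschetz oscillator $\CP_k$ is constructed from the $k$-th symmetric (or exterior) power of the standard representation of $\sl_2$, its local contribution is an elementary expression involving the symmetric function values $[k+1]_q$ arising from the stratum $X^{(k)}$ at a single point.

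Second, I would compute the right-hand side, the Bernstein asymptotics of the canonical Schwartz function for $G = \SL_2$, from scratch. The boundary degeneration here is $(G/N \times G/N^-)/T \cong \bigl( (\BA^2 \setminus 0) \times (\BA^2 \setminus 0) \bigr)/T$, and the canonical Schwartz function is essentially the characteristic function of the locus of integral points together with the normalization coming from $K$-invariance. Using the characterization of the asymptotics map as the composition of the orispheric transform with the inverse of the long intertwining operator $M^{-1}$, one computes explicitly for $\SL_2$ that the resulting function on the $T$-orbits of the boundary degeneration is, for each $T$-orbit of invariant $k \in \BZ_{\geq 0}$, a standard $q$-integer expression; this calculation is the $\SL_2$-case of what Sakellaridis describes in \cite{Sak2} and is essentially a direct unwinding of definitions using the Iwasawa decomposition.

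Third, I would match the two sides orbit-by-orbit. Here the key bookkeeping is the identification between the $T$-orbits on $(G/N \times G/N^-)/T$ (indexed by the integer $k$) and the defect strata in the special fiber $v^{-1}(0)$ of $\VinBun_G$ (indexed by the degree of the defect divisor, locally at $x$). With this identification fixed, both computations produce the same elementary $q$-expression, up to the shift/twist conventions used in defining $\Psi$. The hard part is not the combinatorics itself but keeping track of these normalizations consistently: in particular the cohomological shifts built into the Picard-Lefschetz oscillators $\CP_k$, the Tate twists absorbed into the definition of $\IC_{\Bun_G}$, and the measure-theoretic normalization implicit in Bernstein's definition of the asymptotics map. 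Once these conventions are pinned down, the verification becomes a finite check reducing to the observation that the graded character of $\Sym^k(\text{std})$ as an $\sl_2$-representation matches the Hecke-algebraic content of the inverse intertwiner $M^{-1}$ on the corresponding $T$-orbit, which for $\SL_2$ is a classical calculation.
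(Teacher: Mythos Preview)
Your approach is correct and is essentially identical to the paper's: the paper's proof consists of exactly this comparison of the local factors extracted from the nearby cycles formulas with Sakellaridis's explicit formulas for the Bernstein asymptotics, pointing specifically to \cite[Section~6]{Sak1} rather than \cite{Sak2} for the latter. One small correction of detail: the Picard-Lefschetz oscillators $\CP_k$ are built from external \emph{exterior} powers of the standard representation (Definition~\ref{Definition of Picard-Lefschetz oscillators}), not symmetric powers, so you should redo the stalk bookkeeping with $\Lambda$ in place of $\Sym$ before matching.
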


This proposition can be established by comparing our formulas for the local factors with the explicit formulas given by Sakellaridis for the Bernstein asymptotics of the canonical Schwartz function on the group $G$ (\cite[Section 6]{Sak1}).
The use of the global object $\VinBun_G$ in this local context is completely analogous to the use of a global curve in the study of the affine Grassmannian $\Gr_G$ (\cite{BD2}, \cite{MV}), or the use of the Zastava spaces in the study of the semi-infinite flag variety (\cite{FFKM}, \cite{BFGM}).

\medskip
Since we can already prove Proposition \ref{Bernstein via Psi} above for an arbitrary reductive group $G$ by similar methods as for $G=\SL_2$, its precise formulation and proof will be discussed in the future article \cite{Sch1}, where the space $\VinBun_G$ will be studied for an arbitrary reductive group $G$. We plan to return to the full categorification of the Bernstein map in separate work in the future.

\bigskip

\ssec{Proofs via local models}
\label{Proofs via local models}

\mbox{} \medskip

A powerful technique in the study of singular moduli spaces in the geometric Langlands program is to construct \textit{local models} which feature the same singularities but have the advantage of being \textit{factorizable} in the sense of Beilinson and Drinfeld (\cite{BD1}, \cite{BD2}). One common application of the factorization property is to perform inductive calculations of factorizable sheaves on the moduli spaces in question. A prime example of this technique, and a major influence on the present article, is the computation of the IC-sheaf of Drinfeld's relative compactifications $\barBun_B$ by Braverman, Finkelberg, Gaitsgory, and Mirkovic in \cite{BFGM}; in this case the local models are the Zastava spaces introduced by Drinfeld, Feigin, Finkelberg, Kuznetsov and Mirkovic (\cite{FFKM}, \cite{BFGM}).

\medskip

\sssec{\textbf{Local models for $\VinBun_G$}}
To prove our main theorems we construct certain local models $(Y^n)_{n \in \BZ_{\geq 1}}$ for the degeneration $\VinBun_G$, which themselves form one-parameter families
$$v: \ Y^n \ \longto \ \BA^1 \, .$$
Their relationship with $\VinBun_G$ is completely analogous to the relationship between the Zastava spaces and Drinfeld's $\barBun_B$ (\cite{BFGM}, \cite{BG2}):
Broadly speaking, the local model $Y^n$ features the same singularities as the open substack $\sideset{_{\leq n}}{_G}\VinBun$ of defect $\leq n$; hence the validity of our main theorems for $\VinBun_G$ is equivalent to the validity of the corresponding assertions for the local models $Y^n$ for all $n \geq 1$. For example, we establish our nearby cycles theorem for $\VinBun_G$ by establishing it for all local models $Y^n$. To prove it for any given $Y^n$, we proceed by induction on $n$, making use of the following factorization property of our local models:

\medskip

\sssec{\textbf{Factorization in families}}
The main difference between our local models $Y^n$ and the Zastava spaces is that our local models actually do not factorize, but rather ``factorize in families'', i.e., the fibers of the map $v: Y^n \to \BA^1$ are factorizable in compatible ways. In fact, our local models can be also viewed as canonical one-parameter ``Vinberg degenerations'' of the Zastava spaces. The fact that factorization holds only in families is a natural explanation for the fact that the IC-sheaf of the total space $\VinBun_G$ does not factorize. The ``factorization in families'' of our local models however implies that the nearby cycles sheaf does factorize, making it amenable to an inductive computation.

\medskip

\sssec{\textbf{The local models $Y^n$ for small $n$}}
As another ingredient of our proofs, we mention the possibility to describe our local models $Y^n$ in very concrete terms; this is a special feature of the case $G = \SL_2$ considered in this article. For example, we construct natural embeddings of our local models $Y^n$ into certain products of Beilinson-Drinfeld affine Grassmannians to derive explicit equations in coordinates. In the simplest case of defect degree $\leq 1$, which is simultaneously the base case of the inductive proof, these formulas show that the resulting one-parameter degeneration
$$Y^1 \ \longto \ \BA^1$$
has the same singularities as the Picard-Lefschetz family $x \cdot y = t$ of hyperbolas degenerating to a node. Similarly, a somewhat more involved analysis of the equations in the case of defect degree $\leq 2$ can be used to prove the appearance of the Picard-Lefschetz oscillators in the formula for the nearby cycles. Although it is possible to give more abstract and possibly quicker proofs of these statements, we have tried in the current article to include a concrete proof when possible, and have postponed using more abstract methods to our future work \cite{Sch1}, \cite{Sch2} which deals with the case of an arbitrary reductive group $G$.

\medskip

We refer the reader to Section \ref{Nearby cycles} for a more detailed outline of the structure of the proof of the nearby cycles theorem.

\bigskip

\ssec{Structure of the article}

\mbox{} \medskip

We now briefly discuss the content of the individual sections.

\medskip

In Section \ref{The Drinfeld-Lafforgue-Vinberg compactification} we define the compactification $\barBun_G$ and the degeneration $\VinBun_G$ and explain their relationship. We then focus on $\VinBun_G$ and introduce the aforementioned defect stratification. To construct the stratification, but also to prepare for the statement of the main theorem about nearby cycles, we use Drinfeld's and Laumon's relative compactifications $\barBun_B$ to compactify the inclusion maps of the individual strata.

\medskip

In Section \ref{Statement of main theorems} we first recall some facts about nearby cycles, the weight-monodromy filtration, the action of the Lefschetz-$\sl_2$, and the relationship between the nearby cycles and the IC-sheaf. We then define the Picard-Lefschetz oscillators and state our main theorems about the nearby cycles, the IC-sheaf, and the stalks of the $*$-extension of the constant sheaf.

\medskip

In Sections \ref{local models for VinBun} and \ref{Geometry of the local models} we first construct the local models for $\VinBun_G$ and restate the analogous theorem about nearby cycles in this context. We then study their geometry: We discuss the aforementioned factorization in families, and construct embeddings into a product of Beilinson-Drinfeld affine Grassmannians. We use these embeddings on the one hand to construct $\BG_m$-actions which contract the local models onto the strata of maximal defect, and on the other hand to derive the explicit equations for the local models mentioned above.

\medskip

In Section \ref{Nearby cycles} we give the proof of the main theorem about nearby cycles. In Section \ref{Intersection cohomology} we deduce from it the description of the IC-sheaf. In Section \ref{An application: Computation of Drinfeld's function} we provide the aforementioned application on the level of functions related to Drinfeld's and Wang's strange bilinear form on the space of automorphic forms.

\bigskip

\ssec{Notation and conventions}
\label{Notation and conventions}

\mbox{} \medskip

Since we will use a formalism of mixed sheaves, we for concreteness choose the following setup: We assume the curve $X$ is defined over a finite field, and work with Weil sheaves over the algebraic closure of the finite field. For a scheme or stack $Y$, we will denote by $D(Y)$ the derived category of constructible $\Qellbar$-sheaves on $Y$. We will frequently abuse terminology and refer to its objects as sheaves. We fix once and for all a square root $\Qellbar(\tfrac{1}{2})$ of the Tate twist $\Qellbar(1)$. We normalize all IC-sheaves to be pure of weight $0$; for example, on a smooth variety $Y$ the IC-sheaf is equal to $\Qellbar[\dim Y](\tfrac{1}{2} \dim Y)$. Given a local system $E$ on a smooth dense open subscheme $U$ of a scheme~$Y$, we refer to the intermediate extension of the shifted and twisted local system $E[\dim Y](\tfrac{1}{2} \dim Y)$ to $Y$ as the IC-extension of $E$. Our conventions for nearby cycles are stated in Subsection \ref{Conventions for nearby cycles} below.

\medskip

Although we restrict to the case $G=\SL_2$ throughout the article, we will continue to use the symbol $G$; we denote by $B$ and $B^-$ the standard Borel and opposite Borel subgroups of $G = \SL_2$, and by $T$ the standard maximal torus. The arrow $F \longintointo E$ denotes the inclusion of a subbundle $F$ of a vector bundle $E$; a usual injective arrow $F \longinto E$ stands for an injection of coherent sheaves.

\medskip

We will indicate the restriction of a space or a sheaf to a ``disjoint locus'' by the symbol $\circ$, whenever there is no confusion about what the disjointness is referring to. For example, we denote by
$$X^{(n_1)} \stackrel{\circ}{\times} X^{(n_2)}$$
the open subset of the product $X^{(n_1)} \times X^{(n_2)}$ of symmetric powers of the curve $X$ consisting of those pairs of effective divisors with disjoint support, and refer to it as the disjoint locus of $X^{(n_1)} \times X^{(n_2)}$. Similarly, for objects $F_1 \in D(X^{(n_1)})$ and $F_2 \in D(X^{(n_1)})$ we denote by
$$F_1 \ \overset{\circ}{\boxtimes} \ F_2$$
the restriction of the exterior product $F_1 \boxtimes F_2$ to the disjoint locus of the above product. Finally, we denote by $\overset{\circ}{X} {}^{(n)}$ the open subscheme of $X^{(n)}$ obtained by removing all diagonals, i.e., the open subscheme consisting of all effective divisors of the form $\sum_{i=1}^n x_i$ with all $x_i$ distinct.

\medskip
\bigskip

\ssec{Acknowledgements}

\mbox{} \medskip

First and foremost I would like to thank my doctoral advisor Dennis Gaitsgory, for suggesting to study the compactification $\barBun_G$ as well as for his invaluable support throughout this project: The present article has benefitted from his suggestions and from conversations with him in too many ways to mention and would not exist without him. I would furthermore like to thank Vladimir Drinfeld for generously sharing his ideas and for his continuing support; the definition of the main object of this paper, as well as many of the ideas and techniques in this area of mathematics, are of course due to him. Finally, I would like to thank Sophie Morel, Anand Patel, Sam Raskin, Yiannis Sakellaridis, and Jonathan Wang for useful conversations related to this article.

\bigskip\bigskip\bigskip

\section{The Drinfeld-Lafforgue-Vinberg compactification}
\label{The Drinfeld-Lafforgue-Vinberg compactification}

\ssec{The definition of the degeneration and the compactification}

\sssec{Definition of $\VinBun_G$}

We now define the Drinfeld-Lafforgue-Vinberg degeneration $\VinBun_G$ for $G=\SL_2$. An $S$-point of $\VinBun_G$ consists of the data of two vector bundles $E_1$, $E_2$ of rank $2$ on $X \times S$, together with trivializations of their determinant line bundles $\det E_1$ and $\det E_2$, and a map of coherent sheaves
$$\varphi: \ E_1 \ \longto \ E_2$$
satisfying the following condition: For each geometric point $\bar{s} \to S$ we require that the map
$$\varphi|_{X \times \bar{s}} : \ \ E_1|_{X \times \bar{s}} \ \longto \ E_2|_{X \times \bar{s}}$$
is not the zero map; in other words, the map $\varphi|_{X \times \bar{s}}$ is required to not vanish generically on the curve $X \times \bar{s}$.

\medskip

The stack $\VinBun_G$ admits a natural map
$$v: \ \VinBun_G \ \longto \ \BA^1$$
which sends an $S$-point as above to the determinant
$$\det \varphi \ \in \ \Gamma(\CO_{X \times S}) \ = \ \Gamma(\CO_S) \ = \ \BA^1(S).$$
It will follow from Lemma \ref{torsor} below that $\VinBun_G$ is indeed an algebraic stack.

\bigskip

\sssec{Definition of $\barBun_G$}

We now give the definition of the Drinfeld-Lafforgue-Vinberg compactification $\barBun_G$ for $G = \SL_2$, following Drinfeld.
An $S$-point of $\barBun_G$ consists of the following data: Two vector bundles $E_1$ and $E_2$ of rank~$2$ on $X \times S$ together with trivializations of their determinant line bundles $\det E_1$ and $\det E_2$; a line bundle $L$ on $S$; and a map of coherent sheaves
$$\varphi: \ E_1 \ \longto \ E_2 \otimes \text{pr}^*L,$$
where $\text{pr}^*L$ denotes the pullback of $L$ along the projection map $\text{pr}: X \times S \to S$. Similarly to above we require the above data to satisfy the following condition: For each geometric point $\bar{s} \to S$ we require that the map
$$\varphi|_{X \times \bar{s}} : \ \ E_1|_{X \times \bar{s}} \ \longto \ (E_2 \otimes p^*L)|_{X \times \bar{s}}$$
is not the zero map; in other words, the map $\varphi|_{X \times \bar{s}}$ is required to not vanish generically on the curve $X \times \bar{s}$.

\medskip

Similarly to $\VinBun_G$, the stack $\barBun_G$ admits a natural map
$$\bar v: \ \barBun_G \ \longto \ \BA^1/\BG_m$$
to the quotient of $\BA^1$ by $\BG_m$ with respect to the quadratic action, defined by remembering only the line bundle $L$ together with the global section of its square
$$\det \varphi: \ \CO_S \ \longto \ L^{\otimes 2}.$$

\medskip

\sssec{Compactifying the diagonal of $\Bun_G$}
\label{Compactifying the diagonal}
We now explain why one may call the stack $\barBun_G$ a compactification (though it is not literally so; see below). To do so, let
$$b: \Bun_G \ \longto \ \barBun_G \ \ \ \ \ \text{and} \ \ \ \ \ \bar \Delta: \ \barBun_G \ \longto \ \Bun_G \times \Bun_G$$
denote the natural maps. If the characteristic is not equal to $2$, the map $b$ is an etale map onto its image in $\barBun_G$ of degree $2$. If the characteristic is equal to $2$, the map $b$ is radicial onto its image in $\barBun_G$. On the one hand, the diagonal morphism of $\Bun_G$
$$\Delta: \ \Bun_G \ \longto \ \Bun_G \times \Bun_G$$
naturally factors as
$$\xymatrix@+10pt{
\Bun_G \ar^{b}[r] \ar@/_-2pc/[rr]^{\Delta} & \barBun_G \ar^{\bar\Delta \ \ \ \ \ }[r] & \Bun_G \times \Bun_G \, .
}$$

\medskip

\noindent On the other hand, we have the following lemma, which can be easily checked from the definitions:

\medskip

\begin{lemma}
\label{delta bar}
The map
$$\bar\Delta: \ \barBun_G \ \longto \ \Bun_G \times \Bun_G$$
is schematic and proper. The fiber of the map $\bar\Delta$ over a point $(E_1, E_2)$ in $\Bun_G \times \Bun_G$ is equal to the projectivization $\BP (\Hom(E_1, E_2))$ of the vector space of all homomorphisms of coherent sheaves $E_1 \to E_2$.
\end{lemma}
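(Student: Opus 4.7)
The plan is to identify the fiber product $\barBun_G \times_{\Bun_G \times \Bun_G} S$ for an arbitrary test scheme $S \to \Bun_G \times \Bun_G$ (providing bundles $E_1, E_2$ of rank $2$ on $X \times S$) as an explicit projective $S$-scheme, from which both assertions follow at once. By definition, its $T$-points over $T \to S$ are pairs $(L, \varphi)$ with $L$ a line bundle on $T$ and $\varphi \colon E_{1,T} \to E_{2,T} \otimes \text{pr}^*_T L$ fiberwise nonzero on $X \times T$, taken up to isomorphism of $L$. Since line bundles are Zariski-locally trivial on $T$, this groupoid is canonically the stack quotient $[U/\BG_m]$, where $U$ is the functor sending $T \to S$ to the set of $\varphi \colon E_{1,T} \to E_{2,T}$ that are fiberwise nonzero, and $\BG_m$ acts by scaling $\varphi$.

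I would then invoke the standard representability of the relative Hom functor (FGA) to realize the unrestricted Hom functor as an affine $S$-scheme of finite type, cutting out $U$ as an open subscheme via the fiberwise-nonzero condition. Since a nonzero homomorphism admits no nontrivial scalar automorphisms, the $\BG_m$-action on $U$ is free, and so $[U/\BG_m]$ is an honest $S$-scheme of finite type, which gives the schematicity of $\bar\Delta$. Specializing to $S = \Spec k$ and a single pair $(E_1, E_2)$ makes the relative Hom scheme into the affine space $\Hom(E_1, E_2)$, its fiberwise-nonzero locus into $\Hom(E_1, E_2) \setminus \{0\}$, and the $\BG_m$-quotient into $\BP(\Hom(E_1, E_2))$, as claimed.

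Properness I would verify via the valuative criterion. Given a DVR $R$ with fraction field $K$ and uniformizer $\pi$, an $R$-point of $\Bun_G \times \Bun_G$, and a lift to a $K$-point of $\barBun_G$, I would trivialize the line bundle $L_K$ (possible since $\Spec R$ is local) and view $\varphi_K$ as a nonzero element of $\Hom_{X_K}(E_{1,K}, E_{2,K})$. By flat base change together with the standard torsion-free argument, the $R$-module $\Hom_{X_R}(E_{1,R}, E_{2,R})$ is finitely generated and torsion-free --- hence free over the DVR $R$ --- and its $K$-span is $\Hom_{X_K}(E_{1,K}, E_{2,K})$. Consequently there is a unique integer $n$ with $\pi^n \varphi_K \in \Hom_{X_R} \setminus \pi \cdot \Hom_{X_R}$; setting $\varphi_R := \pi^n \varphi_K$ produces the required $R$-extension with nonzero restriction to the special fiber, and uniqueness (modulo rescaling of $L_R \cong \CO_R$) follows from the minimality of $n$.

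The main technical subtlety to watch out for is that $\text{pr}_*\mathcalHom(E_1, E_2)$ need not be locally free on $S$, since the fiber dimension of $\Hom(E_{1,\bar s}, E_{2,\bar s})$ is only upper semicontinuous in $\bar s$. As a result, the relative Hom scheme sits as a closed subscheme of a vector bundle rather than as a vector bundle itself, and the resulting $\BG_m$-quotient is a ``jumping projectivization'' rather than a Zariski-locally-trivial projective bundle over $S$. Nothing in the argument actually breaks down --- only representability of the relative Hom as an affine $S$-scheme and freeness of the $\BG_m$-action are needed --- but this is the point that requires care.
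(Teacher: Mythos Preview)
Your proof is correct. The paper itself does not give a proof of this lemma --- it simply asserts that the statement ``can be easily checked from the definitions'' --- so your detailed treatment via the relative Hom scheme, the free $\BG_m$-action on the fiberwise-nonzero locus, and the valuative criterion is considerably more than what the paper provides, and there is nothing further to compare.
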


\bigskip

\sssec{The relation between $\VinBun_G$ and $\barBun_G$}
\label{The relation}

Next consider the natural map
$$\VinBun_G \ \longto \ \barBun_G$$
defined by taking $L$ to be the trivial line bundle $\CO_S$ on $S$ and by not changing the remaining data. Then the square
$$\xymatrix@+10pt{
\VinBun_G \ar[r] \ar[d]_{v  } &    \barBun_G \ar[d]^{\bar{v}}          \\
\BA^1 \ar[r]                            &    \BA^1/\BG_m              \\
}$$
commutes, where the bottom arrow is the natural projection map. In fact one sees directly from the definitions:

\medskip

\begin{lemma}
\label{torsor}
The above square is cartesian. Thus the map
$$\VinBun_G \ \longto \ \barBun_G$$
is a $\BG_m$-bundle, and in particular the stacks $\barBun_G$ and $\VinBun_G$, as well as the maps $v$ and ${\bar{v}}$, are smooth-locally isomorphic.
\end{lemma}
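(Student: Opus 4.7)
The plan is to verify the cartesianness directly from the moduli-theoretic descriptions of the two sides; everything else in the lemma is then a formal consequence of base change.

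First I would unpack what the data of an $S$-point of each corner means. An $S$-point of $\BA^1/\BG_m$, for the quadratic (weight~$2$) action, amounts to a line bundle $L$ on $S$ together with a section $s \in \Gamma(S, L^{\otimes 2})$, since the $\BG_m$-representation of weight $2$ associates $L^{\otimes 2}$ to the $\BG_m$-torsor corresponding to $L$. Under this description, the lower horizontal map $\BA^1 \to \BA^1/\BG_m$ sends $a \in \BA^1(S) = \Gamma(S,\CO_S)$ to the pair $(\CO_S, a)$, i.e., the trivial line bundle equipped with the section $a$ of $\CO_S^{\otimes 2} = \CO_S$. The right vertical map $\bar v$ sends an $S$-point $(E_1,E_2,L,\varphi)$ of $\barBun_G$ to the pair $(L, \det\varphi)$, by the very definition of $\bar v$.

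Given this, an $S$-point of the fiber product $\barBun_G \times_{\BA^1/\BG_m} \BA^1$ is the datum of $(E_1,E_2,L,\varphi)$ together with $a \in \Gamma(S,\CO_S)$ and an isomorphism $\alpha\colon L \isoto \CO_S$ matching $\det\varphi$ with $a$ under the induced isomorphism $L^{\otimes 2}\isoto \CO_S$. Using $\alpha$ to trivialize $L$, the map $\varphi\colon E_1 \to E_2\otimes \mathrm{pr}^* L$ becomes an honest map $E_1 \to E_2$, and the condition that $\det\varphi = a$ is automatic from the identification. Conversely, any $S$-point $(E_1,E_2,\varphi)$ of $\VinBun_G$ (together with $a := \det\varphi$) yields such a datum by taking $L = \CO_S$ with $\alpha = \mathrm{id}$. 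These two assignments are visibly mutually inverse and functorial in $S$, giving an isomorphism $\VinBun_G \isoto \barBun_G \times_{\BA^1/\BG_m} \BA^1$ compatible with the structure maps; this is the content of the cartesianness.

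Finally, the second assertion is formal: since the map $\BA^1 \to \BA^1/\BG_m$ is a $\BG_m$-torsor (this is the defining property of a quotient stack by a free-on-points action, applied to the $\BG_m$-torsor $\BA^1 \setminus \{0\} \to \BG_m\backslash (\BA^1\setminus\{0\})$ extended across the origin), the pullback $\VinBun_G \to \barBun_G$ is also a $\BG_m$-torsor. In particular $\VinBun_G \to \barBun_G$ is smooth and surjective, so the two stacks, as well as the maps $v$ and $\bar v$, are smooth-locally isomorphic. I do not anticipate a serious obstacle here; the only point to be careful about is the bookkeeping of the weight-$2$ action, in particular making sure that $\bar v$ really outputs the pair $(L,\det\varphi)$ with $\det\varphi\in\Gamma(L^{\otimes 2})$ (and not $\Gamma(L)$), which is precisely what makes the square commute on the nose rather than only up to a square root.
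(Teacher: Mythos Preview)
Your proof is correct and follows the same approach as the paper, which simply notes that the cartesianness is immediate from the definitions. One small quibble: your parenthetical justification that $\BA^1 \to [\BA^1/\BG_m]$ is a $\BG_m$-torsor is garbled---the quadratic action is not free at the origin---but the conclusion holds for the simpler reason that the quotient map $Y \to [Y/G]$ is a $G$-torsor for any action, by the very definition of the quotient stack.
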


\medskip

Note that Lemmas \ref{torsor} and \ref{delta bar} imply that $\barBun_G$ and $\VinBun_G$ are indeed algebraic stacks.

\medskip

\begin{remark}
Because of Lemma \ref{torsor} above, we will restrict our attention to the Drinfeld-Lafforgue-Vinberg degeneration $\VinBun_G$ for the entire article. The study of the singularities of $\barBun_G$, or the study of the map $\bar v$, immediately reduces to the study of $\VinBun_G$ and the study of the map $v$ due to the cartesian square of Lemma \ref{torsor}.
\end{remark}

\bigskip

\ssec{The $G$-locus, the $B$-locus, and the defect-free locus}
\label{The $G$-locus and the $B$-locus}

\mbox{} \medskip

Consider again the natural map $v: \VinBun_G \to \BA^1$, which on the level of $k$-points is defined by sending a triple $(E_1, E_2, \varphi)$ to the determinant $\det \varphi$ of the map $\varphi$.
We will call the fiber of the map $v$ over $0 \in \BA^1$ the $B$-\textit{locus} of $\VinBun_G$, and denote it by $\VinBun_{G,B}$. We will call the inverse image of $\BA^1 \setminus \{0\}$ under~$v$ the $G$-\textit{locus} of $\VinBun_G$ and denote it by $\VinBun_{G,G}$. Thus, on the level of $k$-points, the $G$-locus $\VinBun_{G,G}$ consist precisely of those triples $(E_1, E_2, \varphi)$ for which the map $\varphi$ is an isomorphism. Similarly, the $B$-locus $\VinBun_{G,B}$ consist precisely of those triples for which the determinant
$$\det \varphi: \ \CO_X \ \longto \ \CO_X$$
equals the zero map, i.e., for which the induced maps on fibers
$$\varphi|_x: \ E_1|_x \ \longto \ E_2|_x$$
have rank $\leq 1$ at every point $x \in X$. In other words, the $B$-locus consists of those triples for which the map $\varphi$ has generic rank $1$ on the curve $X$.

\medskip

The $G$-locus of $\VinBun_G$ in fact naturally decomposes as a product:

\medskip

\begin{lemma}
\label{G-locus product decomposition}
The natural map
$$\VinBun_{G,G} \ \ \ \longto \ \ \ \Bun_G \ \times \ (\BA^1 \setminus \{0\})$$
$$(E_1, E_2, \varphi) \ \ \ \longmapsto \  \  \ (E_1, \det \varphi) \ \ \ \ \ \ \ \ \ \ \  $$
is an isomorphism.
\end{lemma}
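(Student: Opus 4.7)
The strategy is to construct the inverse morphism explicitly and then verify that both compositions are canonically isomorphic to the identities. The plan rests on the observation that on the $G$-locus the map $\varphi$ is automatically an isomorphism of vector bundles: since $X$ is proper we have $\Gamma(\CO_{X\times S})=\Gamma(\CO_S)$, and so, after using the given trivializations of $\det E_1$ and $\det E_2$, the expression $\det\varphi$ defining $v$ is naturally an element of $\Gamma(\CO_S)$. On $\VinBun_{G,G}$ this element is a unit, hence is invertible at every point of $X\times S$, so $\varphi$ is a fiberwise isomorphism between locally free sheaves of the same rank, and hence an isomorphism.

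Next I would define the inverse morphism $\Bun_G\times(\BA^1\setminus\{0\})\to\VinBun_{G,G}$ as follows. Let $(E_1,\lambda)$ be an $S$-point of the source, and let $\tau_1\colon\CO_{X\times S}\isoto\det E_1$ denote the trivialization of $\det E_1$ implicit in the $\SL_2$-bundle structure on $E_1$. Put $E_2:=E_1$ as a vector bundle, but equip it with the rescaled trivialization $\tau_2:=\lambda^{-1}\cdot\tau_1$ of $\det E_2=\det E_1$, and let $\varphi:=\id_{E_1}\colon E_1\to E_2$. With respect to $\tau_1$ and $\tau_2$ the scalar $\det\varphi$ is then equal to $\lambda$, so the resulting triple lies in $\VinBun_{G,G}$. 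This construction is manifestly functorial in $S$ and hence defines a morphism of stacks.

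It then remains to check that the two compositions are naturally isomorphic to the identities. In one direction this is immediate: starting from $(E_1,\lambda)$ the round trip recovers $(E_1,\lambda)$ on the nose. In the other direction, starting from a triple $(E_1,E_2,\varphi)$ with $\det\varphi=\lambda$, the round trip produces $(E_1,E_2',\id_{E_1})$, where $E_2'$ is $E_1$ equipped with the trivialization $\lambda^{-1}\tau_1$. The pair $(\id_{E_1},\varphi^{-1})$ then furnishes a canonical isomorphism of triples, because $\varphi^{-1}\colon E_2\to E_1=E_2'$ is an isomorphism of $\SL_2$-bundles: the trivializations are exchanged via $(\det\varphi)^{-1}\tau_2=\lambda^{-1}\tau_1=\tau_2'$.

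No step presents a serious obstacle; the one point demanding care is to keep track of the normalizations of the trivializations so that the constructed $\det\varphi$ lands on $\lambda$ itself rather than on $\lambda^{-1}$ or $\lambda^2$ (the latter mismatch is, incidentally, the reason the $\BG_m$-action on $\BA^1$ in the analogous cartesian square for $\barBun_G$ is the quadratic one).
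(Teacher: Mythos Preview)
Your argument is correct. The paper, however, takes a different and somewhat more conceptual route: it observes that by definition
$$\VinBun_{G,G} \ = \ \Maps\bigl(X,\ \SL_2 \backslash \GL_2 / \SL_2\bigr),$$
since on the $G$-locus the datum $(E_1,E_2,\varphi)$ is precisely that of two $\SL_2$-bundles together with an isomorphism of the underlying $\GL_2$-bundles. It then simplifies the target stack: the determinant identifies $\GL_2/\SL_2 \cong \BG_m$, and the residual left $\SL_2$-action on this $\BG_m$ is trivial, so $\SL_2 \backslash \GL_2 / \SL_2 \cong \mathrm{pt}/\SL_2 \times \BG_m$. Applying $\Maps(X,-)$ and using $\Maps(X,\BG_m)=\BG_m$ (as $X$ is proper and geometrically connected) gives the result at once.

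Your explicit inverse is essentially the unwinding of this identification on points, and it has the virtue of making the isomorphism completely concrete. The paper's mapping-stack argument is shorter and makes clear why the result should hold, and it generalizes immediately (the analogous statement for an arbitrary reductive $G$ and its Vinberg semigroup follows from the same pattern). One minor notational point in your last line: in the equation $(\det\varphi)^{-1}\tau_2=\lambda^{-1}\tau_1$ you are using $(\det\varphi)^{-1}$ to mean the map $\det(\varphi^{-1})\colon \det E_2 \to \det E_1$ rather than the scalar $\lambda^{-1}$; otherwise the two sides would be trivializations of different line bundles. The verification is correct once this is read as intended.
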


\medskip

\begin{proof}
By definition the $G$-locus of $\VinBun_G$ is equal to the mapping stack
$$\Maps(X \, , \, \SL_2 \backslash \GL_2 / \SL_2)$$
parametrizing maps from the curve $X$ into the quotient stack $\SL_2 \backslash \GL_2 / \SL_2$ for the action by left and right translations. But after identifying the quotient $\GL_2/\SL_2$ for the action from the right with $\BG_m$ via the determinant map, we see that the remaining action of $\SL_2$ from the left on this quotient is trivial, and the result follows.
\end{proof}

\bigskip

\sssec{The defect-free locus}
\label{The defect-free locus}

We now define an open substack
$$\sideset{_0}{_G}\VinBun \ \subset \ \VinBun_G$$
which will be referred to as the \textit{defect-free locus} of $\VinBun_G$. This terminology is in line with the notion of defect defined in Subsection \ref{The defect stratification} below. To define the open substack we require the triple $(E_1, E_2, \varphi)$ to additionally satisfy the following condition: For each $\bar{s} \to S$ we require the map
$$\varphi|_{X \times \bar{s}} : \ E_1|_{X \times \bar{s}} \longto (E_2)|_{X \times \bar{s}}$$
to not vanish at any point of the curve $X \times \bar{s}$. In particular the defect-free locus contains the $G$-locus $\VinBun_{G,G}$.

\medskip

\begin{proposition}
The restriction of the map $v: \VinBun_G \to \BA^1$ to $\sideset{_0}{_G}\VinBun$ is smooth; in particular the open substack $\sideset{_0}{_G}\VinBun$ is smooth.
\end{proposition}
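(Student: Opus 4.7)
The plan is to identify the defect-free locus with a mapping stack and reduce the smoothness of $v$ to a tangent complex computation on finite-dimensional objects. Unwinding the definitions, one has a tautological identification
$$\sideset{_0}{_G}\VinBun \ = \ \Maps(X, Y^\circ), \qquad Y^\circ \ := \ \bigl[\,\SL_2\backslash (\Mat_2 \setminus \{0\})/\SL_2\,\bigr],$$
for the action of $\SL_2 \times \SL_2$ on the space of nonzero $2\times 2$ matrices by left and right multiplication. Under this identification the map $v$ is induced by the determinant morphism $\det\colon Y^\circ \to \BA^1$, using the canonical isomorphism $\Maps(X,\BA^1) \cong \BA^1$ which holds since $X$ is proper and geometrically connected.

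I would first verify that $\det\colon Y^\circ \to \BA^1$ is smooth. Since $\Mat_2 \setminus \{0\} \to Y^\circ$ is a smooth $(\SL_2 \times \SL_2)$-cover, it suffices to check that $\det\colon \Mat_2 \setminus \{0\} \to \BA^1$ is smooth; but its differential at $M$, given by $\delta M \mapsto \tr(\mathrm{adj}(M)\cdot \delta M)$, is surjective precisely when $\mathrm{adj}(M) \neq 0$, and for $2\times 2$ matrices this fails only at $M = 0$. The main input is then the following orbit analysis: $(\SL_2 \times \SL_2)$ acts transitively on each fiber of $\det$ over $\Mat_2 \setminus \{0\}$ --- for $t \neq 0$ the fiber is a single orbit inside $\GL_2$, consistent with Lemma~\ref{G-locus product decomposition}, and for $t = 0$ the rank-one matrices form the single remaining orbit (any rank-one $M$ factors as $u v^T$, and $\SL_2$ acts transitively on nonzero column vectors). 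It follows that the infinitesimal action $\sl_2 \oplus \sl_2 \to T_{(\Mat_2 \setminus \{0\})/\BA^1}$ is fiberwise surjective, whence $\mathcal{H}^0(\BT_{Y^\circ/\BA^1}) = 0$; consequently $\BT_{Y^\circ/\BA^1}$ is quasi-isomorphic to the stabilizer Lie algebra bundle placed in cohomological degree $-1$, a rank-$3$ vector bundle on $Y^\circ$.

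For $\phi = (E_1,E_2,\varphi) \in \sideset{_0}{_G}\VinBun(k)$, the relative tangent complex of $v$ at $\phi$ is then $R\Gamma(X,\phi^*\BT_{Y^\circ/\BA^1}) \simeq R\Gamma(X,\mathcal{S})[1]$ for some rank-$3$ vector bundle $\mathcal{S}$ on $X$. Because $X$ is a curve this complex is concentrated in degrees $[-1,0]$, with $H^{-1}=H^0(X,\mathcal{S})$ giving the infinitesimal automorphisms and $H^0=H^1(X,\mathcal{S})$ the relative tangent space; crucially, there is no higher cohomology, so the obstruction space vanishes and $v$ is smooth at $\phi$. Smoothness of $\sideset{_0}{_G}\VinBun$ itself then follows from smoothness of the base $\BA^1$. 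The crux of the argument is the triviality of $\mathcal{H}^0(\BT_{Y^\circ/\BA^1})$, which rests on the orbit analysis above; everything after that is formal.
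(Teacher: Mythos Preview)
Your proof is correct and follows essentially the same approach as the paper: both identify the defect-free locus with the mapping stack $\Maps(X, \SL_2\backslash \Mat_{2\times 2}^{\geq 1}/\SL_2)$, argue that the relative tangent complex $T_{Q/\BA^1}$ is concentrated in degree $-1$ via transitivity of the $\SL_2\times\SL_2$-action on the fibers of $\det$, and conclude by computing $R\Gamma$ of a vector bundle on a curve. The only notable difference is that the paper explicitly passes to derived mapping stacks to justify the identification $T_{rel}|_\phi \simeq R\Gamma(X,\phi^*T_{Q/\BA^1})$, whereas you invoke this formula directly; this is a minor expository point rather than a mathematical gap.
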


\medskip

\begin{proof}
Let $\Mat_{2 \times 2}^{\geq 1}$ denote the variety of $2 \times 2$ matrices over $k$ of rank $\geq 1$, and abbreviate
$$Q \ := \ \SL_2 \backslash \Mat_{2 \times 2}^{\geq 1} / \SL_2 \, .$$
By definition, the defect-free locus$\sideset{_0}{_G}\VinBun$ is equal to the mapping stack $\Maps(X,Q)$, and the above map
$$v: \ \ \sideset{_0}{_G}\VinBun \ = \ \Maps(X,Q) \ \ \longto \ \ \Maps(X,\BA^1) \ = \ \BA^1$$
is equal to the map induced on mapping stacks by the determinant map
$$d: \ Q \ \longto \ \BA^1 \, .$$
The above mapping stacks are objects of classical algebraic geometry; we will now consider the corresponding derived mapping stacks, which are objects of derived algebraic geometry. In the present context, this should be considered as nothing more than a convenient formalism when dealing with tangent complexes. We denote the derived mapping stacks and the map between them by
$$v_{der}: \ \ \Maps_{der}(X, Q) \ \longto \ \Maps_{der}(X, \BA^1) \, ,$$
and will show that the map $v_{der}$ is smooth. Since the base change of $v_{der}$ along the natural map
$$\BA^1 \ = \ \Maps(X, \BA^1) \ \longto \ \Maps_{der}(X, \BA^1)$$
agrees with the map $v$ between classical mapping stacks, establishing that $v_{der}$ is smooth suffices to prove the lemma.

\medskip

To prove that the map $v_{der}$ is indeed smooth, we will show that the fiber of its relative tangent complex
$$T_{rel} \ \ := \ \ T_{Maps_{der}(X,Q) \, / \, Maps_{der}(X,\BA^1)}$$
at any geometric point of $\Maps_{der}(X,Q)$ is concentrated in degrees $-1$ and~$0$. To do so, let
$$T_{Q/\BA^1} \ \longto \ T_Q \ \longto \ d^*T_{\BA^1} \ \stackrel{+1}{\longto}$$
denote the usual tangent complex triangle on the double quotient $Q$ associated to the determinant map $d: Q \to \BA^1$. We claim that the complex $T_{Q/\BA^1}$ is concentrated in degree $-1$. Indeed, since the map $d$ is smooth, it suffices to show that the tangent complex of each fiber of $d$ is concentrated in degree~$-1$. But since the action of $\SL_2 \times \SL_2$ on any fiber of the determinant map
$$\Mat_{2 \times 2}^{\geq 1} \ \longto \ \BA^1$$
is transitive with smooth stabilizers, the fibers of the map $d$ are classifying stacks of smooth groups, proving the claim that $T_{Q/\BA^1}$ is concentrated in degree $-1$. We can now show that the fiber of the relative tangent complex $T_{rel}$ at any given geometric point $f: X \to Q$ of $\Maps_{der}(X,Q)$ is concentrated in degrees $-1$ and $0$. Namely, since we are using derived mapping stacks, taking the fiber of the usual tangent complex triangle
$$T_{rel} \ \longto \ T_{Maps_{der}(X,Q)} \ \longto \ v_{der}^*T_{\Maps_{der}(X, \BA^1)} \ \stackrel{+1}{\longto}$$
on $\Maps_{der}(X,Q)$ at the point $f$ yields a triangle
$$T_{rel}|_f \ \longto \ R\Gamma(X, f^*T_Q) \ \longto \ R\Gamma(X, f^*d^*T_{\BA^1}) \ \stackrel{+1}{\longto} \, .$$
But pulling back the tangent complex triangle on $Q$ above along the map $f$ and then applying $R\Gamma(X,-)$ yields the triangle
$$R\Gamma(X, f^*T_{Q/\BA^1}) \ \longto \ R\Gamma(X, f^*T_Q) \ \longto \ R\Gamma(X, f^*d^*T_{\BA^1}) \ \stackrel{+1}{\longto} \, ,$$
whose second map agrees with the second map of the previous triangle. Thus
$$T_{rel}|_f \ = R\Gamma(X, f^*T_{Q/\BA^1}) \, ,$$
and hence $T_{rel}|_f$ is indeed concentrated in degrees $-1$ and $0$ as desired.
\end{proof}

\medskip

\sssec{Remarks about the Vinberg semigroup}

The terminology ``$G$-locus'' and ``$B$-locus'' stems from the more general context of the Vinberg semigroup: The Vinberg semigroup of a reductive group admits a natural stratification indexed by the parabolic subgroups of the reductive group; this stratification induces a stratification of the degeneration $\VinBun_G$, which specializes to the stratification into the $G$-locus and the $B$-locus in the case of $G=\SL_2$. For further motivation for this notation see Subsection \ref{The defect stratification} below.

\bigskip

\ssec{The defect stratification}
\label{The defect stratification}

\sssec{Definition of the defect}
\label{Definition of the defect}

The $B$-locus $\VinBun_{G,B}$ possesses a natural stratification by the following notion of \textit{defect}. Let $(E_1, E_2, \varphi)$ be a $k$-point of $\VinBun_{G,B}$. Then the map $\varphi$ admits a unique factorization
$$E_1 \ \longonto \ M_1 \ \longinto \ M_2 \ \longintointo \ E_2$$
where $M_1$ and $M_2$ are line bundles on the curve $X$, the first map is surjective, the middle map is an injection of coherent sheaves, and the last map is a subbundle map. We call the effective divisor on the curve $X$ corresponding to the injection $M_1 \into M_2$ the \textit{defect divisor}; its degree will be called the \textit{defect}.

\medskip

\sssec{Stratification by defect}
\label{Stratification by defect}

We now stratify the $B$-locus $\VinBun_{G,B}$ into loci of constant defect, according to the factorization of the map $\varphi$ above. We first set up the notation. Recall that the moduli stack $\Bun_B$ classifying $B$-bundles on $X$ admits a natural map
$$\mathfrak{q}: \Bun_B \longto \Bun_T$$
which induces a bijection between the sets of connected components
$$\pi_0(\Bun_B) \ = \ \pi_0(\Bun_T) \ = \ \BZ.$$
Let $\Bun_{T,n}$ denote the connected component of $\Bun_T$ consisting of degree~$n$ line bundles, and define $\Bun_{B,n}$ and $\Bun_{B^-,n}$ in the same way. Furthermore, let $k \in \BZ_{\geq 0}$ be a non-negative integer and let $X^{(k)}$ denote the $k$-th symmetric power of the curve $X$.

\medskip

Next define a map
$$X^{(k)} \times \Bun_B \ \longto \ \Bun_T$$
as the composition
$$X^{(k)} \times \Bun_B \ \stackrel{\id \times \mathfrak{q}}{\longto} \ X^{(k)} \times \Bun_T \ \stackrel{\text{twist}}{\longto} \ \Bun_T \, ,$$
where the second map sends a pair $(D, L)$ consisting of an effective divisor~$D$ and a line bundle $L$ to the twisted line bundle $L(-D)$.

\medskip

Using the previous map we now form the fiber product
$$\Bun_{B^-} \ \underset{\Bun_T}{\times} \ \bigl( \, X^{(k)} \, \times \Bun_B \bigr) \, ,$$
from which we will now construct a map to the $B$-locus $\VinBun_{G,B}$. By definition, a point of this fiber product consists of a $B^-$-bundle $E_1 \longonto M_1$, an effective divisor $D$, a $B$-bundle $M_2 \longintointo E_2$, and an identification $M_1 \cong M_2(-D)$. Thus, given two integers $n_1, n_2$ with $n_1 = n_2 - k$ we can define a map
$$f_{n_1, k, n_2}: \ \Bun_{B^-, n_1} \ \underset{\Bun_T}{\times} \ \bigl( \, X^{(k)} \, \times \Bun_{B,n_2} \bigr) \ \longto \ \VinBun_{G,B}$$
by sending the above point to the triple $(E_1, E_2, \varphi)$ where the map $\varphi$ is defined as the composition
$$\varphi: \ \ E_1 \ \longonto \ M_1 =  M_2(-D) \ \longinto \ M_2 \ \longintointo \ E_2 \, .$$

\medskip

We then have the following stratification of the $B$-locus $\VinBun_{G,B}$:

\medskip

\begin{proposition}
\label{defect stratification proposition}
\begin{itemize}
\item[]
\item[]
\item[(a)] The map $f_{n_1, k, n_2}$ is a locally closed immersion and thus defines an isomorphism onto a smooth locally closed substack
$$\sideset{_{(n_1, k, n_2)}}{_{G,B}}\VinBun \ \longinto \ \VinBun_{G,B} \, .$$
\item[]
\item[(b)] On the level of $k$-points, the $B$-locus $\VinBun_{G,B}$ is equal to the disjoint union
$$\VinBun_{G,B} \ = \ \bigcup_{(n_1, k, n_2)} \ \sideset{_{(n_1, k, n_2)}}{_{G,B}}\VinBun \, ,$$
where the union runs over all triples $(n_1, k, n_2)$ with $n_1, n_2 \in \BZ$, $k \in \BZ_{\geq 0}$, and $n_1 = n_2 - k$.
\item[]
\item[(c)] On the level of $k$-points, the closure of a stratum $\sideset{_{(n_1, k, n_2)}}{_{G,B}}\VinBun$ is equal to the union of strata

$$\ \ \ \ \ \ \sideset{_{(n_1, k, n_2)}}{_{G,B}}\barVinBun \ \ = \ \ \bigcup_{\substack{d_1 \geq 0 \\ d_2 \geq 0}} \ \sideset{_{(n_1 - d_1, k + d_1 + d_2, n_2 + d_2)}}{_{G,B}}\VinBun \, .$$
\item[]
\item[(d)]
Let $k \in \BZ_{\geq 0}$. Then the locus in $\VinBun_{G,B}$ obtained by requiring that the defect is at most $k$ naturally forms an open substack
$$\sideset{_{\leq k}}{_{G,B}}\VinBun \ \ \subset \ \ \VinBun_{G,B} \, .$$
\item[]
\item[(e)]
The union of all strata $\sideset{_{(n_1, k, n_2)}}{_{G,B}}\VinBun$ of fixed defect $k \in \BZ_{\geq 0}$ naturally forms a locally closed substack 
$$\sideset{_k}{_{G,B}}\VinBun \ \ \longinto \ \ \VinBun_{G,B}$$
which is isomorphic as stacks to the disjoint union
$$\sideset{_k}{_{G,B}}\VinBun \ \ = \ \ \coprod_{n_1, n_2} \sideset{_{(n_1, k, n_2)}}{_{G,B}}\VinBun \, .$$
\end{itemize}
\end{proposition}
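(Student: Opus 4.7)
My plan is to proceed in the order (b), (a), (d), (e), (c), based on a canonical factorization of $\varphi$. Given a $k$-point $(E_1, E_2, \varphi)$ of $\VinBun_{G,B}$, the map $\varphi$ has generic rank exactly one on $X$, so it factors uniquely as
$$
E_1 \ \longonto \ M_1 \ \longinto \ M_2 \ \longintointo \ E_2,
$$
where $M_2$ is the saturation of the image of $\varphi$ in $E_2$ (a line subbundle), and $M_1 = E_1/\Ker\varphi$ is the torsion-free quotient of $E_1$ through which $\varphi$ factors. The middle arrow is a non-zero map of line bundles whose divisor of zeros is the defect divisor, of degree $k = n_2 - n_1$. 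This canonical factorization is functorial and persists in $S$-families with locally constant defect, producing the inverse of $f_{n_1, k, n_2}$ on such families; this gives (b) on geometric points and exhibits $f_{n_1, k, n_2}$ as a monomorphism onto its image. The smoothness of the source in (a) is automatic from the smoothness of $\Bun_B$, $\Bun_{B^-}$, $X^{(k)}$ and of the map $\mathfrak{q}$; combined with a dimension count and (d) below, this promotes the monomorphism to a locally closed immersion with smooth image.

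For (d), I would use that defect is upper semicontinuous in families: in an $S$-family, the defect at a geometric point $\bar s$ equals the length of the torsion part of $\Coker(\varphi|_{\bar s})$, which jumps up under specialization by the standard semicontinuity theorems for coherent cohomology. Hence $\{\text{defect} \le k\}$ is open, giving (d). For (e), the fixed-defect locus $\sideset{_k}{_{G,B}}\VinBun$ is then the closed complement of the defect-$\le k-1$ open substack inside the defect-$\le k$ open substack, hence locally closed, and the identification with $\coprod_{n_1, n_2}\sideset{_{(n_1, k, n_2)}}{_{G,B}}\VinBun$ follows from (a) once one observes that the connected-component invariants $n_1$ and $n_2$ are locally constant on the defect-$k$ locus.

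The main obstacle is (c), the description of the closure. My approach would be a DVR test. Given a family $(E_1, E_2, \varphi)$ over a DVR with generic fiber in $\sideset{_{(n_1, k, n_2)}}{_{G,B}}\VinBun$, the generic factorization $E_1^\eta \longonto M_1^\eta \longinto M_2^\eta \longintointo E_2^\eta$ need not remain strict at the special fiber: in the limit the surjection $E_1 \to M_1^\eta$ may degenerate so that its image lies in a proper line subbundle $M_1^s \subset M_1^\eta$ of degree $n_1 - d_1$ for some $d_1 \ge 0$, and dually the subbundle $M_2^\eta \subset E_2$ may desaturate to a larger line subbundle $M_2^s \supset M_2^\eta$ of degree $n_2 + d_2$ for some $d_2 \ge 0$. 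The resulting special-fiber factorization then lies in the stratum $\sideset{_{(n_1 - d_1,\, k + d_1 + d_2,\, n_2 + d_2)}}{_{G,B}}\VinBun$, with total defect $(n_2 + d_2) - (n_1 - d_1) = k + d_1 + d_2$, giving the inclusion $\subseteq$ in (c). Conversely, to show every such deeper stratum lies in the closure, I would explicitly construct, starting from a point of the deeper stratum, a one-parameter family whose special fiber is that point and whose generic fiber lies in $\sideset{_{(n_1, k, n_2)}}{_{G,B}}\VinBun$, using one-parameter families of Hecke modifications at $d_1$ chosen points for the surjection $E_1 \onto M_1^s$ and $d_2$ chosen points for the subbundle $M_2^s \subset E_2$. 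This last step is completely analogous to, and can be reduced to, the analogous closure computation for Drinfeld's compactification $\barBun_B$ carried out in \cite{BG1} and \cite{BFGM}, via the fiber-product description of the source of $f_{n_1, k, n_2}$.
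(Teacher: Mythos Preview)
Your overall plan is reasonable and parts (b), (d), (e) are fine, but the approach differs from the paper's and there is a real gap in your argument for (a).

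The paper does not argue via semicontinuity and factorization in families. Instead it first constructs \emph{compactified} maps
\[
\bar f_{n_1,k,n_2}\colon \ \barBun_{B^-,n_1}\underset{\Bun_T}{\times}\bigl(X^{(k)}\times\barBun_{B,n_2}\bigr)\ \longto\ \VinBun_{G,B}
\]
using Drinfeld's $\barBun_B$, and proves they are finite (in particular proper). Part (a) then follows cleanly: the image of the boundary under $\bar f_{n_1,k,n_2}$ is closed; on its open complement $\CU$ the restriction $f_{n_1,k,n_2}$ is the base change of a proper map, hence proper, and a proper monomorphism is a closed immersion. Part (c) is then immediate from the known stratification of $\barBun_B$, and (d), (e) are deduced from (c). So the paper's logical order is essentially the reverse of yours: it establishes (c) via the compactification and reads off (d), (e) from it.

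The gap in your (a): ``monomorphism $+$ dimension count $+$ (d)'' is not a valid mechanism for upgrading a monomorphism to a locally closed immersion. A finite-type monomorphism from a smooth source of the correct dimension need not be an immersion (e.g.\ the normalization of a nodal curve with one preimage of the node deleted). What would actually work is what you gesture at earlier: if the canonical factorization truly persists in $S$-families over the constant-defect locus, you obtain an honest \emph{inverse} to $f_{n_1,k,n_2}$, hence an isomorphism onto a locally closed substack. But then you must justify that $\im\varphi$ and its saturation are $S$-flat line bundles on $X\times S$ when the defect is constant; this is true but not automatic, and you have not addressed it. The paper's properness argument sidesteps this flatness issue entirely.

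Your semicontinuity proof of (d) is correct and is a pleasant alternative to the paper's deduction from (c). Your DVR approach to (c) would work but is heavier; note that your concluding remark that the reverse inclusion ``can be reduced to the analogous closure computation for $\barBun_B$'' is precisely what the paper does from the outset, packaged through the maps $\bar f_{n_1,k,n_2}$. The payoff of the paper's route is that a single finiteness lemma yields (a) and (c) simultaneously, whereas your route trades that lemma for separate semicontinuity, flatness, and specialization arguments.
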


\bigskip

We will prove Proposition \ref{defect stratification proposition} in Subsection \ref{Proof of stratification results} below, using certain compactifications $\bar{f}_{n_1, k, n_2}$ of the maps $f_{n_1, k, n_2}$ that we introduce next. A posteriori, these compactified maps are in fact resolutions of singularities of the strata closures (see Corollary \ref{resolution of singularities} below).

\bigskip
\ssec{Compactifying the maps $f_{n_1, k, n_2}$}
\label{Compactifying the maps $f_{n_1, k, n_2}$}

\sssec{Overview}
The goal of this subsection is to compactify the maps $f_{n_1, k, n_2}$ introduced above. These compactifications will be used to to prove Proposition \ref{defect stratification proposition} above, and are constructed using the relative compactifications $\barBun_B$ of Drinfeld and Laumon. Since the compactifications of the maps $f_{n_1, k, n_2}$ are also used in the description of the nearby cycles sheaf in Section \ref{Statement of main theorems} below, we begin with a brief review of the relative compactifications of Drinfeld and Laumon.

\medskip

Recall first that the map $\Bun_B \to \Bun_G$ is schematic but not proper; relative compactifications have been defined by G. Laumon for $G = \GL_n$ (\cite{Lau}) and by V. Drinfeld for an arbitrary reductive group $G$ (\cite{BG1}), and have been of great importance in the geometric Langlands program. For $G=\GL_n$ and $n \geq 2$, Laumon's compactification and Drinfeld's compactification differ. However, in the case of interest $G = \SL_2$ of the present paper, the two compactifications agree; we will denote them by $\barBun_B$. We now recall the definition of $\barBun_B$ for $G = \SL_2$ and then use it to compactify the maps $f_{n_1, k, n_2}$ from Subsection \ref{Stratification by defect} above. For more details on $\barBun_B$ we refer the reader to \cite{BG1}.

\medskip

\sssec{Definition of $\barBun_B$}
Let $G = \SL_2$. An $S$-point of $\barBun_B$ consists of the data of a vector bundle $E$ of rank $2$ on $X \times S$ with trivialized determinant, a line bundle $L$ on $X \times S$, and an injection of coherent sheaves $L \longinto E$ which remains injective after being restricted to $X \times \bar{s}$ for any geometric point $\bar{s} \to S$. The definition of $\barBun_{B^-}$ is analogous.

\medskip

\sssec{Basic properties}
\label{Basic properties}

The open substack of $\barBun_B$ obtained by requiring that the above injection of sheaves is a subbundle map is naturally identified with $\Bun_B$ and is dense in $\barBun_B$. Furthermore, the maps $\Bun_B \to \Bun_G$ and $\Bun_B \to \Bun_T$ naturally extend to $\barBun_B$, and the extended map $\barBun_B \to \Bun_G$ is schematic and proper when restricted to connected components of $\barBun_B$. Finally, for $G = \SL_2$ the map $\barBun_B \to \Bun_T$ is in fact smooth.

\medskip

\sssec{Stratification of $\barBun_B$}
\label{Stratification of barBun_B}

The stack $\barBun_B$ possesses the following stratification. For a connected component $\barBun_{B,n}$ with
$$n \ \in \ \BZ \ = \ \pi_0(\Bun_B) \ = \ \pi_0(\barBun_B)$$
and an integer $k \in \BZ_{\geq 0}$, consider the map
$$X^{(k)} \times \, \Bun_{B,n+k} \ \, \longto \ \, \barBun_{B,n}$$
defined as
$$(D, L \longintointo E) \ \ \ \longmapsto \ \ \ (L(-D) \into L \longintointo E) \, .$$
This map is in fact a locally closed immersion, and as $k$ ranges over $\BZ_{\geq 0}$ the corresponding locally closed substacks stratify $\barBun_{B,n}$:

$$\barBun_{B,n} \ \ = \ \ \bigcup_{k \in \BZ_{\geq 0}} \ (X^{(k)} \times \Bun_{B,n+k})$$

\noindent Finally, note that the map
$$X^{(k)} \times \Bun_B \ \longto \ \Bun_T$$
from Subsection \ref{Stratification by defect} above is in fact equal to the composition
$$X^{(k)} \times \, \Bun_B \ \, \longto \ \, \barBun_B \ \stackrel{\bar{\mathfrak{q}}}{\longto} \ \Bun_T \, .$$

\bigskip

\sssec{Compactifying the maps $f_{n_1, k, n_2}$}
\label{compactifying the maps sssec}

We now define the above-mentioned compactifications $\bar f_{n_1, k, n_2}$ of the maps $f_{n_1, k, n_2}$ from Subsection \ref{Stratification by defect} above. To do so, observe first that the maps
$$X^{(k)} \times \, \Bun_{B,n+k} \ \, \longto \ \, \barBun_{B,n}$$
from Subsection \ref{Stratification of barBun_B} above naturally extend to maps
$$X^{(k)} \times \, \barBun_{B,n+k} \ \, \longto \ \, \barBun_{B,n} \, .$$
We can therefore enlarge the fiber product from Subsection \ref{Stratification by defect} by replacing $\Bun_B$ and $\Bun_{B^-}$ by $\barBun_B$ and $\barBun_{B^-}$, and define the compactified map

$$\bar{f}_{n_1, k, n_2}: \ \barBun_{B^-, n_1} \ \underset{\Bun_T}{\times} \ \bigl( \, X^{(k)} \, \times \barBun_{B,n_2} \bigr) \ \longto \ \VinBun_{G,B}$$

\medskip

\noindent in the exact same fashion as the map $f_{n_1, k, n_2}$. We then have:

\medskip
\medskip

\begin{lemma}
\label{finiteness of compactified maps}
The map $\bar{f}_{n_1, k, n_2}$ is finite.
\end{lemma}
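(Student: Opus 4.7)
My plan is to verify that $\bar f_{n_1,k,n_2}$ is schematic, proper, and quasi-finite; these three properties together imply finiteness.

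First I will handle schematicness and properness by working relative to $\Bun_G \times \Bun_G$. Both the source and target of $\bar f_{n_1,k,n_2}$ admit natural forgetful maps to $\Bun_G \times \Bun_G$ remembering only the underlying pair $(E_1,E_2)$, and $\bar f_{n_1,k,n_2}$ is a morphism over $\Bun_G \times \Bun_G$. The target $\VinBun_{G,B}$ is schematic and separated over $\Bun_G \times \Bun_G$: this follows from Lemma \ref{torsor} and Lemma \ref{delta bar} applied at $v=0$, or directly from the moduli definition. The source is proper over $\Bun_G \times \Bun_G$: this reduces to properness of $\barBun_B \to \Bun_G$ and $\barBun_{B^-} \to \Bun_G$ on connected components (recalled in Subsection \ref{Basic properties}) together with properness of $X^{(k)}$, and the fiber product over $\Bun_T$ respects these properties in the presence of fixed discrete (degree) data. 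Since a morphism from a stack proper over a base to a stack separated over the same base is automatically proper and schematic, this gives both properties for $\bar f_{n_1,k,n_2}$.

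Next I will verify quasi-finiteness, which is the heart of the argument. Given a geometric point $(E_1,E_2,\varphi)$ of $\VinBun_{G,B}$, I factor $\varphi$ canonically as
$$E_1 \ \longonto \ N \ \longinto \ \tilde N \ \longintointo \ E_2,$$
where $N=\im\varphi$ is the image line bundle, the first arrow is surjective on coherent sheaves, $\tilde N$ is the saturation of $N$ in $E_2$, and the injection $N \into \tilde N$ corresponds to the canonical defect divisor $D_\varphi$ of $\varphi$. A fiber point of $\bar f_{n_1,k,n_2}$ over $(E_1,E_2,\varphi)$ is a datum $(E_1 \to M_1,\, D \in X^{(k)},\, M_2 \into E_2)$ together with an identification $M_1 \cong M_2(-D)$ such that the composition $E_1 \to M_1 \into M_2 \into E_2$ equals $\varphi$. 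The key observation is that the image of this composition in $E_2$ equals $M_2(-D)$, forcing $M_2(-D)=N$; hence $M_2$ is a rank-one coherent subsheaf of $E_2$ containing $N$, and therefore contained in the saturation $\tilde N$. Writing $M_2 = N(D)$ inside $\tilde N$ shows that $D$ must be an effective subdivisor of $D_\varphi$ of degree $k$; conversely $M_1 \cong N$ is forced, and the map $E_1 \to M_1=N$ is forced to be the canonical surjection $E_1 \onto N$ (since the subsequent injection $N \into E_2$ is injective). Thus the fiber is in bijection with the finite set of effective subdivisors of $D_\varphi$ of degree $k$.

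The step I expect to be most delicate is the fiber bookkeeping above — namely, tracking that each piece of data in the compactified source is rigidly pinned down by the choice of $D \leq D_\varphi$, given only the composition $\varphi$. The other two properties are formal consequences once the relative picture over $\Bun_G \times \Bun_G$ has been set up. Combining schematicness, properness, and quasi-finiteness then yields that $\bar f_{n_1,k,n_2}$ is finite.
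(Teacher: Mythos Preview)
Your properness argument has a genuine gap: the source $\barBun_{B^-,n_1} \times_{\Bun_T} (X^{(k)} \times \barBun_{B,n_2})$ is \emph{not} proper over $\Bun_G \times \Bun_G$. The fiber product over the stack $\Bun_T$ records the datum of an isomorphism $\alpha: M_1 \cong M_2(-D)$, and since automorphisms of a line bundle on the proper connected curve $X$ form a copy of $\BG_m$, the map from the source to $\barBun_{B^-,n_1} \times X^{(k)} \times \barBun_{B,n_2}$ is only affine (a $\BG_m$-torsor over its image), not a closed immersion; properness over $\Bun_G \times \Bun_G$ is therefore lost. The paper avoids this by instead realizing the source as a closed substack $\CY$ of $\barBun_{B^-,n_1} \times_{\Bun_G} \VinBun_G \times_{\Bun_G} \barBun_{B,n_2}$, cut out by the condition that $\varphi$ factor through both $E_1 \to M_1$ and $M_2 \to E_2$, and then using that the projection of this triple fiber product to $\VinBun_G$ is proper. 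The reason this works while your approach does not is that the extra $\BG_m$ in the source (rescaling $\alpha$) matches the $\BG_m$ in $\VinBun_G$ over $\barBun_G$ (rescaling $\varphi$), so it is absorbed once one works relative to $\VinBun_G$ rather than $\Bun_G \times \Bun_G$.

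Your quasi-finiteness argument also has a smaller gap. The assertion that the image of the composition in $E_2$ equals $M_2(-D)$ presupposes that $E_1 \to M_1$ is surjective, but a point of $\barBun_{B^-}$ only requires this map to be generically surjective. When $E_1 \to M_1$ lies in a boundary stratum with defect divisor $D_1$, the image is $M_2(-D-D_1)$, and tracing through one finds that the fiber over $\varphi$ is in bijection with ordered decompositions $D_\varphi = D_1 + D + D_2$ of the prescribed degrees, not merely with subdivisors $D \leq D_\varphi$ of degree $k$. This is still a finite set, so the conclusion survives, but your stated bijection only covers the open stratum $D_1 = 0$. The paper handles this by stratifying the source via the defect stratifications of $\barBun_{B^-}$ and $\barBun_B$ and invoking quasi-finiteness of the addition map $X^{(d_1)} \times X^{(k)} \times X^{(d_2)} \to X^{(d_1+k+d_2)}$.
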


\medskip

\begin{proof}
We first show that the map is quasifinite. This can easily be deduced from the definitions and from the stratification of $\barBun_B$ in Subsection \ref{Stratification of barBun_B}, as follows. Consider the induced stratification of the fiber product
$$\barBun_{B^-, n_1} \ \underset{\Bun_T}{\times} \ \bigl( \, X^{(k)} \, \times \barBun_{B,n_2} \bigr)$$
with the strata
$$\bigl( \Bun_{B^-, n_1 - d_1} \times \, X^{(d_1)} \bigr) \ \underset{\Bun_T}{\times} \ \bigl( \, X^{(k)} \, \times X^{(d_2)} \times \Bun_{B,n_2 + d_2} \big) \, ,$$
where the integers $d_1, d_2 \in \BZ_{\geq 0}$ are varying.
We claim that the fiber over any $k$-point of $\VinBun_{G,B}$ can meet at most finitely many of the above strata. Indeed, any $k$-point of $\VinBun_{G,B}$ admits a unique factorization
$$\varphi: \ E_1 \ \longonto \ M_1 \ \longinto \ M_2 \ \longintointo \ E_2$$
as in Subsection \ref{Definition of the defect} above; if $m \in \BZ_{\geq 0}$ denotes its defect, then one sees from the definition of $\bar{f}_{n_1, k, n_2}$ that only the strata with
$$d_1 + k + d_2 \ = \ m$$
can meet its fiber.

\medskip

Hence it suffices to prove that $\bar{f}_{n_1, k, n_2}$ is quasifinite when restricted to any such stratum; this follows from the unique factorization of the map $\varphi$ above together with the fact that the map
$$X^{(d_1)} \times X^{(k)} \times X^{(d_2)} \ \longto \ X^{(d_1 + k + d_2)}$$
defined by adding effective divisors is quasifinite.

\bigskip

To show that the map $\bar{f}_{n_1, k, n_2}$ is finite it now suffices to show that it is proper. To do so, consider first the fiber product
$$\barBun_{B^-, n_1} \underset{\Bun_G}{\times} \VinBun_G \underset{\Bun_G}{\times} \barBun_{B, n_2}$$
where the two maps $\VinBun_G \to \Bun_G$ are the two maps remembering only the bundles $E_1$ and $E_2$, respectively. Thus the fiber product parametrizes the data of a point $(E_1, E_2, \varphi)$ of $\VinBun_G$ together with a point $E_1 \to M_1$ of $\barBun_{B^-, n_1}$ and a point $M_2 \to E_2$ of $\barBun_{B, n_2}$.

\medskip

Consider now the closed substack $\CY$ of the above fiber product obtained by requiring that the map $\varphi$ factors through the map $E_1 \to M_1$ and also through the map $M_2 \to E_2$:

$$\xymatrix@+10pt{
E_1 \ar[d] \ar[rr]^{\varphi} \ar@{..>}[rrd] & & E_2 \\
M_1 \ar@{..>}[rru] & & M_2 \ar[u] \\
}$$

\medskip
\medskip

\noindent We claim that the closed substack $\CY$ is in fact isomorphic to the stack
$$\barBun_{B^-, n_1} \ \underset{\Bun_T}{\times} \ \bigl( \, X^{(k)} \, \times \barBun_{B,n_2} \bigr) \, .$$
Indeed, given an $S$-point of $\CY$ as above, the map $\varphi$ is forced to factor as
$$E_1 \longto M_1 \stackrel{i}{\longto} M_2 \longto E_2 \, ,$$
and the datum of the map $i: M_1 \to M_2$ is equivalent to the datum of the map $\varphi$. Moreover, the definition of $\VinBun_G$ forces the map $i: M_1 \to M_2$ to be injective when restricted to $X \times \bar{s}$ for any geometric point $\bar{s} \to S$. Since $M_1$ and $M_2$ have degrees $n_1$ and $n_2$ when restricted to each $X \times \bar{s}$ and since $n_1 = n_2 - k$, the datum of the map $i$ above is in turn equivalent to the datum of an $S$-point of $X^{(k)}$.

\medskip

Finally, note that the map $\bar{f}_{n_1, k, n_2}$ is equal to the composition of the inclusion map of
$$\CY \ = \ \barBun_{B^-, n_1} \ \underset{\Bun_T}{\times} \ \bigl( \, X^{(k)} \, \times \barBun_{B,n_2} \bigr)$$
into the fiber product
$$\barBun_{B^-, n_1} \underset{\Bun_G}{\times} \VinBun_G \underset{\Bun_G}{\times} \barBun_{B, n_2}$$
with the projection of the latter to $\VinBun_G$. Since the inclusion map is a closed immersion and the projection map is proper by Subsection \ref{Basic properties}, we conclude that $\bar{f}_{n_1, k, n_2}$ is proper, finishing the proof.
\end{proof}

\bigskip

\sssec{Proof of stratification results}
\label{Proof of stratification results}

Using Lemma \ref{finiteness of compactified maps} we can now prove the stratification results of Proposition \ref{defect stratification proposition} above. Before doing so, we state the following corollary, which follows from the fact that the map
$$\barBun_B \longto \Bun_T$$
is smooth for $G = \SL_2$ (see Subsection \ref{Basic properties} above).

\bigskip

\begin{corollary}[of Lemma \ref{finiteness of compactified maps}]
\label{resolution of singularities}

The compactified map

$$\bar{f}_{n_1, k, n_2}: \ \ \barBun_{B^-, n_1} \, \underset{\Bun_T}{\times} \bigl( \, X^{(k)} \times \barBun_{B,n_2} \bigr) \ \, \longto \ \, \sideset{_{(n_1, k, n_2)}}{_{G,B}}\barVinBun$$

\medskip

\noindent is a resolution of singularities of the closure of the stratum $\sideset{_{(n_1, k, n_2)}}{_{G,B}}\VinBun$.
\end{corollary}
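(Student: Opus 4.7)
The plan is to verify the three defining properties of a resolution of singularities: properness of $\bar{f}_{n_1, k, n_2}$, smoothness of its source, and the map being an isomorphism over a dense open subset of the target. Properness is immediate from Lemma~\ref{finiteness of compactified maps}, since finite maps are proper; and the fact that the image lies in the closure $\sideset{_{(n_1, k, n_2)}}{_{G,B}}\barVinBun$ follows from the fact that the open locus $\Bun_{B^-, n_1} \times_{\Bun_T} (X^{(k)} \times \Bun_{B, n_2})$ maps into the stratum via $f_{n_1, k, n_2}$, combined with properness (closed image) and irreducibility considerations.

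For smoothness of the source
\[ \barBun_{B^-, n_1} \, \underset{\Bun_T}{\times} \, \bigl( X^{(k)} \times \barBun_{B, n_2} \bigr), \]
I would argue in stages. The symmetric power $X^{(k)}$ is smooth. The twist map $X^{(k)} \times \Bun_T \to \Bun_T$ sending $(D, L) \mapsto L(-D)$ is a smooth fibration (fiberwise it is a torsor for the twist automorphism). Combining this with the smoothness of $\bar{\mathfrak{q}}: \barBun_B \to \Bun_T$ for $G = \SL_2$ recorded in Subsection~\ref{Basic properties}, the map $X^{(k)} \times \barBun_{B, n_2} \to \Bun_T$ used to form the fiber product is smooth. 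Base-changing along this smooth morphism shows that the projection from the fiber product onto $\barBun_{B^-, n_1}$ is smooth; since $\barBun_{B^-, n_1}$ is itself smooth (again by Subsection~\ref{Basic properties}, using smoothness of $\Bun_T$), the source is smooth.

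For the birationality step, the restriction of $\bar{f}_{n_1, k, n_2}$ to the open substack where both the $B$-structure and $B^-$-structure injections are subbundle maps recovers precisely the map $f_{n_1, k, n_2}$ of Subsection~\ref{Stratification by defect}. By Proposition~\ref{defect stratification proposition}(a) this map is a locally closed immersion with image the open stratum $\sideset{_{(n_1, k, n_2)}}{_{G,B}}\VinBun$, and by part~(c) of the same proposition this stratum is dense in its closure. Hence $\bar{f}_{n_1, k, n_2}$ restricts to an isomorphism from a dense open subscheme of its source onto a dense open subscheme of the target, which together with properness and smoothness of the source completes the proof.

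The main subtlety is the smoothness argument, specifically verifying that the map $(D, (L \hookrightarrow E)) \mapsto L(-D)$ from $X^{(k)} \times \barBun_{B, n_2}$ to $\Bun_T$ is smooth---this relies crucially on the nontrivial smoothness of $\bar{\mathfrak{q}}: \barBun_B \to \Bun_T$ specific to the case $G = \SL_2$, which fails for higher-rank groups. One should also be slightly cautious in interpreting ``resolution of singularities'' in the stack-theoretic setting, but the three properties established above are exactly what is required.
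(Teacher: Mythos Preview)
Your proof is correct and follows essentially the same approach as the paper. The paper's ``proof'' is a single sentence noting that the corollary follows from the smoothness of $\barBun_B \to \Bun_T$ for $G=\SL_2$; you have simply made explicit the three standard ingredients (properness from Lemma~\ref{finiteness of compactified maps}, smoothness of the source via the fiber product argument, and birationality via Proposition~\ref{defect stratification proposition}(a)). One small remark on logical order: in the paper the corollary is stated \emph{before} the proof of Proposition~\ref{defect stratification proposition}, but since that proposition is itself deduced from Lemma~\ref{finiteness of compactified maps} and does not use the corollary, there is no circularity---the paper in fact flags this with the phrase ``a posteriori'' when first mentioning the resolution.
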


\bigskip

Finally we prove Proposition \ref{defect stratification proposition}:

\medskip

\begin{proof}[Proof of Proposition \ref{defect stratification proposition} (a)]
We use the same notation as in Subsection \ref{Stratification by defect}. We first show that the map $f_{n_1, k, n_2}$ is a monomorphism of algebraic stacks. Thus we need to check that the data
$$E_1 \longonto M_1 \longto M_2 \longintointo E_2$$
on $X \times S$ can be reconstructed from the composite map $\varphi: E_1 \longto E_2$. Indeed, the line bundle $M_1$ can be recovered as the image $\im(\varphi)$, and the factorization through $M_1$ corresponds to the factorization
$$E_1 \longonto \im(\varphi) \longinto E_2 \, .$$
One can argue dually for $M_2$, and hence $f_{n_1, k, n_2}$ is a monomorphism.

\medskip

We now show that $f_{n_1, k, n_2}$ is in fact a locally closed immersion. Let $\CB$ denote the boundary of

$$\CY \ = \ \barBun_{B^-, n_1} \ \underset{\Bun_T}{\times} \ \bigl( \, X^{(k)} \, \times \barBun_{B,n_2} \bigr) \, ,$$

\noindent i.e., the closed complement in $\CY$ of the open substack

$$\Bun_{B^-, n_1} \ \underset{\Bun_T}{\times} \ \bigl( \, X^{(k)} \, \times \Bun_{B,n_2} \bigr) \, .$$

\medskip

\noindent Since the map $\bar{f}_{n_1, k, n_2}$ is proper, the image of the boundary $\CZ$ under $\bar{f}_{n_1, k, n_2}$ is a closed substack of $\VinBun_G$; let $\CU$ denote its open complement. We claim that taking the inverse image of $\CU$ under $\bar{f}_{n_1, k, n_2}$ yields the cartesian square

$$\xymatrix@+10pt{
\Bun_{B^-, n_1} \underset{\Bun_T}{\times} \bigl( \, X^{(k)} \times \Bun_{B,n_2} \bigr) \ar@{^(->}[r]^{\text{open}} \ar[d] \ar[rd]^{f_{n_1, k, n_2}} \ & \ \barBun_{B^-, n_1} \underset{\Bun_T}{\times} \bigl( \, X^{(k)} \times \barBun_{B,n_2} \bigr) \ar[d]^{\bar{f}_{n_1, k, n_2}} \\
\CU \ \ar@{^(->}[r]^{\text{open}}& \ \VinBun_G \\
}$$

\medskip
\medskip

\noindent This follows from the fact that any point of $\VinBun_G$ lying in the image of the boundary $\CZ$ must have defect strictly greater than $k$.

\medskip

The diagonal map of the above square is precisely the map $f_{n_1, k, n_2}$, which has already been shown to be a monomorphism. Thus the left vertical arrow is also a monomorphism; but being the base change of the proper map~$\bar{f}_{n_1, k, n_2}$, the left vertical arrow is also proper, and hence it must be a closed immersion. This establishes the desired factorization of the map~$f_{n_1, k, n_2}$, showing that it is indeed a locally closed immersion.

\medskip

Finally, the assertion about smoothness follows from the fact that the map $\Bun_{B^-} \longto \Bun_T$ is smooth.
\end{proof}

\bigskip

\begin{proof}[Proof of Proposition \ref{defect stratification proposition} (b) through (e)]
Part (b) follows immediately from the fact that every map $\varphi: E_1 \to E_2$ factors uniquely as
$$\varphi: \ E_1 \longonto M_1 \longinto M_2 \longintointo E_2$$
as in Subsection \ref{Definition of the defect} above.
Part (c) follows from the definition of the map $\bar{f}_{n_1, k, n_2}$ together with the stratifications of $\barBun_B$ and $\barBun_{B^-}$ in Subsection~\ref{Stratification of barBun_B}.
Part (d) follows from the formula for the strata closure in part (c).
For part~(e), note that by part (c) each stratum $\sideset{_{(n_1, k, n_2)}}{_{G,B}}\VinBun$ is closed in the open substack $\sideset{_{\leq k}}{_{G,B}}\VinBun$. Thus the natural map
$$\coprod_{n_1, n_2} \sideset{_{(n_1, k, n_2)}}{_{G,B}}\VinBun \ \ \longto \ \ \sideset{_{\leq k}}{_{G,B}}\VinBun$$
is a closed immersion, and the claim follows.
\end{proof}

\bigskip\bigskip

\section{Statement of main theorems}
\label{Statement of main theorems}

\ssec{Preliminaries about nearby and vanishing cycles}
\label{Preliminaries about nearby and vanishing cycles}

\mbox{} \vspace{0.3cm}

\sssec{Conventions}
\label{Conventions for nearby cycles}

Given a map $Y \to \BA^1$ we will denote by
$$\Psi: \ \D(Y|_{\BA^1 \setminus \{0\}}) \ \longto \ \D(Y|_{\{0\}})$$
the unipotent nearby cycles functor in the perverse and Verdier-self dual renormalization, i.e., we shift and twist the usual unipotent nearby cycles functor by $[-1](-\tfrac{1}{2})$ so that it is t-exact for the perverse t-structure and commutes with Verdier duality literally and not just up to twist. We will refer to~$\Psi$ simply as \textit{the nearby cycles}, and we denote the analogously shifted and twisted unipotent vanishing cycles functor simply by $\Phi$. We denote the logarithm of the unipotent part of the monodromy operator by
$$N: \ \Psi \ \longto \ \Psi(-1) \, ,$$
and will refer to it simply as \textit{the monodromy operator}. The monodromy operator $N$ admits the factorization $N = \var \circ \can$ into the natural maps
$$\can: \ \Psi \longto \Phi \ \ \ \ \ \ \text{and} \ \ \ \ \ \ \var: \ \Phi \longto \Psi(-1) \, .$$
In the above normalization, the usual triangle relating $\Psi$ and $\Phi$ reads
$$F|^*_{Y|_{\{0\}}}[-1](-\tfrac{1}{2}) \ \longto \ \Psi(F) \ \stackrel{\can \ }{\longto} \ \Phi(F) \ \stackrel{+1}{\longto}$$
for any object $F \in \D(Y)$. We refer the reader to \cite{gluing perverse sheaves} and \cite[Sec. 5]{Jantzen conjectures} for more background on unipotent nearby and vanishing cycles.

\bigskip

\sssec{Monodromy and weight filtrations and Gabber's theorem}
We now recall some facts about the monodromy and weight filtrations on nearby cycles; we refer the reader to \cite[Sec. 1.6]{Weil2} and \cite[Sec. 5]{Jantzen conjectures} for proofs.

\medskip

Given a perverse sheaf $F$ on $Y|_{\BA^1 \setminus \{0\}}$, the endomorphism $N$ acts nilpotently on the perverse sheaf $\Psi(F)$, and thus induces the \textit{monodromy filtration} on $\Psi(F)$. The latter filtration is the unique finite filtration
$$\Psi(F) = M_{n} \ \supseteq \ M_{n-1} \ \supseteq \ \cdots \ \supseteq \ M_{-n} \ \supseteq 0$$
by perverse sheaves $M_i$ satisfying that
$$N(M_i) \ \subset \ M_{i-2}(-1)$$
for all $i$, and that the induced maps
$$N^i: \ M_i/M_{i-1} \ \longto \ \bigl(M_{-i}/M_{-i-1}\bigr)(-i)$$
are isomorphisms for all $i \geq 0$.
In particular the operator $N$ acts on the associated graded perverse sheaf $\gr(\Psi(F))$, and we have the following well-known lemma:

\medskip

\begin{lemma}
\label{Lefschetz-sl_2}
The action of $N$ on the associated graded $\gr(\Psi(F))$ extends canonically to an action of the ``Lefschetz-$\sl_2$'', i.e.: There exists a unique action of the Lie algebra $\sl_2(\overline{\BQ}_\ell)$ on $\gr(\Psi(F))$ such that the action of the lowering operator of $\sl_2(\overline{\BQ}_\ell)$ coincides with the action of $N$, and such that the Cartan subalgebra of $\sl_2(\overline{\BQ}_\ell)$ acts on the summand $\gr(\Psi(F))_i = M_i/M_{i-1}$ with Cartan weight $i$. Thus the decomposition
$$\gr(\Psi(F)) \ = \ \bigoplus_{i} M_i/M_{i-1}$$
agrees with the decomposition of the $\sl_2(\overline{\BQ}_\ell)$-representation $\gr(\Psi(F))$ according to Cartan weights. We will refer to the Lie algebra $\sl_2(\overline{\BQ}_\ell)$ in this context as \textit{the Lefschetz}-$\sl_2$.
\end{lemma}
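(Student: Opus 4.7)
The statement reduces to the following purely linear-algebraic fact, applied to $V := \gr(\Psi(F))$ with $V_i := M_i/M_{i-1}$: on a finite-dimensional $\BZ$-graded $\Qellbar$-vector space $V = \bigoplus_i V_i$ equipped with an endomorphism $N$ of degree $-2$ such that $N^i: V_i \isoto V_{-i}$ is an isomorphism for every $i \geq 0$, there is a unique $\sl_2$-action on $V$ having $N$ as the lowering operator and Cartan element $H$ acting by the scalar $i$ on $V_i$. The Tate twists only shift the constructed raising operator by $(1)$ and do not otherwise affect the argument.

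Both $H$ and the lowering operator are forced by the statement, so I focus on constructing and uniquely characterizing the raising operator $E: V_i \to V_{i+2}$. First I would introduce, for each $i \geq 0$, the space of primitive vectors
\[
P_i \ := \ \ker\bigl( N^{i+1}: V_i \longto V_{-i-2} \bigr),
\]
and verify, by descending induction on $i$ using the isomorphism hypothesis, that the evaluation map
\[
\bigoplus_{i \geq 0} \bigoplus_{j=0}^{i} P_i \ \longto \ V, \qquad (v,j) \longmapsto N^j v,
\]
is a graded isomorphism, with the $(i,j)$-summand placed in weight $i-2j$. On each length-$(i+1)$ string $\{v, Nv, \ldots, N^i v\}$ generated by a primitive $v \in P_i$ I would then define
\[
E(v) := 0, \qquad E(N^j v) := j(i-j+1)\, N^{j-1} v \ \text{ for } 1 \leq j \leq i,
\]
which turns the string into the standard irreducible $(i+1)$-dimensional representation of $\sl_2$ and hence assembles into the desired action on all of $V$.

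For uniqueness, I would argue that any $\sl_2$-action on $V$ compatible with the given $H$ and $N$ must satisfy $\ker(E) \cap V_i = P_i$ for $i \geq 0$: the structure theorem for finite-dimensional $\sl_2$-representations shows that in each irreducible summand the kernel of the raising operator in weight $i$ is exactly the highest-weight line, which coincides with the kernel of the corresponding power of the lowering operator. Hence any second raising operator $E'$ must annihilate $P_i$, and its values on the remaining elements $N^j v$ are then pinned down by iterated application of the relation $[E',N] = H$.

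The main obstacle I anticipate is the primitive-string decomposition itself: the isomorphism $N^i: V_i \isoto V_{-i}$ only controls the ``extremal'' component of each weight, so peeling off the shorter strings requires a careful induction that tracks both the kernels $\ker(N^{i+1}) \cap V_i$ and the images $N^j(V_{i+2j})$ simultaneously. Once this piece of linear algebra is in place, everything follows from the standard structure of finite-dimensional $\sl_2$-representations.
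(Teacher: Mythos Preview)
The paper does not give its own proof of this lemma: it is stated as ``well-known'' and the reader is referred to \cite[Sec.~1.6]{Weil2} and \cite[Sec.~5]{Jantzen conjectures} for background on the monodromy filtration. Your argument is the standard one (primitive decomposition plus the explicit string formulas for $E$) and is correct.

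One small remark: you reduce to a statement about a finite-dimensional $\Qellbar$-vector space, but $\gr(\Psi(F))$ is a perverse sheaf, not a vector space. This is harmless, since every step of your argument --- forming $P_i$ as a kernel, the inductive primitive decomposition, defining $E$ as a scalar multiple of the inverse of $N$ on each string --- uses only kernels, images, direct sums, and the isomorphism hypothesis $N^i: V_i \isoto V_{-i}$, and hence goes through in any $\Qellbar$-linear abelian category. It would be worth saying this explicitly rather than silently identifying $V$ with a vector space.
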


\bigskip

In the case that $F$ is a pure perverse sheaf, the monodromy filtration satisfies Gabber's theorem, which we state for the case of weight $0$:

\medskip

\begin{proposition}[Gabber]
\label{Gabber's theorem}

Assume that $F$ is a pure perverse sheaf of weight~$0$. Then the subquotients of the monodromy filtration on $\Psi(F)$ are pure, and the weight of the subquotient $\gr(\Psi(F))_i  = M_i/M_{i-1}$ is equal to~$i$. In other words, the monodromy filtration agrees with the weight filtration of~$\Psi(F)$. In particular, the weight of each subquotient as a Weil sheaf agrees with its Cartan weight with respect to the action of the Lefschetz-$\sl_2$.
\end{proposition}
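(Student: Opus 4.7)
The plan is to reduce the statement to the uniqueness property of the monodromy filtration from Lemma \ref{Lefschetz-sl_2}, and then verify that the weight filtration on $\Psi(F)$ satisfies the two defining conditions of the monodromy filtration. Recall that the monodromy filtration is characterized as the unique finite filtration $\{M_i\}$ such that $N(M_i) \subseteq M_{i-2}(-1)$ and such that $N^i$ induces an isomorphism $\gr_i \cong (\gr_{-i})(-i)$. Hence it suffices to show that the weight filtration $\{W_i \Psi(F)\}$ satisfies both properties: then by uniqueness it must agree with $\{M_i\}$, which in turn gives that $\gr_i \Psi(F) = M_i/M_{i-1}$ is pure of weight $i$.

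The first property is essentially formal. Since $\Psi$ takes mixed perverse sheaves to mixed perverse sheaves of the same weight bound (in our normalization, preserving weight), and since $N \colon \Psi(F) \to \Psi(F)(-1)$ is a morphism of mixed perverse sheaves, it is strict for the weight filtration. The twist on the target shifts weights by $2$, so $N(W_i \Psi(F)) \subseteq W_i(\Psi(F)(-1)) = (W_{i-2}\Psi(F))(-1)$, which is precisely the first defining property.

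The difficult step is the second property, that $N^i$ induces isomorphisms $\gr^W_i \Psi(F) \isoto (\gr^W_{-i}\Psi(F))(-i)$; this is the actual content of Gabber's theorem. The proof proceeds by reduction to a geometric normal-crossings situation: using de Jong's alterations one finds a proper generically finite map $\pi\colon Y' \to Y$ and a smooth compactification such that the pullback $\pi^*F$ becomes (a direct summand of the pushforward of) a pure lisse sheaf on a smooth open subscheme whose complement inside $Y'|_0$ is a simple normal crossings divisor. Since $\Psi$ commutes with proper pushforward and the decomposition theorem allows one to descend the statement from $\pi^*F$ back to $F$, one is reduced to the local normal crossings model $\BA^n \to \BA^1$, $(x_1,\ldots,x_n) \mapsto x_1 \cdots x_n$. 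In this local case $\Psi$ and its monodromy can be computed explicitly (by Picard-Lefschetz and a Künneth-type argument, realizing $\gr \Psi$ as a tensor product of copies of the two-dimensional standard representation of the Lefschetz-$\sl_2$), and the required weight-monodromy isomorphism is verified by an elementary representation-theoretic check.

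The main obstacle is of course the final geometric step: the reduction to the normal crossings model requires the full strength of de Jong's alterations and the stability of the class of mixed perverse sheaves under the operations involved, and the control of weights under $\Psi$ in the presence of ramification is delicate. Since this is a well-established result in the Weil II framework, in practice one would simply invoke \cite{Weil2} and \cite{Jantzen conjectures} rather than reproduce the argument; the preceding sketch outlines the logical skeleton of what is being invoked.
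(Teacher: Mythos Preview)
The paper does not prove this proposition at all: it is stated as a background fact attributed to Gabber, and the reader is referred to \cite[Sec.~1.6]{Weil2} and \cite[Sec.~5]{Jantzen conjectures} for proofs (see the sentence introducing the subsection on monodromy and weight filtrations). Your proposal therefore goes beyond what the paper does, by sketching the logical skeleton of an actual argument. Since you yourself conclude that in practice one would ``simply invoke'' the references rather than reproduce the proof, your proposal and the paper are in agreement on how to handle this statement; the only difference is that you supply some expository context for what is being invoked, which the paper omits.
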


\bigskip

Finally, recall on the one hand that the $i$\textit{-th primitive part} $P_i$ of $\Psi(F)$ is defined as the kernel of the map
$$N: \ \gr_i(\Psi(F)) \ \longto \ \gr_{i-2}(\Psi(F)) \, .$$
On the other hand, consider the filtration induced on the kernel
$$\ker \bigl(N: \Psi(F) \to \Psi(F)(-1) \bigr) \ \ \subset \ \ \Psi(F)$$
by the monodromy filtration on $\Psi(F)$ by means of intersecting the kernel with the monodromy filtration. We then have the following well-known lemma:

\begin{lemma}
\label{kernel and associated graded commute}
The $i$-th subquotient of the latter filtration is canonically isomorphic to the $i$-th primitive part $P_i$. Less precisely, the associated graded of the kernel of $N: \Psi(F) \to \Psi(F)(-1)$ agrees with the kernel of $N$ acting on the associated graded $\gr(\Psi(F))$.
\end{lemma}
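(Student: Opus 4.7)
I would equip $K := \ker\bigl(N \colon \Psi(F) \to \Psi(F)(-1)\bigr)$ with the induced filtration $K_i := K \cap M_i$. The inclusions $K_i \hookrightarrow M_i$ descend to injections
$$K_i/K_{i-1} \ \hookrightarrow \ M_i/M_{i-1} \ = \ \gr_i(\Psi(F)),$$
and since $N$ vanishes on $K$, the image is visibly contained in the primitive part $P_i = \ker(\bar N \colon \gr_i \to \gr_{i-2}(-1))$. The content of the lemma is the reverse inclusion, and the plan is to establish it by a short filtered chase once one has the right input.

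Unwinding definitions, the reverse inclusion for every $i$ is equivalent to the identity
$$N(M_i) \, \cap \, M_{i-3}(-1) \ = \ N(M_{i-1}) \qquad (\ast)$$
because, lifting a primitive class $\bar x \in P_i$ to $x \in M_i$, one has $N(x) \in M_{i-3}(-1)$, and $(\ast)$ supplies $y \in M_{i-1}$ with $N(y) = N(x)$, so that $x - y \in K_i$ represents $\bar x$. The inclusion $\supseteq$ in $(\ast)$ is immediate from $M_{i-1} \subseteq M_i$ and $N(M_{i-1}) \subseteq M_{i-3}(-1)$, so the task is $\subseteq$.

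For the inclusion $\subseteq$ in $(\ast)$, I would invoke the classical strictness of $N$ with respect to the monodromy filtration, namely the identity $N(M_j) = N(\Psi(F)) \cap M_{j-2}(-1)$ for all $j$. Applied at level $j = i-1$, strictness gives
$$N(M_i) \cap M_{i-3}(-1) \ \subseteq \ N(\Psi(F)) \cap M_{i-3}(-1) \ = \ N(M_{i-1}),$$
as needed. Strictness is itself a standard property of the monodromy filtration, deducible from the defining isomorphism $N^i \colon \gr_i \xrightarrow{\sim} \gr_{-i}(-i)$ together with the Lefschetz-$\mathfrak{sl}_2$-action of Lemma \ref{Lefschetz-sl_2}: non-canonically decomposing $\gr^M(\Psi(F))$ into irreducible $\mathfrak{sl}_2$-representations whose weights are symmetric about zero, strictness reduces to an elementary Jordan-block computation within each irreducible summand. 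For a precise reference I would cite \cite[Sec.~5]{Jantzen conjectures} or \cite[Sec.~1.6]{Weil2}.

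The one non-formal input is strictness; everything else is a direct diagram chase on the filtrations, and the canonicity of the identification $\gr_i(K) \cong P_i$ is automatic because both the filtration $K_\bullet$ on $K$ and the embedding into $\gr_\bullet(\Psi(F))$ are canonical.
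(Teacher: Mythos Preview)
The paper does not actually prove this lemma; it is introduced as a ``well-known lemma'' and stated without argument, with the surrounding material referring the reader to \cite[Sec.~1.6]{Weil2} and \cite[Sec.~5]{Jantzen conjectures} for background on the monodromy filtration. Your proposal therefore supplies a proof where the paper gives none, and the argument you outline is correct: the injectivity of $K_i/K_{i-1} \hookrightarrow P_i$ is formal, and the surjectivity is exactly the identity $(\ast)$, which in turn follows from strictness of $N$ with respect to the monodromy filtration.

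One small caveat on your justification of strictness: the $\sl_2$-action from Lemma~\ref{Lefschetz-sl_2} lives only on the associated graded $\gr^M(\Psi(F))$, so decomposing $\gr^M$ into irreducibles does not by itself say anything about the filtered object. What actually proves strictness is either Deligne's explicit inductive construction of the monodromy filtration, or the (non-canonical) lifting of the graded $\sl_2$-action to a splitting of the filtration---equivalently, a Jordan decomposition of $(\Psi(F),N)$---after which the computation is indeed blockwise and elementary. Your references \cite[Sec.~1.6]{Weil2} and \cite[Sec.~5]{Jantzen conjectures} contain precisely this, so the citation is appropriate; just be aware that the inline sketch as written slightly conflates the graded and filtered levels.
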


\bigskip

\sssec{Intersection cohomology from nearby cycles}
As above let $Y$ be a scheme or stack, let $Y \to \BA^1$ be a map, and let $\IC_Y$ denote the IC-sheaf of $Y$, normalized to be pure of weight $0$ as mentioned in Subsection \ref{Notation and conventions}. Consider the monodromy operator
$$N: \ \Psi(\IC_Y) \ \longto \ \Psi(\IC_Y)(-1)$$
acting on the nearby cycles of the IC-sheaf of $Y$, and let $\ker(N) \subset \Psi(\IC_Y)$ and $\im(N) \subset \Psi(\IC_Y)(-1)$ denote its perverse kernel and image. Note furthermore that for the IC-sheaf of $Y$, the usual triangle for the map $\can: \Psi \to \Phi$ is in fact a short exact sequence of perverse sheaves
$$\xymatrix@+10pt{
0 \ar[r] & \IC_Y|^*_{Y|_{\{0\}}}[-1](-\tfrac{1}{2}) \ar[r] & \Psi(\IC_Y) \ar^{can}[r] & \Phi(\IC_Y) \ar[r] & 0 \, .
}$$

\noindent For example from Beilinson's gluing description (\cite{gluing perverse sheaves}) applied to the IC-sheaf of $Y$ one verifies:

\medskip

\begin{lemma}
\label{IC via Psi}
The above short exact sequence in fact coincides with the short exact sequence
$$\xymatrix@+10pt{
0 \ar[r] & \ker(N) \ar[r] & \Psi(\IC_Y) \ar^{N}[r] & \im(N) \ar[r] & 0 \, ,
}$$
i.e., we have
$$\ker(N) \ = \ \IC_Y|^*_{Y|_{\{0\}}}[-1](-\tfrac{1}{2}) \ ,$$
$$\im(N) \ = \ \Phi(\IC_Y) \ . \ \ \ \ \ \ \ \ \ \ \ \ \ $$

\noindent In particular, one can obtain the restriction $\IC_Y|^*_{Y|_{\{0\}}}[-1](-\tfrac{1}{2})$ and the vanishing cycles $\Phi(\IC_Y)$ from understanding the monodromy action on $\Psi(\IC_Y)$.
\end{lemma}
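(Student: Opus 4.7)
The plan is to deduce both identifications from the standard characterization of the intermediate extension in Beilinson's gluing formalism: that $\IC_Y$, being $j_{!*}$ of its restriction to the open locus $Y|_{\BA^1 \setminus \{0\}}$, has no perverse subobjects or quotients supported on the special fiber $Y|_{\{0\}}$.

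First, I would translate this property via the gluing description of \cite{gluing perverse sheaves} into two statements about the maps $\can$ and $\var$ associated to the nearby/vanishing cycles of $\IC_Y$: namely that $\can \colon \Psi(\IC_Y) \to \Phi(\IC_Y)$ is surjective on perverse sheaves and that $\var \colon \Phi(\IC_Y) \to \Psi(\IC_Y)(-1)$ is injective. Indeed, in Beilinson's gluing a perverse subobject of $\IC_Y$ supported on the zero fiber corresponds to a subobject of $\Phi(\IC_Y)$ lying in $\ker(\var)$, and dually a perverse quotient corresponds to a quotient of $\Phi(\IC_Y)$ through which $\can$ does not surject; so the $j_{!*}$-property forces $\var$ injective and $\can$ surjective.

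Second, using the factorization $N = \var \circ \can$ together with the injectivity of $\var$, I would deduce that $\ker(N) = \ker(\can)$ and that $\im(N) = \var(\im(\can)) = \var(\Phi(\IC_Y)) \simeq \Phi(\IC_Y)$. The canonical triangle recorded in Subsection \ref{Conventions for nearby cycles},
$$\IC_Y\big|^*_{Y|_{\{0\}}}[-1](-\tfrac{1}{2}) \ \longto \ \Psi(\IC_Y) \ \stackrel{\can\ }{\longto} \ \Phi(\IC_Y) \ \stackrel{+1}{\longto},$$
now becomes a short exact sequence of perverse sheaves by the surjectivity of $\can$, and identifies the first term with $\ker(\can) = \ker(N)$; combining this with the identification $\im(N) = \Phi(\IC_Y)$ gives the two desired formulas and also shows that the two short exact sequences in the statement literally coincide.

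The main (essentially only) non-formal step is the translation of the $j_{!*}$-property into the injectivity of $\var$ and surjectivity of $\can$, for which the cleanest route is to invoke Beilinson's gluing theorem directly; once this is in hand the rest is a diagram chase using $N = \var \circ \can$ and the standard nearby--vanishing cycles triangle.
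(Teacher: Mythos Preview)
Your argument is correct and is precisely the approach the paper has in mind: the paper does not spell out a proof but simply says ``from Beilinson's gluing description applied to the IC-sheaf of $Y$ one verifies,'' and you have correctly unpacked this by translating the absence of perverse sub- and quotient-objects supported on the special fiber into the injectivity of $\var$ and surjectivity of $\can$, and then using $N = \var \circ \can$.
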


\bigskip

\sssec{Nearby cycles of the Picard-Lefschetz family of hyperbolas}
\label{Nearby cycles of the Picard-Lefschetz family of hyperbolas}

We now recall the well-known computation of the nearby cycles for the family of hyperbolas $xy = t$, i.e., the nearby cycles for the map
$$d: \ \BA^2 \longto \BA^1, \ \ (x,y) \longmapsto x \cdot y \, .$$
This example in fact plays a key role in the nearby cycles computation for $\VinBun_G$, as will become clear in the next two sections and in the proof of the main theorem about nearby cycles in Section \ref{Nearby cycles} below.

\medskip

To state the result, let $C = d^{-1}(\{0\})$ be the fiber of $d$ over $0 \in \BA^1$, i.e., the reducible node in $\BA^2$ formed by the union $C = C_x \cup C_y$ of the two coordinate axes $C_x$ and $C_y$. Let $p$ denote the origin of $\BA^2$, i.e., the intersection of $C_x$ and $C_y$, and let $\delta_p$ denote the pushforward of the constant sheaf $\Qellbar$ along the inclusion $p \into C$. Let $(\Qellbar)_C$ denote the constant sheaf on $C$ and let $i_{x,*}(\Qellbar)_{C_x}$ and $i_{y,*}(\Qellbar)_{C_y}$ denote the pushforwards of the constant sheaves from $C_x$ and $C_y$ to $C$. Thus the IC-sheaf of $C$ is equal to
$$\IC_C \ = \ i_{x,*}(\Qellbar)_{C_x} \ \oplus \ i_{y,*}(\Qellbar)_{C_y} \, .$$
Applying the nearby cycles functor to the IC-sheaf $\IC_{\BA^2} = \Qellbar[2](1)$ of $\BA^2$ one finds:

\medskip

\begin{lemma}
\label{Psi for the Picard-Lefschetz family of hyperbolas}
The weight-monodromy filtration on $\Psi(\IC_{\BA^2})$ is equal to
$$\Psi(\IC_{\BA^2}) \  \ \supsetneq \ \ (\Qellbar)_C[1](\tfrac{1}{2}) \ \ \supsetneq \ \ \delta_p(\tfrac{1}{2}) \ \ \supsetneq \ \ 0 \, ,$$
and the corresponding associated graded object equals
$$\gr \, \Psi(\IC_{\BA^2}) \ \ = \ \ \delta_p(-\tfrac{1}{2}) \ \oplus \ \IC_C \ \oplus \ \delta_p(\tfrac{1}{2}) \, .$$
Furthermore, the action of the monodromy operator $N$ on $\gr \, \Psi(\IC_{\BA^2})$ identifies $\gr_{1} = \delta_p(-\tfrac{1}{2})$ with $\gr_{-1}(-1) = \delta_p(\tfrac{1}{2})(-1) = \delta_p(-\tfrac{1}{2})$, and the action on $\gr_0 = \IC_C$ is trivial. In particular, as a representation of the Lefschetz-$\sl_2$ the direct sum $\gr_{-1} \oplus \gr_1$ is isomorphic to the standard representation of the Lefschetz-$\sl_2$.
\end{lemma}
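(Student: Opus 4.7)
The plan is to combine a direct stalkwise calculation at the origin $p$ (using the Milnor fibre of the ordinary double point $xy=t$ and the classical Picard--Lefschetz formula) with a global identification of $\ker(N)$ coming from Beilinson's gluing description, Lemma \ref{IC via Psi}. First, the map $d$ is smooth away from $p$, so by our perverse-and-Verdier-self-dual normalization of $\Psi$ the restriction of $\Psi(\IC_{\BA^2})$ to $C\setminus\{p\}$ equals $(\Qellbar)_{C\setminus\{p\}}[1](\tfrac{1}{2})$ with trivial monodromy; this already determines $\Psi(\IC_{\BA^2})$ outside $p$ and pins down the filtration there.

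Second, the nearby fibre of $d$ near $p$ is $d^{-1}(t)=\{xy=t\}\cong \BG_m$, and the Milnor-fibre formula gives
$$\Psi(\IC_{\BA^2})_p \ = \ R\Gamma_{\et}(\BG_m,\Qellbar)[1](\tfrac{1}{2}),$$
a complex supported in perverse degrees $-1$ and $0$ with Frobenius weights $0$ and $+1$ respectively (using $H^1(\BG_m)=\Qellbar(-1)$). The Picard--Lefschetz formula for the $A_1$-singularity shows that $N$ acts on this stalk by a non-zero map sending the degree-$0$ part to the degree-$(-1)$ part after a Tate twist, so the stalks of $\ker(N)$ and $\im(N)$ at $p$ are each $1$-dimensional.

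Third, by Lemma \ref{IC via Psi} the perverse kernel of $N:\Psi\to\Psi(-1)$ is globally
$$\ker(N) \ = \ \IC_{\BA^2}|^*_C[-1](-\tfrac{1}{2}) \ = \ (\Qellbar)_C[1](\tfrac{1}{2}),$$
which is the middle term of the claimed filtration. The top quotient $\Psi/\ker(N)=\im(N)$ is then supported at $p$ by Step~1, of rank $1$ in perverse degree $0$ by Step~2, and carries a Tate twist of $(-1)$; the stalk calculation thus forces $\im(N)=\delta_p(-\tfrac{1}{2})$, the top graded piece. The internal sub $\delta_p(\tfrac{1}{2})\hookrightarrow (\Qellbar)_C[1](\tfrac{1}{2})$ is the essentially unique non-zero perverse morphism from $\delta_p$ (up to twist), obtained concretely from the attaching triangle for $\{p\}\hookrightarrow C\hookleftarrow C\setminus\{p\}$, which gives $\Hom(\delta_p,(\Qellbar)_C[1])=\Qellbar$; the cokernel is supported on $C$, has the correct generic rank, and is pure of weight~$0$ by Gabber, hence must equal $\IC_C$.

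Finally, since $\IC_{\BA^2}$ is pure of weight $0$, Gabber's theorem (Proposition \ref{Gabber's theorem}) identifies the monodromy filtration with the weight filtration, so the three graded pieces carry Cartan weights $+1,0,-1$ for the Lefschetz--$\sl_2$ action of Lemma \ref{Lefschetz-sl_2}, and $\delta_p(-\tfrac{1}{2})$ and $\delta_p(\tfrac{1}{2})$ assemble via $N$ into a copy of the standard $2$-dimensional representation while $\IC_C$ forms a trivial summand, exactly as asserted. The main technical obstacle is the perverse identification in Step~3 --- checking that the morphism $\delta_p(\tfrac{1}{2})\to(\Qellbar)_C[1](\tfrac{1}{2})$ extracted from the stalk picture really is an embedding of perverse sheaves with cokernel $\IC_C$, i.e.\ that the local Picard--Lefschetz data glues to the specific perverse extension on $C$; once this local-to-global gluing is done, the remaining steps (smoothness off $p$, Beilinson's triangle, and Gabber) are formal.
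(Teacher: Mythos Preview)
The paper does not actually prove this lemma: it is introduced as ``the well-known computation of the nearby cycles for the family of hyperbolas $xy=t$'' and simply stated without argument. Your proof is correct and is a clean way to organize this classical fact; in particular, your use of the paper's own Lemma \ref{IC via Psi} to globally identify $\ker(N)=(\Qellbar)_C[1](\tfrac{1}{2})$ is efficient, since it bypasses having to assemble the filtration by hand from the local Picard--Lefschetz data.

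One small correction: the Frobenius weights on the stalk $\Psi(\IC_{\BA^2})_p$ are $-1$ and $+1$, not $0$ and $+1$. Indeed $R\Gamma(\BG_m,\Qellbar)[1](\tfrac{1}{2})$ has $H^{-1}=\Qellbar(\tfrac{1}{2})$ of weight $-1$ and $H^{0}=\Qellbar(-\tfrac{1}{2})$ of weight $+1$. This does not affect your argument.

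For the ``main technical obstacle'' you flag in Step~3, the verification is straightforward: at the node $p$ one computes $i_p^!(\Qellbar)_C=\Qellbar[-1]$ (since a punctured neighborhood of $p$ has two branches), so $\Hom(\delta_p,(\Qellbar)_C[1])=\Qellbar$ and the nonzero map is a perverse injection; comparing stalks at $p$ then shows the cokernel has $H^{-1}=\Qellbar^2$ and $H^0=0$, i.e.\ it equals $i_{x,*}\Qellbar_{C_x}[1](\tfrac{1}{2})\oplus i_{y,*}\Qellbar_{C_y}[1](\tfrac{1}{2})=\IC_C$, as required.
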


\bigskip\bigskip

\ssec{Picard-Lefschetz oscillators}
\label{Picard-Lefschetz oscillators}

\sssec{Factorization structures}
\label{Factorization structures}

Assume we are given, for each $n \in \BZ_{\geq 0}$, a perverse sheaf $F_n \in D(X^{(n)})$ on the symmetric power $X^{(n)}$ of the curve $X$. Here and below we denote by
$$\add: \ X^{(n_1)} \times X^{(n_2)} \ \longto \ X^{(n)}$$
the map defined by adding effective divisors. Then we define a \textit{factorization structure} on the collection of perverse sheaves $F_n$ to be a collection of compatible isomorphisms
$$(\add^* F_n)\big|^*_{X^{(n_1)} \stackrel{\circ}{\times} X^{(n_2)}} \ \ \cong \ \ F_{n_1} \stackrel{\circ}{\boxtimes} F_{n_2}$$
for any $n_1 + n_2 = n$. If there is no ambiguity about which factorization structure is being considered on a given collection of perverse sheaves $F_n$, then we also abuse terminology and refer to the collection of perverse sheaves $F_n$ as \textit{factorizable}.

\bigskip

\sssec{External exterior powers}
\label{sssec External exterior powers}

Recall that to any local system $E$ on the curve $X$, placed in cohomological degree $0$, one can associate its $n$-\textit{th} \textit{external exterior power} $\Lambda^{(n)}(E)$ on the symmetric power of the curve $X^{(n)}$. Namely, the $n$-fold exterior product $E \boxtimes \cdots \boxtimes E$ on the $n$-th power $X^n$ carries a natural equivariant structure with respect to the action of the symmetric group $S_n$ on $X^n$. Thus its pushforward $p_* (E \boxtimes \cdots \boxtimes E)$ along the natural map
$$p: \ X^n \ \longto \ X^{(n)}$$
carries a natural action of $S_n$, and we define $\Lambda^{(n)}(E)$ by taking $S_n$-invariants of the pushforward $p_* (E \boxtimes \cdots \boxtimes E)$ against the sign character of $S_n$.

\medskip

This construction is functorial and satisfies the basic properties listed in the next lemma (see for example \cite[Sec. 5]{on de Jong} for proofs).

\medskip

\begin{lemma}
\label{external exterior powers}
\begin{itemize}
\item[]
\item[]
\item[(a)] Over the disjoint locus $\overset{\circ}{X} {}^{(n)}$ the $n$-th external exterior power $\Lambda^{(n)}(E)$ is again a local system.
\item[]
\item[(b)] The shifted object $\Lambda^{(n)}(E)[n]$ is a perverse sheaf. In fact, it is equal to the intermediate extension of its restriction to the disjoint locus.
\item[]
\item[(c)] Let $D = \sum_{k} n_k x_k \in X^{(n)}$ be a divisor on $X$, with the points $x_k$ distinct. Then the $*$-stalk of $\Lambda^{(n)}(E)$ at the point $D$ is equal to
$$\bigotimes_k \, \Lambda^{n_k}(E) \, .$$
\item[]
\item[(d)] The collection of perverse sheaves $\Lambda^{(n)}(E)[n]$ is factorizable in the sense of Subsection \ref{Factorization structures} above.

\end{itemize}
\end{lemma}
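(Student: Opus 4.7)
The plan is to derive all four parts from the basic observation that the quotient map $p: X^n \to X^{(n)}$ is a finite map between smooth irreducible varieties of the same dimension $n$, which restricts to a finite \'etale $S^n$-torsor over the disjoint locus $\overset{\circ}{X}{}^{(n)}$. Part (a) is then immediate: over $\overset{\circ}{X}{}^{(n)}$ the pushforward $p_*(E\boxtimes\cdots\boxtimes E)$ is a local system of rank $(\rk E)^n$, and $\Lambda^{(n)}(E)$ is by definition its direct summand of sign-isotypic vectors under the $S^n$-action.

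For (b), the key input is that $p$ is \emph{small} in the Goresky-MacPherson sense: its fibres are zero-dimensional, while each stratum of $X^{(n)}$ of positive codimension (corresponding to a coincidence pattern of the divisor) has codimension at least one, so the smallness inequality $\dim p^{-1}(\text{stratum}) < \tfrac12\codim(\text{stratum})$ is trivially satisfied. Consequently $p_*(E^{\boxtimes n}[n])$ is a perverse sheaf on $X^{(n)}$ equal to the intermediate extension of its restriction to $\overset{\circ}{X}{}^{(n)}$, and since intermediate extension commutes with passage to direct summands, the sign-isotypic summand $\Lambda^{(n)}(E)[n]$ inherits the same property, giving (b). I expect this small-map observation to be the main substantive point of the argument; once it is in place, the remaining parts become bookkeeping with induced representations and base change.

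For (c) I would apply proper base change for the finite map $p$: the geometric fibre $p^{-1}(D)$ is the set of orderings of the multiset $D$, on which $S^n$ acts transitively with stabilizer $S_{n_1}\times\cdots\times S_{n_r}$, so the stalk $(p_*E^{\boxtimes n})_D$ is canonically the induced representation $\mathrm{Ind}_{\prod_k S_{n_k}}^{S^n}\bigl(\bigotimes_k E_{x_k}^{\otimes n_k}\bigr)$. Frobenius reciprocity, together with the fact that the restriction of the sign character of $S^n$ to $\prod_k S_{n_k}$ is the external product of their sign characters, identifies the sign-isotypic component with the tensor product $\bigotimes_k \Lambda^{n_k} E_{x_k}$, which is (c).

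For (d) I would first construct the factorization isomorphism on the open subset of $X^{(n_1)}\stackrel{\circ}{\times} X^{(n_2)}$ where both divisors consist of distinct simple points. There both sides are local systems with matching stalks by (a) and (c), and a canonical identification is produced from the Cartesian square relating the quotient maps $p_n$ and $p_{n_1}\times p_{n_2}$ via the concatenation map $X^{n_1}\times X^{n_2} \to X^n$, combined with the multiplicativity of sign characters under $S_{n_1}\times S_{n_2} \hookrightarrow S^n$. Since $\Lambda^{(n_1)}(E)\boxtimes\Lambda^{(n_2)}(E)$ is an intermediate extension by (b) and K\"unneth, and $\mathrm{add}^*\Lambda^{(n)}(E)$ is one as well because $\mathrm{add}$ is \'etale on the disjoint locus, the generic isomorphism extends uniquely to all of $X^{(n_1)}\stackrel{\circ}{\times} X^{(n_2)}$, and the required associativity for iterated splittings is immediate from the associativity of concatenation.
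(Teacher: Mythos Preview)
Your argument is correct. The paper does not give its own proof of this lemma; it simply refers the reader to \cite[Sec.~5]{on de Jong} for proofs. Your direct approach --- using that the finite map $p:X^n\to X^{(n)}$ is small so that $p_*(E^{\boxtimes n}[n])$ is perverse and equal to the intermediate extension from the \'etale locus, then passing to the sign-isotypic summand, and computing stalks via proper base change and Frobenius reciprocity --- is exactly the standard argument one finds behind that citation. One minor remark on presentation: your phrasing of the smallness inequality is slightly nonstandard; since $p$ is finite, every fiber is $0$-dimensional and the smallness condition $2\dim p^{-1}(z) < \codim(\text{stratum})$ for non-open strata is immediate, so you could simply say ``$p$ is finite, hence small'' without invoking the stratification explicitly.
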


\bigskip

\sssec{Definition of Picard-Lefschetz oscillators}
\label{Definition of Picard-Lefschetz oscillators}

Let $V$ denote the $2$-dimensional standard representation of the Lefschetz-$\sl_2$:
$$V \ = \ \Qellbar(\tfrac{1}{2}) \oplus \Qellbar(-\tfrac{1}{2})$$
We denote by
$$\underline V \ := \ V \otimes \Qellbar_X$$
the corresponding constant local system of rank $2$ on the curve $X$ together with the induced action of the Lefschetz-$\sl_2$. For any integer $n \geq 1$ we then define the \textit{Picard-Lefschetz oscillator} $\CP_n$ on $X^{(n)}$ to be the $n$-th external exterior power of $\underline V \,$, shifted and twisted as follows:
$$\CP_n \ := \ \Lambda^{(n)} (\underline V) \, [n](\tfrac{n}{2})$$
Thus by Lemma \ref{external exterior powers} above $\CP_n$ is a perverse sheaf on $X^{(n)}$, and carries an action of the Lefschetz-$\sl_2$ by the functoriality of the external exterior power construction. Furthermore the definition and Lemma \ref{external exterior powers} together show:

\medskip

\begin{lemma}
\label{properties of PLOs}
Let the symmetric group $S_n$ act on the $n$-fold tensor power $V \otimes \cdots \otimes V$ by permuting the factors and additionally multiplying by the sign of the permutation, and consider the local system on the disjoint locus~$\overset{\circ}{X} {}^{(n)}$ associated to this representation. Then the IC-extension of this local system is equal to the Picard-Lefschetz oscillator $\CP_n$. In particular the perverse sheaf $\CP_n$ is semisimple. Finally, the natural factorization structure on the collection of Picard-Lefschetz oscillators $\CP_n$ respects the action of the Lefschetz-$\sl_2$.
\end{lemma}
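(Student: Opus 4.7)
The plan is to verify the three assertions in turn, each following relatively directly from the definition of $\CP_n$ combined with Lemma \ref{external exterior powers}.

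First I would identify the restriction of $\CP_n$ to the disjoint locus $\overset{\circ}{X} {}^{(n)}$. By Lemma \ref{external exterior powers}(b), the perverse sheaf $\CP_n$ is already the intermediate extension of this restriction, so it suffices to identify the latter. Unwinding the definition, on the disjoint locus $\Lambda^{(n)}(\underline V)$ is the sign-isotypic component of the pushforward $p_*(\underline V^{\boxtimes n})$ under the étale Galois cover $p : \overset{\circ}{X} {}^{n} \to \overset{\circ}{X} {}^{(n)}$ of group $S^n$. Since $\underline V$ is the constant local system with fiber $V$, the sheaf $\underline V^{\boxtimes n}$ on $\overset{\circ}{X} {}^{n}$ is the constant local system with fiber $V^{\otimes n}$, and the $S^n$-equivariance is precisely the permutation of tensor factors. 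Galois descent then identifies the sign-isotypic part of $p_*(\underline V^{\boxtimes n})$ with the local system on $\overset{\circ}{X} {}^{(n)}$ associated to the representation of $\pi_1(\overset{\circ}{X} {}^{(n)})$ obtained by pulling back the sign-twisted permutation representation of $S^n$ on $V^{\otimes n}$ along the monodromy surjection $\pi_1(\overset{\circ}{X} {}^{(n)}) \onto S^n$ of the cover $p$. This is the desired identification.

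Semisimplicity then follows formally: the sign-twisted permutation representation of the finite group $S^n$ on $V^{\otimes n}$ is completely reducible by Maschke's theorem in characteristic zero, and semisimplicity is preserved upon pullback along the surjection $\pi_1(\overset{\circ}{X} {}^{(n)}) \onto S^n$. Hence the restriction of $\CP_n$ to $\overset{\circ}{X} {}^{(n)}$ is a semisimple local system, concentrated in a single perverse degree. Its intermediate extension to $X^{(n)}$ splits as a direct sum of intermediate extensions of irreducible summands, each of which is a simple perverse sheaf on $X^{(n)}$, yielding the semisimplicity of $\CP_n$.

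Finally, for the compatibility of the factorization structure with the Lefschetz-$\sl_2$ action, I would argue by functoriality. The action of the Lefschetz-$\sl_2$ on $V$ induces a family of automorphisms of the local system $\underline V$. Since the external exterior power construction $E \mapsto \Lambda^{(n)}(E)$ and the factorization isomorphism in Lemma \ref{external exterior powers}(d) are both natural in the input local system $E$, they are automatically compatible with the automorphisms induced by the Lefschetz-$\sl_2$, which is precisely the assertion. The main obstacle, such as it is, lies in the first step: carefully tracking the $S^n$-equivariant structure through Galois descent in order to confirm that the sign-isotypic component yields the sign-twisted permutation representation rather than an untwisted variant. All remaining assertions then follow formally from this identification.
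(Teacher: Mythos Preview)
Your proposal is correct and follows exactly the route the paper indicates: the paper states that the lemma follows directly from the definition of $\CP_n$ together with Lemma \ref{external exterior powers}, and your argument is simply a careful unpacking of this, tracking the $S^n$-equivariant structure through Galois descent along the cover $\overset{\circ}{X}{}^n \to \overset{\circ}{X}{}^{(n)}$ and invoking Maschke and functoriality for the remaining two claims.
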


\medskip

Our choice of the term Picard-Lefschetz oscillators is due, on the one hand, to the appearance of the sign character in the action of the symmetric group in Lemma \ref{properties of PLOs} above; and, on the other hand, due to the appearance of the representation $V$: For $n=1$ the Picard-Lefschetz oscillator $\CP_1$ equals, up to shifts and twists, the constant rank-$2$ local system on the curve~$X$ whose fiber is equal to the standard representation $V$ of the Lefschetz-$\sl_2$; the latter is precisely the summand of the associated graded of the nearby cycles sheaf of the Picard-Lefschetz family from Subsection \ref{Nearby cycles of the Picard-Lefschetz family of hyperbolas} above consisting of those summands supported on the singular locus $\{p\}$ of the map $d: \BA^2 \to \BA^1$.

\bigskip

\ssec{Nearby cycles for $\VinBun_G$}

\mbox{} \medskip

To state our main theorem about nearby cycles for $\VinBun_G$ we will need the following definition:

\medskip

\sssec{Placing Picard-Lefschetz oscillators on $\VinBun_G$}
\label{Placing Picard-Lefschetz oscillators on VinBun}

We now define versions of the Picard-Lefschetz oscillators on the strata closures of the defect stratification of the $B$-locus from Subsection~\ref{The defect stratification} above. More precisely, we define versions $\widetilde \CP_{n_1, k, n_2}$ of the Picard-Lefschetz oscillators $\CP_k$ on the relative compactifications
$$\barBun_{B^-, n_1} \ \underset{\Bun_T}{\times} \ \bigl( \, X^{(k)} \, \times \barBun_{B,n_2} \bigr)$$
from Subsection~\ref{Compactifying the maps $f_{n_1, k, n_2}$} above; the latter map onto the strata closures in $\VinBun_G$ via the compactified maps
$$\bar{f}_{n_1, k, n_2}: \ \barBun_{B^-, n_1} \ \underset{\Bun_T}{\times} \ \bigl( \, X^{(k)} \, \times \barBun_{B,n_2} \bigr) \ \longto \ \VinBun_{G,B}$$
introduced in Subsection \ref{compactifying the maps sssec} above.

\medskip

To state the definition, let $(n_1, k, n_2)$ be any triple with $n_1 = n_2 - k$, as before. Then we define $\widetilde \CP_{n_1, k, n_2}$ as
$$\widetilde \CP_{n_1, k, n_2} \ \ := \ \ \IC_{\barBun_{B^-, n_1}} \underset{ \ \Bun_T}{\boxtimes} \ \CP_k \, \boxtimes \IC_{\barBun_{B,n_2}} \, ,$$
i.e., as the $*$-restriction of the external product
$$\IC_{\barBun_{B^-, n_1}} \boxtimes \ \CP_k \, \boxtimes \IC_{\barBun_{B,n_2}}$$
from the product space
$$\barBun_{B^-, n_1} \times \, X^{(k)} \, \times \barBun_{B,n_2}$$
to the fiber product
$$\barBun_{B^-, n_1} \ \underset{\Bun_T}{\times} \ \bigl( \, X^{(k)} \, \times \barBun_{B,n_2} \bigr) \, ,$$
shifted by $[- \dim \Bun_T]$ and twisted by $(- \tfrac{\dim \Bun_T}{2})$. Since the IC-sheaf of $\barBun_B$ is constant for $G=\SL_2$ by Subsection \ref{Basic properties}, we can rephrase the definition of $\widetilde \CP_{n_1, k, n_2}$ as follows. Let $g$ denote the genus of the curve $X$ and let $s_k$ denote the integer
$$s_k \ := \ \dim \barBun_{B^-, n_1} + \dim \barBun_{B,n_2} - \dim \Bun_T \ = \ 3g-3 + 2k \, .$$
Furthermore let
$$p_{n_1, k, n_2}: \ \ \barBun_{B^-, n_1} \ \underset{\Bun_T}{\times} \ \bigl( \, X^{(k)} \, \times \barBun_{B,n_2} \bigr) \ \ \longto \ \ X^{(k)}$$
denote the natural forgetful map. Then we can equivalently define:
$$\widetilde \CP_{n_1, k, n_2} \ \ := \ \ p_{n_1, k, n_2}^* \CP_k \ [s_k](\tfrac{s_k}{2})$$
Finally, the action of the Lefschetz-$\sl_2$ on the Picard-Lefschetz oscillator $\CP_k$ induces an analogous action on $\widetilde \CP_{n_1, k, n_2}$.

\bigskip

\sssec{Main theorem about nearby cycles}

We can now state our main theorem about nearby cycles for $\VinBun_G$. Its proof will be given in Section \ref{Nearby cycles} below. Recall from Subsection \ref{The $G$-locus and the $B$-locus} that the $G$-locus $\VinBun_{G,G}$ is smooth, so that its IC-sheaf is constant up to shifts and twists. Applying the nearby cycles functor $\Psi$ to this shifted constant sheaf and passing to the associated graded of its weight-monodromy filtration, we obtain a perverse sheaf on the $B$-locus $\VinBun_{G,B}$ carrying the monodromy action of the Lefschetz $\sl_2$. The result then is:

\bigskip

\begin{theorem}
\label{main theorem for nearby cycles}
There exists an isomorphism of perverse sheaves
$$\gr \, \Psi (\IC_{\VinBun_{G,G}})     \ \ \ \cong \ \ \ \bigoplus_{(n_1, k, n_2)} \bar{f}_{n_1, k, n_2, *}\ \widetilde \CP_{n_1, k, n_2}$$
which identifies the action of the Lefschetz-$\sl_2$ on the right hand side via the Picard-Lefschetz oscillators with the monodromy action on the left hand side. As before the direct sum runs over all triples $(n_1, k, n_2)$ with $n_1, n_2 \in \BZ$, $k \in \BZ_{\geq 0}$, and $n_1 = n_2 - k$.
\end{theorem}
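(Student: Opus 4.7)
My plan follows the strategy sketched in Subsection \ref{Proofs via local models}: reduce the theorem to the analogous assertion on the local models $Y^n$, and then prove the result for $Y^n$ by induction on the defect bound $n$. Since each $Y^n$ features the same singularities as $\sideset{_{\leq n}}{_G}\VinBun$, the theorem for $\VinBun_G$ over the open substack of defect $\leq n$ is equivalent to the analogous statement for $Y^n$; thus establishing the family of assertions for all $n \geq 1$ suffices. The key structural advantage is that, while $\VinBun_G$ itself does not factorize, the family $v \colon Y^n \to \BA^1$ does ``factorize in families,'' and consequently the nearby cycles sheaf $\Psi(\IC_{Y^n})$ is genuinely factorizable with respect to the defect divisor. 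This is what makes an inductive argument possible.

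The base case $n=1$ is the decisive computation. Using the embedding of $Y^1$ into a product of Beilinson--Drinfeld affine Grassmannians and writing out coordinates, one checks that locally (transversally to the stratum of defect $1$) the family $Y^1 \to \BA^1$ looks like the Picard--Lefschetz family $xy = t$. Then Lemma \ref{Psi for the Picard-Lefschetz family of hyperbolas} yields the desired formula: the nearby cycles associated graded contains precisely the standard representation $V$ of the Lefschetz-$\sl_2$ placed on the stratum of defect $1$, which is the defining description of $\CP_1$. This identifies not only the perverse sheaf structure but also the monodromy/Lefschetz-$\sl_2$ action, and establishes the theorem on the defect-$1$ locus. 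Contracting along a $\BG_m$-action from Section \ref{Geometry of the local models} then propagates the local computation to a global one on the full stratum $\sideset{_1}{_{G,B}}\VinBun$.

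For the inductive step, factorization in families implies that the restriction of $\Psi(\IC_{Y^n})$ to the disjoint locus $\overset{\circ}{X}{}^{(n)} \subset X^{(n)}$ splits as an $n$-fold external product of copies of the defect-$1$ answer, giving a local system on $\overset{\circ}{X}{}^{(n)}$ whose stalk is $V^{\otimes n}$ together with the induced $S_n$-equivariant structure coming from permutation of the marked points. The main nontrivial point is to verify that this equivariant structure is the \emph{alternating} one — i.e., that the symmetric group acts on $V^{\otimes n}$ by permutation twisted by the sign character — so that the resulting local system descends to $\overset{\circ}{X}{}^{(n)}$ as precisely the restriction of the Picard--Lefschetz oscillator in the sense of Lemma \ref{properties of PLOs}. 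I expect this sign issue to be the main obstacle: it is invisible in the defect-$1$ base case and is the reason the $\CP_k$ involve exterior rather than symmetric powers. The way to pin it down is to carry out the direct computation of the IC-stalks in defect $\leq 2$, as done in Subsection \ref{Picard-Lefschetz oscillators via intersection cohomology}, exploiting the explicit equations for $Y^2$ derived from the Beilinson--Drinfeld affine Grassmannian embedding; together with factorization, the case $n = 2$ determines the sign for all $n$ since $S_n$ is generated by transpositions.

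Having identified the restriction of $\gr \, \Psi$ to the disjoint locus with (the restriction of) $\bigoplus_{(n_1,k,n_2)} \bar{f}_{n_1,k,n_2,*}\widetilde{\CP}_{n_1,k,n_2}$ compatibly with the Lefschetz-$\sl_2$ action, the remaining task is to promote this to an isomorphism over all of the defect stratification. Purity of $\Psi(\IC_{\VinBun_{G,G}})$ via Gabber's theorem (Proposition \ref{Gabber's theorem}) implies that $\gr \, \Psi$ is a semisimple perverse sheaf, so each summand is the intermediate extension of its restriction to a smooth open subset. By the defect stratification (Proposition \ref{defect stratification proposition}), the map $\bar f_{n_1,k,n_2}$ is a finite resolution of singularities (Corollary \ref{resolution of singularities}), so pushforward along it preserves IC-extensions; combined with Lemma \ref{properties of PLOs}(b), this identifies the intermediate extension of the local-system description on the disjoint locus with the asserted pushforwards of $\widetilde{\CP}_{n_1,k,n_2}$. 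The Lefschetz-$\sl_2$ action is transported along this identification automatically, since it was already matched on the disjoint locus and intermediate extension is functorial.
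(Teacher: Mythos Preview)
Your overall architecture matches the paper's: reduce to the local models $Y^n$, induct on $n$, handle $n=1$ via the explicit Picard--Lefschetz family, use factorization in families to get the answer over the disjoint locus of $X^{(n)}$, and pin down the $S_n$-equivariant structure by the $n=2$ computation (reducing via ``transpositions generate''). All of that is correct and is exactly how the paper proceeds.

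There is, however, a genuine gap in your final paragraph. You write that Gabber's theorem makes $\gr\,\Psi$ semisimple, ``so each summand is the intermediate extension of its restriction to a smooth open subset,'' and then you take that open subset to be the preimage of the disjoint locus $\overset{\circ}{X}{}^{(n)}$. Semisimplicity alone does not justify this: a simple constituent could perfectly well be supported on (the preimage of) a diagonal in $X^{(n)}$, in which case it is invisible over $\overset{\circ}{X}{}^{(n)}$ and your identification there says nothing about it. Ruling out such diagonal-supported simples is a substantive step, not a formality; the paper isolates it as Lemma \ref{no summands over the diagonal lemma} and devotes Subsection \ref{Fighting simples on the main diagonal} to its proof. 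The argument there is not internal to the nearby cycles setup: it passes through the triangle for $i_B^* j_{G,*}\IC$, uses the contraction principle to identify $s^*j_{G,*}\IC$ with $\widetilde\Omega_n$ tensored with $H^*(\BA^1\setminus\{0\})$ (Proposition \ref{omega tilde proposition}), and then invokes the explicit Grothendieck-group description of $\widetilde\Omega_n$ from \cite{BG2} (Lemma \ref{Omega tilde in the Grothendieck group}) to see that no diagonal-supported simples can appear. This step also genuinely uses the induction hypothesis (to control the summand not supported on the maximal-defect stratum, via Lemma \ref{restriction along s of the usual summands}). Without it, your passage from ``agreement on the disjoint locus'' to ``agreement everywhere'' does not go through.

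A secondary point: your inductive step does not visibly separate the summand of $\gr\,\Psi$ supported on the maximal-defect stratum ${}_nY^n_B$ from the complementary summand. In the paper this splitting (Subsection \ref{Splitting according to loci of support}) is what allows the induction hypothesis to dispatch the ``not on ${}_nY^n_B$'' part cleanly (Lemma \ref{interplay away from worst stratum}), leaving only the maximal-defect summand to be analyzed via factorization and the $n=2$ sign check. Your sketch conflates these, which is harmless in spirit but makes the role of the induction hypothesis unclear and obscures exactly where the diagonal-support lemma is needed.
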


\medskip

\sssec{Remark}
By Subsection \ref{Compactifying the maps $f_{n_1, k, n_2}$} above, the summands $\bar{f}_{n_1, k, n_2, *}\ \widetilde \CP_{n_1, k, n_2}$
on the right hand side of Theorem \ref{main theorem for nearby cycles} are equal to the intermediate extension of the perverse sheaf
$$\IC_{\Bun_{B^-, n_1}} \underset{ \ \Bun_T}{\boxtimes} \ \CP_k \, \boxtimes \IC_{\Bun_{B,n_2}}$$
from the stratum $\sideset{_{(n_1, k, n_2)}}{_{G,B}}\VinBun$ to its closure in $\VinBun_{G,B}$.

\bigskip\bigskip

\ssec{Intersection cohomology of $\VinBun_G$}

\mbox{} \medskip

To state our main theorem about the IC-sheaf of $\VinBun_G$, we introduce the following notation. Given a representation $\rho$ of the symmetric group $S^k$, we denote by $\IC(\rho)$ the IC-extension of the corresponding local system on the disjoint locus of $X^{(k)}$. Furthermore, using the same notation as in the definition of $\widetilde \CP_{n_1, k, n_2}$ above, we define
$$\widetilde{\IC}(\rho)_{n_1, k, n_2} \ \ := \ \ \IC_{\barBun_{B^-, n_1}} \underset{ \ \Bun_T}{\boxtimes} \ \IC(\rho) \, \boxtimes \IC_{\barBun_{B,n_2}}$$
on the fiber product
$$\barBun_{B^-, n_1} \ \underset{\Bun_T}{\times} \ \bigl( \, X^{(k)} \, \times \barBun_{B,n_2} \bigr) \, .$$
Using the projection maps $p_{n_1, k, n_2}$ from Subsection \ref{Placing Picard-Lefschetz oscillators on VinBun} above one can equivalently define
$$\widetilde{\IC}(\rho)_{n_1, k, n_2} \ \ := \ \ p_{n_1, k, n_2}^* \, \IC(\rho) \, [s_k](\tfrac{s_k}{2}) $$
where the integers $s_k$ are defined as in Subsection \ref{Placing Picard-Lefschetz oscillators on VinBun} above. We can then state:

\bigskip

\begin{theorem}
\label{main theorem for intersection cohomology}
The associated graded with respect to the weight filtration of the restriction $\IC_{\VinBun_G} \big|^*_{\VinBun_{G,B}}[-1](-\tfrac{1}{2})$ is equal to:
$$\bigoplus_{(n_1, k, n_2, r)} \bar{f}_{n_1, k, n_2, *} \ \widetilde{\IC}(\rho_{k-r, r})_{n_1, k, n_2} \, \otimes \Qellbar (\tfrac{k}{2} - r) \, .$$

\medskip

\noindent Here we denote by $\rho_{(k-r,r)}$ the irreducible representation of $S_k$ corresponding to the Young diagram with $k-r$ boxes in the first column and~$r$ boxes in the second column. The direct sum runs over all quadruples $(n_1, k, n_2, r)$ where $n_1, n_2 \in \BZ$ and $k,r \in \BZ_{\geq 0}$, satisfying that $n_1 = n_2 - k$ and $0 \leq r \leq \tfrac{k}{2}$.
\end{theorem}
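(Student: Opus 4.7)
The strategy is to extract this statement from the nearby cycles theorem by taking the kernel of the monodromy operator, and then to compute this kernel on the Picard-Lefschetz oscillators via Schur-Weyl duality. First, by Lemma \ref{IC via Psi} the restriction $\IC_{\VinBun_G}|^*_{\VinBun_{G,B}}[-1](-\tfrac{1}{2})$ coincides with the perverse kernel of the monodromy operator $N$ acting on $\Psi(\IC_{\VinBun_{G,G}})$; and by Lemma \ref{kernel and associated graded commute}, the associated graded of this kernel with respect to the induced filtration coincides with the kernel of $N$ acting on $\gr\,\Psi(\IC_{\VinBun_{G,G}})$. Invoking Theorem \ref{main theorem for nearby cycles} and using that each $\bar f_{n_1, k, n_2}$ is finite --- so that $\bar f_{n_1, k, n_2, *}$ is $t$-exact on perverse sheaves and commutes with $\ker$ --- the computation reduces to evaluating the kernel of the Lefschetz-$\sl_2$ lowering operator on each Picard-Lefschetz oscillator $\CP_k$, since the $\sl_2$-action on $\widetilde{\CP}_{n_1, k, n_2} = p_{n_1, k, n_2}^* \CP_k[s_k](\tfrac{s_k}{2})$ factors entirely through the $\CP_k$ factor.

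I would then compute $\ker(N)$ on $\CP_k$ via Schur-Weyl duality. By Lemma \ref{properties of PLOs} the oscillator $\CP_k$ is the IC-extension of the local system on $\overset{\circ}{X}{}^{(k)}$ attached to the $S_k$-representation $\sgn \otimes V^{\otimes k}$, with the Lefschetz-$\sl_2$ acting diagonally (and commuting with $S_k$). Since $\dim V = 2$, only partitions with at most two rows contribute to the Schur-Weyl decomposition of $V^{\otimes k}$, and after the sign twist one obtains
$$\sgn \otimes V^{\otimes k} \ = \ \bigoplus_{r=0}^{\lfloor k/2 \rfloor} \, S_{(k-r,r)}(V) \otimes \rho_{k-r,r}$$
as $\sl_2 \times S_k$-modules, where tensoring with $\sgn$ converts the Specht module $M_{(k-r,r)}$ into its conjugate $M_{(k-r,r)'} = \rho_{k-r,r}$ (matching the theorem's convention of indexing by column lengths). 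Taking IC-extensions yields a semisimple decomposition
$$\CP_k \ = \ \bigoplus_{r=0}^{\lfloor k/2 \rfloor} \, S_{(k-r,r)}(V) \otimes \IC(\rho_{k-r,r}),$$
where the first factor appears as a graded vector space carrying its natural Tate-twisted structure.

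For each irreducible $\sl_2$-module $S_{(k-r,r)}(V)$, the kernel of the lowering operator $N$ is the one-dimensional lowest-weight subspace of Cartan weight $2r - k$; since Cartan weight equals weight by Gabber (Proposition \ref{Gabber's theorem}) and $\Qellbar(n)$ has weight $-2n$, this subspace is canonically identified as a Weil sheaf with $\Qellbar(\tfrac{k}{2} - r)$. Combining these ingredients gives
$$\ker\!\bigl( N \text{ on } \widetilde{\CP}_{n_1, k, n_2} \bigr) \ = \ \bigoplus_{r=0}^{\lfloor k/2 \rfloor} \, \widetilde{\IC}(\rho_{k-r,r})_{n_1, k, n_2} \otimes \Qellbar(\tfrac{k}{2} - r),$$
and pushing forward along the finite maps $\bar f_{n_1, k, n_2}$ and summing over the triples $(n_1, k, n_2)$ yields the desired formula. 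The most delicate step is the bookkeeping: one must track the sign twist carefully to ensure that $\rho_{k-r,r}$ (indexed by column lengths) appears rather than its conjugate, and simultaneously pair the $\sl_2$-lowest-weight spaces with the correct Tate twists $\Qellbar(\tfrac{k}{2} - r)$.
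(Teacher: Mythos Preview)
Your proposal is correct and follows essentially the same route as the paper: reduce to the kernel of $N$ on $\gr\,\Psi$ via Lemmas \ref{IC via Psi} and \ref{kernel and associated graded commute}, invoke Theorem \ref{main theorem for nearby cycles} and the finiteness of the $\bar f_{n_1,k,n_2}$, and then compute $\ker(N)$ on $\CP_k$ by the Schur--Weyl decomposition of the sign-twisted $S_k$-representation $V^{\otimes k}$, picking off the lowest-weight line of each $\sl_2$-irreducible as the appropriate Tate twist. Your bookkeeping of the sign twist (conjugating the partition so that $\rho_{k-r,r}$ is indexed by column lengths) and of the Tate twist on the lowest-weight line matches the paper's Lemmas \ref{decomposing PLO} and \ref{PLO kernel}.
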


\bigskip

Theorem \ref{main theorem for intersection cohomology} yields an explicit answer for the primitive parts $P_i$; in general it is however not clear how to compute the IC-stalks from the $P_i$. In the present situation it is however possible, due to a geometric fact visible on the level of the local models we will construct in Section \ref{local models for VinBun} below. For this reason we comment on the computation of IC-stalks in Remark \ref{IC stalks} below. The proof of Theorem \ref{main theorem for intersection cohomology} will be given in Section \ref{Intersection cohomology} below.

\bigskip\bigskip\bigskip

\ssec{Stalks of the $*$-extension}

\mbox{} \medskip

We now state our result describing the stalks of the $*$-extension of the constant sheaf; we refer the reader to Subsections \ref{Stalks of the *-extension of the constant sheaf}, \ref{Relation to the geometric Langlands program}, and \ref{An application to Drinfeld's strange invariant bilinear form} of the introduction, as well as to Section \ref{An application: Computation of Drinfeld's function}, for motivation and for applications.

\medskip

To state the result, let
$$j_G: \ \VinBun_{G,G} \ \longinto \ \VinBun_G$$
denote the open immersion of the $G$-locus, and let $i_{n_1, k, n_2}$ denote the inclusion of the stratum
$$\sideset{_{(n_1, k, n_2)}}{_{G,B}}\VinBun \ = \ \Bun_{B^-, n_1} \ \underset{\Bun_T}{\times} \ \bigl( \, X^{(k)} \, \times \Bun_{B,n_2} \bigr)$$
into the $B$-locus $\VinBun_{G,B}$. Our result then expresses the $*$-restriction along $i_{n_1, k, n_2}$ of the $*$-extension along $j_G$ of the constant sheaf on the $G$-locus $\VinBun_{G,G}$ in terms of a certain complex $\widetilde \Omega_k$. This complex has already been studied in the work \cite{BG2} of Braverman and Gaitsgory, and is defined as follows.

\medskip

\sssec{Definition of $\widetilde \Omega_k$}
Let ${}_0Z^k$ denote the \textit{open Zastava space} from \cite{FFKM}, \cite{BFGM}; see Subsection \ref{Zastava spaces} below for its definition. As is explained in Subsection \ref{Zastava spaces}, the space ${}_0Z^k$ is smooth and comes equipped with a projection map to the $k$-th symmetric power of the curve
$$\pi_Z: \ {}_0Z^k \ \longto \ X^{(k)} \, .$$
We then define $\widetilde \Omega_k$ as the pushforward
$$\widetilde \Omega_k \ := \ \pi_{Z,!} \bigl( \IC_{{}_0Z^k}\bigr) \ = \ \pi_{Z,!} \bigl( (\Qellbar)_{{}_0Z^k}[\dim_{{}_0Z^k}](\tfrac{1}{2} \dim_{{}_0Z^k})\bigr) \, .$$
We refer the reader to Subsection \ref{Compactly supported cohomology of open Zastava spaces} below for a more detailed discussion of the complex $\widetilde \Omega_k$.
We will in fact express our result in terms of the Verdier dual
$$\BD \, \widetilde \Omega_k \ = \ \pi_{Z,*} \bigl( \IC_{{}_0Z^k}\bigr)$$
of $\widetilde \Omega_k$.

\bigskip

Using the same notation as before, our result reads:

\bigskip

\begin{theorem}
\label{ij}
The $*$-restriction of the $*$-extension of the IC-sheaf of the $G$-locus
$$i_{n_1, k, n_2}^* \, j_{G,*} \, \IC_{\VinBun_{G,G}}$$
is equal to
$$\IC_{\Bun_{B^-, n_1}} \underset{\Bun_T}{\boxtimes} \Bigl( \Bigl( \BD \widetilde \Omega_k [2k](k) \, \otimes \, H^*(\BA^1 \setminus \{0\})[1](\tfrac{1}{2}) \Bigl) \ \boxtimes \ \IC_{\Bun_{B, n_2}} \Bigr) \, .$$
\end{theorem}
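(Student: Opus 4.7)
The plan is to reduce the statement to a local computation via a Zastava-style local model transverse to the stratum, combined with the smoothness of the $\Bun_{B^{\pm}}$-directions and a separate analysis of the $v$-direction responsible for the $H^*(\BA^1 \setminus \{0\})$ factor. By Lemma \ref{G-locus product decomposition} we have $\VinBun_{G,G} \cong \Bun_G \times (\BA^1 \setminus \{0\})$, so $\IC_{\VinBun_{G,G}}$ is (up to shift and twist) the constant sheaf. Since the stratum $\sideset{_{(n_1, k, n_2)}}{_{G,B}}\VinBun$ is $\Bun_{B^-, n_1} \times_{\Bun_T} (X^{(k)} \times \Bun_{B, n_2})$ with smooth $\Bun_{B^{\pm}}$-factors, the first step is to show that $i_{n_1,k,n_2}^* j_{G,*} \IC_{\VinBun_{G,G}}$ is the $*$-pullback to the stratum of an object living on $X^{(k)}$, twisted by the IC-sheaves of the $\Bun_{B^{\pm}}$-factors.

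To see this, I would first construct, in a Zariski or smooth neighborhood of a point $p$ of the stratum, a local description of $\VinBun_G$ as a product of a smooth neighborhood in $\Bun_{B^-, n_1} \times_{\Bun_T} \Bun_{B, n_2}$ with a Vinberg-style degeneration of a fiber of the open Zastava space ${}_0Z^k$ over the defect divisor of $p$. This is exactly the role played by the local models $Y^k$ to be constructed in Section \ref{local models for VinBun}: by the factorization-in-families property outlined in Subsection \ref{Proofs via local models} of the introduction, the singularities of $\VinBun_G$ transverse to the stratum are modeled by $Y^k$, whose fibers over $\BA^1 \setminus \{0\}$ recover the Zastava space $Z^k$. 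The key geometric identification is that, at a point of the stratum whose defect divisor is $D \in X^{(k)}$, the deformations of $(E_1, E_2, \varphi)$ inside the $G$-locus, with $(E_1 \onto M_1)$ and $(M_2 \longintointo E_2)$ fixed, are in natural bijection with pairs consisting of a point of the fiber $\pi_Z^{-1}(D)$ of ${}_0Z^k \to X^{(k)}$ over $D$ together with a nonzero value of $v \in \BA^1 \setminus \{0\}$.

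Granting this local identification, the computation of the $*$-stalk of $j_{G,*} \IC_{\VinBun_{G,G}}$ at $p$ becomes the cohomology of a small neighborhood of $p$ inside the $G$-locus, which factors as $H^*({}_0Z^k|_D) \otimes H^*(\BA^1 \setminus \{0\})$ up to shift and twist. Performing this computation in families over $X^{(k)}$, proper base change along $\pi_Z: {}_0Z^k \to X^{(k)}$ then identifies the $X^{(k)}$-direction contribution with $\pi_{Z,*}(\IC_{{}_0Z^k}) = \BD \widetilde{\Omega}_k$, with the dimension of $\VinBun_{G,G}$ relative to the stratum producing precisely the shift $[2k](k)$ asserted in the statement. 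The remaining factor $H^*(\BA^1 \setminus \{0\})[1](\tfrac{1}{2})$ is the transverse contribution of the smooth $v$-coordinate, and the outer box product with $\IC_{\Bun_{B^{\pm}}}$ records the smoothness of the remaining directions. Assembling these three contributions yields the formula.

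The main obstacle is the geometric identification in the second paragraph: one must establish that, after passing to a smooth local model, the link inside the $G$-locus of a boundary point of defect $k$ is indeed the fiber of ${}_0Z^k \to X^{(k)}$ times $\BA^1 \setminus \{0\}$. Equivalently, one must check that the local model $Y^k$, restricted to $v \neq 0$, is smooth-locally isomorphic to ${}_0Z^k \times (\BA^1 \setminus \{0\})$ over its image in the stratum; this relies on the explicit construction of $Y^k$ via embeddings into products of Beilinson--Drinfeld affine Grassmannians that will be carried out in Sections \ref{local models for VinBun} and \ref{Geometry of the local models}. Once this is in place, the remainder of the proof is a fairly mechanical application of proper base change, the projection formula, and bookkeeping of shifts and twists.
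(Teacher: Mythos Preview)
Your overall strategy matches the paper's: reduce to the local models $Y^k$ (as in Subsection \ref{Restatements of the main theorems for the local models}), use the product decomposition $Y^k_G \cong {}_0Z^k \times (\BA^1 \setminus \{0\})$ of Lemma \ref{G-locus decomposition for the local models}, and identify the answer with $\pi_{Z,*}(\IC_{{}_0Z^k}) = \BD\widetilde\Omega_k$ tensored with $H^*(\BA^1\setminus\{0\})$. The smooth $\Bun_{B^\pm}$ factors come along for free, as you say.

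However, the justification of the central step has a genuine gap. You write that ``proper base change along $\pi_Z: {}_0Z^k \to X^{(k)}$'' identifies the $X^{(k)}$-contribution with $\pi_{Z,*}(\IC_{{}_0Z^k})$, but $\pi_Z$ is \emph{not} proper (the open Zastava space is affine over $X^{(k)}$ with positive-dimensional fibers), so proper base change does not apply. More fundamentally, the $*$-stalk of $j_{G,*}F$ at a boundary point is not literally the cohomology of the fiber of $Y^k_G \to X^{(k)}$: the open inclusion $j_G$ does not factor through any product structure on all of $Y^k$, only on $Y^k_G$, so knowing the link inside the $G$-locus does not by itself compute $s^* j_{G,*}$. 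Your heuristic ``cohomology of a small neighborhood inside the $G$-locus'' is the right intuition, but it needs an actual mechanism to become a proof.

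The paper supplies that mechanism: a contracting $\BG_m$-action on $Y^k$ (Subsection \ref{The contraction for the local models $Y^n$}) which contracts all of $Y^k$ onto the section $s: X^{(k)} \hookrightarrow Y^k$. The contraction principle (Lemma \ref{contraction principle}) then gives, for any $\BG_m$-monodromic $F$, a natural isomorphism $s^*F \cong \pi_*F$. Applied to $F = j_{G,*}\IC_{Y^k_G}$ this yields
\[
s^* j_{G,*} \IC_{Y^k_G} \;=\; \pi_* j_{G,*} \IC_{Y^k_G} \;=\; \pi_{G,*} \IC_{Y^k_G},
\]
and \emph{now} the product decomposition of $Y^k_G$ finishes the computation (see Proposition \ref{omega tilde proposition}, which is the Verdier-dual form). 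So the embeddings into Beilinson--Drinfeld Grassmannians that you mention are relevant, but only indirectly: they are used in Section \ref{Geometry of the local models} to construct the contracting $\BG_m$-action, which is the real engine. Replace your ``proper base change'' step with the contraction principle and your argument goes through.
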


\bigskip

Theorem \ref{ij} will be proven in Proposition \ref{omega tilde proposition} below, on the level of the \textit{local models} which we introduce in the next section.

\bigskip\bigskip\bigskip

\section{Local models for $\VinBun_G$}
\label{local models for VinBun}

\bigskip
\ssec{The absolute and relative local models}

\mbox{} \medskip

\sssec{Definition of the relative model}
\label{Definition of the relative model}

Let $n \in \BZ_{\geq 1}$. An $S$-point of the \textit{relative local model} $Y^n_{rel}$ consists of the data of a triple $(E_1, E_2, \varphi)$ on $X \times S$ as in the definition of $\VinBun_G$, together with a line subbundle $L_1 \longintointo E_1$ and a line quotient bundle $E_2 \longonto L_2$, satisfying the following conditions: For every geometric point $\bar{s} \to S$ we require the restriction to $X \times \bar{s}$ of the composite map
$$L_1 \ \longintointo \ E_1 \ \stackrel{\varphi}{\longto} \ E_2 \ \longonto \ L_2$$
to be an isomorphism generically on the curve $X \times \bar{s}$. Furthermore, for each $\bar{s} \to S$ we require the resulting injection of line bundles
$$L_1|_{X \times \bar{s}} \ \longinto \ L_2|_{X \times \bar{s}}$$
to be of relative degree $n$, i.e., we require it to correspond to an effective divisor of degree $n$ on the curve $X \times \bar{s}$. Note that these conditions in particular imply that the composite map $L_1 \to L_2$ is an injection of coherent sheaves on $X \times S$.

\medskip

\sssec{Definition of the absolute model}
Next consider the natural map
$$Y^n_{rel} \ \longto \ \Bun_T$$
defined by remembering only the line bundle $L_2$. We define the \textit{absolute local model} $Y^n$ as the fiber of this map over the trivial line bundle $\CO_X$; i.e., the absolute model $Y^n$ is obtained from the relative model $Y^n_{rel}$ by requiring the ``background'' line bundle $L_2$ to be the trivial line bundle. It is not hard to see that the absolute local model $Y^n$ is in fact a scheme.

\medskip

\sssec{More definitions}
\label{More definitions}

The following definitions and notation apply to both~$Y^n$ and $Y^n_{rel}$. We only state them for $Y^n$, the case of $Y^n_{rel}$ being analogous. By construction the space $Y^n$ admits a forgetful map to $\VinBun_G$, and in particular a natural map to $\BA^1 = T_{adj}^+$. Using the latter map we define the $G$-\textit{locus} $Y^n_G$ and the $B$-\textit{locus} $Y^n_B$ as for $\VinBun_G$ in Subsection \ref{The $G$-locus and the $B$-locus} above. The \textit{defect-free locus} ${}_0Y^n$ of $Y^n$ is defined exactly as for $\VinBun_G$; i.e., it is the inverse image of $\sideset{_0}{_G}\VinBun$ under the forgetful map $Y^n \to \VinBun_G$. As for $\VinBun_G$ we have:

\medskip

\begin{lemma}
The restriction of the map $Y^n_G \to \BA^1$ to the defect-free locus~${}_0Y^n_G$ is smooth; in particular the open subscheme ${}_0Y^n_G$ of $Y^n$ is smooth.
\end{lemma}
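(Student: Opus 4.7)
The plan is to mirror exactly the argument given above for the smoothness of $\sideset{_0}{_G}\VinBun\to\BA^1$. Note first that on $Y^n_G$ the map $\varphi$ is forced to be an isomorphism (since $\det\varphi$ is an invertible function), hence nowhere vanishing, so ${}_0Y^n_G = Y^n_G$ and it suffices to show that the restriction of $v$ to ${}_0Y^n$ is smooth. The second sentence of the lemma then follows immediately, since $\BA^1$ is smooth. The first step is to identify ${}_0Y^n$ with a classical mapping stack. Let
\[
\tilde Q\ :=\ B^-\backslash\Mat^{\geq 1}_{2\times 2}/B,
\]
with $B,B^-$ the standard Borels of $G=\SL_2$ acting by left and right translation. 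A map $X\to\tilde Q$ packages the data of a $B$-bundle $(E_1\supset L_1)$, a $B^-$-bundle $(E_2\twoheadrightarrow L_2)$, and a morphism $\varphi:E_1\to E_2$ of the underlying $\SL_2$-bundles which is pointwise of rank $\geq 1$. Thus ${}_0Y^n_{rel}$ is a union of connected components of $\Maps(X,\tilde Q)$ fixed by the choice of $n_1=\deg L_1$, $n_2=\deg L_2$ with $n_2-n_1=n$, and the absolute model ${}_0Y^n$ is its base change along the smooth map $\mathrm{pt}\to\Bun_T$ corresponding to $L_2=\CO_X$, which preserves smoothness of any map to $\BA^1$. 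The determinant on matrices descends to $\tilde d:\tilde Q\to\BA^1$, which induces $v$.

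The second step is to repeat the derived-mapping-stack argument verbatim. Replacing classical mapping stacks by derived ones yields a morphism
\[
v_{der}:\ \Maps_{der}(X,\tilde Q)\ \longto\ \Maps_{der}(X,\BA^1),
\]
whose base change along $\BA^1=\Maps(X,\BA^1)\to\Maps_{der}(X,\BA^1)$ recovers $v$; hence it suffices to prove $v_{der}$ is smooth. As in the earlier argument, the fiber of the relative tangent complex at a geometric point $f:X\to\tilde Q$ equals
\[
T_{v_{der}}\big|_f\ =\ R\Gamma(X,\,f^*T_{\tilde Q/\BA^1}),
\]
so it is enough to show that $T_{\tilde Q/\BA^1}$ is concentrated in degree $-1$, which in turn reduces to showing that every fiber of $\tilde d$ is a disjoint union of classifying stacks of smooth algebraic groups.

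The third step is the fiber calculation. For $t\in\BG_m$ the fiber of $\det$ on $\Mat^{\geq 1}_{2\times 2}$ is a coset of $\SL_2$ on which $B^-\times B$ acts, decomposing it into the two Bruhat cells with smooth stabilizers. For $t=0$ the fiber consists of nonzero rank-one matrices, and the $B^-\times B$-orbits are parameterized by the pair (Bruhat position of image line, Bruhat position of kernel line)$\,\in\,\BP^1/B^-\times\BP^1/B$, again with smooth stabilizers. This yields the smoothness of $v_{der}$, and hence of $v|_{{}_0Y^n}$; restriction to $\BA^1\smallsetminus\{0\}$ completes the proof. The hard part of this plan is the bookkeeping in Step 1: one must verify that the pointwise rank-$\geq 1$ condition built into $\tilde Q$ agrees exactly with the defect-free condition defining ${}_0Y^n$ (no extra generic condition is slipped in), that the degree and trivialization conditions correspond respectively to choosing connected components of $\Maps(X,\tilde Q)$ and to smooth base change along $\mathrm{pt}\to\Bun_T$, and that the orbit structure and smoothness of stabilizers on the rank-one stratum hold as claimed---all of which amount to short direct computations.
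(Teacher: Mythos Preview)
Your overall plan is the right one, and Steps 1--2 are fine; the gap is in Step 3. The key difference from the $\VinBun_G$ argument is that the $B^-\times B$ action on each fiber of $\det:\Mat^{\ge 1}_{2\times 2}\to\BA^1$ is \emph{not} transitive, unlike the $\SL_2\times\SL_2$ action. So the fiber $[\SL_2/(B^-\times B)]$ of $\tilde d$ over $t\neq 0$ is not a disjoint union of classifying stacks: it is a connected stack with two points (the Bruhat strata), and at the closed Bruhat point the tangent complex of this fiber has nonzero $H^0$ (the $1$-dimensional normal direction to the small cell). The same happens over $t=0$, where the four $B^-\times B$-orbits on rank-one matrices have dimensions $3,2,2,1$. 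Having ``smooth stabilizers'' is not enough; you need transitivity to conclude the quotient is a classifying stack. Thus $T_{\tilde Q/\BA^1}$ is genuinely a two-term complex in degrees $[-1,0]$ with nonvanishing $\mathcal H^0$ on the non-open strata, and the reduction you claim fails.

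What repairs the argument is exactly the ``extra generic condition'' you dismissed. One checks that in every fiber of $\tilde d$ the open $B^-\times B$-orbit is precisely the locus where the $(1,1)$ matrix entry is nonzero, i.e.\ where the composite $L_1\to L_2$ is nonzero; at those points $\mathcal H^0(T_{\tilde Q/\BA^1})=0$. Since a point of ${}_0Y^n$ has $L_1\to L_2$ injective, the corresponding map $f:X\to\tilde Q$ lands in the open orbit away from a finite set of points of $X$. Hence $\mathcal H^0(f^*T_{\tilde Q/\BA^1})$ is a torsion sheaf on the curve, so it has no $H^1$, and the hypercohomology spectral sequence shows $R\Gamma(X,f^*T_{\tilde Q/\BA^1})$ is still concentrated in degrees $[-1,0]$. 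This gives smoothness of $v_{der}$ at every point of ${}_0Y^n$. So your approach works, but only after acknowledging that ${}_0Y^n$ is a genuine open substack of $\Maps(X,\tilde Q)$ (not a union of components) cut out by the generic-injectivity condition, and that this condition is essential rather than incidental.
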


\medskip

The space $Y^n$ furthermore admits a natural projection map to the $n$-th symmetric power of the curve
$$\pi: \ Y^n \ \longto \ X^{(n)} \, .$$
Namely, recall that an $S$-point of $X^{(n)}$ consists of a line bundle $L$ on $X \times S$ together with a map of coherent sheaves $L \to \CO_{X \times S}$ which is injective of relative degree $n$ whenever restricted to $X \times \bar{s}$ for every geometric point $\bar{s} \to S$. The map $\pi$ is then defined by only remembering the composite map of line bundles $L_1 \to L_2 = \CO_{X \times S}$.

\medskip

As for $\VinBun_{G,B}$ every $k$-point in the $B$-locus $Y^n_B$ admits a unique factorization
$$L_1 \ \longintointo \ E_1 \longonto M_1 \longto M_2 \longintointo E_2 \ \longonto L_2$$
with notation as above. As before we call the effective divisor corresponding to $M_1 \into M_2$ the \textit{defect divisor} and its degree the \textit{defect}. The $B$-locus~$Y^n_B$ is stratified according to defect degrees just like $\VinBun_{G,B}$. To state the analogous result, we first recall:

\medskip

\sssec{Zastava spaces}
\label{Zastava spaces}

In \cite{FFKM}, \cite{BFGM} certain local models for the relative compactifications $\barBun_B$ from Subsection \ref{Compactifying the maps $f_{n_1, k, n_2}$}, the \textit{Zastava spaces}, were introduced. We recall now their definition for $G = \SL_2$.

\medskip

Let $n \in \BZ_{\geq 0}$. Then an $S$-point of the relative Zastava space $Z^n_{rel}$ consists of an $S$-point $L \into E$ of $\barBun_B$ together with a ``background'' line bundle $L'$ on $X \times S$ and a surjection $E \onto L'$, subject to the following conditions. First, one requires that for every geometric point $\bar{s} \to S$ the restriction of the composite map
$$L \ \longinto \ E \ \longonto \ L'$$
to $X \times \bar{s}$ is an isomorphism generically on the curve $X \times \bar{s}$. Second, the resulting injective map of line bundles $L \into L'$ on $X \times S$ is required to be of relative degree $n$ on each $X \times \bar{s}$.

\medskip

As before one defines an absolute version $Z^n$ of $Z^n_{rel}$ by forcing the ``background'' line bundle $L'$ to be the trivial line bundle. The absolute Zastava space $Z^n$ is in fact a scheme. Next, the notation
$$Z^n_{(\Bun_{T,d})}$$
refers to the relative version, but with the degree of the ``background'' line bundle~$L'$ being required to be equal to the integer $d$. Similarly, for the opposite Borel $B^-$, the space
$$Z^{-,n}_{(\Bun_{T,-n})}$$
parametrizes the data
$$L' \ \longintointo \ E \ \longto \ L$$
where now $L'$ is a ``background'' line subbundle of $E$ of degree $-n$, and the composite map $L' \to L$ is required to be an isomorphism generically on $X$ of relative degree~$n$.

\medskip

We denote by ${}_0Z^n$ the open subscheme of the absolute Zastava space obtained by requiring that the injection $L \into E$ is in fact a subbundle map, and similary for the relative versions. We will refer to ${}_0Z^n$ and its relative versions as the \textit{open Zastava space}. Finally, the absolute and relative Zastava spaces afford natural maps
$$\pi_Z: \ Z^n \ \longto \ X^{(n)}$$
defined as in Subsection \ref{More definitions} above; the absolute Zastava spaces are in fact \textit{factorizable}, in the sense of Subsection \ref{Factorization}, with respect to these maps.

\medskip

We refer the reader to \cite{FM}, \cite{FFKM}, and \cite{BFGM} for more background on the Zastava spaces.

\medskip

\sssec{The $G$-locus of $Y^n$ in terms of Zastava spaces}

Let $Y^n_{c=1}$ denote the fiber of the natural map $Y^n \to \BA^1$ over the element $c=1 \in \BA^1$. Then directly from the definitions one sees that $Y^n_{c=1}$ agrees with the open Zastava space ${}_0Z^n$. In fact we have the following analog for the $G$-locus $Y^n_G$ of Lemma \ref{G-locus product decomposition} for $\VinBun_G$. Given an $S$-point
$$L_1 \ \longintointo \ E_1 \ \stackrel{\cong}{\longto} \ E_2 \ \longonto \ \CO_{X \times S}$$
of $Y^n_G$ we can define an $S$-point
$$L_1 \ \longintointo \ E_1 \ \longonto \ \CO_{X \times S}$$
of ${}_0Z^n$ by composing the last two maps; furthermore, we obtain the $S$-point $\det \varphi$ of $\BA^1$. Then Lemma \ref{G-locus product decomposition} above implies:

\medskip

\begin{lemma}
\label{G-locus decomposition for the local models}
The natural map
$$Y^n_G \ \ \ \longto \ \ \ {}_0Z^n \times (\BA^1 \setminus \{0\})$$
defined by the above association is an isomorphism. Under this isomorphism, the natural map $Y^n_G \to \BA^1 \setminus \{0\}$ corresponds on the right hand side to the projection onto the second factor. The projection map $\pi: Y^n_G \to X^{(n)}$ corresponds on the right hand side to the projection onto the first factor followed by the projection map $\pi_Z: \, {}_0Z^n \to X^{(n)}$.
\end{lemma}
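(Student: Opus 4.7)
The plan is to bootstrap from Lemma \ref{G-locus product decomposition}, which handles the analogous statement for $\VinBun_{G,G}$: I will verify that the additional data distinguishing $Y^n_G$ from $\VinBun_{G,G}$ — namely a line subbundle $L_1 \hookrightarrow E_1$ and a line quotient $E_2 \twoheadrightarrow \CO_{X \times S}$ satisfying a generic-isomorphism condition — matches exactly the additional data distinguishing ${}_0Z^n$ from $\Bun_G$. The geometric idea is that over the $G$-locus the isomorphism $\varphi$ lets one transport the quotient of $E_2$ back to a quotient of $E_1$, converting the $Y^n$-data into Zastava-data living on the single bundle $E_1$.

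That the map in the statement is well-defined on $S$-points is straightforward from the definitions: when $\varphi$ is an isomorphism, the composite $E_1 \xrightarrow{\varphi} E_2 \twoheadrightarrow \CO_{X \times S}$ is a line-bundle quotient of $E_1$, and the composite $L_1 \to \CO_{X \times S}$ is by hypothesis generically an isomorphism of relative degree $n$, so $(L_1 \hookrightarrow E_1 \twoheadrightarrow \CO_{X \times S})$ is indeed an ${}_0Z^n$-point, while $\det \varphi$ gives an $S$-point of $\BA^1 \setminus \{0\}$ by definition of the $G$-locus. Compatibility with the forgetful maps $Y^n_G \to \VinBun_{G,G}$ and ${}_0Z^n \times (\BA^1 \setminus \{0\}) \to \Bun_G \times (\BA^1 \setminus \{0\})$ (the latter via the projection ${}_0Z^n \to \Bun_G$ remembering only the rank-$2$ bundle $E$, together with Lemma \ref{G-locus product decomposition}) is immediate.

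To construct the inverse, I would identify the fibers of these two forgetful maps. The fiber of $Y^n_G \to \VinBun_{G,G}$ over $(E_1, E_2, \varphi)$ is the groupoid of pairs $(L_1 \hookrightarrow E_1, \, q_2 : E_2 \twoheadrightarrow \CO_{X \times S})$ satisfying the generic-isomorphism and degree-$n$ conditions on the composite; transporting $q_2$ via $\varphi^{-1}$ to a quotient of $E_1$, this is canonically equivalent to the fiber of ${}_0Z^n \to \Bun_G$ over $E_1$. Explicitly, given a Zastava point $(L_1 \hookrightarrow E \twoheadrightarrow \CO_{X \times S})$ and a scalar $c$, one recovers a $Y^n_G$-point by locally choosing any $\varphi_0 : E \to E$ with $\det \varphi_0 = c$, setting $E_1 = E_2 = E$ with the given $L_1 \hookrightarrow E_1$ and with $E_2 \twoheadrightarrow \CO_{X \times S}$ defined as the composite of the Zastava quotient with $\varphi_0^{-1}$.

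The main technical point, and the only real obstacle, is to check that this fiberwise correspondence descends to a morphism of stacks, independent of the auxiliary choice of $\varphi_0$. But this is essentially the content of Lemma \ref{G-locus product decomposition}: the ambiguity in the choice of $\varphi_0$ is precisely the $\SL_2 \times \SL_2$-orbit structure on $\GL_2$, whose quotient by the determinant map gives $\BG_m$. Any two choices $\varphi_0, \varphi_1$ with $\det \varphi_0 = \det \varphi_1 = c$ yield canonically isomorphic $Y^n_G$-data via the pair of automorphisms $(\id, \, \varphi_1 \circ \varphi_0^{-1})$ on $(E_1, E_2)$, so the construction descends to the desired inverse ${}_0Z^n \times (\BA^1 \setminus \{0\}) \to Y^n_G$. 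The stated compatibilities of this isomorphism with the projections to $\BA^1 \setminus \{0\}$ (via $\det \varphi$) and to $X^{(n)}$ (via the composite divisor $L_1 \to L_2$) are then immediate from the construction.
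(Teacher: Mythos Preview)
Your proposal is correct and takes essentially the same approach as the paper: the paper simply states that the result follows from Lemma \ref{G-locus product decomposition} without further elaboration, and your argument spells out precisely how --- by identifying the fibers of $Y^n_G \to \VinBun_{G,G}$ and ${}_0Z^n \to \Bun_G$ via transport along $\varphi$. Your check that the choice of $\varphi_0$ is immaterial (via the isomorphism $(\id, \varphi_1 \circ \varphi_0^{-1})$, which has determinant $1$) is exactly the content underlying the paper's one-line invocation.
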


\medskip

\bigskip

\sssec{Stratification of the $B$-locus of the local model}
\label{Stratification of the $B$-locus of the local model}

The stratification of the $B$-locus of $\VinBun_G$ from Proposition \ref{defect stratification proposition} above takes the following form for the local model $Y^n$. Let $n_1, k, n_2 \in \BZ_{\geq 0}$ be non-negative integers satisfying $n_1 + k + n_2 = n$. Then as in Subsection~\ref{Stratification by defect} above there exist natural maps
$$\bar{f}_{n_1, k, n_2}: \ \ Z^{-,n_1}_{(\Bun_{T,-n})} \, \underset{\Bun_T}{\times} \bigl( \, X^{(k)} \times Z^{n_2} \bigr) \ \ \ \longto \ \ \ Y^n_B \, ,$$
and the analogous result is:

\medskip

\begin{corollary}
\label{stratification for local model}
The maps $\bar{f}_{n_1, k, n_2}$ are proper, and their restrictions

$$f_{n_1, k, n_2}: \ \ {}_0Z^{-,n_1}_{(\Bun_{T,-n})} \, \underset{\Bun_T}{\times} \bigl( \, X^{(k)} \times {}_0Z^{n_2} \bigr) \ \ \ \longto \ \ \ Y^n_B$$

\medskip

\noindent are isomorphisms onto smooth locally closed substacks
$${}_{(n_1,k,n_2)}Y^n_B \ \ \longinto \ \ Y^n_B \, .$$

\medskip

\noindent As the triples $(n_1, k, n_2)$ range over all triples of non-negative integers satisfying $n_1 + k + n_2 = n$, the substacks ${}_{(n_1,k,n_2)}Y^n_B$ form a stratification of the $B$-locus $Y^n_B$. On the level of $k$-points the closure of a stratum is equal to the finite disjoint union of strata

$${}_{(n_1,k,n_2)}\overline{Y^n_B} \ \ = \ \ \bigcup_{\substack{d_1 \geq 0 \\ d_2 \geq 0}} \ \,  {}_{(n_1 - d_1, k + d_1 + d_2, n_2 - d_2)}Y^n_B$$

\medskip

\noindent Given any non-negative integer $k \in \BZ_{\geq 0}$, the locus ${}_{\leq k}Y^n_B$ in $Y^n_B$ obtained by requiring the defect to be at most $k$ is open in $Y^n_B$. The locus~${}_kY^n_B$ obtained by requiring the defect to be exactly $k$ is locally closed, and isomorphic as schemes to the disjoint union
$${}_kY^n_B \ \ \ =  \ \ \ \coprod_{n_1, n_2} \ {}_{(n_1, k, n_2)}Y^n_B \, .$$
Finally, the locus ${}_nY^n_B$ of maximal defect is closed in $Y^n_B$ and isomorphic to the symmetric power $X^{(n)}$.
\end{corollary}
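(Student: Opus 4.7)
The plan is to mimic the proof of Proposition \ref{defect stratification proposition} for $\VinBun_G$, with the Zastava spaces $Z^{n_2}$ and $Z^{-,n_1}_{(\Bun_{T,-n})}$ playing the roles of $\barBun_B$ and $\barBun_{B^-}$. First I would construct the compactified maps $\bar{f}_{n_1,k,n_2}$ by sending a point
$$(L' \longintointo E_1 \longonto M_1, \ D, \ M_2 \longinto E_2 \longonto L'')$$
(together with the identification $M_1 \cong M_2(-D)$ coming from the fiber product over $\Bun_T$) to the composite map $L' \into E_1 \onto M_1 \into M_2 \into E_2 \onto L''$ on the curve $X$, where $L' \into L''$ is forced to be of degree $n$ by the degree constraints on the three factors. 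This requires $n_1 + k + n_2 = n$, matching the indexing in the statement.

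The main ingredient is properness of $\bar{f}_{n_1,k,n_2}$, which is the analog of Lemma \ref{finiteness of compactified maps}. I would establish it by the very same argument: form the fiber product
$$Z^{-,n_1}_{(\Bun_{T,-n})} \underset{\Bun_G}{\times} Y^n \underset{\Bun_G}{\times} Z^{n_2}\,,$$
cut out the closed subscheme $\CY'$ on which the map $\varphi$ factors through both $E_1 \onto M_1$ and $M_2 \into E_2$, and identify $\CY'$ with the source of $\bar{f}_{n_1,k,n_2}$ via the unique factorization $M_1 \into M_2$ (with the divisor of this injection giving the $X^{(k)}$-factor, using the degree conditions). The map $\bar{f}_{n_1,k,n_2}$ then factors as a closed immersion followed by the proper projection to $Y^n$, and is therefore proper. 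Quasi-finiteness (hence finiteness) follows from the unique factorization of a map $\varphi$ in $Y^n_B$ and the quasi-finiteness of the addition map on symmetric powers, exactly as in Lemma \ref{finiteness of compactified maps}.

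Next I would prove that the restriction $f_{n_1,k,n_2}$ is a monomorphism: given a point of $Y^n_B$, the bundles $M_1$ and $M_2$ in its canonical factorization $L_1 \into E_1 \onto M_1 \into M_2 \into E_2 \onto L_2$ are reconstructed as the image and saturated image of $\varphi$, respectively, and all remaining data is determined. Combining the monomorphism with the properness of $\bar{f}_{n_1,k,n_2}$ by the same open/closed trick as in the proof of Proposition \ref{defect stratification proposition}(a) (complementing the boundary of the source) then shows that $f_{n_1,k,n_2}$ is a locally closed immersion. Smoothness of its image is immediate from the smoothness of the open Zastava spaces $_0Z^n$ and $_0Z^{-,n}$ over $\Bun_T$.

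Parts (b)--(e) of Proposition \ref{defect stratification proposition} carry over verbatim: every $k$-point of $Y^n_B$ admits a unique factorization of $\varphi$, giving the stratification; the closure formula comes from the standard stratification of Zastava spaces into open Zastava strata indexed by additional effective divisors; the defect-$\leq k$ and defect-$=k$ statements follow as in Proposition \ref{defect stratification proposition}(d),(e). For the final sentence, when the defect is maximal (i.e.\ $k = n$ and $n_1 = n_2 = 0$) one has $_0Z^0 = \Bun_T$ on both sides, and the fiber product $\Bun_T \times_{\Bun_T} (X^{(n)} \times \Bun_T) = X^{(n)} \times \Bun_T$ collapses once one imposes the absolute condition (trivial background line bundle), leaving precisely $X^{(n)}$. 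The main technical obstacle is the properness step above; everything else is essentially a rerun of the $\VinBun_G$ argument once the compactified maps are in place.
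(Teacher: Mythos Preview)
Your overall strategy --- rerun the proof of Proposition~\ref{defect stratification proposition} with Zastava spaces in place of $\barBun_{B^\pm}$ --- is exactly what the paper has in mind; the result is stated as a corollary with no proof precisely because the argument is meant to be a verbatim transcription. The monomorphism, locally-closed-immersion, stratification, closure, and maximal-defect assertions all go through just as you say.

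There is, however, a genuine gap in your properness step. In the fiber product
\[
Z^{-,n_1}_{(\Bun_{T,-n})} \ \underset{\Bun_G}{\times} \ Y^n \ \underset{\Bun_G}{\times} \ Z^{n_2}
\]
the projection to $Y^n$ is \emph{not} proper: the Zastava spaces are not proper over $\Bun_G$, because the ``background'' data (the subbundle $L' \longintointo E_1$ on one side, the surjection $E_2 \onto \CO$ on the other) moves in a non-proper family once only the underlying $G$-bundle is fixed. So the factorization ``closed immersion followed by proper projection'' fails at the second step, and the argument of Lemma~\ref{finiteness of compactified maps} does not transcribe literally.

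The clean fix is to use the forgetful map $Y^n \to \VinBun_G$ rather than re-prove properness from scratch. One checks that the square
\[
\xymatrix{
Z^{-,n_1}_{(\Bun_{T,-n})} \underset{\Bun_T}{\times} \bigl(X^{(k)} \times Z^{n_2}\bigr) \ar[r]^-{\bar f_{n_1,k,n_2}} \ar[d] & Y^n \ar[d] \\
\barBun_{B^-,m_1} \underset{\Bun_T}{\times} \bigl(X^{(k)} \times \barBun_{B,m_2}\bigr) \ar[r]^-{\bar f_{m_1,k,m_2}} & \VinBun_G
}
\]
is cartesian for the appropriate degrees $m_1 = n_1 - n$, $m_2 = -n_2$ (the left vertical map forgets the background line-bundle data, and the point is that this data is already carried by $Y^n$). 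Since the bottom map is proper by Lemma~\ref{finiteness of compactified maps}, the top map is proper by base change. This is presumably why the paper records the statement as a corollary rather than a proposition.
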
 

\bigskip

\ssec{Restatements of the main theorems for the local models}
\label{Restatements of the main theorems for the local models}

\mbox{} \medskip

To prove the main theorem about nearby cycles, Theorem \ref{main theorem for nearby cycles} above, it suffices to establish its analog for the absolute local models $Y^n$, for all integers $n \geq 0$; the same holds for the other theorems in Section \ref{Statement of main theorems} above. This follows via a standard argument often referred to as the ``interplay principle'', carried out for example in \cite[Sec. 3, 8.1]{BFGM}, \cite[Sec. 4.3, 4.6, 4.7]{BG2}, where the analogous interplay between Drinfeld's compactification $\barBun_B$ and the Zastava spaces is used: One first compares the absolute and the relative local model to each other, and then compares the relative model to $\VinBun_G$.

\medskip

To state this analog of Theorem \ref{main theorem for nearby cycles} for $Y^n$, let $n_1, k, n_2 \in \BZ_{\geq 0}$ be non-negative integers satisfying $n_1 + k + n_2 = n$, and recall the compactified maps
$$\bar{f}_{n_1, k, n_2}: \ \ Z^{-,n_1}_{(\Bun_{T,-n})} \, \underset{\Bun_T}{\times} \bigl( \, X^{(k)} \times Z^{n_2} \bigr) \ \ \ \longto \ \ \ Y^n_B$$
from Subsection \ref{Stratification of the $B$-locus of the local model}. Similarly as before let
$$\widetilde \CP_{n_1, k, n_2} \ \ := \ \ \IC_{Z^{-,n_1}_{Bun_T,-n}} \underset{ \ \Bun_T} \boxtimes \ \ \CP_k \ \boxtimes \ \IC_{Z^{n_2}}$$
denote the $*$-restriction of the external product
$$\IC_{Z^{-,n_1}_{\Bun_T,-n}} \boxtimes \ \ \CP_k \ \boxtimes \ \IC_{Z^{n_2}}$$
from the product space
$$Z^{-,n_1}_{(\Bun_{T,-n})} \times X^{(k)} \times Z^{n_2}$$
to the fiber product
$$Z^{-,n_1}_{(\Bun_{T,-n})} \, \underset{\Bun_T}{\times} \bigl( \, X^{(k)} \times Z^{n_2} \bigr) \, ,$$
shifted by $[\dim \Bun_T]$ and twisted by $(\tfrac{\dim \Bun_T}{2})$. Since the Zastava spaces are smooth for $G=\SL_2$, we can equivalently define $\widetilde \CP_{n_1, k, n_2}$ as
$$\widetilde \CP_{n_1, k, n_2} \ \ = \ \ p_{n_1, k, n_2}^* \, \CP_k \, [2n - 2k](n-k)$$
where similarly to above we denote by
$$p_{n_1, k, n_2}: \ \ Z^{-,n_1}_{(\Bun_{T,-n})} \, \underset{\Bun_T}{\times} \bigl( \, X^{(k)} \times Z^{n_2} \bigr) \ \ \ \longto \ \ \ X^{(k)}$$
the forgetful map. The analog of Theorem \ref{main theorem for nearby cycles} then reads:

\bigskip

\begin{theorem}
\label{main theorem for nearby cycles, local version}
There exists an isomorphism of perverse sheaves
$$\gr \, \Psi (\IC_{Y^n_G})     \ \ \ \cong \ \ \ \bigoplus_{(n_1, k, n_2)} \bar{f}_{n_1, k, n_2, *}\ \widetilde \CP_{n_1, k, n_2}$$
which identifies the action of the Lefschetz-$\sl_2$ on the right hand side via the Picard-Lefschetz oscillators with the monodromy action on the left hand side. Here the direct sum runs over all triples $(n_1, k, n_2)$ of non-negative integers satisfying $n_1 + k + n_2 = n$.
\end{theorem}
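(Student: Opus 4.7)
The plan is to establish Theorem \ref{main theorem for nearby cycles, local version} by induction on $n$, driven by the factorization in families of the local models $Y^n$. For the base case $n=1$, I would use the explicit equations for $Y^1 \to \BA^1$ inside a product of Beilinson-Drinfeld affine Grassmannians (as indicated in Section \ref{Proofs via local models}) to show that this family is, up to smooth-local isomorphism, the Picard-Lefschetz family $\BA^2 \to \BA^1$, $(x,y) \mapsto xy$, parametrized by the point of $X$ carrying the defect. Lemma \ref{Psi for the Picard-Lefschetz family of hyperbolas} then computes $\gr \, \Psi$ explicitly. The three summands corresponding to the triples $(1,0,0)$, $(0,0,1)$, $(0,1,0)$ match the three pieces of the weight-monodromy filtration: two copies of $\delta_p$ (contributing to the two Borel strata closures of defect $0$) and $\IC_C$ (contributing to the defect-$1$ stratum). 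In particular the oscillator $\CP_1 = \underline V [1](\tfrac{1}{2})$ appears on the defect-$1$ stratum $X \subset Y^1_B$, with the Lefschetz-$\sl_2$ action on its fiber being precisely the Picard-Lefschetz monodromy on $\delta_p(-\tfrac{1}{2}) \oplus \delta_p(\tfrac{1}{2})$.

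For the inductive step the central input is the factorization of $Y^n$ over $X^{(n)}$: for any splitting $n = n_1' + n_2'$, over the disjoint locus $X^{(n_1')} \stackrel{\circ}{\times} X^{(n_2')} \subset X^{(n)}$ there is a natural isomorphism
$$\pi^{-1}\bigl(X^{(n_1')} \stackrel{\circ}{\times} X^{(n_2')}\bigr) \ \cong \ Y^{n_1'} \stackrel{\circ}{\times} Y^{n_2'}$$
of families over $\BA^1$, where the map on the right is induced by multiplication. Under this isomorphism the IC-sheaves on the $G$-loci match up to shifts and twists, and a standard Künneth-type statement for nearby cycles with respect to the multiplication map $\BA^1 \times \BA^1 \to \BA^1$ yields
$$\Psi(\IC_{Y^n_G}) \, \overset{\circ}{\boxtimes} \, \cong \, \Psi(\IC_{Y^{n_1'}_G}) \, \overset{\circ}{\boxtimes} \, \Psi(\IC_{Y^{n_2'}_G})$$
compatibly with the monodromy operators on both sides. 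Applied inductively, this describes $\gr \, \Psi(\IC_{Y^n_G})$ on the open locus where the defect divisor has support at more than one point of $X$. To extend across the ``full diagonal'' (defect divisor supported at a single point), I would invoke Gabber's Proposition \ref{Gabber's theorem}: since $\IC_{Y^n_G}$ is pure, $\gr \, \Psi$ is pure, hence semisimple, hence each simple summand equals the intermediate extension of its restriction to any dense open. Combined with the fact that $\bar f_{n_1,k,n_2,*}\widetilde \CP_{n_1,k,n_2}$ is itself the IC-extension of its restriction to the open stratum (using that the compactified maps $\bar f_{n_1,k,n_2}$ are finite resolutions of singularities by Corollary \ref{resolution of singularities}), this matching of dense open restrictions forces the matching of the full perverse sheaves.

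The main obstacle is pinning down the correct $S^k$-descent at the full diagonal: the external exterior power $\Lambda^{(k)}(\underline V)$ entering $\CP_k$ uses the \emph{sign} character of $S^k$, and factorization over the disjoint locus alone cannot distinguish it from the symmetric descent $\Sym^{(k)}(\underline V)$ or any other $S^k$-isotypic option. The plan for resolving this is a direct local computation in the $n = 2$ case: using the explicit equations for $Y^2$ in a product of Beilinson-Drinfeld affine Grassmannians to access the IC-stalks of the defect-$2$ diagonal, one can compute the monodromy of the local system on $\overset{\circ}{X}{}^{(2)}$ obtained from $\gr \, \Psi$ and read off the sign character from the behavior as two points of $X$ collide. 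This $n = 2$ verification, together with the factorization argument, determines the descent character for all $k \geq 2$. Once the sheaf-theoretic identification is in place, the compatibility of the Lefschetz-$\sl_2$ action on the right-hand side with the monodromy on the left follows automatically from the factorization structure and the base case, since the $\sl_2$-action on $V$ in the definition of $\CP_k$ is by construction the classical Picard-Lefschetz monodromy from Lemma \ref{Psi for the Picard-Lefschetz family of hyperbolas}.
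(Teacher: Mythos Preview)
Your overall architecture matches the paper's proof closely: induction on $n$, base case $n=1$ via the explicit identification with the Picard-Lefschetz family $xy=t$, factorization in families for the inductive step, and a direct $n=2$ computation to pin down the sign descent. The reduction of the $S^k$-equivariance question to $n=2$ (transpositions generate $S^k$) is also the paper's Lemma \ref{transpositions generate}, and your plan for $n=2$ via explicit IC-stalk computation is exactly the paper's second proof in Subsection \ref{Picard-Lefschetz oscillators via intersection cohomology}.

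There is, however, a genuine gap in the step where you ``extend across the full diagonal.'' You write that since $\gr\,\Psi$ is pure and hence semisimple, ``each simple summand equals the intermediate extension of its restriction to any dense open.'' This is false as stated: a simple perverse sheaf supported entirely on the closed complement restricts to zero on the open, and intermediate extension of zero is zero. Semisimplicity alone does not preclude simple summands supported on the main diagonal $\Delta_X \subset X^{(n)}$ (equivalently, on the maximal-defect stratum ${}_nY^n_B$ over a single point). If such summands existed, the factorization description on the disjoint locus would miss them entirely, and your induction would not close.

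The paper isolates precisely this issue as Lemma \ref{no summands over the diagonal lemma} and devotes Subsection \ref{Fighting simples on the main diagonal} to its proof. The argument is not formal: one computes $s^* j_{G,*} \IC_{Y^n_G}$ via the contraction principle and the product decomposition of $Y^n_G$, obtaining (the Verdier dual of) $\widetilde\Omega_n \otimes H^*_c(\BA^1\setminus\{0\})$ where $\widetilde\Omega_n$ is the Zastava pushforward $\pi_{Z,!}\IC_{{}_0Z^n}$. One then needs the explicit Grothendieck-group formula for $\widetilde\Omega_n$ from Braverman--Gaitsgory \cite{BG2} to see that no simple constituent is supported on $\Delta_X$. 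Feeding this into the triangle for $\can:\Psi\to\Phi$ and invoking the induction hypothesis for the ``not on ${}_nY^n_B$'' summand handles the remaining bookkeeping. Without this input (or an independent argument ruling out diagonal-supported simples), your intermediate-extension step does not go through.

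A minor technical point: the factorization of Proposition \ref{factorization in families} is a fiber product $Y^{n_1'} \times_{\BA^1} Y^{n_2'}$ over a \emph{single} copy of $\BA^1$, not a product with multiplication map to $\BA^1$. The K\"unneth statement you need for nearby cycles is therefore the one for a fiber product over the same base (as in \cite[Sec.~5]{Jantzen conjectures}), not a Sebastiani--Thom type formula.
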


\bigskip\bigskip\bigskip

\section{Geometry of the local models}
\label{Geometry of the local models}

\bigskip

\ssec{Factorization in families}
\label{Factorization}

\mbox{} \medskip

Unlike the Beilinson-Drinfeld affine Grassmannian (\cite{BD1}) or the Zastava spaces (\cite{BFGM}), the local models $Y^n$ are not literally factorizable. Instead, they are \textit{factorizable in families}, i.e., the fibers of the map $Y^n \to \BA^1$ are factorizable in a compatible way:

\bigskip

\sssec{Factorization in families}

The spaces $Y^n$ are \textit{factorizable in families} in the sense of the following lemma.

\medskip

\begin{proposition}
\label{factorization in families}
For any integers $n_1 + n_2 = n$ the natural map
$$X^{(n_1)} \stackrel{\circ}{\times} X^{(n_2)} \ \longto \ X^{(n)}$$
defined by adding effective divisors induces a cartesian square
$$\xymatrix@+10pt{
Y^{n_1} \underset{ \ \BA^1}{\stackrel{\circ}{\times}} Y^{n_2} \ar[r] \ar[d]_{\pi_{n_1} \times \pi_{n_2}}   &   Y^n \ar[d]^{\pi_n}             \\
X^{(n_1)} \stackrel{\circ}{\times} X^{(n_2)} \ar[r]                            &       X^{(n)}            \\
}$$
where the top horizontal arrow commutes with the natural maps to $\BA^1$.
\end{proposition}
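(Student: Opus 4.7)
The cartesianness of the square asserts that an $S$-point of $Y^n$ whose divisor $D \in X^{(n)}$ admits a disjoint decomposition $D = D_1 + D_2$ is equivalent to a pair of $S$-points of $Y^{n_1}$ and $Y^{n_2}$ over $D_1$ and $D_2$ with matching $\det \varphi \in \BA^1$. I will prove the proposition by constructing a pair of mutually inverse maps, closely following the standard proof of factorization for Zastava spaces in \cite{BFGM} and \cite{BG2} --- with additional care needed to accommodate the global rank-$2$ bundles $E_1, E_2$ with trivialized determinants and the morphism $\varphi$ present in $Y^n$ but absent in the Zastava setting.

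For the direction from $Y^n$ to the fiber product, given a $Y^n$-point $(E_1, E_2, \varphi, L_1, \CO_X)$ with disjoint $D = D_1 + D_2$, I would produce the $Y^{n_1}$-piece by performing simultaneous Hecke modifications of $E_1$ and $E_2$ at $D_2$. The modification on the $E_1$ side replaces the subbundle $L_1 = \CO_X(-D)$ with $\CO_X(-D_1)$; that on the $E_2$ side is the compatible modification relative to the kernel subbundle $\CO_X \hookrightarrow E_2$ of the quotient. The crucial compatibility ensuring that these modifications are coherent and induce a well-defined morphism $\varphi^{(1)}: E_1^{(1)} \to E_2^{(1)}$ is the following: since the composition $L_1 \hookrightarrow E_1 \xrightarrow{\varphi} E_2 \twoheadrightarrow \CO_X$ vanishes along $D$ and in particular along $D_2$, the image $\varphi(L_1)$ lies at $D_2$ inside the kernel subbundle $\CO_X \hookrightarrow E_2$ of the quotient --- which is exactly the condition needed for the two modifications at $D_2$ to match. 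The $Y^{n_2}$-piece is produced symmetrically by modifying at $D_1$ instead, and both pieces inherit the same $\det \varphi$-value as the original.

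For the reverse direction, given a compatible pair of $Y^{n_i}$-points with disjoint divisors and matching $\det \varphi$-values, one glues them along their common restriction to $X \times S \setminus D$, where the two sets of data are canonically identified using that $\det \varphi = c$ is fixed. The main obstacle is the verification that the Hecke modifications preserve the trivialized determinants of $E_1$ and $E_2$: a naive elementary transformation at a divisor of positive degree shifts the determinant, so the modifications must be balanced transformations that modify the sub- and quotient-bundles simultaneously in such a way that the total change in the determinant cancels. Alternatively, and perhaps more cleanly, one can avoid an explicit Hecke construction altogether by using the embeddings of $Y^n$ into products of Beilinson-Drinfeld affine Grassmannians to be constructed later in this section: since Beilinson-Drinfeld Grassmannians are manifestly factorizable in the sense of \cite{BD1}, the factorization in families for $Y^n$ would then follow from the compatibility of these embeddings with factorization by a straightforward descent argument.
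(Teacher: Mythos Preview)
Your proposal has the right shape, and your reverse-direction sketch even touches on the key insight: away from $D$ the data are canonically identified once $\det\varphi = c$ is fixed. But you do not isolate or prove this assertion, and for the forward direction you propose Hecke modifications whose determinant-compatibility you yourself flag as unresolved.

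The paper's proof is simpler and treats both directions symmetrically via a single preparatory lemma (Lemma~\ref{generic lemma behavior}): on the open locus $U \subset X \times S$ where the composite $L \to \CO_{X\times S}$ is an isomorphism, the entire $Y^k$-datum is canonically the standard one
\[
\CO_U \ \overset{i_1}{\longinto} \ \CO_U \oplus \CO_U \ \xrightarrow{\ \bigl(\begin{smallmatrix}1&0\\0&\,d|_U\end{smallmatrix}\bigr)\ } \ \CO_U \oplus \CO_U \ \overset{pr_1}{\longonto} \ \CO_U,
\]
determined solely by $d = \det\varphi \in \Gamma(S,\CO_S)$. This is precisely why the factorization holds only \emph{over} $\BA^1$ rather than absolutely. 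With the lemma in hand both directions become pure gluing: to extract the $Y^{n_1}$-piece from a $Y^n$-point with disjoint $D = D_1 + D_2$, restrict the given data to $U_2 = (X\times S)\setminus D_2$ and glue with the standard datum above on $U_1 = (X\times S)\setminus D_1$; the two agree on $U_1\cap U_2 = U$ by the lemma, and the common value of $d$ is visibly inherited. No Hecke modifications are needed, and the determinant trivializations come for free because each piece being glued already carries one. Your Grassmannian alternative would also ultimately need this lemma, or something equivalent, to verify that the closed condition cutting out $Y^n$ inside the product of Zastava spaces respects factorization over $\BA^1$.
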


\medskip

Broadly speaking, Proposition \ref{factorization in families} follows from the fact that generically on the curve $X$, the datum of a point of $Y^n$ is the trivial datum except for the determinant of the middle map $\varphi$. More precisely, Proposition \ref{factorization in families} will be a direct consequence of the following easy lemma:

\medskip

\begin{lemma}
\label{generic lemma behavior}
Let $k$ be a non-negative integer, let
$$L \longintointo E_1 \stackrel{\varphi}{\longto} E_2 \longonto \CO_{X \times S}$$
be an $S$-point of the local model $Y^k$, and let
$$d \ := \ \det(\varphi) \ \ \in \ \ \Gamma(X \times S, \CO_{X \times S}) \ = \ \Gamma(S, \CO_S) \ = \ \BA^1(S)$$
denote its image under the usual map $Y^k \to \BA^1$. Furthermore let $U \subset X \times S$ denote the dense open subscheme of $X \times S$ on which the composite map $L \to \CO_{X \times S}$ is an isomorphism. Then over $U$ the data of the above $S$-point takes the simple form
$$\CO_{U} \ \stackrel{i_1}{\longintointo} \ \CO_{U} \oplus \CO_{U} \ \stackrel{\Bigl(\begin{smallmatrix} 1&0 \\ 0& d|_U \end{smallmatrix}\Bigr)}{\longto} \ \CO_{U} \oplus \CO_{U} \ \stackrel{pr_1}{\longonto} \ \CO_{U} \, .$$
\end{lemma}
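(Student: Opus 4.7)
The plan is to exploit the fact that, by the very definition of $U$, the composite
$$\alpha \colon L \ \longinto \ E_1 \ \stackrel{\varphi}{\longto} \ E_2 \ \longonto \ \CO_U$$
is an isomorphism of line bundles on $U$. From this single piece of data I will construct canonical splittings of $E_1|_U$ and $E_2|_U$, check that $\varphi|_U$ is block-diagonal with respect to them, and identify the two diagonal entries as $1$ and $d|_U$.

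First I would split $L|_U \hookrightarrow E_1|_U$: letting $\beta \colon E_1 \to \CO_{X \times S}$ denote the composite of $\varphi$ with the surjection $E_2 \twoheadrightarrow \CO_{X \times S}$, one has $\beta|_L = \alpha$, so on $U$ the map $\alpha^{-1} \circ \beta$ is a retraction of $L|_U \hookrightarrow E_1|_U$. This yields $E_1|_U = L|_U \oplus K$ with $K := \ker(\beta|_U)$. Dually, the line subbundle $\widetilde L := \varphi(L)|_U \subset E_2|_U$ projects isomorphically onto $\CO_U$ under $E_2 \twoheadrightarrow \CO_{X \times S}$, so it is a complement to $L' := \ker(E_2 \twoheadrightarrow \CO_{X \times S})|_U$, giving $E_2|_U = \widetilde L \oplus L'$. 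The map $\varphi|_U$ is then manifestly block-diagonal in these decompositions: it sends $L|_U$ onto $\widetilde L$ by construction, and $K = \ker \beta$ is sent into $\ker(E_2 \twoheadrightarrow \CO_{X \times S})|_U = L'$.

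Finally, trivialize $L|_U$ via $\alpha$ and $\widetilde L$ via the induced isomorphism $\widetilde L \cong L$; then use the trivializations of $\det E_1$ and $\det E_2$ together with the splittings $\det E_1|_U = L|_U \otimes K$ and $\det E_2|_U = \widetilde L \otimes L'$ to induce trivializations $K \cong \CO_U$ and $L' \cong \CO_U$. In these bases the $L \to \widetilde L$ block becomes the identity, the $K \to L'$ block becomes multiplication by some $g \in \Gamma(U,\CO_U)$, and the outer inclusion $L|_U \hookrightarrow E_1|_U$ and projection $E_2|_U \twoheadrightarrow \CO_U$ become $i_1$ and $pr_1$ respectively. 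Computing $\det\varphi|_U$ in the chosen bases identifies it with $g$, so that $g = d|_U$, completing the argument. I do not anticipate any real obstacle: every step is canonical once one commits to viewing the data on $U$ through the lens of the tautological isomorphism $\alpha$; the only minor bookkeeping point is ensuring that the bases of $K$ and $L'$ are scaled to match the given trivializations of $\det E_1$ and $\det E_2$, which is automatic after $e_1$ and $f_1 = \varphi(e_1)$ are fixed.
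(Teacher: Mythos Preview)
Your proposal is correct and follows essentially the same approach as the paper: both use the fact that the composite $L \to \CO_{X\times S}$ is an isomorphism on $U$ to split $E_1|_U$ and $E_2|_U$ compatibly with $\varphi$, and then use the determinant condition to identify the remaining diagonal entry as $d|_U$. Your write-up is somewhat more explicit about why $\varphi|_U$ is block-diagonal and about how the $\SL_2$-trivializations of $\det E_i$ pin down the second basis vectors, but the underlying argument is the same.
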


\bigskip

\begin{proof}
\noindent Composing the middle map $\varphi$ either with the rightmost or the leftmost arrow we obtain the splittings
$$\xymatrix@+10pt{
 L|_U \ar@{^(->}[r] \ar@/_2pc/[rr]^{\cong} &  E_1|_U \ar@{->>}[r] & \CO_{U} \! \! \! \! \! \! \! \! & \text{and} & \! \! \! \! \! \! \! \! L|_U \ar@{^(->}[r] \ar@/_2pc/[rr]^{\cong} &  E_2|_U \ar@{->>}[r] &  \CO_{U}
}$$
over the open subscheme $U$. These splittings in turn induce trivializations of the $\SL_2$-bundles $E_1$ and $E_2$ which are compatible with the middle map $\varphi$, so that $\varphi$ must be of the matrix form as above, but with an a priori unknown entry in the lower right corner. The fact that $\det(\varphi) = d$ on $X \times S$ however forces the entry in the lower right corner to be equal to $d|_U \in \Gamma(U, \CO_U)$.
\end{proof}

\bigskip

We can now prove Proposition \ref{factorization in families}:

\medskip

\begin{proof}[Proof of Proposition \ref{factorization in families}]
We need to construct a natural isomorphism
$$Y^{n_1} \underset{ \ \BA^1}{\stackrel{\circ}{\times}} Y^{n_2} \ \ \cong \ \ \Bigl( X^{(n_1)} \stackrel{\circ}{\times} X^{(n_2)} \Bigr) \underset{X^{(n)}}{\times} Y^n$$
which respects the forgetful maps to $X^{(n_1)} \stackrel{\circ}{\times} X^{(n_2)}$ and to $\BA^1$.
To do so, let us first define a map from the right hand side to the left hand side. Thus we are given an $S$-point
$$L \longintointo E_1 \stackrel{\varphi}{\longto} E_2 \longonto \CO_{X \times S}$$
of $Y^n$, an $S$-point $L_1 \into \CO_{X \times S}$ of $X^{(n_1)}$, and an $S$-point $L_2 \into \CO_{X \times S}$ of~$X^{(n_2)}$, such that the subsheaf $L_1 \otimes L_2 \into \CO_{X \times S}$ coincides with the subsheaf $L \into \CO_{X \times S}$ obtained from the $S$-point of $Y^n$. Let
$$d \ := \ \det(\varphi) \ \in \ \Gamma(X \times S, \CO_{X \times S}) \ = \ \Gamma(S, \CO_S) \, ,$$
and let $U$, $U_1$, $U_2$ denote the open subschemes of $X \times S$ on which the maps
$$L \longinto \CO_{X \times S}, \ \ L_1 \longinto \CO_{X \times S}, \ \ L_2 \longinto \CO_{X \times S}$$
are isomorphisms. Then by definition of the right hand side we have
$$U_1 \cap U_2 \ = \ U \ \ \ \text{and} \ \ \ U_1 \cup U_2 \ = \ X \times S \, .$$
We now define an $S$-point of~$Y^{n_1}$ by gluing together the required data on~$U_1$ and $U_2$. Namely, on the one hand we restrict the datum
$$L \longintointo E_1 \stackrel{\varphi}{\longto} E_2 \longonto \CO_{X \times S}$$
to the open subscheme $U_2$, and on the other hand we consider the datum
$$\CO_{U_1} \ \stackrel{i_1}{\longintointo} \ \CO_{U_1} \oplus \CO_{U_1} \ \stackrel{\Bigl(\begin{smallmatrix} 1&0 \\ 0& d|_{U_1} \end{smallmatrix}\Bigr)}{\longto} \ \CO_{U_1} \oplus \CO_{U_1} \ \stackrel{pr_1}{\longonto} \ \CO_{U_1}$$
over the open subscheme $U_1$. By Lemma \ref{generic lemma behavior}, these two data agree on the intersection $U_1 \cap U_2 = U$, and thus can be glued to form an $S$-point of~$Y^{(n_1)}$. We construct an $S$-point of $Y^{(n_2)}$ analogously, and by construction they together form an $S$-point of the left hand side as desired.

\medskip

We define a map from the left hand side to the right hand side in a similar fashion: Consider the $S$-points of $Y^{n_1}$ and $Y^{n_2}$ arising from a given $S$-point of the left hand side, and let $d \in \Gamma(S, \CO_S)$ be their common image in $\BA^1(S)$. These $S$-points of $Y^{n_1}$ and $Y^{n_2}$ give rise to open subschemes $U_1$ and $U_2$ of $X \times S$ defined exactly as in Lemma \ref{generic lemma behavior}, and by definition of the left hand side we have that $U_1 \cup U_2 = X \times S$. Furthermore, by Lemma~\ref{generic lemma behavior} the restriction of the data on $X \times S$ comprising the $S$-point of $Y^{n_1}$ to the intersection $U_1 \cap U_2$ agrees with the restriction of the data comprising the~$S$-point of $Y^{n_2}$. Thus the two data can be glued to form an $S$-point of $Y^n$, and we have constructed the converse map. Finally, it is immediate from the constructions that the two maps are inverse to each other and respect the forgetful maps to $X^{(n_1)} \stackrel{\circ}{\times} X^{(n_2)}$ and to $\BA^1$.
\end{proof}

\medskip

\sssec{Factorization of the fibers}
For a scalar $c \in \BA^1$ let $Y^n_c$ denote the fiber of the map $Y^n \to \BA^1$ over $c$. Thus $Y^n_{c=0}$ is equal to the $B$-locus $Y^n_B$ of $Y^n$, and $Y^n_{c=1}$ is equal to the open Zastava space ${}_0Z^n$ from Subsection \ref{Zastava spaces} above. Furthermore, since the top horizontal arrow in Proposition \ref{factorization in families} commutes with the natural maps to $\BA^1$, we find:

\medskip

\begin{corollary}
The spaces $Y^n_c$ are factorizable in the usual sense, i.e., the addition of effective divisors induces a cartesian square
$$\xymatrix@+10pt{
Y^{n_1}_c \stackrel{\circ}{\times} Y^{n_2}_c \ar[r] \ar[d]_{\pi_{n_1} \times \pi_{n_2}}   &   Y^n_c \ar[d]^{\pi_n}             \\
X^{(n_1)} \stackrel{\circ}{\times} X^{(n_2)} \ar[r]                            &       X^{(n)}            \\
}$$

\medskip

\noindent In particular, the $B$-locus $Y^n_B$ is factorizable in the usual sense.
\end{corollary}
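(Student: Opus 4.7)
The plan is to obtain the corollary essentially for free by base changing the cartesian square of Proposition \ref{factorization in families} along the inclusion $\{c\} \hookrightarrow \BA^1$. The key observation is that, in Proposition \ref{factorization in families}, the two vertical maps and the bottom horizontal map are maps of schemes over $\Spec k$, while only the top horizontal map carries structure over $\BA^1$ that needs to be compatible; crucially, the Proposition asserts that this top arrow commutes with the natural maps to $\BA^1$. Thus the entire square is a morphism of cartesian squares over $\BA^1$ (where $X^{(n)}$ and $X^{(n_1)} \stackrel{\circ}{\times} X^{(n_2)}$ are viewed as sitting over $\BA^1$ via structure morphisms to $\Spec k$, and we implicitly base-change them along $\BA^1 \to \Spec k$).

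The first step is to identify the fiber of $Y^{n_1} \underset{\BA^1}{\stackrel{\circ}{\times}} Y^{n_2}$ over $c \in \BA^1$ with $Y^{n_1}_c \stackrel{\circ}{\times} Y^{n_2}_c$. This is immediate from the definitions: an $S$-point of the fiber is by definition a pair consisting of an $S$-point of $Y^{n_1}$ and an $S$-point of $Y^{n_2}$, both with disjoint associated divisors in $X^{(n_1)}$ and $X^{(n_2)}$, and both with common image $c$ in $\BA^1$. This is precisely the same data as an $S$-point of $Y^{n_1}_c \stackrel{\circ}{\times} Y^{n_2}_c$, since requiring each of the two fibers of $Y^{n_i} \to \BA^1$ individually to be taken at $c$ is the same as requiring their common image in $\BA^1$ to equal $c$. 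Similarly the fiber of $Y^n$ over $c$ is $Y^n_c$ by definition.

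The second step is to take the base change of the entire square in Proposition \ref{factorization in families} along $\{c\} \hookrightarrow \BA^1$. Since base change preserves cartesian squares, the resulting square
$$\xymatrix@+10pt{
Y^{n_1}_c \stackrel{\circ}{\times} Y^{n_2}_c \ar[r] \ar[d]   &   Y^n_c \ar[d]             \\
X^{(n_1)} \stackrel{\circ}{\times} X^{(n_2)} \ar[r]                            &       X^{(n)}            \\
}$$
is cartesian, which is the desired statement. The closing assertion about $Y^n_B$ being factorizable in the usual sense is then just the special case $c = 0$. There is no main obstacle here: the content has already been supplied by Proposition \ref{factorization in families}, and the corollary is a formal consequence of base change compatibility.
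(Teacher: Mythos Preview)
Your proposal is correct and takes essentially the same approach as the paper: the corollary is obtained by base changing the cartesian square of Proposition \ref{factorization in families} along $\{c\}\hookrightarrow\BA^1$, using that the top arrow commutes with the maps to $\BA^1$. The paper in fact gives no further argument beyond noting this compatibility, so your write-up is simply a more detailed unpacking of the same deduction.
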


\medskip

\ssec{Embedding, section, and contraction}
\label{Embedding, section, and contraction}

\mbox{} \medskip

In this section we construct a $\BG_m$-action on $Y^n$ which contracts $Y^n$ onto a section of the projection map $\pi: Y^n \to X^{(n)}$. This action can be constructed in various ways; here we construct it via a specific embedding of $Y^n$ into a product of Beilinson-Drinfeld affine Grassmannians which we discuss first. This embedding will also be used in Subsection \ref{Explicit equations and generalized Picard-Lefschetz families} below to derive explicit equations for the local models $Y^n$.

\medskip

\sssec{Embeddings for Zastava spaces}
\label{Embeddings for Zastava spaces}

Let $\Gr_G^n \longto X^{(n)}$ denote the Beilinson-Drinfeld affine Grassmannian for $G=\SL_2$, which parametrizes $\SL_2$-bundles on the curve~$X$ together with a trivialization away from an effective divisor of degree $n$. Recall from \cite{BFGM} that the absolute Zastava space $Z^n$ from Subsection \ref{Zastava spaces} affords a natural locally closed embedding
$$Z^n \ \longinto \ \Gr_G^n$$
which commutes with the natural projections to $X^{(n)}$. On $k$-points, this embedding associates to a point
$$L \ \longinto \ E \ \longonto \ \CO_X$$
of $Z^n$ the $\SL_2$-bundle $E$ together with the trivialization of $E$ obtained by splitting the surjection $E \onto \CO_X$ away from the zero locus of the composite map $L \to \CO_X$.

\medskip

Next consider the Zastava space parametrizing the data
$$L' \ \longintointo \ E \ \longto \ \CO_X$$
with notation as in previous sections. Note that here the map on the right is allowed to have zeroes, the line bundle on the right is fixed to be $\CO_X$, while the ``background'' line bundle $L'$ on the left is allowed to vary. Unlike the absolute Zastava space from Subsection \ref{Zastava spaces} above, this Zastava space is obtained from the relative Zastava space $_{\Bun_{T,-n}}Z^{-,n}$ by forcing the line bundle $L$ ``on the right''  to be equal to $\CO_X$; we denote this Zastava space by $\tilde{Z}^{-,n}$ for simplicity. An embedding of $\tilde Z^{-,n}$ into $\Gr_G^n$ is defined exactly as for $Z^n$.

\bigskip

\sssec{Sections for Zastava spaces}
\label{Sections for Zastava spaces}

Next we briefly review some constructions for the Zastava space $Z^n$ from \cite{BFGM}; we will use these constructions in Subsections \ref{The section for the local models $Y^n$} and \ref{The contraction for the local models $Y^n$} below to make similar constructions for the local models $Y^n$.
First, recall that the projection map
$$\pi_Z: \ Z^n \ \longto \ X^{(n)}$$
admits a natural section $s_Z$
which on $k$-points sends an effective divisor $D$ to the point
$$\CO_X(-D) \ \stackrel{i_1}{\longinto} \ \CO_X \oplus \CO_X \ \stackrel{pr_1}{\longonto} \ \CO_X$$
of the Zastava space $Z^n$.

\medskip

The case of $\tilde Z^{-,n}$ is analogous: The projection
$$\pi_{Z^-}: \ \tilde Z^{-,n} \ \longto \ X^{(n)}$$
admits a natural section $s_{Z^-}$ defined by sending an effective divisor $D \in X^{(n)}$ to the point
$$\CO_X(-D) \ \stackrel{i_1}{\longintointo} \ \CO_X(-D) \oplus \CO_X(D) \ \stackrel{pr_1}{\longto} \ \CO_X$$
of $\tilde Z^{-,n}$.

\bigskip

\sssec{Contractions for Zastava spaces}
\label{Contractions for Zastava spaces}

Next recall from \cite{MV} that any cocharacter $\check\lambda: \BG_m \to T$ naturally gives rise to an action of $\BG_m$ on the Beilinson-Drinfeld affine Grassmannian $\Gr_G^n$ which leaves the forgetful map $\Gr_G^n \to X^{(n)}$ invariant. It is shown in \cite{BFGM} that the $(-2 \check\rho)$-action of $\BG_m$ preserves the subspace $Z^n$; moreover, it contracts $Z^n$ onto the section $s_Z$, i.e., the action map extends to a map
$$\BA^1 \times Z^n \ \longto \ Z^n$$
such that the composition
$$Z^n \ = \ \{ 0 \} \times Z^n \ \longinto \ \BA^1 \times Z^n \ \longto \ Z^n$$
is equal to the composition of projection and section
$$Z^n \ \stackrel{\pi_Z}{\longto} \ X^{(n)} \ \stackrel{s_Z}{\longto} \ Z^n \, .$$

\medskip

The next lemma provides a modular interpretation of the $(- 2 \check\rho)$-action of~$\BG_m$ on $Z^n$, which will be used below; it can be proven by chasing through the definitions.

\medskip

\begin{lemma}
\label{modular action 1}
The action of an element $a \in \BG_m(S) = \Gamma(S,\CO_S)^\times$ on an $S$-point
$$L \ \stackrel{i}{\longinto} \ E \ \stackrel{p}{\longonto} \CO_X$$
of the Zastava space $Z^n$ via the $(- 2 \check\rho)$-action of $\BG_m$ yields the point
$$L \ \stackrel{a \cdot i}{\longinto} \ E \ \stackrel{\frac{1}{a} \cdot p \ }{\longonto} \ \CO_X \, .$$
\end{lemma}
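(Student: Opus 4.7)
The proof should be a direct unwinding of the definition of the embedding $Z^n \longinto \Gr_G^n$ and the cocharacter action on $\Gr_G^n$, so I will outline the verification step by step. First, I would recall from Subsection \ref{Embeddings for Zastava spaces} that under this embedding a Zastava point $(L \stackrel{i}{\longinto} E \stackrel{p}{\longonto} \CO_X)$ is sent to the pair $(E, \beta)$, where $\beta$ is the trivialization of $E$ on the complement $X \setminus D$ of the divisor $D = \Divis(p \circ i)$ obtained by splitting $p$ against $i$. Concretely, on $X \setminus D$ one obtains a basis $(\iota, \sigma)$ of $E$, with $\iota = i \circ (p \circ i)^{-1}$ the section coming from $L$ and $\sigma$ the unique complementary section normalized by $p \circ \sigma = 1$ together with $\iota \wedge \sigma = 1$ in the canonically trivialized determinant $\det E = \CO_X$.

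Next, I would unpack the $(-2\check\rho)$-action on the image $(E, \beta) \in \Gr_G^n$: by the general description of cocharacter actions on the affine Grassmannian, $a \in \BG_m(S)$ leaves $E$ unchanged and modifies $\beta$ by the diagonal element $(-2\check\rho)(a) \in T \subset \SL_2$. Since the first standard basis vector of $\SL_2$ carries the $T$-weight on which $-2\check\rho$ acts by $a$, while the second carries the weight on which it acts by $a^{-1}$, the new trivialization is $(a \cdot \iota, \, a^{-1} \cdot \sigma)$; in particular the $\SL_2$-normalization $(a \iota) \wedge (a^{-1} \sigma) = \iota \wedge \sigma = 1$ is preserved, confirming that the action does preserve the subspace $Z^n \subset \Gr_G^n$.

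The final step is to reinterpret the pair $(E, (a \iota, a^{-1} \sigma))$ as a Zastava point $(L, i', p')$. Since the $(-2\check\rho)$-action preserves the projection $\pi_Z$ by Subsection \ref{Contractions for Zastava spaces}, the associated divisor $D$ is unchanged, so $p' \circ i' = p \circ i$. Matching the first basis vector of the new trivialization with the formula $\iota' = i' \circ (p' \circ i')^{-1} = i' \circ (p \circ i)^{-1}$ forces $i' = a \cdot i$, and the identity $p' \circ i' = p \circ i$ then forces $p' = p/a$, which is exactly the claim.

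I expect the only real subtlety to be the bookkeeping of sign conventions: which sign of the cocharacter is being used, whether the $\BG_m$-action on $\Gr_G^n$ is by left or right multiplication, and whether the trivialization $\beta$ is viewed as a map $\CO_X^{\oplus 2} \to E$ or its inverse. A single reversal of any one of these conventions would interchange $a$ and $a^{-1}$ in the final formula, so fixing a consistent set of conventions at the outset will be the main obstacle.
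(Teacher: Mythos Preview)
Your approach is exactly what the paper indicates: it merely states that the lemma ``can be proven by chasing through the definitions'' without spelling out any details, so your outline is in fact more explicit than the paper's own treatment, and your caveat about sign/convention bookkeeping is well placed. One small slip: the complementary section $\sigma$ should be normalized by $p \circ \sigma = 0$ (it spans $\ker p$) rather than $p \circ \sigma = 1$, but this does not affect the structure of your argument.
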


\bigskip

Similarly, the $(2\check\rho)$-action of $\BG_m$ on $\Gr^n_G$ contracts $\tilde Z^{-,n}$ onto the section~$s_{Z^-}$ from Subsection \ref{Sections for Zastava spaces} above. Just as for $Z^n$ we have the following modular interpretation:

\medskip

\begin{lemma}
\label{modular action 2}
The action of an element $a \in \BG_m(S) = \Gamma(S,\CO_S)^\times$ on an $S$-point
$$L \ \stackrel{i}{\longintointo} \ E \ \stackrel{p}{\longto} \CO_X$$
of the Zastava space $\tilde Z^{-,n}$ via the $(2 \check\rho)$-action of $\BG_m$ yields the point
$$L \ \stackrel{\frac{1}{a} \cdot i}{\longintointo} \ E \ \stackrel{a \cdot p}{\longto} \ \CO_X \, .$$
\end{lemma}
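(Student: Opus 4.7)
The proof will be a direct analog of the proof of Lemma \ref{modular action 1}, obtained by chasing the locally closed embedding $\tilde Z^{-,n} \longinto \Gr_G^n$ from Subsection \ref{Embeddings for Zastava spaces} through the definition of the $(2\check\rho)$-action on the Beilinson-Drinfeld affine Grassmannian. The plan is first to unravel the modular meaning of the embedding, then to translate the $(2\check\rho)$-twist of the trivialization back into the triple $(L \intointo E \to \CO_X)$.

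Concretely, given an $S$-point $L \stackrel{i}{\longintointo} E \stackrel{p}{\longto} \CO_X$ of $\tilde Z^{-,n}$, I would let $U \subset X \times S$ denote the open locus where $p \circ i: L \to \CO_X$ is an isomorphism; on $U$ the subbundle $i(L)|_U$ splits the surjection $p|_U$ and hence yields a canonical trivialization $\tau: E|_U \isoto \CO_U \oplus \CO_U$ in which $i$ becomes the inclusion into the first summand and $p$ becomes the projection onto the second summand. The image in $\Gr_G^n$ is $(E, \tau)$. The cocharacter $2\check\rho$ for $G=\SL_2$ equals the coroot $\check\alpha$, so by definition the action of $a \in \BG_m$ on $\Gr_G^n$ modifies this trivialization by composing it with $\mathrm{diag}(a, a^{-1})$ viewed as a loop.

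To recover the modular data of the resulting point of $\tilde Z^{-,n}$, I would re-read the new trivialization: passing the automorphism $\mathrm{diag}(a, a^{-1})$ of $E|_U$ through the splitting dualizes it into a rescaling of the modular maps. The first summand (corresponding to the subbundle $L$) gets scaled so that $i$ becomes $\tfrac{1}{a} \cdot i$, while the second summand (corresponding to the quotient $\CO_X$) gets scaled so that $p$ becomes $a \cdot p$, which is precisely the claimed formula. As a consistency check, specializing to $a \to 0$ must recover the section $s_{Z^-}$ from Subsection \ref{Sections for Zastava spaces}, which indeed it does since $i/a$ blows up in the ``correct direction'' to force the image of $L$ to stabilize on the standard subbundle.

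The main (and essentially only) obstacle is bookkeeping of sign conventions: one must verify that $2\check\rho$, rather than $-2\check\rho$, is the cocharacter producing a contraction of $\tilde Z^{-,n}$ onto $s_{Z^-}$, and that the resulting rescaling lands on $\tfrac{1}{a} \cdot i$ and $a \cdot p$ rather than on the swapped pair. Alternatively, and perhaps more cleanly, one can bypass this by observing that under the involution $B \leftrightarrow B^-$ exchanging $Z^n$ and $\tilde Z^{-,n}$ (which swaps the roles of $i$ and $p$), the action of $2\check\rho$ corresponds to the action of $-2\check\rho$, so the claim follows formally from Lemma \ref{modular action 1}.
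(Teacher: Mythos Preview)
Your approach is exactly the paper's: it proves Lemma \ref{modular action 1} by ``chasing through the definitions'' and then introduces the present lemma with ``Just as for $Z^n$,'' i.e.\ the same chase (or, equivalently, your $B \leftrightarrow B^-$ reduction). One small slip to fix when you carry it out: you write that under the trivialization $i$ becomes inclusion into the first summand while $p$ becomes projection onto the \emph{second}, but then $p \circ i = 0$ rather than an isomorphism; both must live on the same summand (compare $s_{Z^-}$, where $i = i_1$ and $p = \mathrm{pr}_1$), which is precisely the bookkeeping you already flag as the only obstacle.
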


\bigskip

\sssec{Embeddings for the local models $Y^n$}
\label{Embeddings for the local models $Y^n$}

Combining the embeddings of~$Z^n$ and $\tilde Z^{-,n}$ from Subsection \ref{Embeddings for Zastava spaces} above, we obtain a locally closed embedding
$$\tilde Z^{-,n} \underset{ \ X^{(n)}}{\times} Z^n \ \ \longinto \ \ \Gr_G^n \underset{ \ X^{(n)}}{\times} \Gr_G^n \, .$$

\noindent We now construct the embedding of $Y^n$ mentioned above by in turn constructing a closed immersion
$$\tau: \ Y^n \ \longinto \ \tilde Z^{-,n} \underset{ \ X^{(n)}}{\times} Z^n \, .$$
Namely, if
$$L \ \longintointo \ E_1 \ \stackrel{}{\longto} \ E_2 \ \longonto \ \CO_{X \times S}$$
is an $S$-point of $Y^n$, we can on the one hand compose the middle map $\varphi$ with the surjection on the right and obtain the $S$-point
$$L \ \longintointo \ E_1 \ \longto \ \CO_{X \times S}$$
of $\tilde Z^{-,n}$. On the other hand, composing $\varphi$ with the subbundle map on the left yields an $S$-point
$$L \ \longto \ E_2 \ \longonto \ \CO_{X \times S}$$
of $Z^n$, and by construction the two points in fact lie in the fiber product over $X^{(n)}$ above; we have thus defined the map $\tau$.

\medskip

\begin{lemma}
\label{tau closed immersion}
The map $\tau$ is a closed immersion.
\end{lemma}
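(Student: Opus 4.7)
The plan is to show that $\tau$ is a monomorphism and that its image is cut out as the vanishing locus of a canonical section of a vector bundle on $B := \tilde Z^{-,n} \underset{X^{(n)}}{\times} Z^n$, hence is closed. The starting observation is that on $X \times B$ the universal data fits into a natural short exact sequence of coherent sheaves
$$0 \longto L \longto \CHom(E_1, E_2) \longto \CK' \longto 0 \, ,$$
where $L'' := \ker(\pi: E_2 \onto \CO) \cong \CO$ by the triviality of $\det E_2$; the subsheaf $L \cong \CHom(L^{-1}, L'')$ consists of maps factoring through both $E_1 \onto E_1/\iota(L) \cong L^{-1}$ and $L'' \into E_2$; and $\CK' \subset \CHom(L, E_2) \oplus \CHom(E_1, \CO)$ is the sheaf of pairs $(\phi, \psi)$ satisfying $\pi \circ \phi = \psi \circ \iota$. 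Exactness is local: after trivializing the line bundles, this is the decomposition of a $2 \times 2$ matrix $\bigl(\begin{smallmatrix} a & b \\ c & e \end{smallmatrix}\bigr)$ into its off-diagonal entry $b$ (spanning $L$) and the triple $(a,c,e)$ (the quotient $\CK'$). The quotient map is $\varphi \mapsto (\varphi \circ \iota, \pi \circ \varphi)$, and the universal pair coming from the two factors of $B$ is precisely the universal global section of $\CK'$ on $X \times B$.

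First I would verify that $\tau$ is a monomorphism. For two $S$-points of $Y^n$ with the same image under $\tau$, the difference $\varphi_1 - \varphi_2$ maps to zero in $\CK'$ and so lies in $\Gamma(X \times S, L)$. But $L|_{X \times \bar s} \cong \CO_X(-D_{\bar s})$ has degree $-n < 0$, so $H^0(X, L|_{X \times \bar s}) = 0$ on every geometric fiber; cohomology and base change applied to $p : X \times S \to S$ then give $p_* L = 0$, whence $\Gamma(X \times S, L) = 0$ and $\varphi_1 = \varphi_2$.

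Next I would realize the image of $\tau$ as a closed subscheme of $B$. Since $h^0(X, L|_{X \times \bar b}) = 0$ and $h^1(X, L|_{X \times \bar b}) = n + g - 1$ are constant on $B$, Grauert's theorem implies that $R^1 p_* L$ is a vector bundle on $B$ whose formation commutes with arbitrary base change $S \to B$. Applying $p_*$ to the displayed short exact sequence and using $p_* L = 0$ produces a coboundary $\partial: p_* \CK' \to R^1 p_* L$. Evaluating $\partial$ on the universal section of $\CK'$ yields a global section $\sigma \in \Gamma(B, R^1 p_* L)$; by base change, its vanishing locus $Z \hookrightarrow B$ represents the subfunctor of those $S$-points over which the universal pair, restricted to $S$, lifts to some $\varphi \in \Gamma(X \times S, \CHom(E_1, E_2))$.

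Finally, I would check that $\tau$ induces an isomorphism $Y^n \stackrel{\sim}{\longto} Z$. Injectivity on $S$-points is the monomorphism statement; for surjectivity, any lift $\varphi$ over an $S$-point of $Z$ automatically defines an $S$-point of $Y^n$, because the composite $\pi \circ \varphi \circ \iota$ coincides with the degree-$n$ injection $L \into \CO$ encoded by $B \to X^{(n)}$ and is hence generically an isomorphism on every geometric fiber, forcing $\varphi$ to be generically nonzero. This factors $\tau$ as the isomorphism $Y^n \stackrel{\sim}{\longto} Z$ followed by the closed immersion $Z \hookrightarrow B$, completing the proof. The main technical input is Grauert's theorem, whose hypothesis is the constancy of $h^1$ on fibers.
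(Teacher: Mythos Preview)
Your proof is correct and follows the same strategy as the paper's: both establish the monomorphism claim via the vanishing $H^0(X, L|_{X \times \bar s}) = 0$ for degree reasons, and both argue that the existence of the lift $\varphi$ is a closed condition. The paper simply asserts the latter without elaboration, whereas you make it explicit by realizing the obstruction as a section of the locally free sheaf $R^1 p_* L$ via Grauert's theorem; this is a welcome clarification.

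One small correction in your local description: the kernel of $\varphi \mapsto (\varphi \circ \iota, \pi \circ \varphi)$ is spanned by the entry $e$, not $b$. Indeed, factoring through $E_1 \onto E_1/L$ kills the first column ($a=c=0$) and factoring through $L'' \into E_2$ kills the first row ($a=b=0$), leaving only $e$. Your global identification of the kernel as $\CHom(L^{-1}, L'') \cong L$ is nonetheless correct and the argument is unaffected.
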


\begin{proof}
Given an $S$-point of~$\tilde Z^{-,n} \times_{X^{(n)}} Z^n$, represented by the outer rhombus in the next diagram, we show that there is at most one dotted arrow $\varphi$ making both triangles commute.
$$\xymatrix@+10pt{
 & E_1 \ar[dr]^{h_1} \ar@{..>}^{\varphi}[dd] & \\
 L \ar@{^(->}[ur]^{g_1} \ar[dr]_{g_2} & & \CO_{X \times S} \\
 & E_2 \ar@{->>}[ur]_{h_2} & \\
}$$
Since the existence of such an arrow is a closed condition, this will prove the lemma. To prove the uniqueness of the dotted arrow, we form the difference $\delta: E_1 \to E_2$ of any given two such dotted arrows, and show that $\delta = 0$. Namely, by the commutativity assumptions for each dotted arrow, the map $\delta$ descends to a map $\bar\delta: E_1/L \to E_2$ whose composite with $h_2$ is $0$. Thus the map $\bar\delta$ factors through the kernel of $h_2$, which itself is the trivial line bundle, and we need to show that the resulting map $E_1/L \to \CO_{X \times S}$ is zero. 

\medskip

To do so, observe first that since $g_1$ is a subbundle map, the quotient $E_1/L$ is itself a line bundle; its restriction to any $X \times \bar{s}$ has degree $n \geq 1$. We prove the above vanishing by showing that in fact the vector space of maps
$$\Hom_{\CO_{X \times S}}(E_1/L, \CO_{X \times S}) \ = \ H^0(X \times S, (E_1/L)^*)$$
vanishes, where $(E_1/L)^*$ denotes the dual line bundle of $E_1/L$. For the latter, it suffices to show that the sheaf pushforward $R^0p_*((E_1/L)^*)$ along the projection map $p: X \times S \to S$ vanishes. By the theorem on cohomology and base change, this in turn can be checked on the geometric fibers of the projection $p$, where it holds for degree reasons.
\end{proof}

\bigskip

\sssec{The section for the local models $Y^n$}
\label{The section for the local models $Y^n$}

Next we construct a section of the projection map
$$\pi: \ Y^n \ \longto \ X^{(n)} \, .$$
First recall that an $S$-point of $X^{(n)}$ consists of a line bundle $L$ on $X \times S$ together with a map of coherent sheaves $L \to \CO_{X \times S}$ which is injective of relative degree $n$ whenever restricted to $X \times \bar{s}$ for every geometric point $\bar{s} \to S$. The latter condition automatically forces the map $L \to \CO_{X \times S}$ to be injective. Furthermore, let $L^*$ denote the dual line bundle of $L$ on $X \times S$. Then we define the section
$$s: \ X^{(n)} \ \longto \ Y^n$$
by associating to an $S$-point $L \to \CO_{X \times S}$ of $X^{(n)}$ the $S$-point
$$L \ \ \stackrel{i_1}{\longinto} \ \ L \oplus L^* \ \ \stackrel{\varphi}{\longto} \ \ \CO_{X \times S} \oplus \CO_{X \times S} \ \ \stackrel{pr_1}{\longonto} \ \ \CO_{X \times S}$$
of $Y^n$, where the map $\varphi$ in the middle is defined as the composition
$$L \oplus L^* \ \ \stackrel{pr_1}{\longonto} \ \ L \ \ \longinto \ \CO_{X \times S} \ \ \stackrel{i_1}{\longinto} \ \ \CO_{X \times S} \oplus \CO_{X \times S} \, .$$

\medskip

\noindent It is clear from the definitions that the map $s$ is indeed a section of $\pi$. Furthermore, by construction the section $s$ factors through the $B$-locus $Y^n_B$ of $Y^n$. In fact we have the following two lemmas, both of which follow easily from the definitions:

\medskip

\begin{lemma}
The section $s$ induces an isomorphism of $X^{(n)}$ with the stratum of maximal defect ${}_nY^n_B$.
\end{lemma}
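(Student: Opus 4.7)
The plan is to combine two observations: that the section $s$ lands in the closed substack ${}_nY^n_B$ of maximal defect, and that the identification of ${}_nY^n_B$ with $X^{(n)}$ provided by Corollary \ref{stratification for local model} coincides with the map induced by $\pi$. From the definition of $s$, the composite
\[ L \ \stackrel{i_1}{\longinto} \ L \oplus L^* \ \stackrel{\varphi}{\longto} \ \CO_{X \times S} \oplus \CO_{X \times S} \ \stackrel{pr_1}{\longonto} \ \CO_{X \times S} \]
recovers the original injection $L \longinto \CO_{X \times S}$ representing the given $S$-point of $X^{(n)}$, so $\pi \circ s = \id_{X^{(n)}}$; in particular $s$ is a monomorphism.

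To show that $s$ factors through ${}_nY^n_B$, I would read off the canonical factorization $L_1 \longintointo E_1 \longonto M_1 \longinto M_2 \longintointo E_2 \longonto L_2$ from the explicit form of $s(D)$. By construction, $\varphi$ in $s(D)$ equals the composite $L \oplus L^* \longonto L \longinto \CO_{X \times S} \longinto \CO_{X \times S} \oplus \CO_{X \times S}$, so that the image of $\varphi$ is $M_1 = L$ and its saturation in $E_2 = \CO_{X\times S} \oplus \CO_{X\times S}$ is $M_2 = \CO_{X \times S}$ (the first coordinate subbundle). The defect divisor $M_1 \longinto M_2$ is then precisely $L \longinto \CO_{X \times S}$, which has (maximal) degree $n$; hence $s(D) \in {}_nY^n_B$.

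To conclude, Corollary \ref{stratification for local model} identifies ${}_nY^n_B$ with the image of $\bar f_{0,n,0}$ on the fiber product ${}_0Z^{-,0}_{(\Bun_{T,-n})} \underset{\Bun_T}{\times} (X^{(n)} \times {}_0Z^0)$, which collapses to $X^{(n)}$ since both open Zastava factors of relative degree zero parametrize canonically trivial data. A direct unraveling of the definition of $\bar f_{0,n,0}$ — which forces $E_1 \cong \CO_X(-D) \oplus \CO_X(D)$ and $E_2 \cong \CO_X \oplus \CO_X$ with the standard inclusions and projections — shows that $\bar f_{0,n,0}$ agrees with $s$ under this identification, so $s$ is an isomorphism onto ${}_nY^n_B$. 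The main subtle point, which is avoided by invoking the stratification corollary rather than constructing an inverse by hand, would be to show directly that the splittings $E_1 \cong L_1 \oplus L_1^*$ and $E_2 \cong L_2 \oplus L_2^*$ forced in the maximal-defect case are canonical in families; this is precisely what is packaged into the identification of the stratum with the above fiber product, so it need not be revisited here.
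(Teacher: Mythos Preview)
Your proof is correct. The paper itself does not give a proof of this lemma, stating only that it ``follows easily from the definitions''; your argument is a faithful and careful unwinding of exactly those definitions, identifying $s$ with the stratification map $f_{0,n,0}$ from Corollary~\ref{stratification for local model}.
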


\begin{lemma}
\label{agreement of sections}
The section $s_{Z^-} \times s_Z$ of the projection
$$\pi_{Z^-} \times \pi_Z: \ \ \tilde Z^{-,n} \underset{ \ X^{(n)}}{\times} Z^n \ \longto \ X^{(n)}$$
induced by the sections $s_Z$ and $s_{Z^-}$ from Subsection \ref{Sections for Zastava spaces} factors through the closed subspace $Y^n$, and in fact agrees with the section $s$.
\end{lemma}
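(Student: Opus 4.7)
The plan is to verify directly from the definitions that the composition $\tau \circ s : X^{(n)} \to \tilde Z^{-,n} \underset{X^{(n)}}{\times} Z^n$ coincides with $s_{Z^-} \times s_Z$. Since $\tau$ is a closed immersion by Lemma \ref{tau closed immersion}, any factorization of $s_{Z^-} \times s_Z$ through $Y^n$ is unique; this single computation therefore establishes both assertions of the lemma at once, namely that $s_{Z^-} \times s_Z$ factors through $Y^n$ and that its unique lift is precisely the section~$s$.

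To carry out the check, I would start with an $S$-point $L \longinto \CO_{X \times S}$ of $X^{(n)}$, unpack $s(L)$ as in Subsection \ref{The section for the local models $Y^n$}, and then apply the two projections defining $\tau$ from Subsection \ref{Embeddings for the local models $Y^n$}. On the $\tilde Z^{-,n}$ side, $\tau$ post-composes the middle map $\varphi$ with $pr_1 : \CO_{X \times S} \oplus \CO_{X \times S} \longonto \CO_{X \times S}$; because $\varphi$ is by construction of $s$ the composite $L \oplus L^* \stackrel{pr_1}{\longonto} L \longinto \CO_{X \times S} \stackrel{i_1}{\longinto} \CO_{X \times S} \oplus \CO_{X \times S}$, this post-composition collapses to the given inclusion $L \longinto \CO_{X \times S}$ restricted from $L \oplus L^*$ via the first projection. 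The resulting datum $L \stackrel{i_1}{\longinto} L \oplus L^* \longto \CO_{X \times S}$ is precisely $s_{Z^-}(L)$. Dually, $\tau$ pre-composes $\varphi$ with $i_1 : L \longinto L \oplus L^*$; by the same factorization of $\varphi$ one obtains the datum $L \longinto \CO_{X \times S} \stackrel{i_1}{\longinto} \CO_{X \times S} \oplus \CO_{X \times S} \stackrel{pr_1}{\longonto} \CO_{X \times S}$ appearing in the definition of $s_Z(L)$.

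There is no real obstacle: the lemma is a direct unfolding of definitions, and the only thing requiring attention is matching up the two factorizations of $\varphi$ built into the definition of $s$ with the two projections comprising $\tau$. Once both components of $\tau \circ s$ are identified with $s_{Z^-}$ and $s_Z$ respectively, the fact that they cover the identity on $X^{(n)}$ is automatic and forces the product to land in the fiber product $\tilde Z^{-,n} \underset{X^{(n)}}{\times} Z^n$, completing the proof.
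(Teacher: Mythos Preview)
Your proposal is correct and is exactly the approach the paper has in mind: the paper states that this lemma ``follows easily from the definitions'' without giving an explicit argument, and you have simply written out the definition chase that verifies $\tau \circ s = s_{Z^-} \times s_Z$. The computation you give on each component is accurate.
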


\medskip

\sssec{The contraction for the local models $Y^n$}
\label{The contraction for the local models $Y^n$}

We now construct a $\BG_m$-action on $Y^n$ which contracts it onto the section $s$, in the sense of Subsection \ref{Contractions for Zastava spaces} above. One can construct this action in various ways; here we construct it using the embedding from Subsection~\ref{Embeddings for the local models $Y^n$}.
Namely, let us define a $\BG_m$-action on the fiber product $\Gr_G^n \times_{X^{(n)}} \Gr_G^n$
by acting on the first factor via the cocharacter $2 \check\rho$ of $G = \SL_2$ and on the second factor via the cocharacter $-2 \check\rho$.

\medskip

\begin{lemma}
\label{contraction lemma}
This $\BG_m$-action preserves the locally closed subspace $Y^n$ and contracts it onto the section $s$.
\end{lemma}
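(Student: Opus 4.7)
The plan is to verify $\BG_m$-invariance at the level of $S$-points using the modular formulas of Lemmas \ref{modular action 1} and \ref{modular action 2}, then extend the action from $\BG_m$ to $\BA^1$ by a closure/density argument, and finally identify the $a=0$ limit by transporting the known contraction statements on each Zastava factor through $\tau$.

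First I would establish invariance on $\BG_m$-orbits. Given an $S$-point of $Y^n$ written as $L \stackrel{g_1}{\longintointo} E_1 \stackrel{\varphi}{\longto} E_2 \stackrel{h_2}{\longonto} \CO_{X \times S}$, its image under $\tau$ is the pair $\bigl((L, g_1, h_2 \circ \varphi),\, (L, \varphi \circ g_1, h_2)\bigr) \in \tilde Z^{-,n} \times_{X^{(n)}} Z^n$. Applying Lemma \ref{modular action 2} to the first factor and Lemma \ref{modular action 1} to the second, the action of $a \in \BG_m$ produces $\bigl((L, \tfrac{1}{a} g_1, a\, h_2 \circ \varphi),\, (L, a\, \varphi \circ g_1, \tfrac{1}{a} h_2)\bigr)$. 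A direct check shows this is the $\tau$-image of the $Y^n$-point $(L, E_1, E_2, \tfrac{1}{a} g_1, a^2 \varphi, \tfrac{1}{a} h_2)$: writing $\tilde g_1 := \tfrac{1}{a} g_1$, $\tilde h_2 := \tfrac{1}{a} h_2$, $\tilde\varphi := a^2 \varphi$, the compatibility requirements $\tilde h_2 \circ \tilde\varphi = a\, h_2 \circ \varphi$ and $\tilde\varphi \circ \tilde g_1 = a\, \varphi \circ g_1$ are both immediate. Hence the $\BG_m$-orbit of any point of $Y^n$ remains inside $Y^n$.

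Next I would extend the action from $\BG_m$ to $\BA^1$. Because each Zastava factor admits a contracting $\BA^1$-action commuting with its projection to $X^{(n)}$ (Subsection \ref{Contractions for Zastava spaces}), the product action extends to a map $\mu: \BA^1 \times (\tilde Z^{-,n} \times_{X^{(n)}} Z^n) \longto \tilde Z^{-,n} \times_{X^{(n)}} Z^n$ on the ambient fiber product. Since $Y^n \longinto \tilde Z^{-,n} \times_{X^{(n)}} Z^n$ is a closed immersion by Lemma \ref{tau closed immersion}, the preimage $\mu^{-1}(Y^n) \cap (\BA^1 \times Y^n)$ is closed in $\BA^1 \times Y^n$; by the previous paragraph it contains the schematically dense open $\BG_m \times Y^n$, so it must equal $\BA^1 \times Y^n$. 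Thus $\mu$ restricts to an action $\BA^1 \times Y^n \longto Y^n$ extending the $\BG_m$-action.

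Finally I would identify the $a=0$ fiber of this extended action. On the ambient fiber product, the value of $\mu$ at $a=0$ is the composition $(s_{Z^-} \circ \pi_{Z^-}) \times (s_Z \circ \pi_Z)$ of projection and section, by the contraction statements of Subsection \ref{Contractions for Zastava spaces}. By Lemma \ref{agreement of sections}, the product section $s_{Z^-} \times s_Z$ factors through $Y^n$ and coincides with the section $s$ of $\pi$. Consequently the restriction of $\mu$ to $\{0\} \times Y^n$ equals $s \circ \pi$, completing the verification. The only substantive step is the modular computation in the first paragraph, which pins down the scaling factor $\tilde\varphi = a^2 \varphi$; the remaining invariance and contraction statements are then formal consequences of the corresponding facts on the two Zastava spaces.
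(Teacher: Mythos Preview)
Your proof is correct and follows essentially the same approach as the paper: the key computation that the $\BG_m$-action on $Y^n$ is given modularly by $\tilde\varphi = a^2\varphi$ is identical, and the contraction onto the section $s$ is deduced from Lemma \ref{agreement of sections} in the same way. You supply more detail than the paper on one point: where the paper simply says the contraction ``follows from the construction together with Lemma \ref{agreement of sections},'' you make explicit the extension of the action from $\BG_m$ to $\BA^1$ on the ambient fiber product and then use that $\tau$ is a closed immersion together with schematic density of $\BG_m \times Y^n$ in $\BA^1 \times Y^n$ to restrict the extended action to $Y^n$.
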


\begin{proof}
We first show that the action indeed preserves $Y^n$. By Subsection \ref{Contractions for Zastava spaces}, we need to show that an $S$-point of $\tilde Z^{-,n} \times_{X^{(n)}} Z^n$ which lies in $Y^n$ still lies in $Y^n$ after acting by an element $a \in \BG_m(S) = \Gamma(S, \CO_S)^\times$. In view of the embedding of Lemma \ref{tau closed immersion} and the modular descriptions of Lemma \ref{modular action 1} and Lemma \ref{modular action 2}, we have to show that if the outer rhombus of the diagram
$$\xymatrix@+10pt{
 & E_1 \ar[dr]^{h_1} \ar@{..>}^{\varphi}[dd] & \\
 L \ar@{^(->}[ur]^{g_1} \ar[dr]_{g_2} & & \CO_{X \times S} \\
 & E_2 \ar@{->>}[ur]_{h_2} & \\
}$$
admits a dotted arrow $\varphi$ as shown, then the same holds for the following rhombus:
$$\xymatrix@+10pt{
 & E_1 \ar[dr]^{a \cdot h_1} \ar@{..>}[dd] & \\
 L \ar@{^(->}[ur]^{\frac{1}{a} \cdot g_1} \ar[dr]_{a \cdot g_2} & & \CO_{X \times S} \\
 & E_2 \ar@{->>}[ur]_{\frac{1}{a} \cdot h_2} & \\
}$$
This can indeed be achieved by defining the dotted arrow as $a^2 \cdot \varphi$, and hence we have shown that $Y^n$ is preserved by the $\BG_m$-action. The second statement follows from the construction together with Lemma \ref{agreement of sections}.
\end{proof}

\medskip

\sssec{The contraction principle and preservation of weights}

Having constructed a $\BG_m$-action on $Y^n$ which contracts $Y^n$ onto the section $s$ of the projection map $\pi$ in the sense of Subsection \ref{Contractions for Zastava spaces} above, we arrive at the following consequences for the restriction along the section $s$. First, the well-known \textit{contraction principle} (see for example \cite[Sec. 3]{Hyperbolic localization} or \cite[Sec. 5]{BFGM}) for contracting $\BG_m$-actions states:

\begin{lemma}
\label{contraction principle}
For any $\BG_m$-monodromic object $F \in D(Y^n)$ there exists a natural isomorphism
$$s^* F \ \cong \ \pi_* F \, .$$
\end{lemma}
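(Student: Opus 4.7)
The plan is to apply the classical contraction principle, using the extended $\BG_m$-action $a: \BA^1 \times Y^n \to Y^n$ furnished by Lemma \ref{contraction lemma}; the key features are $a \circ i_1 = \id_{Y^n}$ and $a \circ i_0 = s \circ \pi$, where $i_t: Y^n \hookrightarrow \BA^1 \times Y^n$ denotes the inclusion of the fiber over $t \in \BA^1$. Since $\pi$ is $\BG_m$-invariant, the map $\tilde\pi := \id_{\BA^1} \times \pi: \BA^1 \times Y^n \to \BA^1 \times X^{(n)}$ intertwines $a$ with the projection onto the second factor, providing the commutativity needed for base change.

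First I would form the sheaf $a^* F$ on $\BA^1 \times Y^n$; by construction its $i_1^*$-restriction recovers $F$, while its $i_0^*$-restriction recovers $(s \circ \pi)^* F = \pi^* s^* F$. Pushing forward along $\tilde\pi$, the resulting complex $K := \tilde\pi_*(a^* F)$ on $\BA^1 \times X^{(n)}$ has, by base change, fiber $\pi_* F$ over $1$ and fiber $\pi_*(\pi^* s^* F)$ over $0$. The monodromicity hypothesis enters through a canonical identification $a^* F \cong p^* F$ on the open subspace $\BG_m \times Y^n \subset \BA^1 \times Y^n$, where $p$ denotes the projection onto the second factor; this identification shows that $K|_{\BG_m \times X^{(n)}}$ is pulled back from $X^{(n)}$, and, combined with the natural $\BG_m$-equivariant structure on $K$ induced by the standard action on $\BA^1$, a standard argument in the theory of $\BG_m$-monodromic complexes then implies that $K$ itself is pulled back from $X^{(n)}$ over all of $\BA^1 \times X^{(n)}$. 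Comparing fibers over $0$ and $1$ yields a canonical isomorphism $\pi_* F \cong \pi_*(\pi^* s^* F)$.

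To conclude, I would apply the projection formula $\pi_*(\pi^* s^* F) \cong s^* F \otimes \pi_*(\Qellbar_{Y^n})$, together with the fact that the $\BG_m$-contraction of $Y^n$ onto the section $s$ forces $\pi_*(\Qellbar_{Y^n}) \cong \Qellbar_{X^{(n)}}$ (fiberwise, the contracting $\BG_m$-action trivializes cohomology). Combining these yields $\pi_* F \cong s^* F$ as desired. The main obstacle is a careful formalization of the monodromicity input — specifically, the implication that the $\BG_m$-equivariant complex $K$ on $\BA^1 \times X^{(n)}$, being pulled back from $X^{(n)}$ over $\BG_m \times X^{(n)}$, is in fact pulled back over all of $\BA^1 \times X^{(n)}$. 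This is a standard result in the theory of monodromic complexes, and I would simply invoke the references cited in the statement of the lemma rather than reprove it.
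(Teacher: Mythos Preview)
The paper does not give a proof of this lemma at all: it simply states the result as the ``well-known contraction principle'' and refers to \cite[Sec.~3]{Hyperbolic localization} and \cite[Sec.~5]{BFGM}. Your sketch is essentially the standard argument found in those references, so in substance you are supplying what the paper chose to outsource.

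One small caution: the identification $a^* F \cong p^* F$ on $\BG_m \times Y^n$ that you invoke is canonical for $\BG_m$-\emph{equivariant} objects, but for merely monodromic $F$ one has to be slightly more careful about what ``canonical'' means here; the usual way around this is to argue with the cospecialization map directly, or to reduce to the equivariant case by a d\'evissage. You already flag this as the main obstacle and propose to cite the references, which is exactly what the paper does and is entirely appropriate.
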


\medskip

Since by \cite[Sec. 5]{BBD} the $*$-pullback does not increase the weights and the $*$-pushforward does not decrease the weights, we obtain:

\medskip

\begin{corollary}
\label{preservation of purity}
Let $F \in D(Y^n)$ be $\BG_m$-monodromic, and assume in addition that $F$ is pure of some weight $w$. Then the complex $s^* F = \pi_* F$ is again pure of weight $w$.
\end{corollary}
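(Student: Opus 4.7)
The plan is to combine the identification $s^*F \cong \pi_* F$ from Lemma \ref{contraction principle} with the two one-sided weight estimates from \cite[Sec.~5]{BBD} which are already flagged in the sentence preceding the corollary. Concretely, since $F$ is pure of weight $w$, in particular it is mixed of weights $\leq w$, and since the $*$-pullback functor along any morphism preserves the class of mixed complexes of weights $\leq w$, the complex $s^*F$ has weights $\leq w$. Dually, $F$ is mixed of weights $\geq w$, and since the $*$-pushforward along any morphism preserves the class of mixed complexes of weights $\geq w$, the complex $\pi_*F$ has weights $\geq w$.

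Now I would invoke Lemma \ref{contraction principle} to get the canonical isomorphism $s^*F \cong \pi_*F$ (this is where the $\BG_m$-monodromicity hypothesis and the contracting $\BG_m$-action constructed in Lemma \ref{contraction lemma} are used). Since one and the same complex is mixed of weights $\leq w$ and of weights $\geq w$, it is pure of weight $w$, which is precisely the conclusion.

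There is no real obstacle: the corollary is a direct combination of the contraction principle with the standard weight-monotonicity properties of the four functors, and the only thing to check is that the hypotheses of Lemma \ref{contraction principle} apply, which is guaranteed by the $\BG_m$-monodromicity assumption on $F$ together with Lemma \ref{contraction lemma} providing the contracting $\BG_m$-action on $Y^n$ onto the image of $s$. Consequently the entire proof can be written in essentially a single sentence, and I would present it as such in the paper rather than breaking it into steps.
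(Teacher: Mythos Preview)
Your proposal is correct and follows exactly the paper's approach: the corollary is stated immediately after the sentence noting that $*$-pullback does not increase weights and $*$-pushforward does not decrease weights (\cite[Sec.~5]{BBD}), and the proof is the one-line combination of these bounds with the identification $s^*F \cong \pi_*F$ from Lemma~\ref{contraction principle}. There is nothing to add or change.
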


\medskip

\ssec{Explicit equations and generalized Picard-Lefschetz families}
\label{Explicit equations and generalized Picard-Lefschetz families}

\mbox{} \medskip

In this section we use the embedding~$\tau$ from Subsection \ref{Embeddings for the local models $Y^n$} to find explicit equations for the fibers of the projection $\pi: Y^n \to X^{(n)}$. We will primarily be concerned with the fiber of~$\pi$ over the point $nx \in X^{(n)}$. The case of a general point $\sum n_k x_k \in X^{(n)}$ follows from this case via factorization.

\medskip

\sssec{Fibers of Zastava spaces}
Following for example \cite{MV}, we use the following notation for the semi-infinite orbits in the affine Grassmannian $\Gr_G = SL_2 (\mathsf{k}(\!(t)\!))/SL_2(\mathsf{k}[ \! [t]\!])$. Given any integer $i \in \BZ$ the $N(\mathsf{k}(\!(t)\!))$-orbit of the point
$$\Bigl(\begin{smallmatrix} t^i & 0 \\ 0 & \ t^{-i} \end{smallmatrix} \Bigr)$$
in $\Gr_G$ will be denoted by $S^i$, and its $N^-(\mathsf{k}(\!(t)\!))$-orbit by $T^i$. Using the modular interpretation of these orbits it is not hard to show (see for example \cite{BFGM}):

\begin{lemma}
By passing to the fibers over the point $nx \in X^{(n)}$, the embeddings of $Z^n$ and $\tilde Z^{-,n}$ into $\Gr_G^n$ from Subsection \ref{Embeddings for Zastava spaces} above induce identifications

$$Z^n|_{nx} \ \ \cong \ \ \overline{S^n} \cap T^0$$
and
$$\tilde Z^{-,n}|_{nx} \ \ \cong \ \ S^n \cap \overline{T^0} \, .$$
\end{lemma}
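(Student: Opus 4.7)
The plan is to identify both fibers with intersections of semi-infinite orbits by translating the moduli data of $Z^n$ and $\tilde Z^{-,n}$ into the Pl\"ucker-type descriptions of $S^n, T^0 \subset \Gr_G$. First, I would recall the modular interpretation of the semi-infinite orbits following \cite{MV}: a point of $\Gr_G$ supported at $x$ is an $\SL_2$-bundle $E$ on the formal disk at $x$ together with a trivialization on the punctured disk, and $\overline{S^n}$ (resp. $S^n$) is the locus where $E$ admits a line subsheaf $L$ of degree $-n$ compatible with the trivialization, with $L$ only required to inject into $E$ as a coherent sheaf (resp. required to be a subbundle); dually, $\overline{T^0}$ (resp. $T^0$) is the locus where $E$ admits a map to $\CO$ of the corresponding type which is merely a map of coherent sheaves (resp. a quotient bundle map).

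For the first identification $Z^n|_{nx} \cong \overline{S^n} \cap T^0$, the embedding $Z^n \longinto \Gr_G^n$ of Subsection \ref{Embeddings for Zastava spaces} sends a point $L \into E \longonto \CO_X$ to the bundle $E$ together with the trivialization away from $x$ obtained from the splitting of the quotient $E \longonto \CO_X$. In the fiber over $nx$ the $\barBun_B$-datum $L$, which is an injection of coherent sheaves of degree $-n$ supported at $x$, translates precisely to membership in $\overline{S^n}$; the genuine quotient bundle $E \longonto \CO_X$ of degree $0$ translates to membership in the open orbit $T^0$. An inverse map is constructed in families by reading off $L$ as the distinguished degree-$(-n)$ subsheaf witnessing the $\overline{S^n}$-condition and $\CO_X$ as the distinguished degree-$0$ quotient bundle witnessing the $T^0$-condition, and the two constructions are manifestly mutually inverse.

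The second identification $\tilde Z^{-,n}|_{nx} \cong S^n \cap \overline{T^0}$ is established by the symmetric argument: the subbundle datum $L \longintointo E$ translates to the open orbit $S^n$, while the coherent sheaf map $E \longto \CO_X$ (possibly with cokernel) translates to the closure $\overline{T^0}$. The main obstacle, in both cases, is the careful bookkeeping of the open-versus-closure dichotomy on the two sides and its functoriality in $S$. This reduces to the standard fact that a line subsheaf of $E$ is a subbundle if and only if the corresponding reduction to $B$ extends to an honest $B$-reduction rather than only to a point of the closure of the corresponding semi-infinite orbit, together with the analogous statement for $B^-$-reductions coming from $E \longto \CO_X$. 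Once these matchings are in place, the identifications of the lemma follow directly, and the full routine verification can be carried out as in \cite{BFGM}.
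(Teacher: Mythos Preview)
Your proposal is correct and follows exactly the approach the paper indicates: the paper does not give a proof but simply says ``Using the modular interpretation of these orbits it is not hard to show (see for example \cite{BFGM})'', and your sketch spells out precisely this modular-interpretation argument. Your bookkeeping of the open-versus-closure dichotomy (subbundle vs.\ subsheaf on the $S$-side, quotient bundle vs.\ arbitrary map on the $T$-side) is the key content, and it matches the definitions of $Z^n$ and $\tilde Z^{-,n}$ in the paper.
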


\bigskip

To make the above intersections of semi-infinite orbits more explicit, we will write matrix representatives for elements of $\Gr_G = SL_2 (\mathsf{k}(\!(t)\!))/SL_2(\mathsf{k}[ \! [t]\!])$. We have the following well-known lemma:

\begin{lemma}
\label{affinespaces}
The following two maps are isomorphisms:
\begin{itemize}
\item[]
\item[(a)]
$$\BA^n \ \longto \ \overline{S^n} \cap T^0$$

$$(a_{-n}, \ldots, a_{-1}) \ \longmapsto \Bigl(\begin{smallmatrix} 1 & 0 \\ \sum a_i t^i & 1 \end{smallmatrix} \Bigr)$$
\item[]

\item[(b)] 
$$\BA^n \ \longto \ S^n \cap \overline{T^0}$$

$$(b_0, \ldots, b_{n-1}) \ \longmapsto \Bigl(\begin{smallmatrix} t^n & \sum b_i t^i \\ 0 & t^{-n} \end{smallmatrix} \Bigr)$$
\end{itemize}
\end{lemma}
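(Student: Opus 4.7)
The plan is to prove (a) directly; statement (b) follows by the mirror argument with the roles of $N$ and $N^-$ (and of $S^i$ and $T^i$) interchanged. The strategy has three steps: first check set-theoretic containment in the target, then establish bijectivity on points, then upgrade to an isomorphism of schemes.

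For the containment step, the matrix $g(a) := \bigl(\begin{smallmatrix} 1 & 0 \\ c(t) & 1 \end{smallmatrix}\bigr)$ with $c(t) := \sum_{i=-n}^{-1}a_i t^i$ manifestly lies in $N^-(k(\!(t)\!))$, so its coset in $\Gr_G$ lies in $T^0$. To see it also lies in $\overline{S^n}$, I would write $c(t) = t^{-k}r(t)$ with $r(0)\neq 0$ and $1\leq k\leq n$, and produce a direct Bruhat-type factorization expressing $g(a)$ as an element of $N(k[\![t]\!])$ times the cocharacter $\diag(t^k,t^{-k})$ times an integral matrix in $\SL_2(k[\![t]\!])$, the integrality being a consequence of $r$ being a unit in $k[\![t]\!]$. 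This places $g(a)$ in $S^k\subseteq\overline{S^n}$; the degenerate locus $c=0$ corresponds to the basepoint of $\Gr_G$, lying in $S^0\subseteq\overline{S^n}$.

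For the bijectivity step, injectivity is clear: if $g(a)=g(a')$ in $\Gr_G$, then $g(a)^{-1}g(a')\in N^-(k(\!(t)\!))\cap \SL_2(k[\![t]\!])=N^-(k[\![t]\!])$, whose lower-left entry $\sum(a'_i-a_i)t^i\in t^{-n}k[t]_{<n}$ can lie in $k[\![t]\!]$ only if $a=a'$. Surjectivity uses the same uniqueness: every coset in $T^0$ has a unique $N^-$-representative whose lower-left entry lies in $t^{-1}k[t^{-1}]$, and the factorization from the previous step shows this coset lies in $\overline{S^n}$ precisely when that Laurent polynomial has pole order at most $n$, which is exactly the parametrization by $\BA^n$ claimed.

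For the upgrade to schemes, I would invoke the standard fact (Subsection \ref{Zastava spaces} and \cite{BFGM}) that $\overline{S^n}\cap T^0\cong Z^n|_{nx}$ is a smooth, irreducible, affine variety of dimension $n$; since the map from $\BA^n$ is then a bijective morphism between smooth irreducible varieties of the same dimension with normal source, Zariski's main theorem promotes it to an isomorphism. The step I expect to be most delicate is the containment: one must verify not merely that $g(a)\in\overline{S^m}$ for some $m$ but that $m\leq n$, and this is what the explicit factorization ensures, reading off the semi-infinite stratum containing $g(a)$ directly from the pole order of $c(t)$.
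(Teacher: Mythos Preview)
The paper offers no proof of this lemma, stating it as well-known. Your approach is sound, and the first two steps (containment and bijectivity on $k$-points) are correct. The third step, however, has a gap: invoking Zariski's Main Theorem for a bijective morphism between smooth varieties is not sufficient in positive characteristic---the Frobenius $\BA^1 \to \BA^1$ is a bijection of smooth varieties that is not an isomorphism---and the paper works over the algebraic closure of a finite field. (A smaller slip: ZMT requires normality of the target rather than the source, though here both are claimed smooth. There is also a potential circularity: the standard route to the smoothness of the fiber $\overline{S^n} \cap T^0 \cong Z^n|_{nx}$ is precisely via the isomorphism you are proving.)

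The cleanest fix avoids ZMT entirely and is already implicit in your surjectivity argument. The identification $T^0 \cong N^-\bigl(k(\!(t)\!)\bigr)/N^-\bigl(k[\![t]\!]\bigr) \cong k(\!(t)\!)/k[\![t]\!]$ is an isomorphism of ind-schemes, under which your map $\BA^n \to T^0$ is the evident closed immersion. The closed condition cutting out $\overline{S^n}$ inside $T^0$ is that the image of the lattice in $k(\!(t)\!)/k(\!(t)\!)e_1$ lie in $t^{-n}k[\![t]\!]$, which for the $N^-$-representative with lower-left entry $c$ amounts to $c \in t^{-n}k[\![t]\!]$; this is exactly the vanishing of the coefficients $a_i$ for $i < -n$, valid over any test scheme $S$. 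Hence $\overline{S^n} \cap T^0$ coincides with the image of $\BA^n$ as closed subschemes of $T^0$, and no appeal to ZMT is needed.
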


\bigskip

\sssec{The fibers of the local models $Y^n$}

Let $\BY^n$ denote the fiber of the projection $\pi: Y^n \to X^{(n)}$ over the point $nx \in X^{(n)}$. Let ${}_0\BY^n$ denote the \textit{defect-free} open subscheme of $\BY^n$, i.e., the open subscheme obtained by intersecting $\BY^n$ with the defect-free locus ${}_0Y^n$ of $Y^n$.

\medskip

We will now use the closed embedding $\tau$ from Subsection \ref{Embeddings for the local models $Y^n$} to find equations for $\BY^n$ and ${}_0\BY^n$. In fact, when describing the embedding on the level of fibers, the exposition seems to be clearer if one at first uses a slight variant~$\tilde \tau$ of the embedding $\tau$ where one slightly enlarges the target; we will remove this ``redundancy'' afterwards (see Corollary \ref{standardcoordinates} below).

\medskip

Namely, instead of $\tau$ we will at first use the closed embedding into the larger target
$$\tilde \tau: \ \ Y^n \ \ \longinto \ \ \, \tilde Z^{-,n} \underset{ \ X^{(n)}}{\times} Z^n \ \times \BA^1 \, ,$$
where the map to the last factor $\BA^1 = T_{adj}^+$ is the usual map $Y^n \to \BA^1$. Thus over the point $nx \in X^{(n)}$ we obtain a closed embedding
$$\BY^n \ \longinto \ (S^n \cap \overline{T^0}) \times (\overline{S^n} \cap T^0) \times \BA^1 \, .$$
Denote by $\Mat_{2\times2}$ the affine space of $2\times2$ matrices over $\mathsf{k}$, i.e., the Vinberg semigroup of $G = \SL_2$. Then one can verify directly from the modular interpretation of $Y^n$:

\bigskip

\begin{lemma}
\label{towards coordinates}
\begin{itemize}
\item[]
\item[]
\item[(a)] The above embedding identifies $\BY^n$ with the closed subscheme of the product
$$(S^n \cap \overline{T^0}) \times (\overline{S^n} \cap T^0) \times \BA^1$$
consisting of those elements $(M_1, M_2, d)$ which satisfy that
$$M_1^{-1} \bigl(\begin{smallmatrix} 1&0\\ 0&d \end{smallmatrix} \bigr) M_2 \ \in \ \Mat_{2\times2}(\mathsf{k}[\![t]\!]).$$
Note that this condition is indeed independent of the choice of representatives for $M_1$ and $M_2$.
\item[]
\item[(b)] The open subscheme $_0\BY^n$ of $\BY^n$ is obtained by additionally requiring that evaluation of the matrix
$$M_1^{-1} \bigl(\begin{smallmatrix} 1&0\\ 0&d \end{smallmatrix} \bigr) M_2 \ \in \ \Mat_{2\times2}(\mathsf{k}[\![t]\!])$$
at $t = 0$ does not yield the zero matrix $\bigl(\begin{smallmatrix} 0&0\\ 0&0 \end{smallmatrix} \bigr)$. Note that this condition is again independent of the choice of representatives for $M_1$ and $M_2$.
\end{itemize}
\end{lemma}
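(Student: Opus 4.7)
The plan is to unwind both parts via Beauville--Laszlo gluing at $x$, combined with Lemma \ref{generic lemma behavior}. First I would set $U := X \setminus \{x\}$ and identify the completed local ring at $x$ with $k[\![t]\!]$, so that the punctured formal neighborhood is $\Spec k(\!(t)\!)$. Throughout the argument the data of $\varphi$ on $X \times S$ gets split into its restriction to $U$ and its restriction to the formal disk at $x$, with compatibility on the punctured disk.

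For part (a), I would begin with the observation that a point of $\BY^n$ has $L = \CO_X(-nx)$, which is trivial on $U$, and that the two Zastava points $M_1, M_2$ provide canonical trivializations of $E_1, E_2$ on $U$ (from the embeddings into $\Gr_G^n$ recalled in Subsection \ref{Embeddings for Zastava spaces}) under which the outer maps of the rhombus become $i_1$ and $\pr_1$. Applying Lemma \ref{generic lemma behavior} on $U$ then forces $\varphi|_U$ to take the diagonal matrix form $\bigl(\begin{smallmatrix}1&0\\0&d\end{smallmatrix}\bigr)$. By Beauville--Laszlo, specifying $\varphi$ on $X \times S$ is equivalent to extending $\varphi|_U$ compatibly to the formal disk at $x$. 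Interpreting $M_1, M_2$ as encoding lattices $\Lambda_i \subset k(\!(t)\!)^2$ through which $E_i$ is recovered on the formal disk, such an extension exists if and only if $\bigl(\begin{smallmatrix}1&0\\0&d\end{smallmatrix}\bigr)$ carries $\Lambda_1$ into $\Lambda_2$, which is precisely the integrality condition $M_1^{-1}\bigl(\begin{smallmatrix}1&0\\0&d\end{smallmatrix}\bigr) M_2 \in \Mat_{2\times 2}(k[\![t]\!])$. Independence from the choice of representatives follows from the invariance of the lattices $\Lambda_i$ under the right action of $\SL_2(k[\![t]\!])$.

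For part (b), I would use that the defect-free locus is cut out by the condition $\varphi|_y \neq 0$ for every $y \in X$. On $U$ the map $\varphi = \bigl(\begin{smallmatrix}1&0\\0&d\end{smallmatrix}\bigr)$ is nowhere vanishing, so only the point $y = x$ contributes. At $x$, the map $\varphi$ on the formal disk is represented by the matrix $M_1^{-1}\bigl(\begin{smallmatrix}1&0\\0&d\end{smallmatrix}\bigr) M_2 \in \Mat_{2\times 2}(k[\![t]\!])$, and the fibral value $\varphi|_x$ is obtained by setting $t = 0$. Hence the open condition for ${}_0\BY^n \subset \BY^n$ is exactly the nonvanishing of this evaluation; well-definedness again reduces to $\SL_2(k[\![t]\!])$-invariance of the lattices.

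The main obstacle I expect is bookkeeping with matrix conventions: one must carefully pin down which side $\SL_2(k[\![t]\!])$ acts on, and whether the embedding of the Zastava spaces into $\Gr_G^n$ (and hence the embeddings $\tau, \tilde\tau$ of Subsection \ref{Embeddings for the local models $Y^n$}) associates to a matrix $M$ the lattice $M \cdot k[\![t]\!]^2$ or its inverse. These choices determine the precise placement of the inverse on $M_1$ rather than $M_2$ and the order of factors in the formula. Once the conventions are fixed consistently with the definitions in Subsections \ref{Embeddings for Zastava spaces} and \ref{Embeddings for the local models $Y^n$}, the remaining verifications are routine.
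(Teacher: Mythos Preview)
Your proposal is correct and is precisely the intended argument: the paper does not spell out a proof but simply says the lemma ``can be verified directly from the modular interpretation of $Y^n$,'' and your use of Beauville--Laszlo gluing at $x$ together with Lemma \ref{generic lemma behavior} is exactly how that verification goes. Your identification of the trivializations from Lemma \ref{generic lemma behavior} with those defining the Zastava embeddings into $\Gr_G^n$ is the key observation, and your anticipation that the only real work is pinning down the matrix conventions (which side carries the inverse, which lattice is $M \cdot k[\![t]\!]^2$) is accurate---indeed, if you trace through the explicit coordinates in Lemma \ref{affinespaces} and the computation in Lemma \ref{preliminary coordinates}, you will see exactly how those conventions get fixed.
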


\bigskip

Using the isomorphisms of Lemma \ref{affinespaces} above we obtain:

\medskip

\begin{lemma}
\label{preliminary coordinates}
Via Lemma \ref{affinespaces}, consider $\BY^n$ as a closed subscheme of the affine space $\BA^n \times \BA^n \times \BA^1$ with coordinates $(b_0, \ldots, b_{n-1}, a_{-n}, \ldots, a_{-1}, d)$. Then $\BY^n$ is defined by the following $n$ equations:
\begin{align*}
a_{-n} b_0 & = d \\
a_{-n} b_1 + a_{-n +1} b_0 & = 0 \\
a_{-n} b_2 + a_{-n+1} b_1 + a_{-n+2} b_0 & = 0 \\
\vdots & \\
a_{-n} b_{n-1} + a_{-n+1} b_{n-2} + \cdots + a_{-1} b_0 & = 0
\end{align*}
(In other words, if we set $a=\sum_i a_i t^i$ and $b=\sum_i b_i t^i$, where $(a_i)_i$ and $(b_i)_i$ are indexed as above, then we require that $a \cdot b = d t^{-n}$.)

\end{lemma}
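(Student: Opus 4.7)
The plan is a direct substitution-and-expansion argument: insert the explicit parametrizations of Lemma~\ref{affinespaces} into the defining matrix condition of Lemma~\ref{towards coordinates}(a) and read off the $n$ resulting scalar equations. Writing $A := \sum_{j=-n}^{-1} a_j\, t^j$ and $B := \sum_{i=0}^{n-1} b_i\, t^i$, the matrices are
$$M_1 = \begin{pmatrix} t^n & B \\ 0 & t^{-n} \end{pmatrix}, \qquad M_2 = \begin{pmatrix} 1 & 0 \\ A & 1 \end{pmatrix},$$
both of determinant one, so their inverses can be written down by hand.

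Next I would compute the indicated triple matrix product. By inspection of exponent ranges, three of the four resulting entries automatically lie in $k[\![t]\!]$ for trivial reasons: an entry involving only $B$ is a polynomial in non-negative powers of $t$; an entry involving $t^n A = \sum_j a_j\, t^{n+j}$ has all exponents in $\{0, \ldots, n-1\}$; and a bare factor of $t^n$ is clearly regular. Hence the entire defining constraint of $\BY^n$ collapses to a single polar-part-vanishing condition on one distinguished matrix entry, of the schematic form ``$d\, t^{-n}  -  A B \in k[\![t]\!]$''.

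The last step is to expand $AB = \sum_{i,j} a_j b_i\, t^{i+j}$ and equate the coefficients of the negative powers $t^{-n}, t^{-n+1}, \ldots, t^{-1}$. Since the exponents $i+j$ in $AB$ range over $[-n, n-2]$, this produces exactly $n$ equations, indexed by $m \in \{0, 1, \ldots, n-1\}$, coming from the coefficient of $t^{-n+m}$. For fixed $m$, the pairs $(i,j)$ satisfying $i+j = -n+m$, $0 \leq i \leq n-1$, $-n \leq j \leq -1$ are parametrized by $\ell \in \{0, \ldots, m\}$ via $j = -n+\ell$, $i = m-\ell$, so the relevant coefficient equals $\sum_{\ell=0}^{m} a_{-n+\ell}\, b_{m-\ell}$. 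The $m=0$ equation then reads $a_{-n} b_0 = d$, and the equations for $1 \leq m \leq n-1$ are the homogeneous Cauchy-product relations $\sum_{\ell=0}^{m} a_{-n+\ell}\, b_{m-\ell} = 0$, matching the statement verbatim.

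The calculation is entirely mechanical and I do not expect any real obstacle; the only points requiring care are the bookkeeping of which matrix entry carries the nontrivial constraint (fixed by the conventions of Lemma~\ref{towards coordinates}), and the reindexing that exhibits the resulting coefficients as iterated Cauchy products of the formal series $A$ and $B$.
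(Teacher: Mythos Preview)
Your approach is exactly the paper's: substitute the parametrizations of Lemma~\ref{affinespaces} into the integrality condition of Lemma~\ref{towards coordinates}(a), observe that three entries are automatically integral, and read off the $n$ equations from the polar part of the remaining entry.

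However, your execution contains the very bookkeeping error you warned against. With your stated $M_1 = \bigl(\begin{smallmatrix} t^n & B \\ 0 & t^{-n} \end{smallmatrix}\bigr)$ and $M_2 = \bigl(\begin{smallmatrix} 1 & 0 \\ A & 1 \end{smallmatrix}\bigr)$ one actually computes
\[
M_1^{-1}\Bigl(\begin{smallmatrix}1&0\\0&d\end{smallmatrix}\Bigr)M_2 \ = \ \Bigl(\begin{smallmatrix} t^{-n} - d\,AB & -dB \\ d\,t^n A & d\,t^n \end{smallmatrix}\Bigr),
\]
so the nontrivial entry is $t^{-n} - d\,AB$, not $d\,t^{-n} - AB$ as you assert. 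The resulting equations would be $d\,a_{-n}b_0 = 1$ and $d\sum_{\ell=0}^m a_{-n+\ell}\,b_{m-\ell} = 0$ for $m\ge 1$, which are not the ones in the lemma (visibly wrong already at $d=0$). The paper's proof instead takes $M_1 = \bigl(\begin{smallmatrix} 1 & 0 \\ g & 1 \end{smallmatrix}\bigr)$ with $g = A$ and $M_2 = \bigl(\begin{smallmatrix} t^n & f \\ 0 & t^{-n} \end{smallmatrix}\bigr)$ with $f = B$; then the $(2,2)$ entry is $-gf + d\,t^{-n}$ and one recovers the stated equations verbatim. Your choice does match the literal ordering in Lemma~\ref{towards coordinates}(a), so the mismatch appears to be a labeling slip between that lemma and its sequel; but it is precisely the convention issue that must be resolved before the rest of your argument goes through.
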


\medskip

\begin{proof}
In the notation of Lemma \ref{affinespaces} and Lemma \ref{towards coordinates}, let
$$g = a_{-n} t^{-n} + \ldots + a_{-1}t^{-1},$$
$$f = b_0 t^0 + \ldots + b_{n-1} t^{n-1},$$
$$M_1 = \bigl(\begin{smallmatrix} 1&0\\ g&1 \end{smallmatrix} \bigr),$$
$$M_2 = \Bigl(\begin{smallmatrix} t^n & f \\ 0 & t^{-n} \end{smallmatrix} \Bigr).$$
where we allow all coefficients to be valued in an arbitrary $\mathsf{k}$-algebra.
We then have
$$M_1^{-1} \bigl(\begin{smallmatrix} 1&0\\ 0&d \end{smallmatrix} \bigr) M_2 \ = \ \Bigl(\begin{smallmatrix} t^n & f \\ - g t^n & -gf + d t^{-n} \end{smallmatrix} \Bigr).$$
Observe that all matrix entries except the one in the lower right corner are integral automatically. The entry in the lower right corner equals
$$-gf + d t^{-n} \ = \ d t^{-n} - \sum_{k=-n}^{n-2} \Bigl( \sum_{i+j=k}a_i b_j \Bigr) t^k$$
Thus the integrality condition of Lemma \ref{towards coordinates} (a) translates to the asserted equations.
\end{proof}

\bigskip

Using the first equation in Lemma \ref{preliminary coordinates} one eliminates the last coordinate and obtains finally:

\medskip

\begin{corollary}
\label{standardcoordinates}
The scheme $\BY^n$ is equal to the closed subscheme of the affine space $\BA^n \times \BA^n$ defined by the following $(n-1)$ equations:
\begin{align*}
a_{-n} b_1 + a_{-n +1} b_0 & = 0 \\
a_{-n} b_2 + a_{-n+1} b_1 + a_{-n+2} b_0 & = 0 \\
\vdots & \\
a_{-n} b_{n-1} + a_{-n+1} b_{n-2} + \cdots + a_{-1} b_0 & = 0
\end{align*}
\end{corollary}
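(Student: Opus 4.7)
The plan is to derive the corollary directly from Lemma \ref{preliminary coordinates} by eliminating the coordinate $d$ via the first equation. The key observation is simply that $d$ appears in exactly one of the $n$ equations of the lemma, namely in $a_{-n} b_0 = d$, and this equation solves for $d$ as a polynomial in the remaining coordinates $a_i, b_j$. Consequently, once the remaining $n-1$ equations are imposed, the variable $d$ contributes no additional constraints and is merely a redundant coordinate.

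Concretely, I would consider the projection
$$p: \ \BA^n \times \BA^n \times \BA^1 \ \longto \ \BA^n \times \BA^n$$
forgetting the $d$-coordinate, and verify that its restriction to $\BY^n$ is an isomorphism onto the closed subscheme $\BY' \subset \BA^n \times \BA^n$ cut out by the displayed $(n-1)$ equations. To this end, I would construct an explicit inverse $q: \BY' \to \BY^n$ by sending
$$(b_0, \ldots, b_{n-1}, a_{-n}, \ldots, a_{-1}) \ \longmapsto \ (b_0, \ldots, b_{n-1}, a_{-n}, \ldots, a_{-1}, \, a_{-n} b_0).$$
By construction, the tautological equation $d = a_{-n} b_0$ holds on the image of $q$, and the remaining $n-1$ equations hold there as well because they are independent of $d$ and are assumed to hold on $\BY'$. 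Hence $q$ factors through the closed subscheme $\BY^n \subset \BA^n \times \BA^n \times \BA^1$. The composition $p \circ q$ is the identity on $\BY'$ by inspection, and the composition $q \circ p$ is the identity on $\BY^n$ because every point of $\BY^n$ satisfies $d = a_{-n} b_0$ by the first equation of Lemma \ref{preliminary coordinates}.

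There is no real obstacle in this step: the corollary is a routine cleanup of Lemma \ref{preliminary coordinates}, amounting to the elimination of the redundant variable $d$. The substantive content lies entirely upstream, in the matrix computation of Lemma \ref{preliminary coordinates} together with the integrality condition from Lemma \ref{towards coordinates}; once these are in place, the present corollary is essentially a change of notation.
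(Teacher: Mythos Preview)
Your proof is correct and follows exactly the same approach as the paper: the paper simply states that one uses the first equation $a_{-n}b_0 = d$ of Lemma \ref{preliminary coordinates} to eliminate the last coordinate $d$, and your argument spells out this elimination carefully by exhibiting the mutually inverse maps between $\BY^n$ and the subscheme $\BY'$.
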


\bigskip\bigskip

For the open subscheme $_0\BY^n$ of $\BY^n$ we have:

\medskip

\begin{lemma}
\label{equations for the open part of the fiber}
The open subscheme $_0\BY^n$ of $\BY^n$ is obtained by removing from~$\BY^n$ the closed subscheme defined by additionally requiring that
$$a_{-n} = 0 = b_0$$
and
$$a_{-n+1} b_{n-1} + \cdots + a_{-2} b_2 + a_{-1} b_1 = 0.$$
\end{lemma}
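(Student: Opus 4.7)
The plan is to read off the defining condition directly from Lemma \ref{towards coordinates}(b), using the explicit matrix already computed inside the proof of Lemma \ref{preliminary coordinates}. Recall that in the coordinates of Corollary \ref{standardcoordinates} we have
$$M_1^{-1} \bigl(\begin{smallmatrix} 1&0\\ 0&d \end{smallmatrix} \bigr) M_2 \ = \ \Bigl(\begin{smallmatrix} t^n & f \\ -gt^n & \,-gf + dt^{-n}\end{smallmatrix} \Bigr),$$
with $g=a_{-n}t^{-n}+\cdots+a_{-1}t^{-1}$ and $f=b_0+b_1t+\cdots+b_{n-1}t^{n-1}$. By Lemma \ref{towards coordinates}(b), $_0\BY^n \subset \BY^n$ is the open locus where the evaluation of this matrix at $t=0$ is nonzero.

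First I would simply evaluate the four entries at $t=0$. For $n\geq 1$ the $(1,1)$ entry $t^n$ vanishes; the $(1,2)$ entry $f$ evaluates to $b_0$; and the $(2,1)$ entry $-gt^n = -(a_{-n}+a_{-n+1}t+\cdots+a_{-1}t^{n-1})$ evaluates to $-a_{-n}$. The only entry that requires the defining equations of $\BY^n$ is the $(2,2)$ entry $-gf+dt^{-n}$: by Lemma \ref{preliminary coordinates} the coefficients of $t^{-n},t^{-n+1},\ldots,t^{-1}$ in this expression vanish identically on $\BY^n$, so its value at $t=0$ is simply the coefficient of $t^0$ in $-gf$, namely $-\sum_{i+j=0} a_ib_j = -(a_{-1}b_1+a_{-2}b_2+\cdots+a_{-n+1}b_{n-1})$.

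Putting these four computations together, the specialization of the matrix at $t=0$ over a point of $\BY^n$ reads
$$\begin{pmatrix} 0 & b_0 \\ -a_{-n} & \,-(a_{-n+1}b_{n-1}+\cdots+a_{-2}b_2+a_{-1}b_1) \end{pmatrix}.$$
The condition that this matrix vanishes is precisely the system stated in the lemma, so its complement cuts out $_0\BY^n$ as claimed. There is no real obstacle here — once the explicit form of $M_1^{-1}\bigl(\begin{smallmatrix}1&0\\0&d\end{smallmatrix}\bigr)M_2$ from Lemma \ref{preliminary coordinates} is in hand, the argument reduces to bookkeeping of coefficients, and the only subtle step is noticing that the constant term of $-gf+dt^{-n}$ needs no further correction because $dt^{-n}$ contributes only to the coefficients of $t^k$ with $k\leq -n$, which have already been cleared by the equations of Corollary \ref{standardcoordinates}.
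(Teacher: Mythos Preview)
Your proposal is correct and follows essentially the same approach as the paper: both invoke Lemma~\ref{towards coordinates}(b), evaluate the explicit matrix $M_1^{-1}\bigl(\begin{smallmatrix}1&0\\0&d\end{smallmatrix}\bigr)M_2$ from the proof of Lemma~\ref{preliminary coordinates} at $t=0$, and read off the vanishing conditions entry by entry. Your write-up simply spells out in more detail why the $(2,2)$ entry specializes to $-\sum_{i+j=0}a_ib_j$, which the paper records compactly as $-\sum_i a_ib_i$.
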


\medskip

\begin{proof}
We continue to use the notation from the proof of Lemma \ref{preliminary coordinates}, and simply write out the condition stated in Lemma \ref{towards coordinates} (b). Namely, evaluating the matrix
$$\Bigl(\begin{smallmatrix} t^n & f \\ - g t^n & -gf + d t^{-n} \end{smallmatrix} \Bigr) \in \Mat_{2\times2}(\mathsf{k}[\![t]\!])$$
at $t=0$ yields the matrix
$$\Bigl(\begin{smallmatrix} 0 & b_0 \\ -a_{-n} & \ -\sum_i a_i b_i \end{smallmatrix} \Bigr),$$
and the assertion follows.
\end{proof}

\bigskip

In a similar fashion one easily checks:

\medskip

\begin{lemma}
\label{composite map and contraction in coordinates}
In terms of the coordinates of Corollary \ref{standardcoordinates}, the composite map
$$\BY^n \ \longto \ Y^n \ \longto \ \BA^1$$
sends a point $(a_i, b_j)_{i,j}$ to the scalar $a_{-n} b_0$. Furthermore, the contracting $\BG_m$-action from Lemma \ref{contraction lemma} acts quadratically on each coordinate:
$$c \, \cdot \, (a_i, b_j)_{i,j} \ \ = \ \ (c^2 a_i, c^2 b_j)_{i,j}$$
\end{lemma}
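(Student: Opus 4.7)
The plan is to deduce both assertions directly from the explicit matrix descriptions already developed, without needing any additional structural input. The first claim is essentially automatic from the equation that was used to eliminate the auxiliary coordinate $d$ when passing from Lemma~\ref{preliminary coordinates} to Corollary~\ref{standardcoordinates}, and the second will follow from a short conjugation calculation on the matrix representatives from Lemma~\ref{affinespaces}.

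For the map $\BY^n \to \BA^1$ I would first recall that the composite $\BY^n \hookrightarrow Y^n \to \BA^1$ is, by construction of the embedding $\tilde\tau$, the restriction of the projection from $\tilde Z^{-,n} \times_{X^{(n)}} Z^n \times \BA^1$ onto the last factor. In the coordinates $(b_0,\ldots,b_{n-1},a_{-n},\ldots,a_{-1},d)$ of Lemma~\ref{preliminary coordinates} this projection is simply $d$, and the very first equation cut out there reads $a_{-n}b_0 = d$. Since this is precisely the equation used to eliminate $d$ when passing to Corollary~\ref{standardcoordinates}, the composite becomes $(a_i,b_j) \mapsto a_{-n}b_0$, as claimed.

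For the $\BG_m$-action I would unwind Lemma~\ref{contraction lemma} in matrix terms. The action on $\tilde Z^{-,n} \times_{X^{(n)}} Z^n$ is $2\check\rho$ on the first factor, whose fiber over $nx$ is parameterized by the $b_j$'s via $M_b = \bigl(\begin{smallmatrix} t^n & f \\ 0 & t^{-n} \end{smallmatrix}\bigr)$ with $f = \sum b_j t^j$, and $-2\check\rho$ on the second factor, parameterized by the $a_i$'s via $M_a = \bigl(\begin{smallmatrix} 1 & 0 \\ g & 1 \end{smallmatrix}\bigr)$ with $g = \sum a_i t^i$. A direct calculation shows that conjugation of $M_b$ by $2\check\rho(c) = \mathrm{diag}(c,c^{-1})$ produces $\bigl(\begin{smallmatrix} t^n & c^2 f \\ 0 & t^{-n} \end{smallmatrix}\bigr)$, and conjugation of $M_a$ by $-2\check\rho(c) = \mathrm{diag}(c^{-1},c)$ produces $\bigl(\begin{smallmatrix} 1 & 0 \\ c^2 g & 1 \end{smallmatrix}\bigr)$. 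Reading off coordinates gives $b_j \mapsto c^2 b_j$ and $a_i \mapsto c^2 a_i$, which is the assertion.

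The only real obstacle is bookkeeping: one has to be careful which factor of the embedding $\tau$ carries which cocharacter, and one has to match the resulting scaling of the off-diagonal matrix entries with the modular descriptions of Lemmas~\ref{modular action 1} and~\ref{modular action 2} (where scaling the injection by $c^{-1}$ and the surjection by $c$ is exactly what conjugation by the relevant $\check\lambda(c)$ implements on the standard matrix representatives in the semi-infinite cells $S^n \cap \bar T^0$ and $\bar S^n \cap T^0$). Once these conventions are pinned down, the two calculations above complete the proof; as a consistency check, the combined action sends $a_{-n}b_0 \mapsto c^4 (a_{-n}b_0)$, compatible with the fact that the contraction pushes $\BY^n$ into the fiber over $0 \in \BA^1$ in the limit $c \to 0$.
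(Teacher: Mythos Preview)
Your proposal is correct and is precisely the direct verification the paper omits with the phrase ``In a similar fashion one easily checks.'' Both parts are handled exactly as intended: the first equation $a_{-n}b_0 = d$ of Lemma~\ref{preliminary coordinates} gives the map to $\BA^1$, and left multiplication by $\pm 2\check\rho(c)$ followed by right normalization by the constant matrix $\mp 2\check\rho(c) \in \SL_2(k[\![t]\!])$ (which is what your ``conjugation'' amounts to in $\Gr_G$) yields the quadratic scaling on each coordinate.
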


\bigskip

\sssec{Equations for other fibers}
\label{general point}
Let $\sum_{k=1}^m n_k x_k$ be a point of $X^{(n)}$, with the $x_k$ distinct. Then since the space $Y^n$ factorizes in families in the sense of Proposition \ref{factorization in families}, the fiber of $Y^n$ over the point $\sum n_k x_k$ is equal to the iterated fiber product
$$\BY^{n_1} \ \underset{\BA^1}{\times} \ \BY^{n_2} \ \underset{\BA^1}{\times} \ \cdots \ \underset{\BA^1}{\times} \ \BY^{n_m} \, ,$$
and one can write explicit equations for the latter space using Corollary~\ref{standardcoordinates} and Lemma \ref{composite map and contraction in coordinates}.

\bigskip

\sssec{The classical Picard-Lefschetz situation for $\BY^1$}
\label{The classical Picard-Lefschetz situation for the fiber}

Specializing to $n=1$ in Corollary \ref{standardcoordinates}, Lemma \ref{equations for the open part of the fiber}, and Lemma \ref{composite map and contraction in coordinates}, we see that the family $\BY^1 \to \BA^1$ recovers the classical Picard-Lefschetz family of hyperbolas degenerating to a node: The space $\BY^1$ is isomorphic to the affine plane $\BA^2$ with the two coordinates $(a_{-1}, b_0)$, and the map $\BY^1 \to \BA^1$ sends a point $(a_{-1}, b_0)$ to the product $a_{-1} \cdot b_0 \in \BA^1$. The $B$-locus $\BY^1_B$ consists of the union of the two coordinate axes.

\medskip

Next we make explicit the stratification of the $B$-locus $\BY^1_B$ induced by the defect stratification of $Y^1_B$, using the analogous notation for the strata. By Corollary \ref{stratification for local model} above the $B$-locus $\BY^1_B$ is stratified by the three strata
$${}_{(1,0,0)}\BY^1_B \, , \ \ {}_{(0,1,0)}\BY^1_B \, , \ \text{and} \ \ {}_{(0,0,1)}\BY^1_B \, .$$
In terms of the Picard-Lefschetz family, the strata ${}_{(1,0,0)}\BY^1_B$ and ${}_{(0,0,1)}\BY^1_B$ form the two axes of the node $\BY^1_B$, with the point of their intersection removed from both. Similarly, the stratum of maximal defect ${}_{(0,1,0)}{\BY^1_B}$ corresponds to the point in which the axes meet. As prescribed by Corollary \ref{stratification for local model} above the closure of either of the strata ${}_{(1,0,0)}\BY^1_B$ and ${}_{(0,0,1)}\BY^1_B$ is obtained by adding the stratum of maximal defect ${}_{(0,1,0)}{\BY^1_B}$.

\bigskip\bigskip\bigskip

\section{Nearby cycles}
\label{Nearby cycles}

In this section we prove the main theorem about nearby cycles for the local models, Theorem \ref{main theorem for nearby cycles, local version}, and hence also Theorem \ref{main theorem for nearby cycles} for $\VinBun_G$, as explained in Subsection \ref{Restatements of the main theorems for the local models} above. The general structure of the argument occupies Subsections \ref{The base case $n=1$} through \ref{The isomorphism on the stratum of maximal defect} below. However, we postpone two key statements to separate subsections later in the text, hoping that this might bring out the structure of the argument better than a logically linear proof. Moreover, each of the two statements requires proof techniques that are somewhat different from the present subsection, possibly justifying the separate treatment.

\medskip

We will prove Theorem \ref{main theorem for nearby cycles, local version} by induction on the integer $n$ appearing in its formulation. We remark that our inductive procedure will implicitly also show that the full nearby cycles are in fact unipotent; this is needed to invoke the factorization of the nearby cycles (see for example \cite[Sec. 5]{Jantzen conjectures}) during the induction step. Alternatively, it is not hard to see that the total space $\VinBun_G$ admits a $\BG_m$-action which is compatible under the natural map to $\BA^1$ with the standard action of $\BG_m$ on $\BA^1$; this in turn forces the full nearby cycles to be unipotent, yielding a different proof.

\ssec{The base case $n=1$}
\label{The base case $n=1$}

\mbox{} \medskip

We begin by establishing the base case $n=1$ of the induction by an explicit calculation involving the geometry of the local models and the equations from Subsection \ref{Explicit equations and generalized Picard-Lefschetz families} above.

\begin{proposition}
\label{base case proposition}
Theorem \ref{main theorem for nearby cycles, local version} holds for the case $n=1$.
\end{proposition}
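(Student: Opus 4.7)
The plan is to reduce the computation to the classical Picard-Lefschetz situation of Lemma \ref{Psi for the Picard-Lefschetz family of hyperbolas}. The first step is to identify the global geometry of the family $v: Y^1 \to \BA^1$. The contracting $\BG_m$-action of Lemma \ref{contraction lemma}, which by Lemma \ref{composite map and contraction in coordinates} acts with weights $(2,2)$ on the fibre coordinates $(a_{-1}, b_0)$, realises $\pi: Y^1 \to X$ as the total space of a rank $2$ vector bundle $\CE$ on $X$, with the section $s: X \to Y^1$ serving simultaneously as the zero section and as the stratum of maximal defect. In these same coordinates the function $v$ is the quadratic form $a_{-1} \cdot b_0$, so étale-locally on $X$ the pair $(Y^1, v)$ is the pullback of the classical Picard-Lefschetz family $(\BA^2, ab)$ of Subsection \ref{Nearby cycles of the Picard-Lefschetz family of hyperbolas}. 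In particular $Y^1_B$ has exactly two irreducible components, namely the two ``coordinate axis'' subbundles of $\CE$, meeting transversally along $s(X)$.

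\medskip

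The computation of nearby cycles then follows from Lemma \ref{Psi for the Picard-Lefschetz family of hyperbolas} via smooth base change along the étale-local identifications $Y^1 \times_X U \cong U \times \BA^2$. Namely, the weight-monodromy decomposition becomes
$$\gr \, \Psi(\IC_{Y^1_G}) \ \cong \ s_* \Qellbar_X(-\tfrac{1}{2}) \ \oplus \ \IC_{Y^1_B} \ \oplus \ s_* \Qellbar_X(\tfrac{1}{2})$$
(suitably shifted to be perverse), with the three direct summands in Cartan weights $+1, 0, -1$ respectively, and with the monodromy operator $N$ identifying the two outer summands exactly as dictated by Lemma \ref{Psi for the Picard-Lefschetz family of hyperbolas}.

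\medskip

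It remains to match these three summands with the three terms of Theorem \ref{main theorem for nearby cycles, local version} for $n=1$, which are indexed by the triples $(1,0,0), (0,0,1), (0,1,0)$. The middle Cartan-weight $0$ summand $\IC_{Y^1_B}$ equals the $\IC$-extension of the union of the two axis-components, and hence decomposes as $\bar{f}_{1,0,0,*}\widetilde\CP_{1,0,0} \oplus \bar{f}_{0,0,1,*}\widetilde\CP_{0,0,1}$ with trivial Lefschetz-$\sl_2$-action. The two outer Cartan-weight $\pm 1$ summands combine (suitably shifted) to form $s_* \underline{V}[\dim X](\tfrac{\dim X}{2}) = s_* \CP_1$ on the maximal defect stratum, which is exactly $\bar f_{0,1,0,*}\widetilde\CP_{0,1,0}$ by unwinding the definitions of $\CP_1$ and $\widetilde\CP_{0,1,0}$ in Subsection \ref{Definition of Picard-Lefschetz oscillators} and Subsection \ref{Placing Picard-Lefschetz oscillators on VinBun}; the compatibility of the action of the Lefschetz-$\sl_2$ on $V$ with the monodromy on this pair of outer summands is precisely the last assertion of Lemma \ref{Psi for the Picard-Lefschetz family of hyperbolas}. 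The main obstacle is the geometric first step: although it is essentially forced by the $\BG_m$-contraction and the explicit equations of Corollary \ref{standardcoordinates}, rigorously identifying $Y^1$ as the total space of a rank $2$ vector bundle on $X$ requires some care to globalize the étale-local coordinate description, which is what makes this base case, as opposed to the higher $n$ cases handled inductively, concretely verifiable.
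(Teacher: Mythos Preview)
Your proposal is correct and takes essentially the same approach as the paper: both reduce to the classical Picard-Lefschetz computation of Lemma \ref{Psi for the Picard-Lefschetz family of hyperbolas} via the explicit coordinate description of Subsection \ref{The classical Picard-Lefschetz situation for the fiber}, and then match the resulting summands with the three terms indexed by $(1,0,0)$, $(0,1,0)$, $(0,0,1)$. The paper is terser---it simply asserts that the computation of $\gr\Psi(\IC_{Y^1_G})$ reduces to that on the fiber $\BY^1$ without spelling out the globalization over $X$ that you make explicit via the rank-$2$ bundle structure---and your closing concern about rigorously establishing this step is somewhat overstated, since the required local triviality is immediate once one notes that Corollary \ref{standardcoordinates} imposes no equations for $n=1$ (so $Y^1 = \tilde Z^{-,1}\times_X Z^1$) and invokes the contracting $\BG_m$-action.
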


\begin{proof}
Since $n=1$ the computation of $\gr \Psi (\IC_{Y^1_G})$ reduces to the computation of $\gr \Psi (\IC_{\BY^1_G})$, where $\BY^1_G$ denotes the $G$-locus of the fiber~$\BY^1$ of the projection map $\pi$ studied in Subsection \ref{Explicit equations and generalized Picard-Lefschetz families} above. We use the same notation as in Subsection \ref{The classical Picard-Lefschetz situation for the fiber} above for the stratification of $\BY^1_B$ induced by the defect stratification of~$Y^1_B$. We then have to show that on the $B$-locus $\BY^1_B$ there exists an isomorphism

$$\gr \, \Psi (\IC_{\BY^1_G}) \ \ \ \cong \ \ \ \Qellbar_{{}_{(1,0,0)}\overline{\BY^1_B}} \, [1](\tfrac{1}{2}) \ \oplus \ \ \Bigl( V \otimes \Qellbar_{{}_{(0,1,0)}\BY^1_B} \Bigr) \ \oplus \ \Qellbar_{{}_{(0,0,1)}\overline{\BY^1_B}} \, [1](\tfrac{1}{2})$$

\medskip

\noindent which respects the action of the Lefschetz-$\sl_2$. To see this, recall first that the explicit description of $\BY^1_B$ in coordinates in Subsection \ref{The classical Picard-Lefschetz situation for the fiber} above shows that the strata closures ${}_{(1,0,0)}\overline{\BY^1_B}$ and ${}_{(0,0,1)}\overline{\BY^1_B}$ form the two axes of the reducible node $\BY^1_B$; similarly, the stratum ${}_{(0,1,0)}{\BY^1_B}$ corresponds to the point in which the axes meet. Furthermore, by Subsection \ref{The classical Picard-Lefschetz situation for the fiber} above the family $\BY^1 \to \BA^1$ is precisely the Picard-Lefschetz family of hyperbolas, and hence the required calculation is precisely the assertion of Lemma \ref{Psi for the Picard-Lefschetz family of hyperbolas} above.
\end{proof}

\bigskip

\ssec{Reduction to the stratum of maximal defect}

\mbox{} \medskip

In the present and the next subsection we deal with the induction step from $n-1$ to $n$. The present subsection reduces the assertion of Theorem \ref{main theorem for nearby cycles, local version} to a simpler version which takes place entirely on the stratum of maximal defect~${}_nY^n_B$; the latter case will be established in the next subsection, modulo the two separate statements referred to earlier.

\bigskip

From now on we abbreviate the right hand side of Theorem \ref{main theorem for nearby cycles, local version} by
$$C_n \ \ := \ \ \bigoplus_{(n_1, k, n_2)} \bar{f}_{n_1, k, n_2, *} \ \widetilde \CP_{n_1, k, n_2} \, .$$
We first record:

\medskip

\begin{lemma}
\label{semisimplicity}
The perverse sheaf $C_n$ is semisimple. The perverse sheaf $\gr \, \Psi (\IC_{Y^n_G})$ becomes semisimple after forgetting the Weil structure.
\end{lemma}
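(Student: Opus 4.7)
My plan is to treat (a) and (b) separately. For (a), I would show that each summand $\bar{f}_{n_1,k,n_2,*}\,\widetilde{\CP}_{n_1,k,n_2}$ is a semisimple perverse sheaf. By Lemma \ref{properties of PLOs}, the Picard-Lefschetz oscillator $\CP_k$ is the IC-extension of a semisimple local system on $\overset{\circ}{X}{}^{(k)}$, and hence is a semisimple perverse sheaf. Since the Zastava spaces are smooth for $G=\SL_2$, the projection $p_{n_1,k,n_2}$ onto $X^{(k)}$ is smooth, so the pullback $\widetilde{\CP}_{n_1,k,n_2}=p_{n_1,k,n_2}^{*}\CP_k\,[s_k](\tfrac{s_k}{2})$ is again the IC-extension of a semisimple local system on an open dense subset of the (smooth) source of $\bar{f}_{n_1,k,n_2}$. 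By Corollary \ref{stratification for local model}, the proper map $\bar{f}_{n_1,k,n_2}$ restricts to an isomorphism onto the dense open stratum ${}_{(n_1,k,n_2)}Y^n_B$ of its image, and a direct dimension count for its fibers over deeper strata (following the argument for the analogous Zastava resolution in \cite{BFGM}) shows that $\bar{f}_{n_1,k,n_2}$ is small. Smallness implies the pushforward is perverse and equal to the intermediate extension of $\widetilde{\CP}_{n_1,k,n_2}$ from the open stratum; intermediate extensions and finite direct sums preserve semisimplicity, yielding (a).

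For (b), note that $Y^n_G$ is smooth by the results of Section \ref{local models for VinBun}, so $\IC_{Y^n_G}$ is pure of weight $0$ with our normalization. Gabber's theorem (Proposition \ref{Gabber's theorem}) then identifies the monodromy filtration on $\Psi(\IC_{Y^n_G})$ with the weight filtration and asserts that its subquotients are pure perverse sheaves. Each such subquotient is moreover of geometric origin, having been built from the constant sheaf on a smooth variety by standard six-functor operations together with the nearby cycles functor. The semisimplicity portion of the Beilinson-Bernstein-Deligne decomposition theorem then implies that any pure perverse sheaf of geometric origin becomes semisimple upon extending scalars from $\BF_q$ to $\overline{\BF}_q$, i.e., upon forgetting the Weil structure. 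Applying this to each subquotient yields (b).

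The main obstacle in this plan is the smallness verification for $\bar{f}_{n_1,k,n_2}$ in part (a); the remaining steps are straightforward applications of general results. The need to forget the Weil structure in (b) is essential, since non-split extensions of pure Weil sheaves can exist, but is of no consequence for the intended application of the lemma at the level of underlying perverse sheaves.
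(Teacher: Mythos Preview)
Your argument for (b) is essentially the paper's: Gabber's theorem (Proposition \ref{Gabber's theorem}) gives purity of the monodromy subquotients, and then the BBD decomposition theorem for pure perverse sheaves yields semisimplicity after passing to $\overline{\BF}_q$.

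For (a), your approach is correct but takes an unnecessary detour. You propose to verify that $\bar f_{n_1,k,n_2}$ is \emph{small} via a fiber-dimension count, and flag this as ``the main obstacle''. But the paper has already established something strictly stronger: Lemma \ref{finiteness of compactified maps} shows these maps are \emph{finite} in the $\VinBun_G$ setting, and the identical quasifiniteness-plus-properness argument carries over verbatim to the local-model maps $\bar f_{n_1,k,n_2}$ over $Y^n_B$ (this is implicit in Corollary \ref{stratification for local model}, and the paper uses finiteness freely thereafter, e.g.\ in Lemma \ref{restriction along s of the usual summands} and in Section \ref{Intersection cohomology}). For a finite morphism, pushforward is $t$-exact for the perverse $t$-structure and preserves semisimplicity directly; so once $\widetilde{\CP}_{n_1,k,n_2}$ is known to be semisimple --- which you correctly deduce from Lemma \ref{properties of PLOs} and the smoothness of the projection $p_{n_1,k,n_2}$ --- the semisimplicity of each summand, and hence of $C_n$, is immediate. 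There is therefore no obstacle at all: your smallness verification would re-establish, in weaker form, a lemma already proved in Section \ref{The Drinfeld-Lafforgue-Vinberg compactification}.
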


\begin{proof}
For $C_n$ it suffices to show that the perverse sheaves $\widetilde \CP_{n_1, k, n_2}$ are semisimple since the compactified maps $\bar{f}_{n_1, k, n_2, *}$ are finite. This follows from Lemma \ref{properties of PLOs} and the definition of $\widetilde \CP_{n_1, k, n_2}$ as a shifted and twisted pullback of the Picard-Lefschetz oscillator $\CP_k$ in Subsection \ref{Placing Picard-Lefschetz oscillators on VinBun}. The assertion about $\gr \, \Psi (\IC_{Y^n_G})$ follows from Gabber's theorem (Proposition \ref{Gabber's theorem} above), together with the decomposition theorem of Beilinson, Berstein, and Deligne from \cite{BBD} for pure perverse sheaves.
\end{proof}

\medskip

Over the course of proving Theorem \ref{main theorem for nearby cycles, local version} we will establish that $\gr \, \Psi (\IC_{Y^n_G})$ is in fact semisimple also as a perverse Weil sheaf; this is of course a posteriori also a consequence of Theorem \ref{main theorem for nearby cycles, local version}.

\medskip

\sssec{Splitting according to loci of support}
\label{Splitting according to loci of support}

Lemma \ref{semisimplicity} is already sufficient to split the perverse sheaves $\gr \, \Psi (\IC_{Y^n_G})$ and $C_n$ into direct sums according to where their simple constituents are supported: Namely, we can decompose
$$\gr \, \Psi (\IC_{Y^n_G}) \ \ = \ \ \Bigl(\gr \, \Psi (\IC_{Y^n_G}) \Bigr)_{\text{on} \, {}_nY^n_B} \ \ \bigoplus \ \ \Bigl(\gr \, \Psi (\IC_{Y^n_G}) \Bigr)_{\text{not} \, \text{on} \, {}_nY^n_B}$$
where all simple constituents of the first summand are supported on the stratum ${}_nY^n_B$ and where all simple constituents of the second summand are not supported on ${}_nY^n_B$. Analogously we write
$$C_n \ \ = \ \ \bigl( C_n \bigr)_{\text{on} \, {}_nY^n_B} \ \ \bigoplus \ \ \bigl( C_n \bigr)_{\text{not} \, \text{on} \, {}_nY^n_B} \, .$$
By definition both direct sum decompositions are respected by the action of the Lefschetz-$\sl_2$. Thus to prove Theorem \ref{main theorem for nearby cycles, local version}, it suffices to construct

\medskip

(a) \textit{the isomorphism on the stratum of maximal defect}
$$\Bigl(\gr \, \Psi (\IC_{Y^n_G}) \Bigr)_{\text{on} \, {}_nY^n_B} \ \cong \ \bigl( C_n \bigr)_{\text{on} \, {}_nY^n_B}$$

(b) \textit{the isomorphism away from the stratum of maximal defect}
$$\Bigl(\gr \, \Psi (\IC_{Y^n_G}) \Bigr)_{\text{not} \, \text{on} \, {}_nY^n_B} \ \cong \ \bigl( C_n \bigr)_{\text{not} \, \text{on} \, {}_nY^n_B}$$
where both isomorphisms need to respect the action of the Lefschetz-$\sl_2$. The existence of the isomorphism (b) away from the stratum of maximal defect follows directly from the induction hypothesis, as we explain in the next paragraph. For the remainder of Section \ref{Nearby cycles} we will then be concerned with establishing the isomorphism (a) on the stratum of maximal defect.

\medskip

\sssec{The isomorphism away from the stratum of maximal defect}
\label{The isomorphism away from the stratum of maximal defect}

Recall from Corollary \ref{stratification for local model} the open subscheme ${}_{\leq (n-1)}Y^n_B$ of the $B$-locus $Y^n_{B}$ defined by allowing the defect degree to be at most $n-1$. Then as in Subsection \ref{Restatements of the main theorems for the local models} the induction hypothesis implies the validity of Theorem \ref{main theorem for nearby cycles, local version} after restricting to the open subscheme~${}_{\leq (n-1)}Y^n_B$, i.e.:

\medskip

\begin{lemma}
\label{interplay away from worst stratum}
Assume the main theorem about nearby cycles holds for the integer $n-1$.
Then on the open subscheme ${}_{\leq (n-1)}Y^n_B$ of $Y^n_B$ there exists an isomorphism of perverse sheaves
$$\gr \, \Psi (\IC_{Y^n_G}) \Big|_{{}_{\leq (n-1)}Y^n_B}^{*}    \ \ \ \cong \ \ \ \bigoplus_{(n_1, k, n_2)} \Bigl( \bar{f}_{n_1, k, n_2, *}\ \widetilde \CP_{n_1, k, n_2} \Bigr) \Big|_{{}_{\leq (n-1)}Y^n_B}^{*}$$
which is compatible with the action of the Lefschetz-$\sl_2$.
\end{lemma}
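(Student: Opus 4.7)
The plan is to reduce the statement on the open subscheme ${}_{\leq (n-1)} Y^n_B$ to the induction hypothesis by means of the factorization in families from Proposition \ref{factorization in families}. The key preliminary observation is that both sides of the desired isomorphism are semisimple perverse sheaves (for the right hand side, by Lemma \ref{semisimplicity}; for the left hand side, by Gabber's theorem together with the Beilinson-Bernstein-Deligne decomposition theorem, at least after forgetting the Weil structure), whose simple constituents are intermediate extensions from the dense open parts of the defect strata of $Y^n_B$. It therefore suffices to construct a Lefschetz-$\sl_2$-equivariant isomorphism on a dense open subscheme of ${}_{\leq (n-1)} Y^n_B$ and then extend uniquely by the universal property of the intermediate extension.

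First I would identify the dense open
\[
U \; := \; \bigcup_{\substack{n_1 + n_2 = n \\ n_1, n_2 \geq 1}} \pi^{-1}\bigl(X^{(n_1)} \stackrel{\circ}{\times} X^{(n_2)}\bigr) \ \cap \ {}_{\leq (n-1)}Y^n_B\, .
\]
The complement of $U$ in ${}_{\leq (n-1)}Y^n_B$ sits over the small diagonal $X \hookrightarrow X^{(n)}$, which has codimension $n-1$ in $X^{(n)}$; since on each stratum ${}_{(n_1', k, n_2')} Y^n_B$ with $k \leq n-1$ one has $n_1' + n_2' \geq 1$, this complement meets each stratum in positive codimension, so $U$ is dense in ${}_{\leq (n-1)}Y^n_B$. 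On the piece of $Y^n$ over $X^{(n_1)} \stackrel{\circ}{\times} X^{(n_2)}$, Proposition \ref{factorization in families} identifies $Y^n$ with the fibre product $Y^{n_1} \stackrel{\circ}{\times}_{\BA^1} Y^{n_2}$ over $\BA^1$, restricting on the $B$-locus to $Y^{n_1}_B \stackrel{\circ}{\times} Y^{n_2}_B$.

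The core step will then be a Thom-Sebastiani-type factorization of the nearby cycles on such factorized pieces. Viewing $Y^{n_1} \stackrel{\circ}{\times}_{\BA^1} Y^{n_2}$ as the base change of the external product $Y^{n_1} \times Y^{n_2} \to \BA^1 \times \BA^1$ along the diagonal $\BA^1 \hookrightarrow \BA^1 \times \BA^1$, and combining the external K\"unneth formula for nearby cycles with the unipotence built into the inductive setup, should produce a canonical isomorphism
\[
\Psi(\IC_{Y^n_G})\big|_{Y^{n_1}_B \stackrel{\circ}{\times} Y^{n_2}_B} \; \cong \; \Psi(\IC_{Y^{n_1}_G}) \stackrel{\circ}{\boxtimes} \Psi(\IC_{Y^{n_2}_G})
\]
intertwining the monodromy $N$ on the left with $N \otimes \id + \id \otimes N$ on the right. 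Passing to the associated graded of the weight-monodromy filtration will then identify the Lefschetz-$\sl_2$ action on the left with the tensor product of the $\sl_2$-actions on the right. Applying the induction hypothesis to each factor (where $n_1, n_2 < n$), and invoking the factorization structure on the Picard-Lefschetz oscillators (Lemma \ref{properties of PLOs}) together with their definition as pullbacks from $X^{(k)}$, will reassemble the summands over the disjoint locus into exactly the restriction of $C_n$ to $U$, matching both the indexing sets and the $\sl_2$-actions summand by summand.

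The hard part will be justifying the Thom-Sebastiani-type factorization of $\Psi$ on a fibre product over $\BA^1$, which is not the setting in which the K\"unneth formula is stated most classically: one needs to carefully transfer the external K\"unneth formula for $Y^{n_1} \times Y^{n_2}$ (as a family over $\BA^1 \times \BA^1$) to the fibre product over $\BA^1$ via base change along the diagonal, and identify the resulting convolution of monodromy filtrations with the tensor product of Lefschetz-$\sl_2$-representations. Once this is in place, semisimplicity together with IC-uniqueness extends the identification constructed on $U$ to all of ${}_{\leq (n-1)}Y^n_B$, completing the induction step.
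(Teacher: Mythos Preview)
Your factorization approach is essentially the argument the paper uses for the \emph{other} half of the induction step (the part supported on ${}_nY^n_B$; see Lemma~\ref{factorization structures on both sides}), but not the one it uses here. For the present lemma the paper instead invokes the ``interplay'' of Subsection~\ref{Restatements of the main theorems for the local models}: one passes from the induction hypothesis on $Y^m$ for $m \leq n-1$ to the corresponding statement on $\VinBun_G$ restricted to defect $\leq n-1$, and then back down to $Y^n$ via the smooth comparison between the relative local model and $\VinBun_G$. That comparison is a smooth cover of \emph{all} of ${}_{\leq(n-1)}Y^n_B$, including the locus sitting over the small diagonal $\Delta_X \subset X^{(n)}$.

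Your direct factorization argument, by contrast, only produces the isomorphism on the dense open $U$ where the underlying divisor splits nontrivially. The extension step is where the gap lies: you assert that the simple constituents of $\gr\Psi|_{{}_{\leq(n-1)}Y^n_B}$ are intermediate extensions from the defect strata, but this is not known at this stage of the induction --- it is essentially the content of the lemma. A priori $\gr\Psi$ could have simple summands supported inside $\pi^{-1}(\Delta_X)\cap{}_{\leq(n-1)}Y^n_B$, and your isomorphism on $U$ would not detect them. Trying to rule out such summands by an argument in the style of Lemma~\ref{no summands over the diagonal lemma} would be circular: the paper's proof of that lemma (Subsection~\ref{no summands proof}) explicitly invokes Lemma~\ref{interplay away from worst stratum} to control the ``not on ${}_nY^n_B$'' part. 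So the passage through $\VinBun_G$ is not a stylistic choice but the mechanism that handles the locus over the small diagonal; your Thom--Sebastiani setup and the induction on the factors are correct and indeed used elsewhere in the paper, but they do not by themselves close the argument over $\Delta_X$.
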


\medskip

\begin{proof}
This follows via a standard argument often referred to as the ``interplay principle''; see for example \cite[Sec. 3, 8.1]{BFGM}, \cite[Sec. 4.3, 4.6, 4.7]{BG2}. We now outline the argument in our case.
By the induction hypothesis, the desired isomorphism has already been constructed up to defect $n-1$, i.e., on $Y^{n-1}_B$ and on $\sideset{_{\leq (n-1)}}{_{G,B}}\VinBun$, and we need to construct it on ${}_{\leq (n-1)}Y^n_B$. For a sufficiently large positive integer $m$, the natural map $Y^m_{rel} \to \VinBun_G$ is smooth; as this map is also compatible with the natural maps to the affine line $\BA^1$ and the defect stratifications of its source and target, the pullback along this map of the isomorphism on $\sideset{_{\leq (n-1)}}{_{G,B}}\VinBun$ yields the desired isomorphism on ${}_{\leq (n-1)}Y^m_{rel, B}$. By the smooth-local equivalence between $Y^m$ and $Y^m_{rel}$, we obtain the desired isomorphism on ${}_{\leq (n-1)}Y^m_B$, although the integer $m$ may be larger than the desired integer $n$. To deduce the assertion for the integer $n$ instead of the integer $m$, note that the etale factorization map
$$Y^n_B \ \stackrel{\circ}{\times} \ Y^{m-n}_B \ \longto \ Y^m_B$$
restricts to a map on open subspaces
$${}_{\leq (n-1)}Y^n_B \ \stackrel{\circ}{\times} \ {}_0Y^{m-n}_B \ \longto \ {}_{\leq (n-1)}Y^m_B \ ;$$
pulling back the isomorphism from ${}_{\leq (n-1)}Y^m_B$ along the latter map then yields the desired isomorphism on ${}_{\leq (n-1)}Y^n_B$ since both sides of the isomorphism are compatible with the factorization structure and constant along ${}_0Y^{m-n}_B$.
\end{proof}

\medskip

Combining Lemma \ref{interplay away from worst stratum} and Lemma \ref{semisimplicity} above, we already know that the restriction of $(\gr \, \Psi (\IC_{Y^n_G}))_{\text{not} \, \text{on} \, {}_nY^n_B}$ to ${}_{\leq (n-1)}Y^n_B$ is semisimple. Since by definition none of the simple constituents of $(\gr \, \Psi (\IC_{Y^n_G}))_{\text{not} \, \text{on} \, {}_nY^n_B}$ are supported on the complement of ${}_{\leq (n-1)}Y^n_B$, it must in fact be equal to the intermediate extension of its restriction to ${}_{\leq (n-1)}Y^n_B$. Thus applying the intermediate extension functor to the isomorphism in Lemma \ref{interplay away from worst stratum} yields the desired isomorphism (b) above.

\bigskip

\ssec{The isomorphism on the stratum of maximal defect}
\label{The isomorphism on the stratum of maximal defect}

\mbox{} \medskip

Recall from Corollary \ref{stratification for local model} that the stratum of maximal defect ${}_nY^n_B$ is canonically identified with the symmetric power $X^{(n)}$ of the curve $X$ via the natural projection map $Y^n \to X^{(n)}$. Throughout this section we will identify ${}_nY^n_B$ and $X^{(n)}$ without further mention. Observe furthermore that by definition of $C_n$ we have
$$\bigl( C_n \bigr)_{\text{on} \, {}_nY^n_B} \ \ = \ \ \CP_n$$
where $\CP_n$ denotes the $n$-th Picard-Lefschetz oscillator as in Subsection \ref{Definition of Picard-Lefschetz oscillators} above.

\medskip

\medskip

\begin{lemma}
\label{factorization structures on both sides}
The objects $(\gr \, \Psi (\IC_{Y^n_G}))_{\text{on} \, {}_nY^n_B}$ and $(C_n)_{\text{on} \, {}_nY^n_B}$ admit natural factorization structures which respect the action of the Lefschetz-$\sl_2$.
\end{lemma}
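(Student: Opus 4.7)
The plan is to handle the two sides separately. For the right-hand side $(C_n)_{\text{on}\,{}_n Y^n_B}$, we note that among the summands $\bar{f}_{n_1,k,n_2,*}\,\widetilde\CP_{n_1,k,n_2}$ of $C_n$, only the one indexed by $(n_1,k,n_2)=(0,n,0)$ consists of simple constituents supported on the closed stratum ${}_n Y^n_B = X^{(n)}$: indeed, in all other cases $k<n$ and the image of $\bar f_{n_1,k,n_2}$ strictly exceeds ${}_n Y^n_B$. For the triple $(0,n,0)$ one has $\bar f_{0,n,0} = \id_{X^{(n)}}$ and $\widetilde\CP_{0,n,0} = \CP_n$, so $(C_n)_{\text{on}\,{}_n Y^n_B} = \CP_n$. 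By Lemma~\ref{properties of PLOs}, the collection $\{\CP_n\}$ comes equipped with a natural factorization structure inherited from the external exterior power construction and this structure respects the Lefschetz-$\sl_2$ action inherited by functoriality from the action on the standard representation $V$; this supplies the desired data on the right-hand side.

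For the left-hand side, we will use the factorization in families from Proposition~\ref{factorization in families}. That proposition produces a cartesian square identifying $Y^n$ over the disjoint locus of $X^{(n)}$ with $Y^{n_1}\underset{\BA^1}{\stackrel{\circ}{\times}} Y^{n_2}$ compatibly with the maps to $\BA^1$. Since both $Y^{n_i}_G$ are smooth, the IC-sheaf of $Y^n_G$ restricts over the disjoint locus to the $*$-restricted external tensor product of $\IC_{Y^{n_1}_G}$ and $\IC_{Y^{n_2}_G}$ over $\BA^1\setminus\{0\}$. A Künneth-type formula for unipotent nearby cycles over $\BA^1$ — available because each $\Psi(\IC_{Y^{n_i}_G})$ is unipotent, as noted at the start of Section~\ref{Nearby cycles} — then yields a canonical isomorphism
$$\Psi_{Y^n}(\IC_{Y^n_G})\big|_{\text{disjoint}} \ \cong \ \Psi_{Y^{n_1}}(\IC_{Y^{n_1}_G}) \, \overset{\circ}{\boxtimes} \, \Psi_{Y^{n_2}}(\IC_{Y^{n_2}_G})$$
under which the monodromy on the left corresponds to $N_1\otimes\id+\id\otimes N_2$ on the right. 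Functoriality then guarantees that the induced monodromy filtrations match up, so passing to the associated graded and invoking Lemma~\ref{Lefschetz-sl_2} gives a factorization structure on $\gr\,\Psi(\IC_{Y^n_G})$ respecting the Lefschetz-$\sl_2$ action.

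To obtain the structure on the maximal-defect summand, we verify that this factorization is compatible with the splitting of Subsection~\ref{Splitting according to loci of support}. The factorization in families carries the defect stratification of $Y^n$ to the product of defect stratifications: a point of $Y^n$ over $D_1+D_2$ with $D_1,D_2$ disjoint has total defect $n=n_1+n_2$ if and only if its components in $Y^{n_1}$ and $Y^{n_2}$ have respective defects $n_1$ and $n_2$. Equivalently, the preimage of ${}_n Y^n_B$ under the top arrow of Proposition~\ref{factorization in families} is exactly ${}_{n_1} Y^{n_1}_B\stackrel{\circ}{\times} {}_{n_2} Y^{n_2}_B$. Consequently the splitting according to loci of support is preserved under factorization, and extracting the summands supported on the maximal-defect strata yields the required factorization on $(\gr\,\Psi(\IC_{Y^n_G}))_{\text{on}\,{}_n Y^n_B}$ intertwining the Lefschetz-$\sl_2$.

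The main obstacle in this plan is the Künneth isomorphism for unipotent nearby cycles on the fiber product over $\BA^1$, together with the compatibility of the combined monodromy with the tensor product of the individual monodromies. We expect this to follow from the tame description of $\Psi$: unipotent nearby cycles are controlled by the action of the tame fundamental group of a punctured disc around $0\in\BA^1$, which satisfies a Künneth property for fiber products over the disc; alternatively, one may first apply the usual Künneth for $\Psi$ to the map $Y^{n_1}\times Y^{n_2}\to\BA^1\times\BA^1$ (with the two natural projections) and then restrict along the diagonal $\BA^1\hookrightarrow\BA^1\times\BA^1$.
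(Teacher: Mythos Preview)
Your proposal is correct and follows essentially the same route as the paper. The paper treats $(C_n)_{\text{on}\,{}_nY^n_B}=\CP_n$ as immediate from the definition and then, for the nearby-cycles side, invokes exactly the same ingredients you do: factorization in families (Proposition~\ref{factorization in families}), a K\"unneth property for $\gr\Psi$ with respect to fiber products over $\BA^1$, and the cartesian square expressing that the maximal-defect stratum itself factorizes. The ``main obstacle'' you flag is precisely what the paper outsources to \cite[Sec.~5]{Jantzen conjectures}, where the compatibility of (unipotent) nearby cycles and their monodromy filtrations with fiber products over the base is recorded; your unpacking via $N=N_1\otimes\id+\id\otimes N_2$ and the tensor product of monodromy filtrations is exactly the content of that citation.
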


\begin{proof}
For $(C_n)_{\text{on} \, {}_nY^n_B} = \CP_n$ this was already dealt with in Lemma \ref{external exterior powers} and Lemma \ref{properties of PLOs} above. We now prove the assertion for $(\gr \, \Psi (\IC_{Y^n_G}))_{\text{on} \, {}_nY^n_B}$. First, the factorization-in-families of the local models $Y^n$ from Proposition \ref{factorization in families} above, together with the compatibility of $\gr \Psi$ with fiber products (see for example \cite[Sec. 5]{Jantzen conjectures}), shows that on $Y^{n_1}_B \overset{\circ}{\times} Y^{n_2}_B$ we have:
$$\gr \, \Psi(\IC_{Y^n_G}) \Big|^*_{Y^{n_1}_B \overset{\circ}{\times} Y^{n_2}_B} \ \ = \ \ \gr \, \Psi(\IC_{Y^{n_1}_G}) \, \overset{\circ}{\boxtimes} \, \gr \, \Psi(\IC_{Y^{n_2}_G})$$
Here the left hand side denotes the $*$-pullback of $\gr \, \Psi(\IC_{Y^n_G})$ along the etale factorization map
$$Y^{n_1}_B \overset{\circ}{\times} Y^{n_2}_B \ \longto \ Y^n_B \, .$$
The above identification respects the action of the Lefschetz-$\sl_2$ due to the comment in Proposition \ref{factorization in families} about compatibility with respect to maps to~$\BA^1$.

\medskip

Next we claim that the above identification in fact induces an identification of the desired summands:
$$(\gr \, \Psi(\IC_{Y^n_G}))_{\text{on} \, {}_nY^n_B} \Big|^*_{Y^{n_1}_B \overset{\circ}{\times} Y^{n_2}_B} \, = \, (\gr \, \Psi(\IC_{Y^{n_1}_G}))_{\text{on} \, {}_{n_1}Y^{n_1}_B} \, \overset{\circ}{\boxtimes} \, (\gr \, \Psi(\IC_{Y^{n_2}_G}))_{\text{on} \, {}_{n_2}Y^{n_2}_B}$$
Indeed, using the identification without requirements on the support and the fact that the stratum of maximal defect ``factorizes'' in the sense that the square
$$\xymatrix@+10pt{
{}_{n_1}Y^{n_1}_B \stackrel{\circ}{\times} {}_{n_2}Y^{n_2}_B \ar[r] \ar[d]   &   {}_nY^n_B \ar[d]           \\
Y^{n_1}_B \stackrel{\circ}{\times} Y^{n_2}_B \ar[r]   &   Y^n_B
}$$

\medskip

\noindent is cartesian, one can identify both the left hand side and the right hand side with the direct summand of the perverse sheaf
$$\gr \, \Psi(\IC_{Y^n_G}) \Big|^*_{Y^{n_1}_B \overset{\circ}{\times} Y^{n_2}_B}$$
consisting of those simple constituents supported on ${}_{n_1}Y^{n_1}_B \stackrel{\circ}{\times} {}_{n_2}Y^{n_2}_B$.
\end{proof}

\medskip

\sssec{Simple constituents supported on the diagonal}
Let $\Delta_X$ denote the main diagonal
$$\Delta_X = X \longinto X^{(n)}$$
of the symmetric power $X^{(n)}$. The following lemma is crucial to our approach to the proof of Theorem \ref{main theorem for nearby cycles, local version}:

\begin{lemma}
\label{no summands over the diagonal lemma}
For $n \geq 2$, none of the simple constituents of the perverse sheaves $(\gr \, \Psi (\IC_{Y^n_G}))_{\text{on} \, {}_nY^n_B}$ and $(C_n)_{\text{on} \, {}_nY^n_B}$ are supported on the diagonal $\Delta_X$.
\end{lemma}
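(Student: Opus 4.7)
For the easier $C_n$-side, the statement is immediate from Lemma \ref{properties of PLOs}. By definition $(C_n)_{\text{on} \, {}_nY^n_B} = \CP_n$, and $\CP_n$ is semisimple and equal to the IC-extension of a local system from $\overset{\circ}{X} {}^{(n)} \subset X^{(n)}$; its simple constituents are therefore IC-extensions of simple local systems on $\overset{\circ}{X} {}^{(n)}$, each of which has full support $X^{(n)}$. Since $\Delta_X \subset X^{(n)} \setminus \overset{\circ}{X} {}^{(n)}$ for $n \geq 2$, none of these constituents can be supported on $\Delta_X$.

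For the nearby cycles side, the plan is to combine the factorization structure of Lemma \ref{factorization structures on both sides} with the inductive hypothesis on $n$. Since Theorem \ref{main theorem for nearby cycles, local version} is assumed for all $m<n$, the inductive hypothesis gives $(\gr\,\Psi(\IC_{Y^m_G}))_{\text{on} \, {}_mY^m_B} \cong \CP_m$ for all such $m$. For any partition $n = n_1 + n_2$ with $n_1, n_2 \geq 1$, combining factorization with the factorization of the Picard-Lefschetz oscillators (Lemma \ref{properties of PLOs}) yields
\[
(\gr\,\Psi(\IC_{Y^n_G}))_{\text{on} \, {}_nY^n_B}\Big|^{*}_{X^{(n_1)} \stackrel{\circ}{\times} X^{(n_2)}} \;\cong\; \CP_{n_1} \overset{\circ}{\boxtimes} \CP_{n_2} \;\cong\; \CP_n\Big|^{*}_{X^{(n_1)} \stackrel{\circ}{\times} X^{(n_2)}}.
\]
Since the open loci $X^{(n_1)} \stackrel{\circ}{\times} X^{(n_2)}$ jointly cover $X^{(n)} \setminus \Delta_X$ (any divisor with two or more distinct support points admits such a decomposition), these two perverse sheaves will agree on $X^{(n)} \setminus \Delta_X$. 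Using semisimplicity (Lemma \ref{semisimplicity}), I then obtain a decomposition
\[
(\gr\,\Psi(\IC_{Y^n_G}))_{\text{on} \, {}_nY^n_B} \;\cong\; \CP_n \,\oplus\, B
\]
in which $B$ is a semisimple perverse sheaf supported on $\Delta_X \cong X$, and the task reduces to showing $B = 0$.

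The plan to rule out $B$ is to compare stalks at a generic point $nx \in \Delta_X$. By Lemma \ref{external exterior powers}(c) the stalk of $\CP_n$ at $nx$ equals $\Lambda^n V\,[n](\tfrac{n}{2})$, which vanishes for $n\geq 3$ (as $\dim V = 2$) and is one-dimensional up to Tate twist for $n=2$. The stalk of $(\gr\,\Psi(\IC_{Y^n_G}))_{\text{on} \, {}_nY^n_B}$ at $nx$ will in turn be accessed through the fiber $\BY^n$ of $\pi: Y^n \to X^{(n)}$ over $nx$: by Lemmas \ref{contraction lemma} and \ref{composite map and contraction in coordinates} the $\BG_m$-action on $\BY^n$ contracts $\BY^n$ onto the origin with uniform positive weight on each coordinate, so the contraction principle (Lemma \ref{contraction principle}) reduces the question to computing $R\Gamma(\BY^n_B, \gr\,\Psi(\IC_{\BY^n_G}))$, which can in principle be attacked using the explicit bilinear equations of Corollary \ref{standardcoordinates} and the formula $d = a_{-n}\cdot b_0$ for the map to $\BA^1$. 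The hard part will be isolating, inside this global cohomology, the piece contributed by those simple constituents of $\gr\,\Psi(\IC_{\BY^n_G})$ which are supported only at the origin $s(nx)\in\BY^n$ (this is precisely the stalk of $(\gr\,\Psi)_{\text{on max defect}}$ at $nx$), separating it from contributions of simple constituents supported on strictly positive-dimensional strata; this separation should be achievable by combining weight considerations with the explicit knowledge on $X^{(n)}\setminus\Delta_X$ already produced by factorization. Once the stalk of $(\gr\,\Psi(\IC_{Y^n_G}))_{\text{on} \, {}_nY^n_B}$ at a generic $nx$ is shown to match that of $\CP_n$, the stalk of $B$ at a generic point of $\Delta_X \cong X$ vanishes; since any nonzero semisimple perverse sheaf on the smooth curve $X$ has nonzero stalks at generic points of its support, this forces $B = 0$ and completes the proof.
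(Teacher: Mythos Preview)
Your treatment of the $(C_n)_{\text{on}}$ case is correct and matches the paper exactly.

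For the nearby cycles side, your reduction is valid: factorization together with the inductive hypothesis does give
$(\gr\Psi)_{\text{on}}\big|_{X^{(n)}\setminus\Delta_X}\cong \CP_n\big|_{X^{(n)}\setminus\Delta_X}$,
and semisimplicity then yields a splitting $(\gr\Psi)_{\text{on}}\cong \CP_n\oplus B$ with $B$ supported on $\Delta_X$. The problem is the last step. You propose to kill $B$ by computing the stalk of $(\gr\Psi)_{\text{on}}$ at a generic $nx\in\Delta_X$, but you do not actually carry this out; you only say it ``should be achievable'' by separating contributions inside $R\Gamma(\BY^n_B,\gr\Psi)$. That separation is exactly the hard part, and nothing you have written explains how to perform it. In practice, computing the cohomology of $\Psi$ on the fiber $\BY^n$ for general $n$ via the quadratic equations of Corollary~\ref{standardcoordinates} is not feasible by bare hands, and isolating the ``supported at the origin'' piece from the rest requires precisely the kind of global input you have not supplied. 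As written, this is a genuine gap.

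The paper's proof proceeds quite differently and avoids any direct stalk computation. It works entirely in the Grothendieck group of perverse sheaves on $X^{(n)}$: from the triangle
\[
s^*j_{G,*}\IC_{Y^n_G}[-1](-\tfrac12)\ \longto\ s^*\Psi(\IC_{Y^n_G})\ \stackrel{N}{\longto}\ s^*\Psi(\IC_{Y^n_G})(-1)\ \stackrel{+1}{\longto}
\]
one sees that any $\Delta_X$-supported simple in the minimal expression for $[s^*\Psi]$ would have to appear in $[s^*j_{G,*}\IC_{Y^n_G}]$. But by the contraction principle (Proposition~\ref{omega tilde proposition}) the Verdier dual of the latter is $\widetilde\Omega_n\otimes H^*_c(\BG_m)$, and Braverman--Gaitsgory's formula (Lemma~\ref{Omega tilde in the Grothendieck group}) expresses $\widetilde\Omega_n$ in the Grothendieck group as a sum of $\add_*$ of objects whose simple constituents are all IC-extensions from the disjoint locus; hence no $\Delta_X$-supported simple occurs. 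Finally, writing $[s^*\gr\Psi]=[(\gr\Psi)_{\text{on}}]+[s^*(\gr\Psi)_{\text{not on}}]$ and handling the second term via the inductive hypothesis (Lemma~\ref{restriction along s of the usual summands}) finishes the argument. The key external input you are missing is precisely this link to the open Zastava cohomology $\widetilde\Omega_n$; without it, your stalk plan has no way to get off the ground.
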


\medskip

For $(\gr \, \Psi (\IC_{Y^n_G}))_{\text{on} \, {}_nY^n_B}$ the proof of Lemma \ref{no summands over the diagonal lemma} is the topic of Subsection~\ref{Fighting simples on the main diagonal} below. The case of $(C_n)_{\text{on} \, {}_nY^n_B}$ however follows directly from the definitions: Since $(C_n)_{\text{on} \, {}_nY^n_B} = \CP_n$, this follows from the fact that the Picard-Lefschetz oscillator $\CP_n$ is the intermediate extension of a local system on the disjoint locus of $X^{(n)}$ by Lemma \ref{properties of PLOs}.

\medskip

In the next lemma and below \textit{the union of all diagonals in}~$X^{(n)}$ refers to the natural closed subscheme of $X^{(n)}$ complementary to the disjoint locus of $X^{(n)}$. From the definition of factorizability one verifies:

\begin{lemma}
\label{only generic supports}
Let $F_n$ be a factorizable collection of perverse sheaves on $X^{(n)}$, and assume that for each $n \geq 2$ none of the simple constituents of $F_n$ is supported on the main diagonal $\Delta_X$. Then for any $n \geq 2$ none of the simple constituents of $F_n$ is supported on the union of all diagonals in~$X^{(n)}$. In particular, none of the simple constituents of $(\gr \, \Psi (\IC_{Y^n_G}))_{\text{on} \, {}_nY^n_B}$ and $(C_n)_{\text{on} \, {}_nY^n_B}$ is supported on the union of all diagonals in $X^{(n)}$.
\end{lemma}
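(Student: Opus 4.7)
The plan is to reduce the general statement to the main-diagonal hypothesis via factorization, arguing by contradiction. First I note that the union of diagonals in $X^{(n)}$ admits a stratification by the locally closed subsets $X^\lambda \subset X^{(n)}$ indexed by partitions $\lambda = (n_1, \ldots, n_m)$ of $n$ with at least one part $\geq 2$; each closure $\overline{X^\lambda}$ is irreducible, and together they exhaust the union of diagonals. A simple perverse sheaf supported on a diagonal is thus the IC-extension of an irreducible local system from some such stratum $X^\lambda$.

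Suppose now for contradiction that $L$ is such a simple constituent of $F_n$, with associated partition $\lambda$. If $\lambda = (n)$, then $\supp L = \Delta_X$ and the hypothesis immediately yields a contradiction. Otherwise $\lambda$ has at least two parts; I fix a part $n_1 \geq 2$ and set $n_2 := n - n_1$, so that $1 \leq n_2 < n$. The key step is to transport $L$ to the disjoint locus via the étale addition map
$$\add \ : \ X^{(n_1)} \stackrel{\circ}{\times} X^{(n_2)} \ \longto \ X^{(n)} \, ,$$
using the factorization isomorphism
$$\add^* F_n \ \cong \ F_{n_1} \stackrel{\circ}{\boxtimes} F_{n_2} \, .$$
One irreducible component of $\add^{-1}(\overline{X^\lambda})$ inside the disjoint locus is $\Delta_X \stackrel{\circ}{\times} \overline{X^{\lambda'}}$, where $\lambda' = (n_2, \ldots, n_m)$ denotes the partition of $n_2$ obtained from $\lambda$ by removing the chosen part; this corresponds to placing the point of multiplicity $n_1$ into the first factor.

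Since intermediate extension commutes with étale pullback, $\add^* L$ is a semisimple perverse sheaf whose simple summands are IC-extensions from the irreducible components of the relevant preimage; in particular it admits a simple summand supported on $\overline{\Delta_X \stackrel{\circ}{\times} X^{\lambda'}}$. Matching against the factorization isomorphism above, every simple constituent of $F_{n_1} \stackrel{\circ}{\boxtimes} F_{n_2}$ is of the form $L_1 \stackrel{\circ}{\boxtimes} L_2$ with $L_i$ a simple constituent of $F_{n_i}$, so comparing supports forces $\supp L_1 = \Delta_X \subset X^{(n_1)}$. Since $n_1 \geq 2$, this contradicts the main-diagonal hypothesis applied to $F_{n_1}$ and proves the first assertion. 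The ``in particular'' clause then follows by applying this to the collections $(\gr \, \Psi(\IC_{Y^n_G}))_{\text{on } {}_nY^n_B}$ and $(C_n)_{\text{on } {}_nY^n_B}$, whose factorization structures were constructed in Lemma \ref{factorization structures on both sides} and whose main-diagonal vanishings are supplied by Lemma \ref{no summands over the diagonal lemma}. The main point requiring care will be the support-matching in the last step, which rests on the standard description of simple constituents of an external product of perverse sheaves together with the compatibility of IC-extension with étale pullback.
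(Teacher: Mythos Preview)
Your argument is correct and is precisely the verification the paper leaves implicit when it writes ``From the definition of factorizability one verifies''; you have supplied a careful proof by splitting off a part $n_1 \geq 2$ and using the factorization isomorphism to produce a simple constituent of $F_{n_1}$ supported on $\Delta_X$, which is exactly the intended reduction. The only points worth tightening are cosmetic: make explicit that $\add$ restricted to the relevant component is \'etale onto an open in $X^\lambda$ (so the corresponding summand of $\add^* L$ is nonzero), and note that the support identity $\supp(L_1 \stackrel{\circ}{\boxtimes} L_2) = \supp L_1 \stackrel{\circ}{\times} \supp L_2$ together with irreducibility of $\Delta_X$ forces $\supp L_1 = \Delta_X$ rather than merely $\supp L_1 \subset \Delta_X$.
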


\medskip

\sssec{Generic agreement}

Lemma \ref{only generic supports} above shows that it suffices to construct the desired isomorphism (a) above after restricting $(\gr \, \Psi (\IC_{Y^n_G}))_{\text{on} \, {}_nY^n_B}$ and $(C_n)_{\text{on} \, {}_nY^n_B}$ to the disjoint locus $\overset{\circ}{X} {}^{(n)}$ of the symmetric power. Indeed, by Lemma \ref{only generic supports} both perverse sheaves are the intermediate extensions of their restrictions to the disjoint locus, and hence the desired isomorphism can be obtained by intermediate extension as well. As a first step towards the isomorphism on the disjoint locus, we construct the following weaker version. Let
$$\add_{disj}: \ \overset{\circ}{X} {}^n \ \longto \overset{\circ}{X} {}^{(n)}$$
denote the addition map from the disjoint locus of the cartesian product to the disjoint locus of the symmetric product of the curve $X$. Then directly from the definition of factorization we obtain:

\begin{lemma}
\label{isomorphisms after pullback}
The isomorphism for $n=1$ from Subsection \ref{The base case $n=1$} and the factorization structure from Lemma \ref{factorization structures on both sides} above together yield isomorphisms of the pullbacks
$$\add_{disj}^* \Bigl(\gr \, \Psi (\IC_{Y^n_G}) \Bigr)_{\text{on} \, {}_nY^n_B} \ \ \cong \ \ \add_{disj}^* \bigl( C_n \bigr)_{\text{on} \, {}_nY^n_B}$$
for any $n \geq 2$ which respect the action of the Lefschetz-$\sl_2$.
\end{lemma}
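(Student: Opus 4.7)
The plan is to deduce the lemma directly from the factorization structures of Lemma \ref{factorization structures on both sides} together with the base case worked out in Proposition \ref{base case proposition}. The whole argument is essentially formal once those two ingredients are in place, so the exposition should be quite short.

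First, I would iteratively apply the factorization structures on both sides using the successive partitions $n = 1 + (n-1) = 1 + 1 + (n-2) = \cdots = 1 + \cdots + 1$. This identifies
$$\add_{disj}^* \bigl(\gr \, \Psi(\IC_{Y^n_G})\bigr)_{\text{on}\,{}_n Y^n_B} \ \ \cong \ \ \bigl(\gr \, \Psi(\IC_{Y^1_G})\bigr)_{\text{on}\,{}_1 Y^1_B}^{\boxtimes \, n}$$
as perverse sheaves on $\overset{\circ}{X}{}^n$, equivariant under the Lefschetz-$\sl_2$ acting diagonally on the $n$ factors, and similarly $\add_{disj}^*(C_n)_{\text{on}\,{}_n Y^n_B} \cong (C_1)_{\text{on}\,{}_1 Y^1_B}^{\boxtimes \, n}$. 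Here one uses that the factorization maps restricted to the strata of maximal defect give exactly the addition map $\overset{\circ}{X}{}^n \to \overset{\circ}{X}{}^{(n)}$, since ${}_1Y^1_B \cong X$ and the closed stratum ${}_nY^n_B \cong X^{(n)}$ factorizes accordingly on the disjoint locus (this factorization of strata of maximal defect already appeared in the proof of Lemma \ref{factorization structures on both sides} above).

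Next I would invoke the base case $n=1$. By Proposition \ref{base case proposition}, the fiberwise summand of $\gr \, \Psi(\IC_{\BY^1_G})$ supported on the maximal-defect point of $\BY^1_B$ is $V \otimes \Qellbar$, with the Lefschetz-$\sl_2$ acting via its standard representation $V$. Running this family over $X$ identifies $(\gr \, \Psi(\IC_{Y^1_G}))_{\text{on}\,{}_1 Y^1_B}$ with the shifted and twisted constant sheaf $\underline V[1](\tfrac{1}{2})$ on ${}_1Y^1_B \cong X$, which by Subsection \ref{Definition of Picard-Lefschetz oscillators} is exactly $\CP_1 = (C_1)_{\text{on}\,{}_1 Y^1_B}$, as an $\sl_2$-equivariant perverse sheaf. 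Taking the $n$-fold external product of this $n=1$ isomorphism and splicing it with the two factorization identifications above yields the claimed isomorphism on $\overset{\circ}{X}{}^n$.

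The main (and only) subtlety is the Lefschetz-$\sl_2$-equivariance under the passage to external products; but this is built into Lemma \ref{factorization structures on both sides} (whose factorization structures are already declared $\sl_2$-equivariant) and into Proposition \ref{base case proposition} (which provides the $\sl_2$-equivariance at $n=1$), so no further verification is needed. Book-keeping of shifts and twists — the $[1](\tfrac{1}{2})$ arising from the passage between the fiber $\BY^1$ and the relative situation $Y^1 \to X$, and the matching between $\CP_1$ and $\underline V[1](\tfrac{1}{2})$ — is mechanical and the only place where a careful but routine check is required.
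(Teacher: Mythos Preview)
Your proposal is correct and follows the same approach as the paper, which simply states that the lemma follows ``directly from the definition of factorization''; you have merely made explicit the iteration of the binary factorization isomorphisms down to the partition $n = 1 + \cdots + 1$ and the appeal to the $n=1$ case.
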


\bigskip

Since the addition map $\add_{disj}$ is a torsor for the symmetric group $S_n$, we note that the pullbacks $\add_{disj}^*(\gr \, \Psi (\IC_{Y^n_G}))_{\text{on} \, {}_nY^n_B}$ and $\add_{disj}^*(C_n)_{\text{on} \, {}_nY^n_B}$ carry natural $S_n$-equivariant structures. Thus to construct the desired isomorphism on the stratum of maximal defect, it suffices to prove that the isomorphism between the pullbacks constructed in Lemma \ref{isomorphisms after pullback} above in fact respects the $S_n$-equivariant structures.

\medskip

This is one of the key computations of the present article, and is in fact part of how the Picard-Lefschetz oscillators were found in the first place. Fortunately, in the present situation it suffices to verify the case $n=2$:

\begin{lemma}
\label{transpositions generate}
To show that the isomorphisms
$$\add_{disj}^* \Bigl(\gr \, \Psi (\IC_{Y^n_G}) \Bigr)_{\text{on} \, {}_nY^n_B} \ \ \cong \ \ \add_{disj}^* \bigl( C_n \bigr)_{\text{on} \, {}_nY^n_B}$$
constructed in Lemma \ref{isomorphisms after pullback} above respect the natural $S_n$-equivariant structures for all $n \geq 2$, it in fact suffices to verify the case $n=2$.
\end{lemma}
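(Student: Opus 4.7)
The plan is to reduce the check of $S^n$-equivariance to the generators of $S^n$ and then to exploit the factorization structure to localize the computation onto a two-point configuration. Recall that $S^n$ is generated by transpositions, so in order to check that the isomorphism of Lemma \ref{isomorphisms after pullback} is $S^n$-equivariant, it suffices to check that it intertwines the actions of each transposition $(ij) \in S^n$ on source and target.

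Fix such a transposition $(ij)$ and consider the partition of $\{1,\dots,n\}$ into $\{i,j\}$ and its complement. The addition map factors as
$$\overset{\circ}{X}{}^n \ \longto \ \overset{\circ}{X}{}^{(2)} \stackrel{\circ}{\times} \overset{\circ}{X}{}^{(n-2)} \ \stackrel{\add_{disj}}{\longto} \ \overset{\circ}{X}{}^{(n)},$$
where the first arrow is the quotient by $S^2 \times S^{n-2}$ acting on the coordinates $\{i,j\}$ and their complement. By the factorization structure from Lemma \ref{factorization structures on both sides}, together with its compatibility with the Lefschetz-$\sl_2$ action, pulling back $(\gr\,\Psi(\IC_{Y^n_G}))_{\text{on}\,{}_nY^n_B}$ and $(C_n)_{\text{on}\,{}_nY^n_B}$ to $\overset{\circ}{X}{}^{(2)} \stackrel{\circ}{\times} \overset{\circ}{X}{}^{(n-2)}$ yields external products of the corresponding perverse sheaves on $\overset{\circ}{X}{}^{(2)}$ and $\overset{\circ}{X}{}^{(n-2)}$. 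Moreover, the isomorphism of Lemma \ref{isomorphisms after pullback} obtained via factorization from the base case $n=1$ is precisely the exterior product of the corresponding isomorphisms for $n=2$ and $n=n-2$ (this is immediate from how the isomorphism was built by repeated application of factorization to the case $n=1$).

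Further pulling back along $\overset{\circ}{X}{}^n \to \overset{\circ}{X}{}^{(2)} \stackrel{\circ}{\times} \overset{\circ}{X}{}^{(n-2)}$, the action of the transposition $(ij)$ acts only on the first tensor factor (the one associated to the coordinates $\{i,j\}$), while leaving the second tensor factor pointwise fixed. Hence the commutativity of the diagram expressing equivariance under $(ij)$ splits as an exterior product of two diagrams: one on $\overset{\circ}{X}{}^{(2)}$ expressing equivariance under the nontrivial element of $S^2$, and one on $\overset{\circ}{X}{}^{(n-2)}$ which is trivially commutative. Thus the $n$-th equivariance statement for $(ij)$ reduces to the $n=2$ equivariance statement, which is what had to be shown.

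The main subtlety, and the reason the argument is really a reduction rather than a triviality, is that the isomorphism of Lemma \ref{isomorphisms after pullback} is only canonically defined after the factorization structure is chosen; one must check that the factorization structures on both sides are compatible in a way that makes the above exterior product decomposition hold. This was built into the statement of Lemma \ref{factorization structures on both sides}, where the factorization structures were shown to respect the action of the Lefschetz-$\sl_2$. The actual work, therefore, is left for the case $n=2$, which is where the defining property of the Picard-Lefschetz oscillators, namely the sign character in Lemma \ref{properties of PLOs}, must be matched against the nontrivial monodromy of the nearby cycles under a transposition.
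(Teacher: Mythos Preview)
Your proof is correct and follows essentially the same approach as the paper: reduce to transpositions since they generate $S^n$, then factor the addition map through a two-point configuration and use that the isomorphism of Lemma \ref{isomorphisms after pullback} is built via factorization from the $n=1$ case, so equivariance under a transposition localizes to the $n=2$ factor. The only cosmetic difference is that the paper factors through $\overset{\circ}{X}{}^{(2)} \times \overset{\circ}{X}{}^{n-2}$ (leaving the complementary coordinates unsymmetrized) rather than your $\overset{\circ}{X}{}^{(2)} \stackrel{\circ}{\times} \overset{\circ}{X}{}^{(n-2)}$, but either intermediate space works for the argument.
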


\begin{proof}
Since the symmetric group $S_n$ is generated by transpositions, it suffices to verify that the above isomorphisms respect the equivariant structure for any transposition in $S_n$. Without loss of generality we may assume the transposition under consideration interchanges the elements $1,2 \in \{1, \ldots, n\}$. Factoring the map $\add_{disj}$ as the composition
$$\overset{\circ}{X} {}^{n} \ \longto \ \overset{\circ}{X} {}^{(2)} \times \overset{\circ}{X} {}^{n-2} \ \longto \ \overset{\circ}{X} {}^{(n)}$$
and using that the isomorphisms in Lemma \ref{isomorphisms after pullback} are constructed via the factorization structures then reduces the assertion to the case $n=2$.
\end{proof}

\medskip

The required assertion in the case $n=2$ will be dealt with in Subsection \ref{Finding the Picard-Lefschetz oscillators} and Subsection~\ref{Picard-Lefschetz oscillators via intersection cohomology} below. Namely, we will give two different proofs, one via an abstract calculation in the Grothendieck group, and another one via a direct computation of the IC-sheaf of the local model $Y^2$ based on the explicit geometry available from Section~\ref{Geometry of the local models} above. This completes the proof of the main theorem about nearby cycles for the local models, Theorem \ref{main theorem for nearby cycles, local version}, and hence also Theorem \ref{main theorem for nearby cycles} for $\VinBun_G$, modulo Subsections \ref{Fighting simples on the main diagonal} through \ref{Picard-Lefschetz oscillators via intersection cohomology} below.

\bigskip

\ssec{Fighting simples on the main diagonal}
\label{Fighting simples on the main diagonal}

\mbox{} \medskip

In this subsection we prove Lemma \ref{no summands over the diagonal lemma} for $(\gr \, \Psi (\IC_{Y^n_G}))_{\text{on} \, {}_nY^n_B}$. As the lemma is part of the induction step of the inductive proof of the main theorem, Theorem \ref{main theorem for nearby cycles, local version}, we are allowed to assume the validity of Theorem \ref{main theorem for nearby cycles, local version} for the integer $n-1$ in the course of the proof of Lemma \ref{no summands over the diagonal lemma}. Using the inductive hypothesis, the question about simples on the diagonal can be translated into a similar question about the cohomology of the Zastava spaces; as remarked earlier, answering this question also establishes Theorem \ref{ij} above. In the next two subsections we first discuss a description of the cohomology of the Zastava spaces as well as its implications for the local models, and only then proceed to the actual proof.

\bigskip

\sssec{Compactly supported cohomology of open Zastava spaces}
\label{Compactly supported cohomology of open Zastava spaces}

As in Subsection \ref{Zastava spaces} let ${}_0Z^n$ denote the open Zastava space, and as before let
$$\pi_Z: \ {}_0Z^n \ \longto \ X^{(n)}$$
denote the projection map. We now record some information about the object
$$\widetilde \Omega_n \ := \ \pi_{Z,!} \bigl( \IC_{{}_0Z^n}\bigr) \ = \ \pi_{Z,!} \bigl( (\Qellbar)_{{}_0Z^n}[\dim_{{}_0Z^n}](\tfrac{1}{2} \dim_{{}_0Z^n})\bigr)$$
and will then apply it to the proof of Lemma \ref{no summands over the diagonal lemma}. In fact, we will only need to understand $\widetilde \Omega_n$ on a fairly coarse level, namely on the level of the Grothendieck group. An expression of $\widetilde \Omega_n$ on the level of the Grothendieck group can fortunately be extracted from the work \cite{BG2} of Braverman and Gaitsgory. The study of $\widetilde \Omega_n$ as an object of the derived category is much more involved, and has been carried out by Sam Raskin in the forthcoming article \cite{The geometric principal series category}.

\medskip

To state the description of $\widetilde\Omega_n$ in the Grothendieck group, let
$$\add: \ X^{(i)} \times X^{(j)} \ \longto \ X^{(n)}$$
denote the addition map of effective divisors as before; however, unlike before, here we do not restrict to the disjoint locus of the product $X^{(i)} \times X^{(j)}$. Furthermore, using the notation from Subsection \ref{sssec External exterior powers} we denote by
$$\Lambda^{(j)} ({\Qellbar}_X)[j](j)$$
the $j$-th external exterior power of the constant local system on the curve~$X$, shifted and twisted as indicated. The following description of $\widetilde \Omega_n$ in the Grothendieck group then follows directly from Corollary 4.5 of \cite{BG2}:

\medskip

\begin{lemma}
\label{Omega tilde in the Grothendieck group}
In the Grothendieck group on $X^{(n)}$ we have:
$$\widetilde \Omega_n \ \ = \ \ \sum_{i+j = n} add_* \Bigl( {\Qellbar}_{X^{(i)}} \; \boxtimes \ \Lambda^{(j)} ({\Qellbar}_X)[j](j) \Bigr)$$
Here the sum runs over all pairs of integers $(i,j)$ with $0 \leq i, j, \leq n$ and $i+j=n$.
\end{lemma}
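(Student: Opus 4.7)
The strategy is to invoke Corollary 4.5 of \cite{BG2} and specialize the resulting Grothendieck group formula to the case $G = \SL_2$ of interest here. The Braverman-Gaitsgory result computes the class of $\pi_{Z,!}(\IC_{{}_0Z^n})$ in the Grothendieck group of $D(X^{(n)})$ for an arbitrary reductive group $G$, expressing it as a sum over a combinatorial indexing set coming from the Langlands dual side, with each summand being a pushforward under an addition map of an external product involving constant sheaves on symmetric powers $X^{(i)}$ and external exterior powers $\Lambda^{(j)}(\Qellbar_X)$ (suitably shifted and twisted). This structure on the Braverman-Gaitsgory side originates in a Koszul-type duality for the stalks of $\widetilde\Omega_n$ and a systematic application of the decomposition theorem to the factorizable stratification of the Zastava space.

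First, I would recall the precise statement of the Braverman-Gaitsgory formula, paying close attention to the indexing combinatorics and the shifts and twists appearing in each summand, and rewrite it in the notation of the present paper.

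Second, I would specialize to $G = \SL_2$. In this case the root and weight lattices have rank one, so the combinatorial indexing set of \cite{BG2} collapses precisely to pairs of non-negative integers $(i, j)$ with $i + j = n$. The two factors in each summand correspond respectively to the ``bosonic'' (symmetric) and ``fermionic'' (antisymmetric) contributions in the underlying Koszul duality; concretely, they arise from the trivial and sign characters of the symmetric group $S^j$ acting on the relevant factorizable stratum. This produces the factors $\Qellbar_{X^{(i)}}$ and $\Lambda^{(j)}(\Qellbar_X)$ respectively, while the shift $[j]$ and twist $(j)$ come from the standard perverse and pure renormalization of the IC sheaf relative to the dimension of the stratum of codimension $j$ under the addition map.

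The main task in carrying out the proof is thus a direct verification that the specialization of \cite[Cor.~4.5]{BG2} to $G = \SL_2$ produces exactly the combination on the right hand side of the statement; this is a bookkeeping exercise involving matching conventions on shifts, twists, and sign characters, with no further substantial geometric input required beyond the input from \cite{BG2}.
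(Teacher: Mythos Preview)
Your proposal is correct and matches the paper's approach exactly: the paper simply states that the lemma ``follows directly from Corollary 4.5 of \cite{BG2}'' without further argument. Your write-up is in fact more detailed than the paper's, since you spell out how the specialization to $G=\SL_2$ collapses the indexing set and produces the specific shifts and twists, whereas the paper leaves all of this implicit.
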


\bigskip

\sssec{Stalks of the extension of the constant sheaf}

We now explain how the object $\widetilde \Omega_n$ arises in a sheaf-theoretic computation on the local models $Y^n$. To do so, let
$$j_G: \ Y^n_G \ \longinto \ Y^n$$
denote the open inclusion of the $G$-locus of $Y^n$, and recall from Subsection \ref{The section for the local models $Y^n$} that the inclusion of the stratum of maximal defect ${}_nY^n_B \into Y^n$ agrees with the section $s: X^{(n)} \into Y^n$ under the identification ${}_nY^n_B = X^{(n)}$. Furthermore let $H^*_c(\BA^1 \setminus \{0\})$ denote the compactly supported cohomology of $\BA^1 \setminus \{0\}$. Then, using the geometry of the local models $Y^n$ discussed in Section~\ref{Geometry of the local models} above, we can now prove:

\medskip

\begin{proposition}
\label{omega tilde proposition}
$$s^!  \, j_{G,!}  \, \IC_{Y^n_G} \ \ = \ \ \widetilde \Omega_n \, \otimes \, H^*_c(\BA^1 \setminus \{0\})[1](\tfrac{1}{2})$$
\end{proposition}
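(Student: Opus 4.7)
The plan is to use the $\BG_m$-contraction of $Y^n$ onto the section $s$ to convert the $!$-restriction along $s$ into a $!$-pushforward along the projection $\pi$, and then to use the product decomposition of the $G$-locus from Lemma \ref{G-locus decomposition for the local models} to reduce the resulting computation on $Y^n_G$ to a Künneth-type identification.

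First, I would invoke the $!$-dual of the contraction principle of Lemma \ref{contraction principle}, which, by Verdier duality applied to the $*$-version, asserts that for any $\BG_m$-monodromic $F \in D(Y^n)$ there is a natural isomorphism $s^! F \cong \pi_! F$. The complex $F := j_{G,!} \IC_{Y^n_G}$ is indeed $\BG_m$-monodromic: the contracting $\BG_m$-action constructed in Lemma \ref{contraction lemma} preserves the open substack $Y^n_G \subset Y^n$ (it acts on the coordinate $d = \det\varphi$ via a nontrivial character, as one sees from Lemma \ref{composite map and contraction in coordinates}, hence preserves the locus $d\neq 0$), so $\IC_{Y^n_G}$ is monodromic on $Y^n_G$, and this property is preserved under $j_{G,!}$. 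Thus $s^! \, j_{G,!} \, \IC_{Y^n_G} \ \cong \ \pi_! \, j_{G,!} \, \IC_{Y^n_G} \ = \ (\pi \circ j_G)_! \, \IC_{Y^n_G}$.

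Next I would use Lemma \ref{G-locus decomposition for the local models}, which identifies $Y^n_G$ with the product ${}_0Z^n \times (\BA^1 \setminus \{0\})$ in a manner compatible with the forgetful maps: under this identification, the composition $\pi \circ j_G : Y^n_G \to X^{(n)}$ factors as $\pi_Z \circ \mathrm{pr}_1$. Since both ${}_0Z^n$ and $\BA^1 \setminus \{0\}$ are smooth, the IC-sheaf $\IC_{Y^n_G}$ is, by comparison of dimensions and our normalization conventions from Subsection \ref{Notation and conventions}, equal to the exterior product $\IC_{{}_0Z^n} \boxtimes \IC_{\BA^1 \setminus \{0\}}$. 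Applying the Künneth formula for the proper pushforward along $\mathrm{pr}_1$ gives
$$(\mathrm{pr}_1)_! \, \IC_{Y^n_G} \ = \ \IC_{{}_0Z^n} \ \otimes \ R\Gamma_c \bigl(\BA^1 \setminus \{0\}, \IC_{\BA^1\setminus\{0\}}\bigr) \ = \ \IC_{{}_0Z^n} \ \otimes \ H^*_c(\BA^1 \setminus \{0\})[1](\tfrac{1}{2}).$$
Pushing forward the resulting complex along $\pi_Z$, the tensor factor $H^*_c(\BA^1 \setminus \{0\})[1](\tfrac{1}{2})$ is a complex of vector spaces and may be pulled out, yielding
$$\pi_{Z,!}(\IC_{{}_0Z^n}) \ \otimes \ H^*_c(\BA^1 \setminus \{0\})[1](\tfrac{1}{2}) \ = \ \widetilde\Omega_n \ \otimes \ H^*_c(\BA^1 \setminus \{0\})[1](\tfrac{1}{2})$$
by the very definition of $\widetilde\Omega_n$ from Subsection \ref{Compactly supported cohomology of open Zastava spaces}. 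Combining the two steps gives the claimed identification.

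The only substantive issue is verifying the monodromicity needed for the $!$-version of the contraction principle; the rest is bookkeeping with shifts, twists, and the projection formula. I do not foresee any real obstacle beyond confirming that the $\BG_m$-action, whose explicit form on coordinates is recorded in Lemma \ref{composite map and contraction in coordinates}, indeed preserves $Y^n_G$ (equivalently, that the induced action on the target $\BA^1$ of the map $v$ is via some power of the standard $\BG_m$-action, which is manifest from the formula $v(a_i,b_j) = a_{-n}b_0$ and the quadratic contracting action).
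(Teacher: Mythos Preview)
Your proposal is correct and follows essentially the same route as the paper's own proof: apply the $!$-version of the contraction principle to $j_{G,!}\IC_{Y^n_G}$ to replace $s^!$ by $\pi_!$, then use the product decomposition of Lemma~\ref{G-locus decomposition for the local models} and K\"unneth to identify $\pi_{G,!}\IC_{Y^n_G}$ with $\widetilde\Omega_n \otimes H^*_c(\BA^1\setminus\{0\})[1](\tfrac{1}{2})$. Your additional discussion of why the $\BG_m$-action preserves $Y^n_G$ (via the explicit formula in Lemma~\ref{composite map and contraction in coordinates}) is a detail the paper leaves implicit, but the argument is the same.
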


\medskip

\begin{proof}
Let $\pi_G$ denote the restriction of the projection map $\pi: Y^n \to X^{(n)}$ to the $G$-locus $Y^n_G$. Then $j_{G,!} \IC_{Y^n_G}$ is $\BG_m$-equivariant for the $\BG_m$-action from Subsection \ref{The contraction for the local models $Y^n$} above, and hence the contraction principle from Lemma \ref{contraction principle} above yields:
$$s^!  \, j_{G,!}  \, \IC_{Y^n_G} \ = \ \pi_! \, j_{G,!}  \, \IC_{Y^n_G} \ = \ \pi_{G,!} \, \IC_{Y^n_G}$$

\medskip

\noindent On the other hand, in terms of the product decomposition of $Y^n_G$ from Lemma \ref{G-locus decomposition for the local models} above we have
$$\IC_{Y^n_G} \ \ = \ \ \IC_{{}_0Z^n} \, \boxtimes \, \IC_{\BA^1 \setminus \{0\}} \, .$$
Thus the compatibility of the projection maps in Lemma \ref{G-locus decomposition for the local models} implies that
$$\pi_{G,!} \, \IC_{Y^n_G} \ \ = \ \ \pi_{Z,!} (\IC_{{}_0Z^n}) \, \otimes \, H^*_c(\BA^1 \setminus \{0\})[1](\tfrac{1}{2}) \ \ = \ \ \widetilde \Omega_n \, \otimes \, H^*_c(\BA^1 \setminus \{0\})[1](\tfrac{1}{2}) \, ,$$
as desired.
\end{proof}

\bigskip

\sssec{The proof}
\label{no summands proof}

We can now proceed to the proof of Lemma \ref{no summands over the diagonal lemma} for the perverse sheaf $(\gr \, \Psi (\IC_{Y^n_G}))_{\text{on} \, {}_nY^n_B}$. In line with the appearance of the lemma in the induction step of the inductive proof of the main theorem, Theorem \ref{main theorem for nearby cycles, local version}, we are allowed to assume the validity of Theorem \ref{main theorem for nearby cycles, local version} for the integer $n-1$ in the course of the proof.

\bigskip

\begin{proof}[Proof of Lemma \ref{no summands over the diagonal lemma} for $(\gr \, \Psi (\IC_{Y^n_G}))_{\text{on} \, {}_nY^n_B}$]

\mbox{} \medskip

Let $i_B$ denote the inclusion of the $B$-locus $Y^n_B$ into $Y^n$, and as before let~$j_G$ denote the open immersion of the $G$-locus $Y^n_G$ into $Y^n$. Then on $Y^n_B$ the usual triangle for the map $\can: \Psi \to \Phi$ applied to the object $j_{G,*} \IC_{Y^n_G}$ takes the form
$$i_B^* \, j_{G,*} \, \IC_{Y^n_G} \, [-1](-\tfrac{1}{2}) \ \longto \ \Psi(\IC_{Y^n_G}) \ \stackrel{N}{\longto} \ \Psi(\IC_{Y^n_G})(-1) \ \stackrel{+1}{\longto} \, .$$

\smallskip

\noindent To better understand the object $(\gr \, \Psi (\IC_{Y^n_G}))_{\text{on} \, {}_nY^n_B}$ we will use the $*$-pullback of the above triangle along $s$, i.e., the triangle
$$s^* \, j_{G,*} \, \IC_{Y^n_G} \, [-1](-\tfrac{1}{2}) \ \longto \ s^* \, \Psi(\IC_{Y^n_G}) \ \stackrel{N}{\longto} \ s^* \, \Psi(\IC_{Y^n_G})(-1) \ \stackrel{+1}{\longto} \, .$$

\smallskip

\noindent More precisely, we will exploit the relation the latter triangle induces in the Grothendieck group of perverse sheaves on $X^{(n)}$. Namely, let the image of $s^* \, \Psi(\IC_{Y^n_G})$ in the Grothendieck group on $X^{(n)}$ be expressed uniquely as a minimal $\BZ$-linear combination of simple perverse sheaves. Then we claim that none of the simple perverse sheaves occurring in this expression is supported on the main diagonal~$\Delta_X$ of $X^{(n)}$. Indeed, since the third term of the last triangle is a non-trivial twist of the middle term $s^* \, \Psi(\IC_{Y^n_G})$, it suffices to prove the analogous claim for the first term. However, by Proposition \ref{omega tilde proposition} above, the first term is Verdier dual to the object
$$\widetilde \Omega_n \, \otimes \, H^*_c(\BA^1 \setminus \{0\})[2](1).$$
Thus it in turn suffices to show the analogous claim for $\widetilde \Omega_n$: If the image of $\widetilde \Omega_n$ in the Grothendieck group of perverse sheaves on $X^{(n)}$ is expressed as a minimal $\BZ$-linear combination of simple perverse sheaves, then none of the simples occurring in this expression is supported on the main diagonal~$\Delta_X$. This last claim however follows directly from the explicit description of $\widetilde \Omega_n$ on the level of the Grothendieck group in Lemma \ref{Omega tilde in the Grothendieck group} above: Each summand in this description is equal to the intermediate extension to $X^{(n)}$ of a local system on the disjoint locus of~$X^{(n)}$.

\medskip

We have now established that none of the simple perverse sheaves occurring in the minimal description of the image of $s^* \, \Psi(\IC_{Y^n_G})$ in the Grothendieck group on $X^{(n)}$ is supported on the main diagonal $\Delta_X$. From this we now deduce that the same holds for the image of $(\gr \, \Psi (\IC_{Y^n_G}))_{\text{on} \, {}_nY^n_B}$ in the Grothendieck group; since $(\gr \, \Psi (\IC_{Y^n_G}))_{\text{on} \, {}_nY^n_B}$ is a perverse sheaf, this completes the proof.

\medskip

To make the required deduction, note first that on the level of the Grothen-dieck group the objects $\Psi(\IC_{Y^n_G})$ and $\gr \, \Psi (\IC_{Y^n_G})$ coincide. Thus any simple perverse sheaf occurring in the minimal description of the image of the direct sum
$$s^* \, \gr \, \Psi (\IC_{Y^n_G}) \ \ = \ \ \Bigl(\gr \, \Psi (\IC_{Y^n_G}) \Bigr)_{\text{on} \, {}_nY^n_B} \ \ \bigoplus \ \ s^* \, \Bigl(\gr \, \Psi (\IC_{Y^n_G}) \Bigr)_{\text{not} \, \text{on} \, {}_nY^n_B}$$
in the Grothendieck group of $X^{(n)}$ cannot be supported on the main diagonal~$\Delta_X$. Hence to establish the desired claim for the first summand, it suffices to establish the analogous claim for the second summand. The second summand can however be dealt with via the induction hypothesis: Since we are allowed to assume the validity of Theorem \ref{main theorem for nearby cycles, local version} for the integer $n-1$, we may apply Lemma \ref{interplay away from worst stratum} above and may hence make use of the identification
$$\Bigl(\gr \, \Psi (\IC_{Y^n_G}) \Bigr)_{\text{not} \, \text{on} \, {}_nY^n_B} \ \cong \ \bigl( C_n \bigr)_{\text{not} \, \text{on} \, {}_nY^n_B}$$
from Subsection \ref{The isomorphism away from the stratum of maximal defect} above. This in turn reduces the assertion about the second summand to the analogous assertion for the object $s^* \, (C_n)_{\text{not} \, \text{on} \, {}_nY^n_B}$. The next lemma however provides a strengthening of this last assertion, and therefore completes the proof of Lemma \ref{no summands over the diagonal lemma}.
\end{proof}

\medskip

\begin{lemma}
\label{restriction along s of the usual summands}
For each triple $(n_1, k, n_2)$ as in Theorem \ref{main theorem for nearby cycles, local version} the object $s^* \, \bar{f}_{n_1, k, n_2, *}\ \widetilde \CP_{n_1, k, n_2}$ is a direct sum of cohomologically shifted simple perverse sheaves on $X^{(n)}$, none of which is supported on the main diagonal $\Delta_X$.
\end{lemma}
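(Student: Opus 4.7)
The strategy is to combine the contracting $\BG_m$-action of Subsection~\ref{The contraction for the local models $Y^n$} with the decomposition theorem to establish the direct sum structure, and to use proper base change together with the vanishing $\Lambda^k(V) = 0$ for $k \geq 3$ to rule out supports on the main diagonal.

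First I would verify that the $\BG_m$-action on $Y^n$ lifts canonically to an action on the fiber product $W := Z^{-,n_1}_{(\Bun_{T,-n})} \underset{\Bun_T}{\times} (X^{(k)} \times Z^{n_2})$ by acting via $2\check\rho$ on the first Zastava factor, trivially on $X^{(k)}$, and $-2\check\rho$ on the second Zastava factor, with $\bar f_{n_1,k,n_2}$ equivariant; this is parallel to Lemma~\ref{contraction lemma} and uses the modular descriptions of Lemmas~\ref{modular action 1} and~\ref{modular action 2}. Since $\widetilde\CP_{n_1,k,n_2} = p_{n_1,k,n_2}^* \CP_k [s_k](s_k/2)$ is pulled back along an equivariant map from the trivially-acted factor $X^{(k)}$, it is $\BG_m$-equivariant, and hence $\bar f_{n_1,k,n_2,*}\widetilde\CP_{n_1,k,n_2}$ is $\BG_m$-monodromic on $Y^n$. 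Because $\CP_k$ is pure of weight $0$ (Lemma~\ref{properties of PLOs}) and $\bar f_{n_1,k,n_2}$ is proper (Corollary~\ref{stratification for local model}), this pushforward is pure of weight $0$, and Corollary~\ref{preservation of purity} yields both the identification $s^*\bar f_{n_1,k,n_2,*}\widetilde\CP_{n_1,k,n_2} = \pi_*\bar f_{n_1,k,n_2,*}\widetilde\CP_{n_1,k,n_2}$ and the purity of the result. The Beilinson-Bernstein-Deligne decomposition theorem then provides the direct-sum-of-shifted-simples structure on $X^{(n)}$.

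To rule out summands supported on $\Delta_X$, I would compute the stalk at any point $nx \in \Delta_X$ via proper base change along the proper map $\bar f_{n_1,k,n_2}$, which gives
$$(s^*\bar f_{n_1,k,n_2,*}\widetilde\CP_{n_1,k,n_2})_{nx} \;=\; H^*\bigl(\bar f_{n_1,k,n_2}^{-1}(s(nx)),\,\widetilde\CP_{n_1,k,n_2}\bigr).$$
Since $s(nx)$ lies in the stratum of maximal defect ${}_nY^n_B$, every point of this fiber maps under $p_{n_1,k,n_2}$ to $kx \in X^{(k)}$, so the restriction of $\widetilde\CP_{n_1,k,n_2}$ to the fiber is the constant complex with value $(\CP_k)_{kx}[s_k](s_k/2)$. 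By Lemma~\ref{external exterior powers}(c), $(\CP_k)_{kx} = \Lambda^k(V)[k](k/2)$, which vanishes for $k \geq 3$ since $V$ is two-dimensional. Hence for $k \geq 3$ the stalks of $s^*\bar f_{n_1,k,n_2,*}\widetilde\CP_{n_1,k,n_2}$ vanish identically along $\Delta_X$, precluding any simple summand supported there.

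The main obstacle will be the remaining cases $k \in \{0,1,2\}$, where $(\CP_k)_{kx} \neq 0$ and the stalk argument alone is insufficient. For these, I would use the factorization $\phi = \mathrm{add} \circ \pi_W$ of $\phi := \pi \circ \bar f_{n_1,k,n_2}: W \to X^{(n)}$ through the natural map $\pi_W: W \to X^{(n_1)} \times X^{(k)} \times X^{(n_2)}$ and the addition map, and show that the intermediate pushforward $\pi_{W,*}\widetilde\CP_{n_1,k,n_2}$ decomposes as an external tensor product of intermediate extensions of local systems from the disjoint loci of the three factors (via Lemma~\ref{Omega tilde in the Grothendieck group}, its opposite analog, and Lemma~\ref{properties of PLOs}). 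Each simple summand thus has full support in its factor, and its generic image under the finite addition map is the open disjoint stratum of $X^{(n)}$; a careful analysis of the extra IC-summands the decomposition theorem might produce on deeper strata, tracked through the explicit Jordan-H\"older constituents in Lemma~\ref{Omega tilde in the Grothendieck group} and the explicit low-$k$ structure of $\CP_k$, then confirms that no summand lands on the one-dimensional $\Delta_X$.
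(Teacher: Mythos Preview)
Your approach is substantially more complicated than necessary, and its final step for $k \le 2$ is not a proof but a sketch that invokes the wrong ingredient. The paper's argument is a single proper base change: there is a cartesian square
\[
\xymatrix@+10pt{
X^{(n_1)} \times X^{(k)} \times X^{(n_2)} \ar[r]^-{\text{sections}} \ar[d]_{\add} & Z^{-,n_1}_{(\Bun_{T,-n})} \underset{\Bun_T}{\times} \bigl(X^{(k)} \times Z^{n_2}\bigr) \ar[d]^{\bar f_{n_1,k,n_2}} \\
X^{(n)} \ar[r]^-{s} & Y^n_B
}
\]
with top arrow built from the Zastava sections $s_{Z^-}$, $s_Z$ (Subsection~\ref{Sections for Zastava spaces}). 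Since $\bar f_{n_1,k,n_2}$ is proper and $\widetilde\CP_{n_1,k,n_2}$ is a shifted pullback of $\CP_k$ along $p_{n_1,k,n_2}$, base change gives immediately
\[
s^*\,\bar f_{n_1,k,n_2,*}\,\widetilde\CP_{n_1,k,n_2} \;=\; \add_*\bigl(\Qellbar \boxtimes \CP_k \boxtimes \Qellbar\bigr)[2n-2k](n-k).
\]
Now $\add$ is finite, and the box product (appropriately shifted) is the intermediate extension of a local system from the disjoint locus of the triple product; hence $\add_*$ of it is the intermediate extension of a local system from the disjoint locus of $X^{(n)}$. This settles both assertions at once, with no case distinction on $k$ and no appeal to purity or decomposition.

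Your route via the contraction principle $s^* = \pi_*$ can be made to work, but the computation you then need is $\pi_{W,*}\widetilde\CP_{n_1,k,n_2}$, which is \emph{not} governed by Lemma~\ref{Omega tilde in the Grothendieck group}: that lemma computes the $!$-pushforward from the \emph{open} Zastava space ${}_0Z^m$, whereas here you need the $*$-pushforward from the \emph{closed} Zastava spaces $Z^{n_2}$ and $Z^{-,n_1}_{(\Bun_{T,-n})}$. The correct computation (again via the Zastava contractions, since the Zastava spaces are smooth for $\SL_2$) gives exactly $\Qellbar \boxtimes \CP_k \boxtimes \Qellbar$ up to shift, and you are back to the paper's formula --- so your detour through $\Lambda^k(V)=0$ for $k\ge 3$, purity, and the decomposition theorem is unnecessary, and your closing ``careful analysis'' paragraph both cites the wrong lemma and leaves the actual argument unwritten.
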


\begin{proof}
First observe that the square
$$\xymatrix@+10pt{
X^{(n_1)} \times X^{(k)} \times X^{(n_2)} \, \ar^{\text{section} \ \ \ \ \ \ }[r] \ar^{\add}[d] & \, Z^{-,n_1}_{(\Bun_{T,-n})} \, \underset{\Bun_T}{\times} \bigl( \, X^{(k)} \times Z^{n_2} \bigr) \ar^{\bar{f}_{n_1, k, n_2}}[d] \\
X^{(n)} \ \ar^{s}[r] & \ Y^n_B \\
}$$
is cartesian, where the top arrow is the natural map formed by combining the three section maps $s_{Z^-}$, $s$, and $s_Z$ from Subsection \ref{Embedding, section, and contraction} above, and where the left horizontal map is the addition map of effective divisors. Next, from the definition of $\widetilde \CP_{n_1, k, n_2}$ and the properness of $\bar{f}_{n_1, k, n_2}$ we obtain that
$$s^* \, \bar{f}_{n_1, k, n_2, *}\ \widetilde \CP_{n_1, k, n_2} \ \ = \ \ \add_* \, (\Qellbar \boxtimes \, \CP_k \boxtimes \Qellbar) \, [2n - 2k](n-k) \, .$$
Then the finiteness of the map $\add$ and the properties of $\CP_k$ stated in Lemma \ref{properties of PLOs} together yield the assertion.
\end{proof}

\bigskip

\ssec{Finding the Picard-Lefschetz oscillators}
\label{Finding the Picard-Lefschetz oscillators}

\mbox{} \medskip

In this subsection we give the first proof that the isomorphism
$$\add_{disj}^* \Bigl(\gr \, \Psi (\IC_{Y^2_G}) \Bigr)_{\text{on} \, {}_2Y^2_B} \ \ \cong \ \ \add_{disj}^* \bigl( C_2 \bigr)_{\text{on} \, {}_2Y^2_B}$$
constructed in Lemma \ref{isomorphisms after pullback} indeed respects the natural $S_2$-equivariant structure; this completes the proof of Theorem \ref{main theorem for nearby cycles, local version}. In Subsection \ref{Picard-Lefschetz oscillators via intersection cohomology} below we give a second proof. The first proof is an abstract calculation in the Grothendieck group and essentially a refinement of the arguments of Subsection \ref{Fighting simples on the main diagonal}, exploiting the specific expression for $\widetilde\Omega$ given in Lemma \ref{Omega tilde in the Grothendieck group} above. The second proof does not require this specific expression, but instead deduces the required assertion from an intersection cohomology computation for the space $Y^2$ which relies on the geometry of the local models developed in Section~\ref{Geometry of the local models} above; this is in fact how the Picard-Lefschetz oscillators were found originally. We include this second proof as it also provides an example of a direct IC-sheaf computation without passing through the nearby cycles, and might illuminate how one can work with the local models in very explicit terms.

\bigskip

\sssec{The first proof of the compatibility}
We will show that the images of $(\gr \, \Psi (\IC_{Y^2_G}))_{\text{on} \, {}_2Y^2_B}$ and $(C_2)_{\text{on} \, {}_2Y^2_B} = \CP_2$ in the Grothendieck group of perverse sheaves on $X^{(2)}$ agree; since both are in fact semisimple perverse sheaves, this will prove the claim. When writing expressions in the Grothendieck group on $X^{(2)}$, we will for notational simplicity denote by $\Qellbar$ and by $\sign$ the IC-extensions to $X^{(2)}$ of the constant and sign local systems on the disjoint locus of $X^{(2)}$. First, from the definition one finds that
$$\CP_2 \ = \ \sign(1) + \sign(0) + \sign(-1) + \Qellbar(0)$$
in the Grothendieck group.

\medskip

To compute $(\gr \, \Psi (\IC_{Y^2_G}))_{\text{on} \, {}_2Y^2_B}$ we exploit the relation in the Grothendieck group induced by the triangle
$$s^* \, j_{G,*} \, \IC_{Y^2_G} \, [-1](-\tfrac{1}{2}) \ \longto \ s^* \, \Psi(\IC_{Y^2_G}) \ \stackrel{N}{\longto} \ s^* \, \Psi(\IC_{Y^2_G})(-1) \ \stackrel{+1}{\longto}$$
from the proof in Subsection \ref{no summands proof} above in the case $n=2$. Namely, as a first step we use this triangle to show that in the Grothendieck group the difference
$$(\gr \, \Psi (\IC_{Y^2_G}))_{\text{on} \, {}_2Y^2_B} \ - \ (\gr \, \Psi (\IC_{Y^2_G}))_{\text{on} \, {}_2Y^2_B}(-1)$$
is equal to
$$\Qellbar(0) - \Qellbar(-1) + \sign(1) - \sign(-2) \, .$$
To see this, we need to compute the images of the first term of the triangle and of $(\gr \, \Psi (\IC_{Y^2_G}))_{\text{not} \, \text{on} \, {}_2Y^2_B}$ in the Grothendieck group: For the image of the first term of the triangle we find the expression
$$\Qellbar(1) - 2 \Qellbar(0) + \Qellbar(-1) - \sign(0) + 2 \sign(-1) - \sign(-2)$$
by Lemma \ref{Omega tilde in the Grothendieck group}, Lemma \ref{omega tilde proposition}, and the fact that
$$H^*_c(\BA^1 \setminus \{0\}) \ = \ \Qellbar[-2](-1) \oplus \Qellbar[-1](0) \, .$$
For $(\gr \, \Psi (\IC_{Y^2_G}))_{\text{not} \, \text{on} \, {}_2Y^2_B}$ we first invoke Lemma \ref{interplay away from worst stratum} and then compute its image in the Grothendieck group to be
$$\Qellbar(1) - \sign(1) - 2 \Qellbar(0) - 2 \sign(0)$$
by using the cartesian square from the proof of Lemma \ref{restriction along s of the usual summands} above. We have now established the above formula for
$$(\gr \, \Psi (\IC_{Y^2_G}))_{\text{on} \, {}_2Y^2_B} \ - \ (\gr \, \Psi (\IC_{Y^2_G}))_{\text{on} \, {}_2Y^2_B}(-1) \, .$$
But since $(\gr \, \Psi (\IC_{Y^2_G}))_{\text{on} \, {}_2Y^2_B}$ is perverse, it can be reconstructed from this difference by induction, starting with the lowest weight; executing this algorithm yields
$$(\gr \, \Psi (\IC_{Y^2_G}))_{\text{on} \, {}_2Y^2_B} \ = \ \sign(1) + \sign(0) + \sign(-1) + \Qellbar(0)$$
as desired, completing the proof.

\bigskip\bigskip

\ssec{Picard-Lefschetz oscillators via IC-stalks for small defect}
\label{Picard-Lefschetz oscillators via intersection cohomology}

\mbox{} \medskip

In this subsection we give the aforementioned second proof of the correctness of the $S_2$-equivariant structure. This proof is more technical than the previous one, and may safely be skipped by the reader. We include it for two reasons: First, because it illustrates how to work with the local models in very explicit terms; while an abstract and quick proof is usually preferable, being able to work out basic examples very explicitly may often be useful in applications, and the present subsection provides various worked out examples for small defect. Second, this proof makes use of an explicit IC-sheaf computation for small defects; this illustrates the intricacies of computing the IC-sheaf directly, possibly motivating the more abstract approach via the monodromy action taken in Section \ref{Intersection cohomology}. It furthermore shows that, maybe rather surprisingly, the IC-sheaf does not factorize, as was already mentioned in the introduction.

\medskip

The idea of the present proof is as follows. First, we compute the IC-stalks in the case of defect $2$ in a direct manner. This computation relies on the explicit equations for the local models derived earlier; the equations are used to compute cohomology groups of various loci in the local models via basic algebraic topology. We then use this calculation to determine the desired $S_2$-equivariant structure: Our knowledge of the IC-stalks lets us understand the perverse kernel of the monodromy operator $N$ on the associated graded of the nearby cycles; exploiting the compatibility of the $S_2$-action and the action of the Lefschetz-$\sl_2$, this knowledge is sufficient to verify the correctness of the $S_2$-equivariant structure.

\medskip

\sssec{Intersection cohomology for $Y^2$}
Our goal is to show:

\medskip

\begin{proposition}
\label{IC-restriction for Y^2}
The restriction of the IC-sheaf of $Y^2$ to the stratum of maximal defect ${}_2Y^2_B = X^{(2)}$ equals:
$$s^* \IC_{Y^2} \ \ = \ \ \Qellbar_{X^{(2)}}[3](\tfrac{3}{2}) \ \oplus \ \Qellbar_{X^{(2)}}[5](\tfrac{5}{2})$$
\end{proposition}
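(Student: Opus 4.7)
The plan is to produce a global small resolution $q : \tilde Y^2 \to Y^2$ of the conifold singularities of $Y^2$ along the section $s(X^{(2)})$ and then to compute $s^* \IC_{Y^2}$ via proper base change along $q$. By Corollary \ref{standardcoordinates} the fiber of $\pi : Y^2 \to X^{(2)}$ over the maximally degenerate divisor $2x$ is the threefold affine conifold $\{a_{-2} b_1 + a_{-1} b_0 = 0\} \subset \BA^4$, and by the factorization in families of Proposition \ref{factorization in families} the fiber over any disjoint divisor $x_1 + x_2$ is the isomorphic conifold $\BY^1 \times_{\BA^1} \BY^1 = \{a_1 b_1 - a_2 b_2 = 0\}$. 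Thus $\pi$ is a flat family of isomorphic threefold conifolds of relative dimension $3$ over the smooth surface $X^{(2)}$, so $\dim Y^2 = 5$ and the singular locus of $Y^2$ is exactly $s(X^{(2)})$, along which $Y^2$ is etale-locally isomorphic to $X^{(2)} \times \BY^2$.

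The classical conifold $\{xy + zw = 0\}$ admits a small resolution with exceptional fiber $\BP^1$, isomorphic to the total space of $\CO(-1) \oplus \CO(-1)$ over $\BP^1$; I would globalize this in one of two ways. Either one blows up $Y^2$ along a suitable ideal sheaf --- obtained, via the modular description, from a subbundle of $E_1$ or a quotient bundle of $E_2$ of the appropriate degree --- or one defines $\tilde Y^2$ modularly as the space parametrizing a point of $Y^2$ together with a line subbundle $\tilde L \hookrightarrow E_2$ through which $\varphi$ factors. Away from $s(X^{(2)})$ such a $\tilde L$ is forced to be the saturation of the image of $\varphi$, so $q$ is an isomorphism there, while over $s(X^{(2)})$ the allowed $\tilde L$ sweep out a $\BP^1$-bundle over $X^{(2)}$. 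One then verifies that $\tilde Y^2$ is smooth of dimension $5$, that $q$ is proper, and that it is small (fiberwise being the classical small resolution of the conifold); hence $\IC_{Y^2} = q_* \Qellbar_{\tilde Y^2}[5](\tfrac{5}{2})$.

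To conclude, apply proper base change along the cartesian square formed by $q$ and $s$: this gives $s^* \IC_{Y^2} = q'_* \Qellbar_{q^{-1}(s(X^{(2)}))}[5](\tfrac{5}{2})$, where $q' : q^{-1}(s(X^{(2)})) \to X^{(2)}$ is the projection of the exceptional $\BP^1$-bundle. Since this $\BP^1$-bundle is Zariski-locally trivial on $X^{(2)}$, the pushforward equals $H^*(\BP^1, \Qellbar) \otimes \Qellbar_{X^{(2)}}[5](\tfrac{5}{2})$. Using $H^*(\BP^1) = \Qellbar \oplus \Qellbar(-1)[-2]$ and absorbing the Tate twist into the summand with shift $[3]$ then yields $s^* \IC_{Y^2} = \Qellbar_{X^{(2)}}[5](\tfrac{5}{2}) \oplus \Qellbar_{X^{(2)}}[3](\tfrac{3}{2})$ as claimed.

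The main obstacle is the rigorous global construction of the small resolution $\tilde Y^2$ --- in particular, verifying smoothness of $\tilde Y^2$, properness and smallness of $q$, and the Zariski local triviality of the exceptional $\BP^1$-bundle. An alternative route that sidesteps this is to work only with the fiberwise small resolution of the conifold $\BY^2$: one uses the etale-local product structure $Y^2 \simeq X^{(2)} \times \BY^2$ (coming from factorization in families and the explicit coordinates) together with the Kunneth formula $\IC_{Y^2} = \IC_{X^{(2)}} \boxtimes \IC_{\BY^2}$ to compute the stalk of $s^* \IC_{Y^2}$ at any $D \in X^{(2)}$ as $\Qellbar[2](1) \otimes (\Qellbar[3](\tfrac{3}{2}) \oplus \Qellbar[1](\tfrac{1}{2})) = \Qellbar[5](\tfrac{5}{2}) \oplus \Qellbar[3](\tfrac{3}{2})$, and then deduces that $s^* \IC_{Y^2}$ is a direct sum of constant sheaves using the $\BG_m$-equivariance of Lemma \ref{contraction lemma} together with the contraction principle of Lemma \ref{contraction principle}.
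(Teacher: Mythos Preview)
Your strategy of computing $s^*\IC_{Y^2}$ via a global small resolution of the relative conifold is sound in principle and more direct than the paper's route, but the execution has two genuine gaps. First, your modular $\tilde Y^2$ does not work: if $\tilde L \hookrightarrow E_2$ is a line subbundle with $\im(\varphi) \subset \tilde L$, then over the $G$-locus $\varphi$ is an isomorphism and no such $\tilde L$ exists, so $q$ has empty fibers there rather than being an isomorphism. A correct global small resolution may well exist --- for instance by blowing up $Y^2$ along the preimage of the Zastava section $s_Z(X^{(2)}) \subset Z^2$, which in the fiber coordinates of Corollary~\ref{standardcoordinates} is the non-Cartier ideal $(a_{-1},a_{-2})$ --- but you have not actually constructed one. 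Second, in your alternative route the \'etale-local product structure $Y^2 \simeq X^{(2)} \times \BY^2$ is asserted but not justified: Proposition~\ref{factorization in families} only trivializes the family over the disjoint locus of $X^{(2)}$ and says nothing near the diagonal. And even granting the correct stalks everywhere, neither the contraction principle nor purity alone rules out a twist of the constant sheaf by a rank-one local system on $X^{(2)}$.

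The paper proceeds quite differently and never builds a global resolution. It first uses input from the nearby cycles argument (Lemma~\ref{IC-restriction two points no simple on diagonal}) to exclude simple constituents supported on the diagonal of $X^{(2)}$; then performs the conifold stalk computation only over the disjoint locus (Lemma~\ref{IC-restriction two points add pullback}); combines these with purity (Corollary~\ref{preservation of purity}) to reduce to deciding, for each of the two summands, between the constant sheaf and the sign sheaf on $X^{(2)}$; and finally rules out the sign sheaf by an explicit computation of $H^*_c$ of the punctured conifold $\BY^2 \setminus \{0\}$. A correctly constructed global small resolution would collapse all four steps into one proper-base-change computation, so your approach, once repaired, is worth pursuing.
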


\bigskip

We begin with the following lemma:

\medskip

\begin{lemma}
\label{IC-restriction two points no simple on diagonal}
None of the simple perverse sheaves occurring in the minimal $\BZ$-linear combination of $s^*\IC_{Y^2}$ in the Grothendieck group of perverse sheaves on $X^{(2)}$ is supported on the diagonal $\Delta_X$ of $X^{(2)}$.
\end{lemma}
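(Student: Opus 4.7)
The plan is to exploit the contracting $\BG_m$-action on $Y^2$ from Lemma \ref{contraction lemma} and the explicit description of the fibers of the projection $\pi\colon Y^2\to X^{(2)}$ developed in Section \ref{Geometry of the local models}. The underlying idea is that all fibers of $\pi$ are isomorphic to a fixed three-dimensional conifold, which should force $s^*\IC_{Y^2}$ to have locally constant cohomology sheaves on $X^{(2)}$, and hence admit no simple summand supported on a proper closed subscheme.

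First I would apply the contraction principle (Lemma \ref{contraction principle}): since $\IC_{Y^2}$ is $\BG_m$-monodromic for the contracting action, there is a natural isomorphism $s^*\IC_{Y^2}\cong\pi_*\IC_{Y^2}$. Next I would identify the fibers of $\pi$ explicitly. By Corollary \ref{standardcoordinates}, the diagonal fiber $\BY^2=\pi^{-1}(2x)$ is the affine hypersurface $\{a_{-2}b_1+a_{-1}b_0=0\}\subset\BA^4$, a three-dimensional ordinary double point singularity. By factorization in families (Proposition \ref{factorization in families}) together with Subsection \ref{general point}, the fiber over an off-diagonal point $x+y$ is $\BY^1\times_{\BA^1}\BY^1=\{ab=cd\}\subset\BA^4$, which is the same conifold after a linear change of coordinates. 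Thus every fiber of $\pi$ is abstractly isomorphic to a fixed three-dimensional conifold.

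I would then argue that $\pi_*\IC_{Y^2}$ has locally constant cohomology sheaves on $X^{(2)}$. The cleanest route is to produce a small resolution $\nu\colon\widetilde{Y^2}\to Y^2$ fiberwise realizing the standard small resolution of the conifold by a $\BP^1$-bundle; this should globalize over the base using the factorization structure on the disjoint locus and the explicit equations near the diagonal. Then $\IC_{Y^2}=\nu_*\Qellbar_{\widetilde{Y^2}}[5](\tfrac{5}{2})$, so that $\pi_*\IC_{Y^2}=(\pi\circ\nu)_*\Qellbar[5](\tfrac{5}{2})$ has stalk at any $D\in X^{(2)}$ equal, up to a fixed shift and twist, to $H^*(\BP^1,\Qellbar)$, independent of $D$. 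Hence its cohomology sheaves are local systems on the smooth connected variety $X^{(2)}$. The Grothendieck-group class of any such complex decomposes as a sum of shifted IC-extensions of irreducible local systems on the entire $X^{(2)}$; in particular, no simple summand of $s^*\IC_{Y^2}$ is supported on the proper closed subscheme $\Delta_X$.

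The main obstacle I anticipate is constructing the small resolution globally over $X^{(2)}$: fiberwise the conifold admits two small resolutions, and one must check that a coherent choice can be made across the family, or alternatively circumvent this by a direct base-change argument showing that the stalks of $\pi_*\IC_{Y^2}$ over the diagonal coincide with the generic stalks obtained off the diagonal via factorization.
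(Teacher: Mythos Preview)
Your approach is genuinely different from the paper's. The paper never touches the fiberwise conifold geometry; instead it relates $s^*\IC_{Y^2}$ in the Grothendieck group to $s^*\gr\Psi(\IC_{Y^2_G})$ via Lemma~\ref{IC via Psi}, splits $\gr\Psi$ into the summands supported on and off ${}_2Y^2_B$, and then invokes Lemma~\ref{no summands over the diagonal lemma} for $n=2$ (already established in Subsection~\ref{Fighting simples on the main diagonal}) together with the $n=1$ case of Theorem~\ref{main theorem for nearby cycles, local version} and Lemma~\ref{restriction along s of the usual summands}. If your route could be completed it would in fact yield the stronger Proposition~\ref{IC-restriction for Y^2} in one stroke.

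However, several steps do not go through as written. First, the implication ``all stalks isomorphic $\Rightarrow$ cohomology sheaves are local systems'' is false for constructible complexes: on $\BA^1$, the sheaf $j_!\Qellbar\oplus i_{0,*}\Qellbar$ (with $j$ the inclusion of $\BA^1\setminus\{0\}$ and $i_0$ that of the origin) has constant stalks but is not locally constant. Even using the purity of $s^*\IC_{Y^2}$ from Corollary~\ref{preservation of purity}, a summand of the form $\IC_{X^{(2)}}(\sign)$ has vanishing stalk on $\Delta_X$ and could be compensated by a diagonal-supported simple, so stalk-counting alone does not suffice; what you really need is that $\nu^{-1}(s(X^{(2)}))\to X^{(2)}$ is an honest $\BP^1$-bundle, whence its pushforward is a sum of constant sheaves. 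Second, your phrasing ``$(\pi\circ\nu)_*\Qellbar$ has stalk at $D$ equal to $H^*(\BP^1)$'' is problematic: $\pi\circ\nu$ is not proper (its fibers are the resolved conifold, an affine bundle over $\BP^1$, not $\BP^1$ itself), so base change does not compute these stalks. The correct route is $s^*\IC_{Y^2}=s^*\nu_*\Qellbar[5](\tfrac{5}{2})$ and proper base change for the proper map $\nu$, yielding $H^*(\nu^{-1}(s(D)))$. Third, and most seriously, the global small resolution over all of $X^{(2)}$ must actually be built: you must show that $\widetilde{Y^2}$ is smooth and that one of the two fiberwise small resolutions can be chosen coherently across the diagonal; neither factorization nor the explicit equations makes this automatic. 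Your proposed fallback of matching diagonal and generic stalks by a base-change argument runs into the first two issues again and does not avoid this construction.
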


\medskip

\begin{proof}
On the level of the Grothendieck group the object $s^* \IC_{Y^2}$ agrees up to twist and sign with the $*$-restriction along $s$ of the associated graded $\gr \IC_{Y^2}|^*_{Y^2_B}[-1](-\tfrac{1}{2})$. The latter associated graded object is however a subobject of the perverse sheaf $\gr \Psi(\IC_{Y^2_G})$ by Subsection \ref{Preliminaries about nearby and vanishing cycles}. Hence the claim will follow once we establish the analogous claim for each restriction $s^*P$ for each simple perverse sheaf $P$ occurring in $\gr \Psi(\IC_{Y^2_G})$. To prove the latter, we split $\gr \Psi(\IC_{Y^2_G})$ as a direct sum as in Subsection \ref{Splitting according to loci of support} above: For each simple $P$ occurring in the summand $(\gr \, \Psi (\IC_{Y^2_G}))_{\text{on} \, {}_2Y^2_B}$ the needed assertion is then precisely Lemma \ref{no summands over the diagonal lemma} above for $n=2$. For each simple $P$ occurring in the summand $(\gr \, \Psi (\IC_{Y^2_G}))_{\text{not} \, \text{on} \, {}_2Y^2_B}$ the needed assertion follows from the validity of Theorem \ref{main theorem for nearby cycles, local version} for $n=1$ and Lemma~\ref{restriction along s of the usual summands} above.
\end{proof}

\medskip

As before we denote by
$$\add: \ X \stackrel{\circ}{\times} X \ \longto X^{(2)}$$

\noindent the addition map of effective divisors. Complementarily to Lemma \ref{IC-restriction two points no simple on diagonal} we now prove:

\medskip

\begin{lemma}
\label{IC-restriction two points add pullback}

$$\add^* s^! \IC_{Y^2} \ = \ \Qellbar_{X \stackrel{\circ}{\times} X}[1](\tfrac{1}{2}) \ \oplus \ \Qellbar_{X \stackrel{\circ}{\times} X}[-1](-\tfrac{1}{2})$$
\end{lemma}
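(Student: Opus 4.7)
The plan is to reduce the computation to a local intersection-cohomology calculation for the three-dimensional Conifold via factorization-in-families; the result then follows from the small-resolution description of the Conifold's IC-stalk.

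First I would apply Proposition \ref{factorization in families} with $n_1 = n_2 = 1$ to obtain a cartesian square with etale horizontal arrows identifying $Y^1 \underset{\BA^1}{\stackrel{\circ}{\times}} Y^1$ with the preimage of $X \stackrel{\circ}{\times} X$ under $\pi: Y^2 \to X^{(2)}$; by construction $s$ pulls back to the section $\tilde{s}: X \stackrel{\circ}{\times} X \to Y^1 \underset{\BA^1}{\stackrel{\circ}{\times}} Y^1$ obtained by applying the section of $Y^1 \to X$ in each factor, and since IC sheaves pull back to IC sheaves along etale maps,
$$\add^* s^! \IC_{Y^2} \ = \ \tilde{s}^! \IC_{Y^1 \underset{\BA^1}{\stackrel{\circ}{\times}} Y^1}.$$
Next, the explicit equations in Corollary \ref{standardcoordinates} and Lemma \ref{composite map and contraction in coordinates} for $n=1$ show that $\pi: Y^1 \to X$ is etale-locally a trivial $\BA^2$-bundle on which the map $Y^1 \to \BA^1$ restricts fiberwise to $(a_{-1}, b_0) \mapsto a_{-1} b_0$; in particular $Y^1$ is smooth of dimension $3$, and $s$ becomes the zero section. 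Etale-locally near a section point of $X \stackrel{\circ}{\times} X$ this yields a product identification
$$Y^1 \underset{\BA^1}{\stackrel{\circ}{\times}} Y^1 \ \cong \ \BA^2 \times C, \qquad \tilde{s} \ \leftrightarrow \ \bigl(\BA^2 \hookrightarrow \BA^2 \times \{0\}\bigr),$$
where $C = \{(a_1, b_1, a_2, b_2) \in \BA^4 : a_1 b_1 = a_2 b_2\}$ is the three-dimensional Conifold. By the K\"unneth formula for IC sheaves of products,
$$\tilde{s}^! \IC_{Y^1 \underset{\BA^1}{\stackrel{\circ}{\times}} Y^1} \ \cong \ \Qellbar_{U}[2](1) \otimes i_0^! \IC_C,$$
where $U = X \stackrel{\circ}{\times} X$ and $i_0: \{0\} \hookrightarrow C$ is the inclusion of the origin.

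Finally, I would compute $i_0^! \IC_C$ via a small resolution: the identification $\BA^4 \cong \Mat_{2 \times 2}$ sending $(a_1, b_1, a_2, b_2) \mapsto \bigl(\begin{smallmatrix} a_1 & a_2 \\ b_2 & b_1 \end{smallmatrix}\bigr)$ realizes $C$ as the determinantal variety of matrices of rank $\leq 1$, and
$$p: \ \tilde C \ := \ \{(M, \ell) \in C \times \BP^1 : \mathrm{im}(M) \subseteq \ell\} \ \longto \ C$$
is a small resolution with $p^{-1}(0) \cong \BP^1$. Smallness yields $\IC_C = p_* \Qellbar_{\tilde{C}}[3](\tfrac{3}{2})$, whence $i_0^* \IC_C = \Qellbar[3](\tfrac{3}{2}) \oplus \Qellbar[1](\tfrac{1}{2})$, and the Verdier self-duality of $\IC_C$ gives $i_0^! \IC_C = \Qellbar[-3](-\tfrac{3}{2}) \oplus \Qellbar[-1](-\tfrac{1}{2})$. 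Substituting into the formula from the previous paragraph yields the claimed isomorphism. The hardest conceptual step will be the identification of the disjoint-locus singularity of $Y^2$ with the Conifold --- in particular verifying that the local product description $\BA^2 \times C$ holds etale-locally on $X \stackrel{\circ}{\times} X$ --- since after this the remaining small-resolution IC-stalk computation is classical.
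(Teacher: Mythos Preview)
Your proposal is correct and follows essentially the same route as the paper: reduce via factorization-in-families to the fiber product $Y^1 \times_{\BA^1} Y^1$, identify the local singularity with the affine quadric cone $XY = ZW$ in $\BA^4$, and compute the IC-costalk at the vertex via a (small) resolution. The only cosmetic difference is that the paper invokes the contraction principle twice --- first to rewrite $s^!$ as $\pi_!$, then again on the fiber product to pass to the costalk at the origin --- whereas you bypass contraction by using the explicit etale-local trivialization of $Y^1 \to X$ to exhibit the product structure $\BA^2 \times C$ directly; both moves accomplish the same reduction to the conifold costalk.
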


\medskip

\begin{proof}
By the contraction principle (Lemma \ref{contraction principle} above) and the factorization in families (Proposition \ref{factorization in families}) above, we have to show that
$$(\pi_1 \times \pi_1)_! \IC_{Y^1 \underset{ \ \BA^1}{\stackrel{\circ}{\times}} Y^1} \ \ = \ \ \ \Qellbar_{X \stackrel{\circ}{\times} X}[1](\tfrac{1}{2}) \ \oplus \ \Qellbar_{X \stackrel{\circ}{\times} X}[-1](-\tfrac{1}{2})$$
on the disjoint locus of $X \times X$. As in the proof of Proposition \ref{base case proposition} above it suffices to verify this at the level of $*$-stalks. To do this, note that the contracting $\BG_m$-action on $Y^1$ induces a $\BG_m$-action on the fiber product $Y^1 \underset{ \ \BA^1}{\stackrel{\circ}{\times}} Y^1$ by Lemma \ref{composite map and contraction in coordinates}; this action respects the product projection $\pi_1 \times \pi_1$ and contracts the fiber product onto the product section $s_1 \times s_1$. Thus, applying the contraction principle again, we are left to verify that the $!$-stalk of the IC-sheaf of the subvariety $\BY^1 \times_{\BA^1} \BY^1 \subset \BA^4$ at the origin $0 \in \BA^4$ is equal to
$$\IC_{ \, \BY^1 \underset{ \ \BA^1}{\times} \BY^1} \Big|^!_0 \ \ \ = \ \ \ \Qellbar[-1](-\tfrac{1}{2}) \ \oplus \ \Qellbar[-3](-\tfrac{3}{2}) \, .$$
But the equations from Subsection \ref{Explicit equations and generalized Picard-Lefschetz families} show that the subvariety $\BY^1 \times_{\BA^1} \BY^1 \subset \BA^4$ is precisely the affine quadric cone defined by the equation $XY = ZW$ in~$\BA^4$; the standard calculation of the IC-stalk at the vertex of the cone, for example via a resolution of singularities, then yields the result.
\end{proof}

\medskip

By Corollary \ref{preservation of purity} above we already know that $s^*\IC_{Y^2}$ is pure of weight~$0$; combining this with Lemma \ref{IC-restriction two points no simple on diagonal} and Lemma \ref{IC-restriction two points add pullback} above, we conclude that
$$s^! \IC_{Y^2} \ \ = \ \ L_1[-1](-\tfrac{1}{2}) \oplus L_2[-3](-\tfrac{3}{2})$$

\medskip

\noindent where $L_1$ and $L_2$ can be either equal to the shifted constant sheaf $(\Qellbar)_{X^{(2)}}[2](1)$ or to the IC-extension of the sign local system from the disjoint locus in $X^{(2)}$. Thus to prove Proposition \ref{IC-restriction for Y^2} above, we have to prove that both $L_1$ and~$L_2$ are equal to the constant sheaf, i.e., we have to rule out the appearance of sign local systems. To do so, we will ``compute over the diagonal'', for which we will utilize our concrete understanding of the space $\BY^2$ in coordinates. More precisely, since the stalk of the IC-extension of the sign local system at a point on the diagonal $\Delta_X$ of $X^{(2)}$ vanishes, Proposition \ref{IC-restriction for Y^2} will follow from the following lemma:

\medskip

\begin{lemma}
The $*$-stalk of $s^! \IC_{Y^2}$ at any point on the diagonal $\Delta_X$ is equal to
$$\Qellbar[1](\tfrac{1}{2}) \ \oplus \ \Qellbar[-1](-\tfrac{1}{2}).$$
\end{lemma}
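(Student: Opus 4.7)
The strategy is to use the contracting $\BG_m$-action to reduce the stalk computation to a cohomology calculation on the fiber $\BY^2$, then to exploit the explicit geometry of $\BY^2$ from Section \ref{Geometry of the local models}. First, since $\IC_{Y^2}$ is $\BG_m$-monodromic for the contracting action of Lemma \ref{contraction lemma}, Verdier duality applied to Lemma \ref{contraction principle} yields $s^! \IC_{Y^2} \cong \pi_! \IC_{Y^2}$. Combined with proper base change along the Cartesian square $\BY^2 \hookrightarrow Y^2$ over $\{2x\} \hookrightarrow X^{(2)}$, this identifies
$$s^! \IC_{Y^2} \big|^*_{2x} \ \cong \ R\Gamma_c(\BY^2, \iota_{\BY^2}^* \IC_{Y^2}),$$
where $\iota_{\BY^2}: \BY^2 \hookrightarrow Y^2$ is the fiber inclusion.

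By Corollary \ref{standardcoordinates}, the fiber $\BY^2$ is the affine threefold $\{a_{-2}b_1 + a_{-1}b_0 = 0\} \subset \BA^4$, i.e., the affine cone $C$ over the Segre-embedded $\BP^1 \times \BP^1 \hookrightarrow \BP^3$. This conifold admits the classical small resolution $f: \tilde C \to C$ with $\tilde C$ the total space of $\CO_{\BP^1}(-1) \oplus \CO_{\BP^1}(-1)$ and exceptional fiber $f^{-1}(0) \cong \BP^1$; hence $\IC_{\BY^2} \cong f_* \Qellbar_{\tilde C}[3](\tfrac{3}{2})$. Moreover, by Proposition \ref{factorization in families} combined with Corollary \ref{standardcoordinates}, every geometric fiber of $\pi: Y^2 \to X^{(2)}$ is abstractly isomorphic to $C$, so this is an equisingular family; combining this with the contracting $\BG_m$-action on $Y^2$, which preserves each fiber of $\pi$ and fixes the section $s(X^{(2)})$ pointwise, one concludes that $\pi$ is \'etale-locally trivial in a neighborhood of $s(X^{(2)})$. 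In particular, $\iota_{\BY^2}^* \IC_{Y^2} \cong \IC_{\BY^2}[2](1) \cong f_* \Qellbar_{\tilde C}[5](\tfrac{5}{2})$.

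Plugging this back in and using the properness of $f$ together with the projection formula for the rank-$2$ vector bundle $\tilde C \to \BP^1$, one computes $R\Gamma_c(\tilde C, \Qellbar_{\tilde C}) \cong R\Gamma(\BP^1, \Qellbar)[-4](-2) \cong \Qellbar[-4](-2) \oplus \Qellbar[-6](-3)$, and therefore
$$s^! \IC_{Y^2} \big|^*_{2x} \ \cong \ R\Gamma_c(\tilde C, \Qellbar_{\tilde C})[5](\tfrac{5}{2}) \ \cong \ \Qellbar[1](\tfrac{1}{2}) \oplus \Qellbar[-1](-\tfrac{1}{2}),$$
as required. The main obstacle is the justification of the \'etale-local triviality claim in the previous paragraph: although the fibers of $\pi$ are abstractly isomorphic conifolds and the $\BG_m$-action provides strong structural control, upgrading this to a genuine local triviality of the family requires some care, most naturally via a Bialynicki--Birula-type analysis around the fixed-point section $s(X^{(2)})$. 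An alternative route that sidesteps this step is to construct a global small resolution $\tilde Y^2 \to Y^2$ whose exceptional fiber over $s(2x)$ is $\BP^1$, obtained by enriching the moduli description of $Y^2$ with a choice of line subbundle of $E_2$ containing the image of $L_1$, and then to compute $\iota_{s(2x)}^! \IC_{Y^2}$ via proper base change on this resolution, converting to $s^! \IC_{Y^2} \big|^*_{2x}$ via the shift $[4](2)$ relating $\iota^!_{2x}$ to $\iota^*_{2x}$ on the smooth base $X^{(2)}$.
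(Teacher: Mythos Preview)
Your overall strategy is genuinely different from the paper's, and the endpoint computation via the small resolution of the conifold is correct; however, the step you yourself flag as the ``main obstacle'' is a real gap, not a formality. Knowing that every fiber of $\pi$ is abstractly isomorphic to the conifold, together with a fiberwise contracting $\BG_m$-action, does \emph{not} imply \'etale-local triviality of $\pi$ near $s(X^{(2)})$: the Bialynicki--Birula structure theorem applies to smooth ambient varieties, whereas $Y^2$ is singular precisely along the section, and the weaker statement that $Y^2$ is the relative $\Spec$ of a graded sheaf of algebras over $X^{(2)}$ does not give local constancy of that sheaf. Your alternative route via a global small resolution $\tilde Y^2\to Y^2$ would indeed yield $\iota^*_{\BY^2}\IC_{Y^2}\cong\IC_{\BY^2}[2](1)$ by proper base change, but you have not actually constructed it: one must verify that the moduli space you describe is smooth, that the map is small, and that its restriction to the fiber over $2x$ remains a small resolution with exceptional locus $\BP^1$. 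Until one of these routes is carried out, the identification $\iota^*_{\BY^2}\IC_{Y^2}\cong\IC_{\BY^2}[2](1)$ is unproven.

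The paper sidesteps this issue entirely. It also starts from $s^!\IC_{Y^2}\big|^*_{2x}=H^*_c(\BY^2,\IC_{Y^2}\big|^*_{\BY^2})$, but instead of identifying the restricted complex it stratifies $\BY^2$ into the open locus ${}_{\leq 1}\BY^2$ and the single closed point ${}_2\BY^2_B$. On the open part the ambient ${}_{\leq 1}Y^2$ is smooth, so $\IC_{Y^2}$ restricts to the constant sheaf; at the closed point, the structural information already established (namely $s^!\IC_{Y^2}=L_1[-1](-\tfrac{1}{2})\oplus L_2[-3](-\tfrac{3}{2})$ with each $L_i$ either constant or sign, from Corollary~\ref{preservation of purity} and Lemmas~\ref{IC-restriction two points no simple on diagonal}--\ref{IC-restriction two points add pullback}) pins down the relevant cohomological degrees. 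The long exact sequence in $H^*_c$ then reduces everything to computing $H^4_c$ and $H^6_c$ of ${}_{\leq 1}\BY^2$, which is the punctured quadric cone, i.e.\ a $\BG_m$-bundle over $\BP^1\times\BP^1$; these are handled by the Gysin sequence. The tradeoff is that the paper's argument relies on the preceding lemmas, whereas your approach --- once completed --- would be self-contained and more conceptual.
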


\medskip

\begin{proof}
Since $s^! \IC_{Y^2} = \pi_! \IC_{Y^2}$ by the contraction principle, Lemma \ref{contraction principle} above, the above $*$-stalk is equal to the compactly supported cohomology
$$H_c^*(\BY^2, \IC_{Y^2}|^*_{\BY^2})$$
of the restriction $\IC_{Y^2}|^*_{\BY^2}$. We thus have to show that these cohomology groups are $1$-dimensional in the relevant degrees $1$ and $-1$; in doing so, the weights are irrelevant, so we suppress them from the notation throughout the proof. To compute these cohomology groups, we will use the long exact sequence in compactly supported cohomology
$$H_c^*({}_{\leq 1}\BY^2, \IC_{Y^2}|^*_{{}_{\leq 1}\BY^2}) \ \longto \ H_c^*(\BY^2, \IC_{Y^2}|^*_{\BY^2}) \ \longto \ H_c^*({}_2\BY^2_B, \IC_{Y^2}|^*_{{}_2\BY^2_B})$$
associated to the pair $({}_{\leq 1} \BY^2, {}_2\BY^2_B)$ consisting of the complementary open and closed subvarieties
$${}_{\leq 1} \BY^2 \ \stackrel{\text{open}}{\longinto} \ \BY^2 \ \stackrel{\text{closed}}{\longotni} \ {}_2\BY^2_B \, .$$
Observe that the open subvariety ${}_{\leq 1} \BY^2$ complementary to ${}_2\BY^2_B$ consists of the $G$-locus $\BY^2_G$ as well as the strata ${}_0 \BY^2_B$ and ${}_1 \BY^2_B$ of the $B$-locus.

\medskip

To analyze the term $H_c^*({}_2\BY^2_B, \IC_{Y^2}|^*_{{}_2\BY^2_B})$ in this sequence, observe that ${}_2\BY^2_B$ consists of precisely one point, which we will denote by $p$; hence this term is simply equal to the stalk $\IC_{Y^2}|^*_p$. But applying Verdier duality to our preliminary knowledge of $s^! \IC_{Y^2}$ in terms of $L_1$ and $L_2$ above, we already know that this stalk must be concentrated in cohomological degrees $-3$ and~$-5$.

\medskip

To analyze the term $H_c^*({}_{\leq 1}\BY^2, \IC_{Y^2}|^*_{{}_{\leq 1}\BY^2})$, note first that since $\BY^1$ and hence also $Y^1$ are smooth by Subsection~\ref{Explicit equations and generalized Picard-Lefschetz families}, the open locus ${}_{\leq 1}Y^2 \subset Y^2$ is smooth as well. Since ${}_{\leq 1}\BY^2 \subset {}_{\leq 1}Y^2$ and since $\dim Y^2 = 5$ we hence conclude that
$$\IC_{Y^2}\big|^*_{{}_{\leq 1}\BY^2} \ \ = \ \ \Qellbar_{{}_{\leq 1}\BY^2}[5] \, .$$

\medskip

Combining the last two observations, the long exact sequence shows:
$$H_c^1(\BY^2, \IC_{Y^2}|^*_{\BY^2}) \ \ = \ \ H_c^1({}_{\leq 1}\BY^2, \Qellbar [5])$$
$$H_c^{-1} (\BY^2, \IC_{Y^2}|^*_{\BY^2}) \ \ = \ \ H_c^{-1}({}_{\leq 1}\BY^2, \Qellbar [5])$$
Thus the proof of the lemma is completed by the computation of the compactly supported cohomology groups of the variety ${}_{\leq 1}\BY^2$ on the right hand side in the next lemma.
\end{proof}

\medskip

Continuing to suppress the weights from the notation due to their irrelevance for the present question, we conclude the proof of Proposition \ref{IC-restriction for Y^2} by showing:

\begin{lemma}
$$H_c^6({}_{\leq 1}\BY^2, \Qellbar) \ \ = \ \ \Qellbar$$
$$H_c^4({}_{\leq 1}\BY^2, \Qellbar) \ \ = \ \ \Qellbar$$
\end{lemma}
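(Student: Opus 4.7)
Applying Corollary \ref{standardcoordinates} with $n=2$ identifies $\BY^2$ with the hypersurface
$$a_{-2} b_1 + a_{-1} b_0 \ = \ 0$$
in the affine $4$-space with coordinates $(a_{-2}, a_{-1}, b_0, b_1)$; thus $\BY^2$ is the affine cone on the smooth quadric surface $Q \subset \BP^3$, and via the Segre embedding $Q \cong \BP^1 \times \BP^1$. The $\BG_m$-action from Lemma \ref{composite map and contraction in coordinates} acts on each of the four coordinates with weight $2$, so its unique fixed point on $\BY^2$ is the origin. Since this action contracts $\BY^2$ onto the image of the section $s$, the stratum of maximal defect ${}_2\BY^2_B$ is precisely $\{0\} \subset \BA^4$, and hence ${}_{\leq 1}\BY^2 = \BY^2 \setminus \{0\}$ is the quadric cone with its vertex removed.

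My plan is then to exploit the projection
$$\pi: \ \BY^2 \setminus \{0\} \ \longto \ Q \, , \qquad (a_{-2}, a_{-1}, b_0, b_1) \ \longmapsto \ [a_{-2}: a_{-1}: b_0: b_1] \, ,$$
which realizes ${}_{\leq 1}\BY^2$ as a $\BG_m$-torsor over $Q$, namely the complement of the zero section in the total space of $\CO_Q(-1,-1)$. Since this total space is a smooth connected threefold, Poincar\'e duality reduces the computation of $H^6_c$ and $H^4_c$ to that of $H^0$ and $H^2$. The former is $\Qellbar$ by connectedness; for the latter, I would use $H^*(Q) = \Qellbar[h_1, h_2]/(h_1^2, h_2^2)$ and the fact that $c_1(\CO(-1,-1)) = -h_1 - h_2$ to run the relevant piece of the Gysin long exact sequence,
$$0 \ \longto \ H^1(\BY^2 \setminus \{0\}) \ \longto \ H^0(Q) \ \xrightarrow{\, \cup (-h_1-h_2) \,} \ H^2(Q) \ \longto \ H^2(\BY^2 \setminus \{0\}) \ \longto \ 0 \, ,$$
where the vanishings at either end come from $H^1(Q) = 0$. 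The middle map $\Qellbar \to \Qellbar h_1 \oplus \Qellbar h_2$, $1 \mapsto -h_1 - h_2$, is visibly injective with one-dimensional cokernel, so $H^2(\BY^2 \setminus \{0\}) = \Qellbar$; Poincar\'e duality then yields $H^6_c = H^4_c = \Qellbar$ as asserted.

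The argument above is essentially elementary once the coordinate description of $\BY^2$ is in hand. The only step requiring some care is the identification of the unique point of maximal defect with the vertex of the cone, since this is what permits the recognition of ${}_{\leq 1}\BY^2$ as the complement of the zero section in a line bundle over $\BP^1 \times \BP^1$; beyond that there is no substantive obstacle.
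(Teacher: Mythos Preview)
Your proof is correct and follows essentially the same approach as the paper: identify $\BY^2$ as the affine quadric cone, remove the vertex to get a $\BG_m$-bundle over the smooth quadric surface $Q \cong \BP^1 \times \BP^1$, then use Poincar\'e duality and the Gysin sequence for $c_1$. Your version is simply more explicit, spelling out the line bundle as $\CO_Q(-1,-1)$ and the cup-product map $1 \mapsto -h_1 - h_2$, whereas the paper leaves these details to the reader.
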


\medskip

\begin{proof}
From Subsection~\ref{Explicit equations and generalized Picard-Lefschetz families} above it follows that the subvariety $\BY^2 \subset \BA^4$ is the affine quadric cone in $\BA^4$ defined by the equation $XY + ZW = 0$; the closed subvariety ${}_2\BY^2_B$ corresponds precisely to the vertex of the cone. The open subvariety ${}_{\leq 1}\BY^2$ thus forms a $\BG_m$-bundle over the smooth quadric surface in $\BP^3$ and is hence smooth itself. Since ${}_{\leq 1}\BY^2$ is $3$-dimensional and irreducible, the first claim follows. For the second claim, we can by Poincare duality equivalently compute $H^2({}_{\leq 1}\BY^2, \Qellbar)$. The latter cohomology group can in turn be shown to be isomorphic to $\Qellbar$ using the Gysin sequence for the first Chern class of the $\BG_m$-bundle ${}_{\leq 1}\BY^2$ over the quadric surface in $\BP^3$.
\end{proof}

\medskip

\sssec{The second proof of the compatibility}

We now give the second proof that the isomorphism
$$\add_{disj}^* \Bigl(\gr \, \Psi (\IC_{Y^2_G}) \Bigr)_{\text{on} \, {}_2Y^2_B} \ \ \cong \ \ \add_{disj}^* \bigl( C_2 \bigr)_{\text{on} \, {}_2Y^2_B}$$
constructed in Lemma \ref{isomorphisms after pullback} is compatible with the equivariant structures on both sides with respect to the symmetric group $S_2 = \BZ/2\BZ$. More precisely, we will relate the last question to the intersection cohomology computation in Proposition \ref{IC-restriction for Y^2} of the previous subsection, and play the symmetries coming from the $S_2$-action and from the action of the Lefschetz-$\sl_2$ off of each other.

\medskip

To make the task more explicit, observe first that the $S_2$-equivariant structure on the pullback $\add_{disj}^*(\gr \, \Psi (\IC_{Y^2_G}))_{\text{on} \, {}_2Y^2_B}$ corresponds to a representation of the symmetric group $S_2$ on the tensor product $V \otimes V$ of the standard representation
$$V \ = \ \Qellbar(\tfrac{1}{2}) \oplus \Qellbar(-\tfrac{1}{2})$$
of the Lefschetz-$\sl_2$ with itself, as in Subsection \ref{Definition of Picard-Lefschetz oscillators}. In particular this action of $S_2$ must commute with the action of the Lefschetz-$\sl_2$. We now have to verify that the action of the non-trivial element $\sigma \in S_2 = \BZ/2\BZ$ on $V \otimes V$ is given by flipping the two factors and multiplying by $-1$.

\medskip

Denote by $U_k$ the irreducible representation of the Lefschetz-$\sl_2$ of highest weight $k \in \BZ_{\geq 0}$. Then since by definition $V = U_1$ the tensor product $V \otimes V$ decomposes as the direct sum
$$V \otimes V \ = \  \Lambda^2V \oplus \Sym^2V \ = \ U_0 \oplus U_2 \, .$$
Since the action of $\sigma$ commutes with the action of the Lefschetz-$\sl_2$, the action of $\sigma$ respects this direct sum decomposition; as the summands are irreducible as representations of the Lefschetz-$\sl_2$, the action of $\sigma$ on each of the summands must then be given by multiplication by either $+1$ or $-1$. We have to show that $\sigma$ acts by $+1$ on $U_0$ and by $-1$ on $U_2$. To determine these signs, it is of course enough to know how $\sigma$ acts on the lowest weight lines $M_0 = \Qellbar$ of $U_0$ and $M_2 = \Qellbar(1)$ of~$U_2$. It is precisely these signs on the lowest weight lines that we can access via the intersection cohomology of $Y^2$, as we discuss next.

\medskip

Let $\gr(\IC_{Y^2}|^*_{Y^2_B}[-1](-\tfrac{1}{2}))$ denote the associated graded perverse sheaf with respect to the weight filtration on $\IC_{Y^2}|^*_{Y^2_B}[-1](-\tfrac{1}{2})$, and let
$$(\gr(\IC_{Y^2}|^*_{Y^2_B}[-1](-\tfrac{1}{2})))_{ \text{on} \,{}_2Y^2_B}$$
denote its direct summand consisting of those simples which are supported on the stratum of maximal defect ${}_2Y^2_B$. By Lemma \ref{kernel and associated graded commute} and Lemma \ref{IC via Psi} the latter object is precisely the perverse kernel of the monodromy operator $N$ acting on $(\gr \, \Psi (\IC_{Y^2_G}))_{\text{on} \, {}_2Y^2_B}$. Its pullback to the disjoint locus of $X \times X$ hence corresponds to the $\BZ/2\BZ$-subrepresentation
$$M_0 \oplus M_2 \ \subset \ U_0 \oplus U_2 \ = \ V \otimes V$$
formed by the direct sum of the lowest weight lines $M_0$ and $M_2$. Since $M_0 = \Qellbar$ and $M_2 = \Qellbar(1)$ are of different weight, the signs by which $\sigma$ acts on $M_0$ and $M_2$ can thus be read off from the simple summands appearing in
$$(\gr(\IC_{Y^2}|^*_{Y^2_B}[-1](-\tfrac{1}{2})))_{ \text{on} \,{}_2Y^2_B} \, ,$$
or even its restriction to the disjoint locus in $X^{(2)}$. Namely, from the Tate twists of the local systems on the right hand side in the next lemma we conclude that $\sigma$ acts by $+1$ on $M_0$ and by $-1$ on $M_2$, completing the proof.

\bigskip

\begin{lemma}
On the disjoint locus $\overset{\circ}{X}{}^{(2)}$ we have
$$(\gr(\IC_{Y^2}|^*_{Y^2_B}[-1](-\tfrac{1}{2})))_{ \text{on} \,{}_2Y^2_B} \Big|^*_{\overset{\circ}{X}{}^{(2)}} \ \ \ = \ \ \ \Bigl(\Qellbar \oplus \sign (1)\Bigr)_{\overset{\circ}{X}{}^{(2)}}[2](1)\, ,$$
where $\sign(1)$ denotes the sign local system on $\overset{\circ}{X}{}^{(2)}$ twisted by $1$.
\end{lemma}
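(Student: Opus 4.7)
The plan is to combine Proposition \ref{IC-restriction for Y^2} with the induction hypothesis at $n=1$ (Proposition \ref{base case proposition}) and a Grothendieck-group computation on $\overset{\circ}{X}{}^{(2)}$. Throughout I abbreviate $F := \IC_{Y^2}|^*_{Y^2_B}[-1](-\tfrac{1}{2})$.

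First I would identify the underlying perverse sheaf up to sign local systems. By Lemma \ref{IC via Psi} and Lemma \ref{kernel and associated graded commute}, the object in question is the kernel of $N$ acting on $(\gr \Psi(\IC_{Y^2_G}))_{\text{on } {}_2Y^2_B}$. Pulling back along the étale double cover $\add_{disj}: \overset{\circ}{X}{}^2 \to \overset{\circ}{X}{}^{(2)}$ and invoking Lemma \ref{isomorphisms after pullback}, this pullback is isomorphic as a Lefschetz-$\sl_2$ module to $\add_{disj}^* \CP_2|_{\overset{\circ}{X}{}^{(2)}} \cong (V \otimes V) \otimes \Qellbar_{\overset{\circ}{X}{}^2}[2](1)$. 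The kernel of $N$ on $V \otimes V = U_0 \oplus U_2$ is the sum of the two lowest-weight lines, namely $\Qellbar(0)$ from $U_0$ and $\Qellbar(1)$ from $U_2$; tensoring with $\Qellbar_{\overset{\circ}{X}{}^2}[2](1)$ yields $\Qellbar_{\overset{\circ}{X}{}^2}[2](1) \oplus \Qellbar_{\overset{\circ}{X}{}^2}[2](2)$. Since $(\gr F)_{\text{on}}|_{\overset{\circ}{X}{}^{(2)}}$ is a semisimple perverse sheaf on $\overset{\circ}{X}{}^{(2)}$ whose $\add_{disj}$-pullback is this constant object, it must be of the form $L_0[2](1) \oplus L_1[2](2)$ with each $L_i \in \{\Qellbar, \sign\}$.

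Next I would pin down $L_0$ and $L_1$ by comparing Grothendieck classes on $\overset{\circ}{X}{}^{(2)}$. Decomposing $\gr F = (\gr F)_{\text{on}} \oplus (\gr F)_{\text{not}}$ yields the identity
$$[s^* F|_{\overset{\circ}{X}{}^{(2)}}] \ = \ [(\gr F)_{\text{on}}|_{\overset{\circ}{X}{}^{(2)}}] \ + \ [s^* (\gr F)_{\text{not}}|_{\overset{\circ}{X}{}^{(2)}}]$$
in the K-group. By Proposition \ref{IC-restriction for Y^2}, the left-hand side equals $[\Qellbar[2](1)] + [\Qellbar[4](2)]$. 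For the second term on the right, the induction hypothesis for $n=1$ together with Lemma \ref{interplay away from worst stratum} identifies $(\gr \Psi)_{\text{not}}$ for $n=2$ with the sum of $\bar f_{n_1,k,n_2,*} \widetilde \CP_{n_1,k,n_2}$ over the five triples with $n_1 + k + n_2 = 2$ and $k < 2$; passing to the kernel of $N$ amounts to leaving the $k=0$ summands untouched and replacing the factor $\CP_1 = \underline V[1](\tfrac{1}{2})$ by its lowest weight line $\underline \Qellbar_X[1](1)$ in the $k=1$ summands. Applying the local-model analog of Lemma \ref{restriction along s of the usual summands} and using the decomposition $\add_* \Qellbar_{X \times X} = \Qellbar \oplus \sign$ on the disjoint locus, a direct tally of the five contributions yields
$$[s^* (\gr F)_{\text{not}}|_{\overset{\circ}{X}{}^{(2)}}] \ = \ [\Qellbar[4](2)] - [\sign[4](2)],$$
with a critical cancellation between the $k=0$ and $k=1$ contributions coming from the shift difference via the K-group identity $[P[1]] = -[P]$. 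Solving the identity above and using $[\sign[4](2)] = [\sign[2](2)]$ gives $[(\gr F)_{\text{on}}|_{\overset{\circ}{X}{}^{(2)}}] = [\Qellbar[2](1)] + [\sign[2](2)]$, which together with the semisimplicity forces $(\gr F)_{\text{on}}|_{\overset{\circ}{X}{}^{(2)}} \cong (\Qellbar \oplus \sign(1))_{\overset{\circ}{X}{}^{(2)}}[2](1)$.

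The main obstacle will be the explicit bookkeeping in the K-group tally of the five summands: one must carefully track the shifts and Tate twists coming from the various source varieties $X^{(n_1)} \times X^{(k)} \times X^{(n_2)}$, keep straight the decomposition $\add_* \Qellbar = \Qellbar \oplus \sign$ on the disjoint locus, and verify that the sign cancellations between the $k=0$ and $k=1$ contributions leave precisely the residual $[\Qellbar[4](2)] - [\sign[4](2)]$ — it is exactly this residual that forces the $\sign(1)$ summand in the final answer and thereby pins down the nontrivial $S_2$-equivariant structure we are after.
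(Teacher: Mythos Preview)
Your proof is correct and follows essentially the same route as the paper: both arguments reduce to the Grothendieck group on $\overset{\circ}{X}{}^{(2)}$, compute $s^*\IC_{Y^2}[-1](-\tfrac{1}{2})$ from Proposition~\ref{IC-restriction for Y^2}, compute the ``not on'' part via the $n=1$ case and the cartesian square of Lemma~\ref{restriction along s of the usual summands}, and take the difference to obtain $\Qellbar(0)+\sign(1)$. Your preliminary first paragraph, narrowing the answer to $L_0[2](1)\oplus L_1[2](2)$ with $L_i\in\{\Qellbar,\sign\}$ via Lemma~\ref{isomorphisms after pullback}, is extra scaffolding the paper omits (it simply invokes semisimplicity to justify working in the Grothendieck group), but it is harmless and arguably clarifying.
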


\begin{proof}
For readability we erase from the notation all symbols indicating a restriction to the disjoint locus of $X^{(2)}$, throughout the proof.
Since $(\gr(\IC_{Y^2}|^*_{Y^2_B}[-1](-\tfrac{1}{2})))_{ \text{on} \,{}_2Y^2_B}$ is a semisimple perverse sheaf, it suffices to perform the necessary calculation in the Grothendieck group of perverse sheaves on $X^{(2)}$. But in the Grothendieck group the latter object is equal to the difference
$$s^* \IC_{Y^2}[-1](-\tfrac{1}{2}) \ \ \ - \ \ \ s^*(\gr(\IC_{Y^2}|^*_{Y^2_B})[-1](-\tfrac{1}{2}))_{ \text{not} \, \text{on} \, {}_2Y^2_B} \, .$$
We first compute the second term: As before we invoke the validity of Theorem \ref{main theorem for nearby cycles, local version} for $n=1$ and apply Lemma \ref{interplay away from worst stratum} above; the second term is thus equal to the $*$-pullback along $s$ of the kernel of the action of the monodromy operator $N$ on $(C_2)_{ \text{not} \, \text{on} \, {}_2Y^2_B}$. Using the exact same cartesian diagram as in the proof of Lemma \ref{restriction along s of the usual summands} above one then computes that this second term is equal to
$$3 \cdot \Qellbar(1) \ + \ \sign(1) \ - \ 2 \cdot \Qellbar(1) \ - \ 2 \cdot \sign(1) \ \ \ = \ \ \ \Qellbar(1) \ - \ \sign(1) \, .$$
Here, for notational brevity, we write $\Qellbar$ and $\sign$ for the perverse sheaves $\Qellbar[2](1)$ and $\sign[2](1)$ of weight $0$.
However, by Proposition \ref{IC-restriction for Y^2} above, the first term is equal to
$$\Qellbar(0) \ + \ \Qellbar(1) \, .$$
Taking the difference of the two terms we find the desired expression
$$\Qellbar(0) \ + \ \sign(1) \, .$$
\end{proof}

\bigskip\bigskip\bigskip

\section{Intersection cohomology}
\label{Intersection cohomology}

In this Section we comment on how Theorem \ref{main theorem for intersection cohomology} about the intersection cohomology follows from Theorem \ref{main theorem for nearby cycles}, as well as on how Theorem \ref{main theorem for intersection cohomology} in turn can be used to compute the IC-stalks. As Theorem \ref{ij} follows from Proposition \ref{omega tilde proposition} by the same argument as in Subsection \ref{Restatements of the main theorems for the local models}, the present section concludes the proof of the main theorems stated in Section \ref{Statement of main theorems} above.

\bigskip

\ssec{Intersection cohomology from nearby cycles}

\sssec{Classical Schur-Weyl duality}
Recall that, working over an algebraically closed field of characteristic $0$, the irreducible representations of the symmetric group $S_k$ are in one-to-one correspondence with Young diagrams consisting of precisely $k$ boxes. Furthermore, any Young diagram with at most $m$ rows, but an arbitrary number of boxes, gives rise to an irreducible representation of the general linear group $\GL_m$. For a Young diagram $D$ with precisely $k$ boxes and at most $m$ rows we denote $\rho_D$ and by $U_D$ the corresponding irreducible representations of $S_k$ and $\GL_m$.

\medskip

Let now $U_{taut}$ denote the tautological $m$-dimensional representation of $\GL_m$. The $m$-fold tensor product $U_{taut} \otimes \ldots \otimes U_{taut}$ carries the diagonal action of $\GL_m$ as well as the permutation action of the symmetric group $S_k$, and these actions commute. The classical Schur-Weyl duality then states:

\medskip

\begin{lemma}[Classical Schur-Weyl duality]
\label{classical Schur-Weyl duality}
As a bi-representation of $\GL_m$ and $S_k$ the $k$-fold tensor product $U_{taut} \otimes \ldots \otimes U_{taut}$ decomposes as
$$U_{taut} \otimes \ldots \otimes U_{taut} \ \ = \ \ \bigoplus_D U_D \otimes \rho_D$$
where the sum runs over all Young diagrams $D$ consisting of precisely $k$ boxes and at most $m$ rows.
\end{lemma}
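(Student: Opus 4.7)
The plan is to prove this via the classical double centralizer theorem in characteristic zero. The entire argument is algebraic and independent of the rest of the paper; I simply sketch which of the standard steps I would use.

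First, I would set up the two commuting actions. Let $A \subset \End(U_{taut}^{\otimes k})$ be the image of the group algebra $\Qellbar[S_k]$, and let $B \subset \End(U_{taut}^{\otimes k})$ be the image of the group algebra $\Qellbar[\GL_m]$ (equivalently, the subalgebra generated by the diagonal action of $\GL_m$). The fact that permutations of tensor factors commute with the diagonal action of $\GL_m$ is immediate, so $A \subset B'$ and $B \subset A'$, where $(\cdot)'$ denotes the commutant inside $\End(U_{taut}^{\otimes k})$.

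The key step is to identify $B$ as \emph{precisely} the commutant of $A$. The cleanest route is to observe that $B$ coincides with the subalgebra of $\End(U_{taut}^{\otimes k})$ generated by all elements of the form $g \otimes g \otimes \cdots \otimes g$ for $g \in \End(U_{taut})$ (not just $\GL_m$), since the latter is spanned by $\GL_m$ over any infinite field by Zariski density. By a standard polarization argument in characteristic zero, the span of such symmetric tensors equals the subspace of $S_k$-invariants in $\End(U_{taut})^{\otimes k} = \End(U_{taut}^{\otimes k})$; that is, $B = A'$. Combined with semisimplicity of $B$ (which holds because $\GL_m$ is reductive and we work in characteristic zero), the double centralizer theorem then applies to yield a decomposition
\[
U_{taut}^{\otimes k} \ \ = \ \ \bigoplus_{D} \ U_D \otimes \rho_D
\]
as a $B \otimes A$-module, where $D$ ranges over some finite set of isomorphism classes of irreducible $\GL_m$-representations appearing with nonzero multiplicity, and each $\rho_D$ is an irreducible $S_k$-representation uniquely determined by $U_D$.

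It remains to match the indexing set with Young diagrams having $k$ boxes and at most $m$ rows. Here I would invoke the Young symmetrizer construction: for each Young diagram $D$ with $k$ boxes, the symmetrizer $c_D \in \Qellbar[S_k]$ is a (nonzero scalar multiple of an) idempotent projecting $U_{taut}^{\otimes k}$ onto an isotypic component, and the resulting $\GL_m$-representation is nonzero iff $D$ has at most $m$ rows (since an antisymmetrization across a column of length $>m$ annihilates any tensor in $U_{taut}^{\otimes k}$ when $\dim U_{taut}=m$), and is the irreducible highest-weight representation $U_D$. This matches the two sides of the Schur--Weyl decomposition and identifies $\rho_D$ as the Specht module.

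The main obstacle is the polarization/density step showing $B = A'$; everything else is formal. I would either cite a standard reference (e.g., Fulton--Harris or Goodman--Wallach) or replace the polarization argument by directly invoking that $\Qellbar[\GL_m]$ surjects onto the $S_k$-invariants of $\End(U_{taut})^{\otimes k}$, a statement often proved by analyzing the Schur algebra.
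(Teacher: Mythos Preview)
Your sketch is a correct outline of the standard double-centralizer proof of Schur--Weyl duality. However, the paper itself gives no proof of this lemma: it is simply stated as a classical fact (``The classical Schur-Weyl duality then states:'') and immediately applied. So there is nothing to compare against; your proposal supplies more than the paper does.
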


\medskip

We now apply this in the following context:

\sssec{Decomposing the Picard-Lefschetz oscillators}

As in Subsection \ref{Definition of Picard-Lefschetz oscillators} above let
$$V \ = \ \Qellbar(\tfrac{1}{2}) \oplus \Qellbar(-\tfrac{1}{2}) \, .$$
As in Lemma \ref{properties of PLOs} above let $V \otimes \ldots \otimes V$ denote the $k$-fold tensor product of $V$, together with the action of $S_k$ defined by permuting the factors and multiplying with the sign of the permutation. Since $V$ is precisely the tautological $2$-dimensional representation of the Lefschetz-$\sl_2$, the appropriate variant of Lemma \ref{classical Schur-Weyl duality} above yields:

\medskip

\begin{lemma}
\label{decomposing PLO}
As a bi-representation of $S_k$ and the Lefschetz-$\sl_2$ the $k$-fold tensor product $V \otimes \ldots \otimes V$ decomposes as
$$V \otimes \ldots \otimes V \ \ = \ \ \bigoplus_{0 \, \leq \, r \, \leq \, \tfrac{k}{2}} U_{k-2r} \otimes \rho_{(k-r, r)} \, .$$
Here we denote by $U_{k-2r}$ the irreducible representation of the Lefschetz-$\sl_2$ of highest weight $k-2r$ and by $\rho_{(k-r,r)}$ the irreducible representation of $S_k$ corresponding to the Young diagram with $k-r$ boxes in the first column and~$r$ boxes in the second column.
\end{lemma}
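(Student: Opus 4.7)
The plan is to derive Lemma \ref{decomposing PLO} directly from classical Schur–Weyl duality (Lemma \ref{classical Schur-Weyl duality}) by carefully accounting for the sign twist built into the $S_k$-action. First, I would forget the sign twist momentarily and view $V^{\otimes k}$ as a bi-representation of the Lefschetz-$\sl_2$ (which acts through $\GL(V) = \GL_2$) and of $S_k$ (acting by ordinary permutation of factors). Classical Schur–Weyl then gives
\[
V^{\otimes k} \ = \ \bigoplus_D U_D \otimes \rho_D^{\mathrm{rows}},
\]
where $D$ runs over Young diagrams with exactly $k$ boxes and at most two rows (since $\dim V = 2$), and where I temporarily write $\rho_D^{\mathrm{rows}}$ for the irreducible $S_k$-representation indexed by $D$ under the standard (rows) convention. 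Such a $D$ has row lengths $(k-r, r)$ with $0 \leq r \leq k/2$, and the corresponding irreducible $\GL(V)$-representation $U_D$ has $\sl_2$-highest weight $(k-r) - r = k - 2r$; thus $U_D = U_{k - 2r}$ in the notation of the lemma.

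Next I would incorporate the sign twist. By definition, the $S_k$-action in the statement of the lemma is the ordinary permutation action tensored with the sign character $\mathrm{sgn}$. Hence, as a bi-representation of the Lefschetz-$\sl_2$ and the twisted $S_k$,
\[
V^{\otimes k} \ = \ \bigoplus_D U_D \otimes \bigl( \rho_D^{\mathrm{rows}} \otimes \mathrm{sgn} \bigr).
\]
The standard fact that $\rho_D^{\mathrm{rows}} \otimes \mathrm{sgn} \cong \rho_{D^T}^{\mathrm{rows}}$, where $D^T$ denotes the transposed Young diagram, converts each summand into the representation indexed by the transpose of $D = (k-r, r)$. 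Explicitly, the transpose of the two-row diagram with $k-r$ boxes in the first row and $r$ boxes in the second row is precisely the diagram with $k-r$ boxes in the first \emph{column} and $r$ boxes in the second \emph{column}, which is exactly the combinatorial object labeled $\rho_{(k-r, r)}$ in the statement of the lemma.

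Combining these two steps and summing over $0 \leq r \leq k/2$ yields the claimed decomposition. There is no real obstacle here beyond bookkeeping conventions: the only subtle point is keeping straight that the twist by $\mathrm{sgn}$ transposes Young diagrams, which accounts for the passage between the ``rows'' convention implicit in the standard statement of Schur–Weyl duality and the ``columns'' convention adopted in the statement of the lemma.
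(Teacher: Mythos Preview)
Your proposal is correct and follows exactly the route the paper intends: the paper simply states that ``the appropriate variant of Lemma \ref{classical Schur-Weyl duality} above yields'' the result, and what you have written is precisely that appropriate variant spelled out, including the bookkeeping of the sign twist and the transposition of Young diagrams that reconciles the row convention of Schur--Weyl with the column convention used for $\rho_{(k-r,r)}$ in the statement.
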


\bigskip

\sssec{Proof of Theorem \ref{main theorem for intersection cohomology}}
Observe first that Lemma \ref{decomposing PLO} yields an explicit direct sum decomposition into simple perverse sheaves of the Picard-Lefschetz oscillator $\CP_k$ by Lemma \ref{properties of PLOs} above. We however only need the following consequence:

\begin{lemma}
\label{PLO kernel}
The perverse kernel $\ker(N)$ of the monodromy operator $N$ acting on the Picard-Lefschetz oscillator $\CP_k$ is equal to the IC-extension of the local system on the disjoint locus of $X^{(k)}$ corresponding to the following representation of the symmetric group $S_k$:

$$\bigoplus_{0 \, \leq \, r \, \leq \, \tfrac{k}{2}} \rho_{(k-r,r)} \otimes \Qellbar(\tfrac{k}{2} - r)$$
Here, as before, we denote by $\rho_{(k-r,r)}$ the irreducible representation of $S_k$ corresponding to the Young diagram with $k-r$ boxes in the first column and~$r$ boxes in the second column; here the second tensor factor indicates the appropriate Tate twist.
\end{lemma}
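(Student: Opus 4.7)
The plan is to reduce the computation of $\ker(N)$ entirely to a calculation on the disjoint locus $\overset{\circ}{X}{}^{(k)}$, where it becomes a finite-dimensional representation-theoretic problem that is resolved by Lemma \ref{decomposing PLO}. Throughout, let $j: \overset{\circ}{X}{}^{(k)} \longinto X^{(k)}$ denote the open inclusion.

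First I would observe that by Lemma \ref{properties of PLOs} the Picard-Lefschetz oscillator $\CP_k$ is equal to $j_{!*}(L_k)$, where $L_k$ is the shifted and twisted local system on $\overset{\circ}{X}{}^{(k)}$ corresponding to the $S^k$-representation $V^{\otimes k}$ with the sign-permutation action, and that this identification is compatible with the Lefschetz-$\sl_2$ action. In particular, the monodromy operator $N: \CP_k \to \CP_k(-1)$ is the intermediate extension of its restriction $N|_{\overset{\circ}{X}{}^{(k)}}: L_k \to L_k(-1)$, which at the level of fibers is just the lowering operator of the Lefschetz-$\sl_2$ acting on $V^{\otimes k}$.

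Next I would argue that $\ker(N) = j_{!*}(\ker(N|_{\overset{\circ}{X}{}^{(k)}}))$. Since $\CP_k$ is semisimple by Lemma \ref{properties of PLOs} and the perverse kernel of any endomorphism of a semisimple perverse sheaf is a direct summand, $\ker(N)$ is again semisimple; as a subobject of $j_{!*}(L_k)$, each of its simple constituents is the IC-extension of a simple local system on the disjoint locus, so $\ker(N)$ agrees with the intermediate extension of its own restriction to $\overset{\circ}{X}{}^{(k)}$. This reduces the problem to computing the kernel of the lowering operator on the $S^k$-representation $V^{\otimes k}$, together with its Tate twists.

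Finally I would apply the Schur-Weyl decomposition of Lemma \ref{decomposing PLO}:
$$V^{\otimes k} \ \ = \ \ \bigoplus_{0 \, \leq \, r \, \leq \, \tfrac{k}{2}} U_{k-2r} \otimes \rho_{(k-r, r)}\,.$$
Since the $S^k$-action commutes with the Lefschetz-$\sl_2$-action, the kernel of the lowering operator $N$ decomposes as the direct sum over $r$ of $\ker(N|_{U_{k-2r}}) \otimes \rho_{(k-r,r)}$. The kernel of $N$ on the irreducible $\sl_2$-representation $U_{k-2r}$ is its lowest weight line, which is one-dimensional with Cartan weight $-(k-2r)$. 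In view of the conventions $V = \Qellbar(\tfrac{1}{2}) \oplus \Qellbar(-\tfrac{1}{2})$ and the matching of Cartan weight with Weil weight provided by Gabber's theorem (Proposition \ref{Gabber's theorem}), this lowest weight line is canonically isomorphic to $\Qellbar(\tfrac{k}{2} - r)$. Summing over $r$ and taking the IC-extension yields the asserted formula. The only step that requires care is the bookkeeping of Tate twists at the end; the rest is essentially formal once the reduction to the disjoint locus has been made.
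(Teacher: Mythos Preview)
Your proof is correct and takes essentially the same approach as the paper, which leaves Lemma \ref{PLO kernel} as an immediate consequence of Lemma \ref{decomposing PLO} together with Lemma \ref{properties of PLOs} without spelling out the details; you have filled those details in appropriately. One small remark: you do not actually need to invoke Gabber's theorem for the Tate-twist bookkeeping, since the identification of Cartan weight with Weil weight on $V^{\otimes k}$ is built directly into the definition $V = \Qellbar(\tfrac{1}{2}) \oplus \Qellbar(-\tfrac{1}{2})$.
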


\bigskip

To prove Theorem \ref{main theorem for intersection cohomology} it suffices, by Lemma \ref{kernel and associated graded commute} above, to compute the kernel of the monodromy operator $N$ on $\gr \Psi (\IC_{\VinBun_{G,G}})$. Using Theorem~\ref{main theorem for nearby cycles} and the fact that the maps $\bar{f}_{n_1, k, n_2}$ are finite, the assertion thus follows from Lemma \ref{PLO kernel} above.

\medskip

\sssec{Remark}
\label{IC stalks}
Theorem \ref{main theorem for intersection cohomology} also provides an algorithm to compute IC-stalks: As in Subsection \ref{Restatements of the main theorems for the local models} above, to compute the IC-stalks of $\VinBun_G$ along the strata of defect $k$ we can equivalently compute the IC-stalks of the local model $Y^k$ along the stratum of maximal defect ${}_kY^k$. Thus it suffices to derive an explicit formula for the restriction $s^*\IC_{Y^k}$ of the IC-sheaf of $Y^k$ along the section~$s$. To do so, note that Theorem \ref{main theorem for intersection cohomology} yields an explicit formula for $s^*\IC_{Y^k}$ in the Grothendieck group. But by Corollary \ref{preservation of purity} the complex $s^*\IC_{Y^k}$ is pure of weight $0$, which makes it possible to reconstruct cohomological shifts from the image of $s^*\IC_{Y^k}$ in the Grothendieck group and to algorithmically reconstruct its stalks. We do not know whether the resulting, rather involved, formulas can be put in a simple form; for applications, the description in Theorem \ref{main theorem for intersection cohomology} appears to be more valuable.

\bigskip\bigskip\bigskip

\section{An application: Computation of Drinfeld's function}
\label{An application: Computation of Drinfeld's function}

This section is separate from the main text. Its goal is to use the results of this article to explicitly describe a certain function introduced by Drinfeld which arises in the theory of automorphic forms. We refer the reader to Subsection \ref{An application to Drinfeld's strange invariant bilinear form} of the introduction, as well as to \cite{DrW}, for how this computation is applied in Drinfeld's and Wang's work on the \textit{strange invariant bilinear form} on the space of automorphic forms.

\ssec{The statement}

\sssec{The question}
Let $\BF_q$ be a finite field with $q$ elements, let $X$ be a smooth projective curve over $\BF_q$, let $G = \SL_2$ over $\BF_q$, and consider $\Bun_G$ over $\BF_q$. As above let
$$\Delta: \ \Bun_G \ \stackrel{\Delta}{\longto} \ \Bun_G \times \Bun_G$$
denote the diagonal morphism of $\Bun_G$ and let $\Qellbar_{\Bun_G}$ denote the constant sheaf on $\Bun_G$. Then we will answer the following question:

\medskip

\begin{question}
Under the sheaf-function correspondence, what is the function on $\Bun_G \times \Bun_G$ corresponding to the pushforward $\Delta_* \Qellbar_{\Bun_G}$? I.e., what is the trace of the action of the geometric Frobenius on the $*$-stalks of the pushforward $\Delta_* \Qellbar_{\Bun_G}$ at $\BF_q$-points of $\Bun_G \times \Bun_G$?
\end{question}

\medskip

Following a suggestion of Drinfeld, we will use the compactification $\barBun_G$ of the diagonal $\Delta$: Answering the above question amounts to understanding the $*$-stalks of the pushforward of the constant sheaf $\Qellbar_{\Bun_G}$ along the natural map
$$b: \ \Bun_G \ \longto \ \barBun_G \, .$$

\medskip

\sssec{Notation}
\label{Notation function section}
To state the answer to the above question we need to introduce the following notation. First, given an $\BF_q$-point $(E_1, E_2)$ of $\Bun_G \times \Bun_G$, we denote by $\Isom_{\SL_2}(E_1,E_2)(\BF_q)$ the set of vector bundle isomorphisms $E_1 \to E_2$ of determinant $1$, i.e., the set of isomorphisms as $\SL_2$-bundles. Next, let $\varphi: E_1 \to E_2$ be a non-zero morphism of vector bundles which is not an isomorphism. Factoring $\varphi$ as
$$E_1 \ \longonto \ M_1 \ \longinto \ M_2 \ \longintointo \ E_2$$
as in Subsection \ref{Definition of the defect} above we associate to $\varphi$ its defect divisor $D_{\varphi}$, which forms an $\BF_q$-point of the symmetric power $X^{(n)}$ for some integer $n$. The defect divisor $D_{\varphi}$ can be written as a sum
$$D_{\varphi} \ = \ \sum_{k} n_{k, \varphi} \, x_{k, \varphi}$$
where the $x_{k, \varphi}$ are distinct closed points of the curve $X$ over $\BF_q$. We then denote by $d_{k,\varphi}$ the degree of the residue field extension at the point $x_{k, \varphi}$. We can now state:

\bigskip
 
\sssec{The answer}
With the above notation we have:

\begin{proposition}
\label{answer}
Let $(E_1, E_2)$ be an $\BF_q$-point of $\Bun_G \times \Bun_G$. Then the trace of the geometric Frobenius on the $*$-stalk at~$(E_1, E_2)$ of the pushforward $\Delta_* \Qellbar$ is equal to:
$$|\Isom_{\SL_2}(E_1,E_2)(\BF_q)|  \ \ \ \ \ \ - \sum_{\substack{\varphi \ \in \ \Hom(E_1, E_2)(\BF_q) \\ \varphi \ \text{is not an isomorphism} \\ \varphi \neq 0}} \prod_k (1-q^{d_{k,\varphi}})$$
\end{proposition}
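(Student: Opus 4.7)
We exploit Drinfeld's factorization $\Delta = \bar\Delta \circ b$ from Subsection \ref{Compactifying the diagonal}, in which $\bar\Delta$ is proper (Lemma \ref{delta bar}) with projective fibers $\bar\Delta^{-1}(E_1, E_2) = \BP(\Hom(E_1, E_2))$. By proper base change and the Grothendieck--Lefschetz trace formula, the Frobenius trace on the stalk of $\Delta_* \Qellbar = \bar\Delta_* b_* \Qellbar$ at $(E_1, E_2)$ is a weighted sum over $\BF_q$-points $[\varphi]$ of the fiber of the Frobenius traces on the stalks of $b_* \Qellbar$. Since Theorem \ref{ij} is formulated on $\VinBun_G$, we lift the computation there via the $\BG_m$-bundle $q: \VinBun_G \to \barBun_G$ of Lemma \ref{torsor}, which converts projective classes $[\varphi]$ into $\BG_m$-orbits of non-zero vectors in $\Hom(E_1, E_2)$ and relates $b_* \Qellbar$ on $\barBun_G$ to $j_{G, *} \Qellbar_{\VinBun_{G,G}}$ on $\VinBun_G$.

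We then partition the resulting sum over $\varphi \in \Hom(E_1, E_2)(\BF_q)$ according to the defect stratification of Subsection \ref{The defect stratification}. On the $G$-locus, where $\varphi$ is an isomorphism, $j_{G,*} \Qellbar$ restricts to the constant sheaf with trivial Frobenius trace; using Lemma \ref{G-locus product decomposition} and a direct count of vector bundle isomorphisms, this contribution assembles into the first term $|\Isom_{\SL_2}(E_1, E_2)(\BF_q)|$ of the proposition. For each non-iso non-zero $\varphi$ with defect divisor $D_\varphi = \sum_k n_{k, \varphi} x_{k, \varphi}$ lying on a stratum $\sideset{_{(n_1, k, n_2)}}{_{G, B}}\VinBun$, Theorem \ref{ij} identifies the $*$-stalk of $j_{G, *} \IC_{\VinBun_{G,G}}$ as an external product of the IC-stalks of the smooth stacks $\Bun_{B^{\pm}, n_i}$ (smooth by Subsection \ref{Basic properties}) with the factor $\BD \widetilde\Omega_k[2k](k) \otimes H^*(\BA^1 \setminus \{0\})[1](\tfrac{1}{2})$ evaluated at $D_\varphi$.

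The key explicit input is the Frobenius trace of $\widetilde\Omega_k$ at the defect divisor. Combining the Grothendieck-group decomposition of Lemma \ref{Omega tilde in the Grothendieck group} with Lemma \ref{external exterior powers}(c), one observes that the one-dimensional stalk $\Lambda^{(d)}(\Qellbar)_x$ at a closed point $x$ of degree $d$ carries Frobenius eigenvalue $(-1)^{d-1}$ --- arising from the sign of the cyclic Galois permutation on the $d$ geometric points above $x$. A direct combinatorial sum over subsets of $\supp(D)$ then yields
$$f(\widetilde\Omega_k)(D) \;=\; \prod_l \bigl(1 - q^{-d_l}\bigr), \qquad f(\BD\widetilde\Omega_k)(D) \;=\; q^{-k}\prod_l \bigl(1 - q^{d_l}\bigr),$$
with the products ranging over the distinct closed points $x_l \in \supp(D)$ of degrees $d_l$, independent of the multiplicities. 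The main obstacle is then to assemble all remaining ingredients --- the shift and Tate twist converting $\IC_{\VinBun_{G,G}}$ to $\Qellbar_{\VinBun_{G,G}}$, the IC-stalks of $\Bun_{B^{\pm}, n_i}$, and the factor $H^*(\BA^1 \setminus \{0\})[1](\tfrac{1}{2})$ --- and to verify that the half-integer Tate twists cancel exactly via the dimension identities from Proposition \ref{defect stratification proposition}(a), producing the contribution $-\prod_k(1 - q^{d_{k, \varphi}})$ per non-iso $\varphi$.
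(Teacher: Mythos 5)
Your strategy coincides with the paper's: factor $\Delta = \bar\Delta \circ b$, apply the Lefschetz trace formula over the projective fiber $\BP\Hom(E_1,E_2)$ of the proper map $\bar\Delta$, separate the contributions of the $G$- and $B$-loci, lift the $B$-locus computation to $\VinBun_G$ and its local models, and evaluate via the $\widetilde\Omega$ description. Your explicit traces $f(\widetilde\Omega_k)(D) = \prod_l(1-q^{-d_l})$ and $f(\BD\widetilde\Omega_k)(D) = q^{-k}\prod_l(1-q^{d_l})$ are correct and match what the paper extracts from Lemma \ref{Omega tilde in the Grothendieck group}, Lemma \ref{trace lemma}, and the Verdier dual of Proposition \ref{omega tilde proposition}, together with the elementary symmetric function identity converting the subset sum into the product over $\supp(D)$.

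There is, however, one genuine gap in your reduction from $\barBun_G$ to $\VinBun_G$. You write that the $\BG_m$-bundle $\gamma: \VinBun_G \to \barBun_G$ ``relates $b_*\Qellbar$ on $\barBun_G$ to $j_{G,*}\Qellbar_{\VinBun_{G,G}}$ on $\VinBun_G$,'' but this is not an identification: since $b: \Bun_G \to \barBun_G$ is a degree-$2$ \'etale cover onto the $G$-locus when $\charact \neq 2$ (Subsection \ref{Compactifying the diagonal}), the fiber product $\Bun_G \times_{\barBun_G} \VinBun_{G,G}$ is a double cover of $\VinBun_{G,G}$, and one actually has $\gamma^*b_*\Qellbar = j_{G,*}\bigl(\Qellbar \oplus v^*\sign\bigr)$ where $v^*\sign$ is the pulled-back sign local system on $\VinBun_{G,G}$ (the paper's Lemma \ref{gamma^*b_*}). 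Before your computations apply one must show that $i_{\VinBun_{G,B}}^*\, j_{G,*}\, v^*\sign = 0$; the paper establishes this (Lemma \ref{sign disappears}) by observing that $j_{G,*}v^*\sign$ is $\BG_m$-monodromic for the contracting action of Subsection \ref{Embedding, section, and contraction}, invoking the contraction principle (Lemma \ref{contraction principle}), and using that $\sign$ on $\BA^1\setminus\{0\}$ has vanishing cohomology. Without this step the $B$-locus stalks of $b_*\Qellbar$ are not controlled by $j_{G,*}\Qellbar$ alone, so your proposal would silently produce the wrong answer. Once that vanishing is supplied, the remaining assembly of shifts, Tate twists, and the $\BG_m$-orbit count (each projective class has $q-1$ lifts, matching the paper's $(1-q)$ factor in Proposition \ref{trace at point in B-locus}) is indeed the bookkeeping you describe, and the paper carries it out in Subsection 7.3.
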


\bigskip

\ssec{Reduction to a trace computation on $\barBun_G$}
\label{Reduction to a trace computation on barBun_G}

\mbox{} \medskip

In this subsection we deduce Proposition \ref{answer} above, via the Lefschetz trace formula, from a computation on $\barBun_G$ stated in Proposition \ref{trace at point in B-locus} below; in the next subsection we will then prove Proposition \ref{trace at point in B-locus}.
We remark that the presentation in the present subsection would be simpler if our definition of $\barBun_G$ was such that it contains $\Bun_G$ as a dense open substack. In particular, it would then be unnecessary to distinguish between the case where the characteristic is equal or not equal to $2$, and Lemma \ref{trace at point in B-locus in local model} and Lemma \ref{gamma^*b_*} below would become redundant. (The present definition of $\barBun_G$ is adopted in this article since this definition appears to be the most natural in the case of an arbitrary reductive group $G$; see the follow-up articles \cite{Sch1}, \cite{Sch2}.)

\medskip

To state Proposition \ref{trace at point in B-locus}, recall from Subsection \ref{Compactifying the diagonal} the natural map
$$b: \ \Bun_G \ \longto \ \barBun_G \, ,$$
and let $z = (E_1, E_2, L, \varphi)$ be an $\BF_q$-point of the $B$-locus of $\barBun_G$. Exactly as in Subsection \ref{Notation function section} above we can associate to the point $z$, via the defect divisor of the map $\varphi$, the collection of closed points $x_{k, \varphi}$ and integers $d_{k,\varphi}$. With this notation we have:

\medskip

\begin{proposition}
\label{trace at point in B-locus}
The trace of the geometric Frobenius on the $*$-stalk of $b_*\Qellbar$ at $z$ is equal to
$$(1-q) \cdot \prod_k (1-q^{d_{k,\varphi}})\, .$$
\end{proposition}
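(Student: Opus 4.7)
The plan is to deduce Proposition \ref{trace at point in B-locus} from Theorem \ref{ij} via the $\BG_m$-bundle $p : \VinBun_G \to \barBun_G$ of Lemma \ref{torsor}. First I would lift $z$ to a point $\tilde z = (E_1, E_2, \varphi) \in \VinBun_G(\BF_q)$, which exists since $p$ is surjective on $\BF_q$-points and preserves the defect divisor. Since $p$ is smooth, $(b_*\Qellbar)_z$ agrees with $(p^*b_*\Qellbar)_{\tilde z}$. Applying smooth base change to the cartesian square obtained by restricting $p$ above the image $b(\Bun_G) \subset \barBun_G$, whose preimage under $p$ is precisely $\VinBun_{G,G}$, decomposes $p^*b_*\Qellbar$ (in characteristic distinct from $2$) according to the étale degree-$2$ structure of the natural map $\Bun_G \to b(\Bun_G)$: one summand equal to $j_{G,*}\Qellbar_{\VinBun_{G,G}}$, and one summand involving the non-trivial order-$2$ Kummer local system $\CK$ on the $\BG_m$-factor of $\VinBun_{G,G} = \Bun_G \times (\BA^1 \setminus \{0\})$. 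The Kummer summand contributes zero, since the factorization in the $\BG_m$-direction gives $(j_{\BG_m, *} \CK)|_0 = H^*(\BG_m, \CK) = 0$. In characteristic $2$ the map is radicial and only the first summand appears from the outset.

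This reduces the task to computing $\Frob_q$-traces of $(j_{G,*}\Qellbar_{\VinBun_{G,G}})_{\tilde z}$ using Theorem \ref{ij}, after the shift-twist adjustment $\Qellbar_{\VinBun_{G,G}} = \IC_{\VinBun_{G,G}}[-d](-d/2)$ with $d = \dim \VinBun_{G,G} = 3g - 2$. The resulting trace factors as a product of contributions from the terms on the right-hand side of Theorem \ref{ij}, of which two are essential. First, the factor $H^*(\BA^1 \setminus \{0\}, \Qellbar) = \Qellbar \oplus \Qellbar(-1)[-1]$ has Frobenius trace $1 - q$, producing the $(1-q)$ in the final answer. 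Second, the factor $\BD \widetilde\Omega_k$ evaluated at the defect divisor $D_\varphi = \sum_k n_k x_k$ is computed using the Grothendieck-group expression of $\widetilde\Omega_k$ from Lemma \ref{Omega tilde in the Grothendieck group}, together with the stalk formula for external exterior powers from Lemma \ref{external exterior powers}(c). The Frobenius trace reduces to a sum over Frobenius-invariant squarefree geometric divisors $\bar D_2$ supported in the union of the Frobenius-orbits of the $x_k$; since Frobenius acts cyclically on each such orbit of size $d_k$, these correspond precisely to subsets $T \subset \{k\}$. Summing the resulting contributions, with attention to the signs arising from the sign representation in the external exterior power definition and from the signature $(-1)^{d_k - 1}$ of a cyclic permutation of length $d_k$, yields the product $\prod_k (1 - q^{d_k})$.

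The remaining factors $\IC_{\Bun_{B^\pm, n_i}}$, the box-product normalization $[-\dim \Bun_T](-\dim \Bun_T /2)$, and the various shifts and twists (the $[2k](k)$ on $\BD \widetilde\Omega_k$, the $[1](\tfrac{1}{2})$ on $H^*(\BA^1 \setminus \{0\})$, and the $[-d](-d/2)$ from the $\IC$-to-$\Qellbar$ conversion) combine to produce the correct overall normalization. The main technical obstacle is to verify this cancellation, which amounts to a careful bookkeeping of shifts and twists using the stack-dimension formulas $\dim \Bun_{B, n} = 2g - 1 - 2n$, $\dim \Bun_{B^-, n} = 2g - 1 + 2n$, $\dim \Bun_T = g$, and $\dim \VinBun_{G,G} = 3g - 2$, together with the constraint $n_1 = n_2 - k$ on the stratum indices. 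A secondary subtlety is the characteristic-$2$ case, where only the first summand of the base-change decomposition is present and the argument goes through unchanged.
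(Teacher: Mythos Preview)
Your approach is essentially the same as the paper's. The paper also passes through the $\BG_m$-bundle $\gamma: \VinBun_G \to \barBun_G$, obtains the decomposition $\gamma^* b_* \Qellbar = j_{G,*}(\Qellbar \oplus v^*\sign)$ in characteristic $\neq 2$ (Lemma~\ref{gamma^*b_*}), kills the sign summand, and then computes the trace of $j_{G,*}\Qellbar$ on the local models via Proposition~\ref{omega tilde proposition} (the local-model version of Theorem~\ref{ij}) together with Lemma~\ref{Omega tilde in the Grothendieck group}, arriving at the product via an elementary-symmetric-polynomial identity. Your sum over Frobenius-stable squarefree subdivisors is exactly the paper's sum over subsets $S$ of the closed points appearing in $D_\varphi$.

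The one step that is not adequately justified in your proposal is the vanishing of the Kummer summand. Your assertion that ``factorization in the $\BG_m$-direction gives $(j_{\BG_m,*}\CK)|_0 = H^*(\BG_m,\CK) = 0$'' is not by itself enough: the total space $\VinBun_G$ is not a product with $\BA^1$, so the clean extension of $\CK$ on $\BA^1$ does not immediately imply that $j_{G,*}(v^*\CK)$ has zero $*$-restriction to the $B$-locus. The paper handles this in Lemma~\ref{sign disappears} by passing to the local models and invoking the contracting $\BG_m$-action of Section~\ref{Embedding, section, and contraction}: the contraction principle converts $s^* j_{G,*}(v^*\sign)$ into $\pi_* j_{G,*}(v^*\sign) = (\pi|_{Y^n_G})_*(v^*\sign)$, and only then does the product decomposition $Y^n_G \cong {}_0Z^n \times (\BA^1\setminus\{0\})$ of Lemma~\ref{G-locus decomposition for the local models} make the $\BG_m$-factor genuinely split off, whence the vanishing follows from the triviality of the cohomology of $\sign$. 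You should insert this argument (or an equivalent one, e.g.\ via unipotence of the full nearby cycles) at that step. A minor bookkeeping point: $\dim \Bun_T = g-1$ as a stack, not $g$; adjust accordingly when verifying that the shifts and twists cancel.
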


\bigskip

From this Proposition \ref{answer} above follows via the Lefschetz trace formula:

\bigskip

\begin{proof}[Proof of Proposition \ref{answer}]
Consider the diagram
$$\xymatrix@+10pt{
   &   \Bun_G \ar^{b}[d]  \ar@/_-2pc/[dd]^{\Delta} \\
\BP(\Hom(E_1, E_2)) \ar[d] \ar^{ \ \ \ g}[r] &   \barBun_G \ar^{\bar\Delta}[d] \\
\Spec \BF_q \ar^{(E_1, E_2) \ \ \ }[r] & \Bun_G \times \Bun_G
}$$
where the square is cartesian. Since $\bar\Delta$ is proper, we can compute the desired trace via the Lefschetz trace formula applied to $g^*b_* \Qellbar$ on the projectivization $\BP(\Hom(E_1, E_2))$ of $\Hom(E_1, E_2)$: The desired trace is equal to

$$\sum_{z \ \in \ \BP(\Hom(E_1, E_2))(\BF_q)} \tr(\Frob, z^*b_* \Qellbar) \, .$$

\noindent To rewrite this formula, we abuse notation and denote again by $z$ the $\BF_q$-point of $\barBun_G$ obtained via the map $g$ from the $\BF_q$-point $z$ of $\BP(\Hom(E_1, E_2))$. We then split the sum according to whether the $\BF_q$-point $z$ lies in the $G$-locus or the $B$-locus of $\barBun_G$, i.e., according to whether the corresponding map~$\varphi$ is an isomorphism or not. The summand corresponding to the $B$-locus is computed by Proposition \ref{trace at point in B-locus} above and contributes the second term in the formula in Proposition \ref{answer}. To compute the summand corresponding to the $G$-locus, recall first that the map
$$b: \ \Bun_G \ \longto \ \barBun_G$$
is not an open immersion: It forms an etale cover of degree $2$ of the $G$-locus of $\barBun_G$ if the characteristic is not $2$, and defines a radicial map onto the $G$-locus if the characteristic is equal to $2$. To avoid having to distinguish these two cases, let
$$\BP \Isom_{\GL_2}(E_1, E_2) \ \subset \ \BP \Hom(E_1,E_2)$$
denote the quotient by $\BG_m$ of the space of isomorphisms of vector bundles $\Isom_{\GL_2}(E_1, E_2)$, and let
$$r: \ \Isom_{\SL_2}(E_1,E_2) \ \longto \ \BP \Isom_{\GL_2}(E_1, E_2)$$
denote the natural map. Then the restriction of $g^* b_* \Qellbar$ to the open subscheme $\BP \Isom_{\GL_2}(E_1, E_2)$ is equal to $r_* \Qellbar$. Since the map $r$ is finite we conclude that the contribution of the $G$-locus is equal to
$$\sum_{z \ \in \ \BP(\Isom(E_1, E_2))(\BF_q)} \tr(\Frob, z^*b_* \Qellbar) \ \ \ = \ \ \ |\Isom_{\SL_2}(E_1,E_2)(\BF_q)| \, ,$$
contributing the first term in the formula in Proposition \ref{answer}.
\end{proof}

\bigskip
 
\ssec{Proof of the trace computation via local models}

\mbox{} \medskip

We now prove Proposition \ref{trace at point in B-locus} above. First, we reduce the assertion to the analogous assertion for $\VinBun_G$, or equivalently for the local models $Y^n$, stated in Lemma \ref{trace at point in B-locus in local model} below. A minor reduction step is necessary since the map $b$ above is not an open immersion. We then prove Lemma \ref{trace at point in B-locus in local model}, using the results of Subsection \ref{Fighting simples on the main diagonal} above. To state the lemma let
$$j_G: \ Y^n_G \ \longinto \ Y^n$$
denote the open inclusion of the $G$-locus of $Y^n$, and let $z$ be an $\BF_q$-point of the stratum ${}_nY^n_B = X^{(n)}$ of maximal defect. Then with the exact same notation as in the previous two subsections we have:

\begin{lemma}
\label{trace at point in B-locus in local model}
The trace of the geometric Frobenius on the $*$-stalk of $j_{G,*} \, \Qellbar$ at the point $z$ is equal to
$$(1-q) \cdot \prod_k (1-q^{d_{k, \varphi}}) \, .$$
\end{lemma}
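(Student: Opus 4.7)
The plan is to use the contraction principle and the product decomposition of the $G$-locus to reduce the problem to a trace computation on $X^{(n)}$, and then to handle the resulting trace by Verdier-dualizing the Grothendieck-group formula for $\widetilde\Omega_n$ given in Lemma \ref{Omega tilde in the Grothendieck group}. First, since $\Qellbar_{Y^n_G}$ is $\BG_m$-equivariant for the action of Lemma \ref{contraction lemma}, the complex $j_{G,*}\Qellbar$ is $\BG_m$-monodromic, and the contraction principle (Lemma \ref{contraction principle}) yields $s^* j_{G,*}\Qellbar = \pi_{G,*}\Qellbar_{Y^n_G}$. The product decomposition of Lemma \ref{G-locus decomposition for the local models} then factors $\pi_G$ through the projection onto ${}_0Z^n$, so by K\"unneth
$$s^* j_{G,*}\Qellbar \ = \ \pi_{Z,*}\Qellbar_{{}_0Z^n} \, \otimes \, H^*(\BA^1\setminus\{0\},\Qellbar),$$
whose second factor contributes Frobenius trace $1-q$.

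Next I pass from $\pi_{Z,*}\Qellbar_{{}_0Z^n}$ to $\widetilde\Omega_n$ via Verdier duality. Since $\widetilde\Omega_n = \pi_{Z,!}\IC_{{}_0Z^n}$ and the IC-sheaf is self-dual, $\BD\widetilde\Omega_n = \pi_{Z,*}\IC_{{}_0Z^n} = \pi_{Z,*}\Qellbar_{{}_0Z^n}[2n](n)$, and hence $\pi_{Z,*}\Qellbar_{{}_0Z^n} = \BD\widetilde\Omega_n[-2n](-n)$; this shift and twist contribute a clean factor $q^n$ in the trace. The task thus reduces to computing $\tr(\Frob, \BD\widetilde\Omega_n|_z)$, for which I apply $\BD$ term-by-term to Lemma \ref{Omega tilde in the Grothendieck group} (using $\BD\,\add_* = \add_*\,\BD$ by properness of $\add$, the formula $\BD\Qellbar_{X^{(i)}} = \Qellbar[2i](i)$, and the self-duality of the sign local system, which gives $\BD(\Lambda^{(j)}(\Qellbar_X)[j](j)) = \Lambda^{(j)}(\Qellbar_X)[j]$) to obtain, in the Grothendieck group,
$$\BD\widetilde\Omega_n \ = \ \sum_{i+j=n}\add_*\bigl(\Qellbar_{X^{(i)}}[2i](i) \, \boxtimes \, \Lambda^{(j)}(\Qellbar_X)[j]\bigr).$$

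For $z = D = \sum_k n_k x_k$, the Grothendieck--Lefschetz formula unfolds this trace as a sum over decompositions $D = D_1 + D_2$; by Lemma \ref{external exterior powers}(c), the stalk of $\Lambda^{(j)}(\Qellbar_X)$ vanishes unless $D_2$ is multiplicity-free over $\overline{\BF_q}$, so $D_2$ is parametrized by subsets $S$ of the support of $D$ via $D_2 = \sum_{k \in S}x_k$. The crucial ingredient is that Frobenius acts on the one-dimensional stalk $\Lambda^{(j)}(\Qellbar_X)|_{D_2}$ as the sign of its permutation of the geometric points above $D_2$; since it cyclically permutes the $d_k$ geometric points above each $x_k$ for $k \in S$, this sign equals $\prod_{k \in S}(-1)^{d_k - 1}$. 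Combining with the signs and twists from the cohomological shifts, the contribution of each $S$ becomes $q^{-i}\cdot (-1)^j \prod_{k \in S}(-1)^{d_k - 1} = q^{-n}\prod_{k \in S}(-q^{d_k})$, and summing over $S$ gives $\tr(\Frob, \BD\widetilde\Omega_n|_z) = q^{-n}\prod_k(1 - q^{d_k})$. Multiplying by the factors $(1-q)$ and $q^n$ assembled above produces the claimed formula $(1-q)\prod_k(1 - q^{d_{k,\varphi}})$.

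The main obstacle is the bookkeeping of Frobenius traces through Verdier duality, cohomological shifts, and Tate twists; in particular the identification of Frobenius acting on $\Lambda^{(j)}(\Qellbar_X)|_{D_2}$ with the sign of the induced Galois permutation, and the delicate cancellation $(-1)^j\prod_{k\in S}(-1)^{d_k-1} = (-1)^{|S|}$ which, together with the twists $q^{d_k}$, produces $\prod_k(1 - q^{d_k})$ rather than the "wrong" formula $\prod_k(1 - q^{-d_k})$ — this is precisely why it is essential to work with $\BD\widetilde\Omega_n$ rather than with $\widetilde\Omega_n$ directly.
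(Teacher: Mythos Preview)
Your proof is correct and follows essentially the same route as the paper: both reduce via the contraction principle and the product decomposition of $Y^n_G$ to a trace computation involving $\pi_{Z,*}\Qellbar_{{}_0Z^n}$ (equivalently, the Verdier dual of $\widetilde\Omega_n$), then invoke the Grothendieck-group formula of Lemma~\ref{Omega tilde in the Grothendieck group}, compute the Frobenius trace on $\Lambda^{(j)}(\Qellbar_X)$ as the sign of the Galois permutation (this is exactly the paper's Lemma~\ref{trace lemma}), and finish with the elementary-symmetric-polynomial identity $\sum_S \prod_{k\in S}(-q^{d_k}) = \prod_k(1-q^{d_k})$. The only cosmetic difference is that the paper packages the first step by Verdier-dualizing Proposition~\ref{omega tilde proposition} directly, whereas you redo that argument on the $*$-side and then dualize the Grothendieck-group formula term by term; the resulting sums and sign bookkeeping are identical.
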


\medskip

\sssec{Reduction to the lemma}
As in Subsection \ref{Restatements of the main theorems for the local models} above, knowing Lemma \ref{trace at point in B-locus in local model} above for all integers $n \geq 0$ is equivalent to knowing the analogous assertion for $\VinBun_G$. To deduce Proposition \ref{trace at point in B-locus} from the latter, we first assume that the characteristic is not equal to $2$.

\medskip

Denote by $\sign$ the sign local system on $\BA^1 \setminus \{0\}$, and denote by $v^* \sign$ its pullback to the $G$-locus $\VinBun_{G,G}$ along the natural map
$$v: \ \VinBun_{G,G} \ \longto \ \BA^1 \setminus \{0\} \, .$$
Furthermore, denote by
$$\gamma: \ \VinBun_G \ \longto \ \barBun_G$$
the natural forgetful map, and let $j_{\VinBun_{G,G}}$ denote the open inclusion of the $G$-locus of $\VinBun_G$. Then chasing through the definitions one finds:

\begin{lemma}
\label{gamma^*b_*}
$$\gamma^*b_*{\Qellbar}_{\Bun_G} \ \ = \ \  j_{\VinBun_{G,G},*} \Bigl( {\Qellbar}_{\VinBun_{G,G}} \, \oplus \, v^* \sign \Bigr)$$
\end{lemma}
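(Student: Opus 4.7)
The plan is to factor the map $b$ through the open $G$-locus $\barBun_{G,G} \subset \barBun_G$, handle each piece by (smooth) base change along $\gamma$, and identify the resulting double cover of $\VinBun_{G,G}$ concretely in terms of the squaring map $\BG_m \to \BG_m$, $z \mapsto z^2$.

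First I would factor $b$ as
$$\Bun_G \ \stackrel{b'}{\longto} \ \barBun_{G,G} \ \stackrel{j_{\barBun_{G,G}}}{\longinto} \ \barBun_G \, .$$
Here $b'$ is the induced map onto the $G$-locus, and it is an étale double cover in characteristic $\neq 2$: this can be read off from the argument in Subsection \ref{Compactifying the diagonal}, since a point of $\barBun_{G,G}$ is given by $(E_1, E_2, L, \varphi)$ with $\varphi$ an isomorphism, and two points of $\Bun_G$ mapping to it correspond to the two square roots of the trivialization $\det \varphi$ of $L^{\otimes 2}$. Since $\gamma$ is a $\BG_m$-torsor and hence smooth, and since the natural square
\[
\xymatrix{
\VinBun_{G,G} \ar[r]^{\!\!\!\!\!\!\!j_{\VinBun_{G,G}}} \ar[d]_{\gamma} & \VinBun_G \ar[d]^{\gamma} \\
\barBun_{G,G} \ar[r]^{\!\!\!\!\!\!\!j_{\barBun_{G,G}}} & \barBun_G
}
\]
is cartesian by Lemma \ref{torsor}, smooth base change yields
$$\gamma^* b_* \Qellbar_{\Bun_G} \ = \ (j_{\VinBun_{G,G}})_* \, \bigl( \gamma^* \, b'_* \Qellbar_{\Bun_G} \bigr).$$

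The main step is now to identify $\gamma^* b'_* \Qellbar_{\Bun_G}$. Applying smooth base change once more along the $\BG_m$-torsor $\gamma|_{\VinBun_{G,G}}$, this equals $(\tilde b')_* \Qellbar$, where $\tilde b'$ is the base change of $b'$ along $\gamma$. Chasing through the moduli descriptions, a point of the fiber product $\Bun_G \times_{\barBun_{G,G}} \VinBun_{G,G}$ over $(E_1, E_2, \varphi) \in \VinBun_{G,G}$ and $E \in \Bun_G$ consists of an identification of their images in $\barBun_{G,G}$; unwinding this identification, and using that $E$ maps to $(E,E,\CO,\id)$ while $(E_1,E_2,\varphi)$ maps to $(E_1, E_2, \CO, \varphi)$, such an identification is the datum of $\SL_2$-isomorphisms $\alpha_1: E \isoto E_1$ and $\alpha_2: E \isoto E_2$ together with a scalar $\beta \in \BG_m$ satisfying $\alpha_2^{-1} \circ \varphi \circ \alpha_1 = \beta \cdot \id_E$. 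Taking determinants forces $\beta^2 = \det \varphi$; conversely, any such square root determines the identification up to $\SL_2$-automorphism. Hence $\tilde b'$ is canonically identified with the pullback along $v: \VinBun_{G,G} \to \BA^1 \setminus \{0\}$ of the squaring map $\BG_m \to \BG_m$, $z \mapsto z^2$.

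Finally, in characteristic $\neq 2$ the squaring map is étale of degree $2$, and the pushforward of the constant sheaf along it decomposes as $\Qellbar \oplus \sign$, where $\sign$ is the Kummer local system associated to the quadratic character of $\BG_m$. By proper base change for the cartesian square defining $\tilde b'$ we obtain
$$(\tilde b')_* \Qellbar \ = \ v^* (\Qellbar \oplus \sign) \ = \ \Qellbar_{\VinBun_{G,G}} \oplus v^* \sign,$$
and assembling the pieces yields the desired formula. The main obstacle in the argument is the identification of the fiber product in the middle paragraph; once this is done, the rest is formal base change and the standard computation of $\pi_* \Qellbar$ for the squaring map.
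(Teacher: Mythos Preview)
Your proof is correct and is precisely the ``chase through the definitions'' that the paper alludes to but does not spell out; the paper provides no argument beyond that phrase. The key identification of the base-changed cover $\tilde b'$ with the pullback along $v$ of the squaring map $\BG_m \to \BG_m$ is exactly right, and the two applications of base change (smooth along the $\BG_m$-torsor $\gamma$, then proper for the finite squaring map) are the natural way to organize it.
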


\medskip

Since the Frobenius traces on the $*$-stalks of $j_{\VinBun_{G,G},*} \, {\Qellbar}_{\VinBun_{G,G}}$ can be computed on the local models $Y^n$, Proposition \ref{trace at point in B-locus} follows from Lemma \ref{trace at point in B-locus in local model} above once we show that the second summand in Lemma \ref{gamma^*b_*} does not contribute. More precisely, letting $i_{\VinBun_{G,B}}$ denote the inclusion of the $B$-locus of $\VinBun_G$, we need to show:

\begin{lemma}
\label{sign disappears}
$$i_{\VinBun_{G,B}}^* \ j_{\VinBun_{G,G},*} \ v^* \sign \ = \ 0$$
\end{lemma}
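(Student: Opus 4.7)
I will follow the strategy of the proof of Theorem \ref{ij} via Proposition \ref{omega tilde proposition}, observing that the same argument goes through verbatim with the constant sheaf $\IC_{Y^n_G}$ replaced by the Kummer local system $v^*\sign$, and that the answer vanishes because $H^*(\BA^1 \setminus \{0\}, \sign)$ does.

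By the reduction of Subsection \ref{Restatements of the main theorems for the local models}, the desired vanishing on $\VinBun_G$ is equivalent to the analogous vanishing on each local model $Y^n$, $n\geq 1$; moreover, the strata-by-strata form of this reduction (the same one invoked to pass from Proposition \ref{omega tilde proposition} to Theorem \ref{ij}) identifies each stratum of $\VinBun_{G,B}$ with the maximal-defect stratum of a suitable local model. Hence it is enough to show that for each $n \geq 1$,
$$s^*\bigl(j_{Y^n_G,*}(v^*\sign)\bigr) \ = \ 0,$$
where $s:X^{(n)} \hookrightarrow Y^n$ is the section onto the maximal-defect stratum.

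For this I use the contracting $\BG_m$-action of Lemma \ref{contraction lemma}. By Lemma \ref{composite map and contraction in coordinates}, this action scales $v$ by $c^4$, and since $c^4 = (c^2)^2$ is a square, the pullback of $\sign$ along $c^4$-multiplication is trivial; hence $v^*\sign$ on $Y^n_G$ is $\BG_m$-equivariant, and so is its $*$-extension. By the contraction principle (Lemma \ref{contraction principle}),
$$s^*\bigl(j_{Y^n_G,*}(v^*\sign)\bigr) \ = \ \pi_*\bigl(j_{Y^n_G,*}(v^*\sign)\bigr) \ = \ \pi_{Y^n_G, *}(v^*\sign).$$
The product decomposition $Y^n_G \cong {}_0Z^n \times (\BA^1 \setminus \{0\})$ of Lemma \ref{G-locus decomposition for the local models}, under which $v$ corresponds to the projection onto the second factor, then yields
$$\pi_{Y^n_G, *}(v^*\sign) \ = \ \pi_{Z, *}(\Qellbar_{{}_0Z^n}) \otimes H^*(\BA^1 \setminus \{0\},\sign) \ = \ 0,$$
since $\sign$ is the nontrivial Kummer character on $\BG_m$ and hence has no cohomology.

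The step I would verify most carefully is the strata-wise form of the reduction to local models: one must check that each stratum of $\VinBun_{G,B}$, under the Zastava-style local model comparison, becomes the maximal-defect stratum of a local model of the appropriate defect degree, so that the vanishing of $s^*$ along maximal-defect on every $Y^n$ really does imply vanishing of $i_{\VinBun_{G,B}}^* j_{\VinBun_{G,G},*}(v^*\sign)$ along every stratum. This is exactly the same reduction step that underlies the passage from Proposition \ref{omega tilde proposition} to Theorem \ref{ij}, so it requires no new input beyond what is already developed in the paper.
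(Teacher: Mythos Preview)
Your proof is correct and follows essentially the same approach as the paper's own proof: reduce to the local models, observe via Lemma \ref{composite map and contraction in coordinates} that $j_{G,*}(v^*\sign)$ is $\BG_m$-monodromic for the contracting action, apply the contraction principle to identify $s^*$ with $\pi_*$, and then use the product decomposition of $Y^n_G$ from Lemma \ref{G-locus decomposition for the local models} to reduce to the vanishing of $H^*(\BA^1\setminus\{0\},\sign)$. Your explicit check that the action scales $v$ by $c^4$, hence preserves the sign local system, makes the equivariance step more transparent than the paper's one-line citation.
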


\begin{proof}
As before it suffices to prove the analogous statement on the local models $Y^n$. Thus we have to show that
$$s^* \, j_{G, *} \, v^* \sign \ = \ 0 \, ,$$
where $s$ and $j_G$ are as before and $v$ denotes the natural map
$$Y^n_G \ \longto \ \BA^1 \setminus \{0\} \, .$$
To prove this, note first that Lemma \ref{composite map and contraction in coordinates} above shows that $j_{G, *} \, v^* \sign$ is naturally $\BG_m$-equivariant for the contracting $\BG_m$-action constructed in Subsection \ref{Embedding, section, and contraction} above. Applying the contraction principle (see Lemma \ref{contraction principle}) and Lemma \ref{G-locus decomposition for the local models} above, the desired vanishing follows from the fact that the sign local system on $\BA^1 \setminus \{0\}$ has trivial cohomology.
\end{proof}

\medskip

This concludes the reduction step under the assumption that the characteristic is not equal to $2$. If the characteristic is equal to $2$, then the map $b$ defines a radicial map from $\Bun_G$ to the $G$-locus $\barBun_{G,G}$. Thus the summand $v^* \sign$ does not appear in Lemma \ref{gamma^*b_*}, and Lemma \ref{sign disappears} is not even needed.

\bigskip

\sssec{Proof of Lemma \ref{trace at point in B-locus in local model}}
We begin by recalling the following trace computation. As before let $\Lambda^{(n)} (\Qellbar_X)$ denote the $n$-th external exterior power on $X^{(n)}$ of the constant local system $\Qellbar_X$ on the curve $X$ over $\BF_q$. Let $D$ be an $\BF_q$-point of $X^{(n)}$. As before we write
$$D \ = \ \sum_k n_k x_k$$
for certain distinct closed points $x_k$ of the curve $X$ and all $n_k \geq 1$, and we let $d_k$ denote the degree of the residue field extension at $x_k$. We then have:

\begin{lemma}
\label{trace lemma}
The trace of the geometric Frobenius on the $*$-stalk of $\Lambda^{(n)} ({\Qellbar}_X)$ is $0$ unless all $n_k$ are equal to $1$. If all~$n_k$ are equal to $1$, then the trace is equal to
$$\prod_k (-1)^{d_k + 1}.$$
\end{lemma}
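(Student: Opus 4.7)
The plan is to determine the geometric stalk of $\Lambda^{(n)}(\Qellbar_X)$ at a geometric point lying over $D$, together with its Frobenius action. Upon base change to $\bar{\BF}_q$, the $\BF_q$-point $D = \sum_k n_k x_k$ becomes the geometric divisor $\bar D = \sum_k \sum_{j=1}^{d_k} n_k \bar x_{k,j}$, where the $\bar x_{k,j}$ are distinct geometric points and where the geometric Frobenius cyclically permutes the family $\bar x_{k,1}, \ldots, \bar x_{k,d_k}$ as a $d_k$-cycle for each $k$.

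The first case is when some $n_k \geq 2$. Here Lemma \ref{external exterior powers}(c) expresses the geometric stalk of $\Lambda^{(n)}(\Qellbar_X)$ at $\bar D$ as
$$\bigotimes_k \bigotimes_{j=1}^{d_k} \Lambda^{n_k}(\Qellbar),$$
and since $\Qellbar$ is one-dimensional this tensor product vanishes as soon as a single $n_k \geq 2$. Consequently the stalk itself is zero, and the Frobenius trace is zero, as asserted.

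In the remaining case all $n_k = 1$, so $D$ lies in the disjoint locus $\overset{\circ}{X}{}^{(n)}$ with $n = \sum_k d_k$. Over this locus the map $p: X^n \to X^{(n)}$ restricts to a finite etale Galois cover with group $S_n$, and the defining construction of $\Lambda^{(n)}$ in Subsection \ref{sssec External exterior powers} identifies the restriction of $\Lambda^{(n)}(\Qellbar_X)$ to $\overset{\circ}{X}{}^{(n)}$ with the rank-one local system associated to the sign character of $S_n$ via this Galois torsor. The geometric fiber $p^{-1}(\bar D)$ is the set of orderings of the $n$ points $\{\bar x_{k,j}\}$; a choice of ordering identifies the action of Frobenius on this fiber with an element $\sigma_{\mathrm{Frob}} \in S_n$, well-defined up to conjugation, whose cycle type is $(d_1, d_2, \ldots)$. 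The trace of Frobenius on the stalk of the sign local system therefore equals
$$\sgn(\sigma_{\mathrm{Frob}}) \ = \ \prod_k (-1)^{d_k - 1} \ = \ \prod_k (-1)^{d_k + 1},$$
yielding the claimed formula. No step in this argument presents a serious obstacle; the only point requiring any care is the identification, directly from the definition in Subsection \ref{sssec External exterior powers}, of $\Lambda^{(n)}(\Qellbar_X)|_{\overset{\circ}{X}{}^{(n)}}$ with the sign local system for the $S_n$-torsor $p$, which is a routine unwinding of the sign-isotypic construction.
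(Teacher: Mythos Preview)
Your proof is correct. The paper does not actually supply a proof of this lemma; it is stated as a ``recalled'' trace computation without argument, and your reasoning---vanishing of the stalk via Lemma~\ref{external exterior powers}(c) when some $n_k \geq 2$, and identification with the sign local system for the $S_n$-torsor $p$ over the disjoint locus when all $n_k = 1$, yielding the sign of a permutation of cycle type $(d_1, d_2, \ldots)$---is exactly the standard justification one would give.
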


\medskip

To prove Lemma \ref{trace at point in B-locus in local model}, we first apply Verdier duality to both sides of the equation in Proposition \ref{omega tilde proposition} above, and then combine the result with Lemma \ref{Omega tilde in the Grothendieck group} above to obtain an expression for $s^*j_{Y^n_G*} \, \Qellbar$ in the Grothendieck group. Applying Lemma \ref{trace lemma} above and taking into account that
$$H^*_c(\BA^1 \setminus \{0\}) \ = \ \Qellbar[-2](-1) \oplus \Qellbar[-1](0)$$
and that
$$\dim Y^n_G \ = \ 2n +1 \,, $$
we find the following formula for the trace of the geometric Frobenius on the $*$-stalk of $j_{Y^n_G*} \, \Qellbar$ at a point $D \in X^{(n)}(\BF_q) = {}_nY^n_B(\BF_q)$:
$$(1-q) \ \cdot \ \sum_{i+j=n} \ \sum_{D_1 + D_2 = D} \ (-1)^j q^j \cdot \prod_{x \, \in \, \supp(D_2)} (-1)^{\deg(x)+1}$$
Here the second sum runs over all pairs $(D_1, D_2)$ of $\BF_q$-points $D_1 \in X^{(i)}$, $D_2 \in X^{(j)}$ such that $D_1 + D_2 = D$ and such that the effective divisor $D_2$ is simple, i.e., each closed point occurring in $D_2$ appears with multiplicity~$1$; furthermore, we write $x \in \supp(D_2)$ to denote that a closed point $x$ of the curve $X$ occurs in $D_2$, and we let $\deg(x)$ denote the degree of the residue field extension at the point~$x$.

\medskip

To reformulate the above formula, let
$$D \ = \ \sum_{k} n_k x_k$$
for certain distinct closed points $x_k$ of the curve $X$, as before. Then the datum of a pair $(D_1, D_2)$ with the above properties is equivalent to the datum of a subset $S$ of the set of closed points $\{x_k\}$ occurring in the effective divisor~$D$. We can then rewrite the above formula as
$$(1-q) \ \cdot \ \sum_{S} (-1)^{|S|} \cdot q^{\sum_{x \in S} \deg(x)}$$
where the sum ranges over all subsets $S$ of the set of closed points occurring in the effective divisor $D \in X^{(n)}(\BF_q)$. We do allow the set $S$ to be the empty set, and in this case the corresponding summand is equal to $1$.

\medskip

Finally, to deduce the formula in Lemma \ref{trace at point in B-locus in local model} from the above preliminary formula, recall that the elementary symmetric polynomials in the variables $X_1, \ldots, X_m$ are precisely the coefficients appearing in the expansion of the product
$$\prod_{k=1}^m \, (T + X_k)$$
as a polynomial in $T$. Taking $m$ to be the number of closed points appearing in the effective divisor $D$, setting $T=1$, and setting $X_k = -q^{\deg x_k}$ transforms the preliminary formula to the desired one in Lemma \ref{trace at point in B-locus in local model}.

\bigskip\bigskip\bigskip\bigskip\bigskip

\end{document}